\documentclass[12pt]{amsart}

\makeatletter
\def\l@section{\@tocline{1}{10pt}{1pc}{}{}}
\def\l@subsection{\@tocline{2}{0pt}{1pc}{4.6em}{}}
\def\l@subsubsection{\@tocline{3}{0pt}{1pc}{7.6em}{}}
\renewcommand{\tocsection}[3]{%
  \indentlabel{\@ifnotempty{#2}{\makebox[2.3em][l]{%
    \ignorespaces#1 #2.\hfill}}}\textbf{#3}}
\renewcommand{\tocsubsection}[3]{%
  \indentlabel{\@ifnotempty{#2}{\hspace*{2.3em}\makebox[2.3em][l]{%
    \ignorespaces#1 #2.\hfill}}}#3}
\renewcommand{\tocsubsubsection}[3]{%
  \indentlabel{\@ifnotempty{#2}{\hspace*{4.6em}\makebox[3em][l]{%
    \ignorespaces#1 #2.\hfill}}}#3}
\makeatother  

\usepackage{amsmath,amssymb,amscd,amsthm,graphicx,enumerate,tikz-cd}

\usepackage{hyperref}
\hypersetup{breaklinks=true}

\usepackage{enumitem} 
\newlist{condenum}{enumerate}{1} 
\setlist[condenum]{label=\bfseries Condition \arabic*.,  ref=\arabic*, wide}

\numberwithin{equation}{section}
\theoremstyle{plain}



\usepackage[utf8]{inputenc}
\usepackage{amssymb}
\usepackage{amsthm}
\usepackage{mathrsfs}
\usepackage{amsfonts}
\usepackage{amsmath}
\usepackage{mathtools}
\usepackage{makecell}
\usepackage{xcolor}
\usepackage[margin=1.25in]{geometry}

\usepackage{lipsum}
\makeatletter
\def\ps@pprintTitle{%
 \let\@oddhead\@empty
 \let\@evenhead\@empty
 \def\@oddfoot{}%
 \let\@evenfoot\@oddfoot}
\makeatother

\newcommand{\intde}{\int_0^{\delta(\tau)^{-\frac 1{100}}}}
\numberwithin{equation}{section}

\newtheorem{theorem}{Theorem}[section]
\newtheorem{lem}[theorem]{Lemma}
\newtheorem{remark}[theorem]{Remark}
\newtheorem{prop}[theorem]{Proposition}
\newtheorem{claim}[theorem]{Claim}
\newtheorem{cor}[theorem]{Corollary} 

\theoremstyle{definition}
\newtheorem{defn}[theorem]{Definition}
\newtheorem*{theorem*}{Theorem}

\usepackage{xpatch}
\makeatletter
\xpatchcmd{\tableofcontents}{\contentsname \@mkboth}{\small\contentsname \@mkboth}{}{}
\xpatchcmd{\listoffigures}{\chapter *{\listfigurename }}{\chapter *{\small\listfigurename }}{}{}
\makeatother

\makeatletter
\def\blfootnote{\xdef\@thefnmark{}\@footnotetext}
\makeatother
\begin{document}

\title{Unique asymptotics of $SO(k)\times SO(n-k+1)$ symmetric ancient ovals of Ricci flow}
\author[Panagiota Daskalopoulos, Wenkui Du, Natasa Sesum, Ziyi Zhao]{Panagiota Daskalopoulos, Wenkui Du,  Natasa Sesum, Ziyi Zhao}
\begin{abstract}
     We obtain the unique asymptotics of $SO(k)\times SO(n-k+1)$-invariant,  compact, simply-connected, {factorwisely non-self-similar} $n$-dimensional $\kappa$-solutions of the  Ricci flow $(M^n, g(t))$, where $n\geq 4$ and $2\leq k\leq n-2$. More precisely, these $\kappa$-solutions are either ancient ovals of the  Ricci flow that are diffeomorphic to the standard sphere $S^n$, having a positive curvature operator metric $g(t)$ and a cylindrical tangent flow at $-\infty$, or they are a Riemannian product of a Perelman’s ancient oval and a
    shrinking round sphere.
     The  metric $g(t)$ of every $SO(k)\times SO(n-k+1)$-invariant ancient oval is represented  in the form  $g(t)=dz\otimes dz + F^2(z,t) g_{S^{k-1}}  + G^2(z,t)g_{S^{n-k}}$ (up to flipping $k-1$ and $n-k$). We obtain results about the blowdown limits of such solutions, establish the unique sharp asymptotics of the profile function $G(z, t)$,  and prove that the  uniqueness of $G(z, t)$ implies the  uniqueness of $F(z, t)$. In particular,  this provides the first instance of a classification result for geometric flows represented by a coupled PDE system, opening new avenues for studying the classification of higher-dimensional $\kappa$-solutions of the Ricci flow.

\end{abstract}
\maketitle

\tableofcontents

\section{Introduction}

The classification of singularity models has been a central driving force in the study of geometric flows largely due to their critical role in modeling the asymptotic geometry of high-curvature singularities.  The $\kappa$-solutions introduced by \cite{perelman2002entropy}  naturally arise in the study of singularity models of Ricci flow. Here, we give the following related definitions needed in our paper. 
\begin{defn}[$\kappa$-solution, cf. {\cite{haslhofer2024kappa, perelman2002entropy}}]\label{kappa solution}
    A $\kappa$-solution of Ricci flow is an ancient solution of the Ricci flow, that is complete with bounded curvature on compact time intervals, and 
 has nonnegative curvature operator, positive scalar curvature, and
is $\kappa$-noncollapsed at all scales.
\end{defn}

\begin{defn}[ancient ovals]\label{ancient oval defn}
For $n\geq 3$, an  $n$-dimensional ancient oval of the Ricci flow $(M^n, g(t))$ is defined as a  compact, simply-connected,  non-self-similar  $\kappa$-solution  with self-shrinking   cylindrical   tangent flow\footnote{For example, see \cite{bamler2020structure}, \cite{perelman2002entropy} and \cite{CMZ-asymptotic-solitons} for the definition of tangent flow at $-\infty$.} (blowdown) $\mathbb{R}^k\times S^{n-k}(\sqrt{2(n-k-1)|t|})$ at $-\infty$ for some $1\leq k\leq n-2$.
\end{defn}

\begin{defn}[factorwisely non-self-similar  Ricci flows]
A Ricci flow is called \emph{factorwisely non-self-similar} if it is
not factorwisely shrinking self-similar. Here, we say that  a Ricci flow
$(M,g(t))_{t\in I}$ is \emph{factorwisely shrinking self-similar} if, after
passing to the universal cover, there exists a time-independent
product decomposition
\[
(\widetilde M,\widetilde g(t))
=
(M_1,g_1(t))\times\cdots\times(M_N,g_N(t))
\]
such that, for each $\alpha\in\{1,\ldots,N\}$, there exist a time
$t_0\in I$, a constant $T_\alpha$ satisfying
\[
t<T_\alpha
\qquad\text{for every }t\in I,
\]
and a smooth family of diffeomorphisms
\[
\Phi_{\alpha,t}:M_\alpha\to M_\alpha,
\qquad
\Phi_{\alpha,t_0}=\operatorname{Id},
\]
for which
\[
g_\alpha(t)
=
\frac{T_\alpha-t}{T_\alpha-t_0}\,
\Phi_{\alpha,t}^{*}g_\alpha(t_0)
\qquad
\text{for every }t\in I.
\]
The constants $T_\alpha$ are not required to be equal. 
Apparently, factorwisely non-self-similar Ricci flow cannot be non-self-similar Ricci flow.
\end{defn}

In dimension three, 
the program of classification of all $\kappa$-solutions of Ricci flow proposed in \cite{haslhofer2024kappa} has culminated in a complete classification for  Ricci flows. For compact case, Angenent, Brendle, Daskalopoulos, and Sesum \cite{ABDS, BDS2021uniqueness} proved that any  compact $\kappa$-ancient $3$D solution is isometric to either a family of shrinking round spheres or the Perelman ancient oval. For noncompact solutions, Brendle \cite{Bre-3d-noncpt}  {showed that any complete noncompact  $3$D $\kappa$-ancient solution has to be rotationally symmetric, and that such  a solution is  necessarily either {the shrinking cylinder, or the quotient of shrinking cylinder}, or the steady Ricci soliton, hence the Bryant soliton as the unique simply-connected, noncollapsed 3D steady gradient Ricci soliton}. Beyond three dimensions, a work by Brendle, Daskalopoulos, Naff, and Sesum \cite{BDNS-compact} provided a {classification of compact $\kappa$-noncollapsed  solutions that have uniformly positive isotropic curvature (PIC) and are weakly PIC2,  showing they have to be either a family of shrinking spheres or the higher-dimensional Perelman ancient ovals.} Higher dimensional  asymptotics  and  classification of $\kappa$-solutions were studied in \cite{BN-noncompact}, \cite{zhao20234}, \cite{ZZ-notype2}, \cite{ma2023unique}, \cite{hebbar2026asymptotic} and \cite{hebbar2026kappa}. 

Despite the above successes  in 3D and the  pinched PIC setting, the general higher-dimensional landscape, where curvature conditions are relaxed, remains incredibly rich and much harder to fully understand. This complexity is sharply captured by Haslhofer's classification conjecture for 4D Ricci flow \cite{haslhofer2024kappa}, which claims that any simply-connected,  4-dimensional $\kappa$-solution must belong to a highly restricted, yet geometrically diverse, list of models. Specifically, the conjecture dictates that shrinking gradient solitons must be the round sphere ${S}^4$, $\mathbb{CP}^2$, ${S}^2\times {S}^2$ or the cylinders ${S}^3 \times \mathbb{R}$ and ${S}^2 \times \mathbb{R}^2$, and this has been verified in \cite{munteanu2017positively, DengZhu2020, naber4dshrinker}. Steady gradient solitons are conjectured to be the 4D Bryant soliton, the 3D Bryant soliton crossed with $\mathbb{R}$, or the 4D ``flying wing'' solutions (the non-rotationally symmetric steady solitons, which are $O(3)$-symmetric and  whose existence was established by Yi Lai \cite{Lai20}). Finally, for ancient solutions that are not solitons, the models are conjectured to be the 4D $O(4)$-symmetric Perelman oval, the 3D Perelman oval crossed with $\mathbb{R}$, the $O(2)\times O(3)$ symmetric ancient oval constructed by Buttsworth \cite{buttsworth2022so},  or the 1-parameter family of $O(3) \times \mathbb{Z}^2_2$-symmetric bubble-sheet ovals  constructed by Haslhofer in \cite{haslhofer2024kappa}  .


In the higher dimensional compact setting, the construction of $4$D ancient ovals of Ricci flow with $SO(2)\times SO(3)$ symmetry of Buttsworth in   \cite{buttsworth2022so} and Haslhofer \cite{haslhofer2024kappa} can be easily extended to the higher dimensional construction of an explicit family of ancient ovals possessing  $SO(k) \times SO(n-k+1)$ symmetry and  the families of $\mathbb{Z}_2^k\times O(n-k+1)$-symmetric ancient ovals.  A critical prerequisite to resolving broad classification conjectures is establishing a rigorous, quantitative understanding of all  higher-dimensional ancient models. In particular, in this paper we aim to study asymptotics  of any compact $SO(k)\times SO(n-k+1)$-symmetric ancient oval of Ricci flow.   In doing so, our results establish the sharp, unique asymptotic behavior of these solutions.

The trajectory of these investigations in Ricci flow closely mirrors breakthroughs in Mean Curvature Flow (MCF), where the classification program has rapidly accelerated. In MCF, the  work of Bamler and Lai \cite{bamler2025classification, bamler2025pde} completely resolved the classification of ancient asymptotically cylindrical solutions across arbitrary dimensions. Their framework provides a classification of  all ancient flows whose tangent flow at $-\infty$ is a generalized cylinder $\mathbb{R}^k \times S^{n-k}$. A subcase of Bamler-Lai's results contains the classification of the bubble-sheet solutions in $\mathbb{R}^4$, which are the $\mathbb{Z}^2_2\times O(2)$-symmetric ancient ovals whose tangent flow at $-\infty$ is ${{S}}^1\times\mathbb{R}^2$. These solutions were constructed in \cite{DH-oval}, and in the Bamler-Lai framework they correspond to the case when $k=2$ and $n=3$. Before their recent works,  progress in understanding the specific rigidity of this subcase was established in our prior work \cite{choi2025classification} 
classifying ancient bubble-sheet solutions in $\mathbb{R}^4$, and in the work by \cite{choi2024classification}. 
Building upon these foundations, generalizations regarding the rigidity and classification of higher-dimensional $k$-ovals in MCF, prior to the work of Bamler-Lai, have been achieved, that is, in  \cite{CDZ25} and \cite{CDZ26} the authors established the strict rigidity of $k$-ovals, successfully classifying them up to space-time rigid motions and parabolic dilations.

\subsection{Main results}
 {Let us now return to the Ricci flow and state the main results of this paper. First, we describe the geometric properties and blowdown behavior of ancient ovals and 
 of compact, simply-connected, factorwisely non-self-similar $\kappa$-solutions of the Ricci flow. These geometric properties are of independent interest, and some of them hold even without the symmetry assumption, while at the same time they played a crucial role in obtaining precise asymptotics of compact symmetric ancient ovals.

\begin{theorem}[geometry of ancient ovals and compact  factorwisely non-self-similar $\kappa$-solutions]\label{remark tangent flow}
Ancient ovals and compact,  factorwisely non-self-similar $\kappa$-solutions of the Ricci flow satisfy the following properties:
  \begin{itemize}
  \item Every ancient oval is diffeomorphic to the standard sphere and  has positive curvature operator.
   \item  Four-dimensional   compact,  simply-connected, { factorwisely non-self-similar} $\kappa$-solutions  must be ancient ovals, whose unique tangent flow at $-\infty$ can only be one of the 
    self-shrinking cylinders $\mathbb{R}\times S^{3}$ 
      or $\mathbb{R}^2\times S^{2}$.
    \item  Let $(M^n,g(t))$ be a
$SO(k)\times SO(n-k+1)$-symmetric, compact, simply-connected, factorwisely
non-self-similar $\kappa$-solution. Then exactly one of the
following alternatives occurs.

\begin{enumerate}
\item
The flow is an ancient oval. Its unique tangent flow at
$-\infty$ is either
\[
\mathbb R^k\times
S^{n-k}\bigl(\sqrt{2(n-k-1)|t|}\bigr),
\]
where $2\le n-k\le n-1$, or
\[
\mathbb R^{n-k+1}\times
S^{k-1}\bigl(\sqrt{2(k-2)|t|}\bigr),
\]
where $2\le k-1\le n-1$.

\item
the flow is a Riemannian product of a
Perelman's ancient oval and a shrinking round sphere. Its tangent
flow at $-\infty$ is
\[
\mathbb R\times
S^{k-1}\bigl(\sqrt{2(k-2)|t|}\bigr)
\times
S^{n-k}\bigl(\sqrt{2(n-k-1)|t|}\bigr),
\]
where $k-1\geq 2$ and $n-k\geq 2$. More precisely, up to interchanging the factors, the flow is either a product of a $k$-dimensional Perelman ancient oval and a shrinking round  $S^{n-k}$ or a product of an $(n-k+1)$-dimensional Perelman ancient oval and a shrinking round $S^{k-1}$.
\end{enumerate}
    \item For any  sequence of times $t_i \rightarrow-\infty$ and an arbitrary sequence of points $x_i$ on a $SO(k)\times SO(n-k+1)$-symmetric $n$-dimensional ancient oval $(M^{n}, g(t_i))$ with $\mathbb R^{k} \times {{S}}^{n-k}$ type cylindrical tangent flow at $-\infty$,
    the rescaled flow around the points $\left(x_i, t_i\right)$ by the scalar curvature $R\left(x_i, t_i\right)$   converges  (after passing through a subsequence) 
    to a shrinking cylinder or the $\mathbb{R}^{k-1}$ times the Bryant soliton.
  \end{itemize}
\end{theorem}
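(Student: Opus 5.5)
The four bullets are handled separately, each by combining the structure theory of tangent flows with known rigidity results.

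\emph{First bullet.} Let $(M^n,g(t))$ be an ancient oval. By Bamler's compactness and structure theory \cite{bamler2020structure} and \cite{CMZ-asymptotic-solitons}, the tangent flow at $-\infty$ is a shrinking soliton, assumed here to be $\mathbb R^k\times S^{n-k}$.
\begin{enumerate}
\item \emph{Positive curvature operator.} The curvature operator is nonnegative. By Hamilton's strong maximum principle, if it is not strictly positive, then the universal cover splits isometrically or has reduced holonomy. $M$ is compact and simply connected, so a flat factor cannot occur. A nontrivial product of compact factors, each of which is a $\kappa$-solution, would have tangent flow equal to the product of the factors' tangent flows. Compatibility with the cylinder $\mathbb R^k\times S^{n-k}$ then forces one factor to be a round shrinking sphere.
\item Since the flow is non-self-similar, the product must also contain a non-self-similar factor, which is a lower-dimensional ancient oval. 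I would rule this out by checking it against the one-sphere-factor form of the tangent flow. This step needs care.
\item \emph{Diffeomorphism type.} Once the curvature operator is positive, B\"ohm--Wilking shows that $M$ is diffeomorphic to a spherical space form. Simple connectivity then gives $S^n$.
\end{enumerate}

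\emph{Second bullet ($n=4$).} The tangent flow at $-\infty$ is a 4D shrinker with nonnegative curvature operator. By the classification of such shrinkers \cite{munteanu2017positively, naber4dshrinker}, it is $S^4$, $\mathbb{CP}^2$, $S^2\times S^2$, $\mathbb R\times S^3$ or $\mathbb R^2\times S^2$.
\begin{enumerate}
\item $\mathbb{CP}^2$ has no nonnegative curvature operator, so it is excluded.
\item If the tangent flow is compact ($S^4$ or $S^2\times S^2$), the asymptotic entropy equals that of the limit. Together with compactness, the equality case of Perelman's monotonicity should then force the flow itself to be the self-similar shrinker.
\item For $S^2\times S^2$, the same conclusion can instead be obtained through the splitting from the first bullet. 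In either case the flow is factorwisely self-similar, which is excluded.
\end{enumerate}
The remaining options are the two cylinders. A compact flow with a cylindrical blowdown is non-self-similar, hence an ancient oval. Uniqueness of the tangent flow follows from the uniqueness theorems for cylindrical tangent flows (Colding--Minicozzi type arguments adapted to Ricci flow, e.g.\ \cite{CMZ-asymptotic-solitons}).

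\emph{Third bullet.} Use the symmetry to write the metric as a cohomogeneity-one warped product $dz^2+F^2g_{S^{k-1}}+G^2g_{S^{n-k}}$. The tangent flow is invariant under $SO(k)\times SO(n-k+1)$. Among shrinkers with nonnegative curvature operator carrying this symmetry and coming from a compact flow, the only possibilities are:
\begin{itemize}
\item a compact quotient of spheres, which is excluded as in the second bullet;
\item $\mathbb R^{k}\times S^{n-k}$;
\item $\mathbb R^{n-k+1}\times S^{k-1}$;
\item $\mathbb R\times S^{k-1}\times S^{n-k}$.
\end{itemize}
In the first two noncompact cases, the first bullet gives that the flow is an ancient oval. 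In the last case, the strong maximum principle yields a splitting, because the tangent flow has reducible holonomy and the kernel of the curvature operator is parallel. This gives a product $N^{k}\times S^{n-k}$ (or the flipped version), where one factor has a round-sphere tangent flow and hence is a shrinking round sphere. The other factor is a rotationally symmetric, compact, non-self-similar $\kappa$-solution, and by \cite{ABDS, BDS2021uniqueness, BDNS-compact} it is Perelman's oval.

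The main obstacle is here: excluding a third case, where the curvature operator is positive but the tangent flow is the product of two spheres. For that I would use that positive curvature persists backward while the blowdown has a parallel splitting. The plan is to study the neck region via the ODE system for $(F,G)$ and show that the mixed sectional curvatures decay too fast to be positive. This seems delicate.

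\emph{Fourth bullet.} This is a standard Perelman-type compactness argument. Rescale by $R(x_i,t_i)$ and apply $\kappa$-compactness to obtain a limiting $\kappa$-solution with the symmetry. If $x_i$ lies in the cylindrical region, the limit is the cylinder. Otherwise $x_i$ lies near a tip where the $S^{k-1}$ orbit collapses, and the limit is noncompact with an $\mathbb R^{k-1}$ split-off coming from the large $F$ scale. After splitting, the remaining factor is a rotationally symmetric, noncompact $\kappa$-solution of dimension $n-k+1$. By Brendle's classification of such solutions \cite{Bre-3d-noncpt, BN-noncompact}, it is either a cylinder or the Bryant soliton.
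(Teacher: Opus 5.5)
There are genuine gaps, and the most serious is in the third bullet. You try to get alternative (2) from the blow-down. Your reasoning is that $\mathbb R\times S^{k-1}\times S^{n-k}$ has reducible holonomy, so the strong maximum principle should split the flow. That inference is invalid. Hamilton's strong maximum principle constrains the null space of the curvature operator of $g(t)$ itself, and a reducible blow-down produces no kernel at any finite time. Perelman's ovals, for instance, have positive curvature operator but blow-down $\mathbb R\times S^{n-1}$.

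The paper obtains alternative (2) differently, by applying the CLN/de Rham decomposition (\cite[Theorem 7.34]{CLN}, \cite{BS-weakly-PIC2}) to $M$ itself:
\begin{itemize}
\item either $M$ genuinely splits, the symmetry forces $N^k\times S^{n-k}$ or $S^{k-1}\times N^{n-k+1}$, and \cite[Theorem 1.3]{BDNS-compact} identifies $N$ as Perelman's oval;
\item or $M$ has positive curvature operator.
\end{itemize}
The case you leave open is positive curvature operator with tangent flow $\mathbb R\times S^{k-1}\times S^{n-k}$, and it is the real crux. The paper does not settle it either. Case 2 in the proof of Lemma \ref{blowdown is cylinder} only asserts that Munteanu--Wang plus the symmetry force the noncompact limit to split $k$ or $n-k+1$ lines. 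That assertion passes over exactly this symmetric, nonnegatively curved product shrinker. So the dichotomy still needs an argument that neither your plan nor the written proof supplies.

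There are also concrete errors elsewhere.
\begin{itemize}
\item \textbf{Second bullet.} $\mathbb{CP}^2$ cannot be discarded for lacking nonnegative curvature operator. Fubini--Study, like every compact symmetric space, has nonnegative curvature operator, with eigenvalues $\tfrac{s}{4},0,0,\tfrac{s}{12},\tfrac{s}{12},\tfrac{s}{12}$. The paper instead notes that $M$ would be diffeomorphic to $\mathbb{CP}^2$, hence K\"ahler, hence self-similar by \cite[Theorem 1.2]{DengZhu2020}. Also, an ``equality in monotonicity'' argument for a compact blow-down needs the forward singularity model. The paper gets this from pinching estimates (\cite{BS}, \cite{BS-weakly-PIC2}).
\item \textbf{First bullet, holonomy.} You drop the reduced-holonomy (K\"ahler or symmetric) branch of the strong maximum principle. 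The paper removes it with \cite{li2024ancient, DengZhu2020} and the self-similarity of symmetric spaces.
\item \textbf{First bullet, product case.} The conclusion is not that some factor is a round sphere; that would put a second sphere into the blow-down. Rather, some factor has Euclidean tangent flow. By Perelman's monotonicity it is then flat, contradicting compactness.
\item \textbf{Fourth bullet.} The step you call standard is the paper's main new ingredient. Splitting off $\mathbb R^{k-1}$ along arbitrary sequences requires $R(x_i,t_i)^{1/2}F(x_i,t_i)\to\infty$. Lemma \ref{k-1lines split} proves this from $F\gtrsim\sqrt{-t}$ away from the center (monotonicity of $F$ plus the cylinder at the center), together with lower curvature bounds. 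At genuine tip points ($R^{1/2}G\to 0$) the paper needs Hamilton's Harnack at maximal-curvature points plus a bounded-distance argument (Proposition \ref{tip-asymptotics}).
\item \textbf{Cylindrical region.} ``Cylindrical region implies cylinder'' needs an extra line from Perelman's long-range estimate. This rules out $\mathbb R^{k-1}$ times the Bryant soliton there (Proposition \ref{epsilon-bubble-sheet}).
\item \textbf{Which orbit collapses.} At the tip it is the $S^{n-k}$-orbit, not the $S^{k-1}$-orbit, that collapses.
\end{itemize}
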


Next, we obtain the following result concerning the unique sharp asymptotics of $SO(k) \times SO(n-k+1)$-symmetric, $n$-dimensional ancient ovals. 
In particular, we will assume $n\geq 4$ and $1\leq k\leq n-2$, since in the case when $n=3$ and $k=1$, the classification has been addressed in \cite{ABDS, BDS2021uniqueness}, and when $n\geq 4$ and $k=1$, the classification has been addressed by the results in \cite{BDNS-compact, hebbar2026kappa}. }


\begin{theorem}[unique sharp asymptotics of $SO(k) \times SO(n-k+1)$-symmetric ancient ovals]\label{unique asymptotics theorem}  
Assume that $(M^n, g(t))$ is an $n$-dimensional, $SO(k) \times SO(n-k+1)$-symmetric ancient oval of the Ricci flow, where $n\geq 4$ and $2\leq k\leq n-2$. 
Suppose that the cylinder $\mathbb{R}^k \times S^{n-k}(\sqrt{2(n-k-1)|t|})$ is the tangent flow at $-\infty$ of $(M^n, g(t))$. Then, for the metric $g(t)$ written in the warped product form
\begin{equation}\label{eqn-metric} 
    g(t) = dz \otimes dz + F^2(z,t) g_{S^{k-1}} + G^2(z,t) g_{S^{n-k}},
\end{equation}
where $z\in [0, z_{\mathrm{max}}(t)]$,
we have the following unique sharp asymptotics for $G(z, t)$ as $t \to -\infty$:

\begin{itemize}
    \item {\bf Parabolic region.} For any constant $L > 0$, the profile function $G$ satisfies
    \begin{equation}
    G(z,t)^2 = 2(n-k-1)(-t)   - \frac{(n-k-1)(z^2+2kt)}{2\log(-t)}  + o\left(\frac{-t}{\log(-t)}\right) 
    \end{equation}
    in $C^{\infty}_{loc}$-sense    for all $0\leq z \leq L\sqrt{-t}$.
    \item {\bf Intermediate region.} For a sufficiently small $\theta > 0$, we have that 
    \begin{equation}
    G(z, t)^2 = -2(n-k-1) t - (n-k-1)\frac{z^2}{2 \log (-t)} + o(-t)
    \end{equation}
     in $C^{\infty}_{loc}$-sense  for $0\leq z \leq 2\sqrt{1-\theta^2} \sqrt{(-t) \log (-t)}$. 
    \item {\bf Tip region.} If $g_{i}(t) = R(p_i, t_i)g(t_i + R(p_i, t_i)^{-1}t)$ is a sequence of metrics obtained by rescaling the ancient solution around the tip points $(p_i, t_i) \in \{G(z, t_i)=0\}$ with $t_i \to -\infty$, then the pointed rescaled solutions $(M^n, g_i(t), p_i)$ converge smoothly to the unique $(n-k+1)$-dimensional Bryant soliton crossed with $\mathbb{R}^{k-1}$.
\end{itemize}
\end{theorem}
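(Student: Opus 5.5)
We follow the strategy developed for the $O(n+1)$-symmetric Perelman ovals in \cite{ABDS, BDS2021uniqueness} and for bubble-sheet ovals in \cite{choi2025classification, CDZ25}, adapted to the coupled warped-product system for $(F,G)$.

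\textbf{Set-up.} Writing the metric as in \eqref{eqn-metric}, Ricci flow becomes a parabolic system for $F$ and $G$, with $\partial_t$ taken along the evolving arclength. We pass to rescaled variables $\tau=-\log(-t)$, $y=z/\sqrt{-t}$, and
\[
G(z,t)=\sqrt{-t}\,\bar u(y,\tau),\qquad \bar u(y,\tau)=\sqrt{2(n-k-1)}\bigl(1+v(y,\tau)\bigr).
\]
By the last item of Theorem \ref{remark tangent flow}, the tangent flow is $\mathbb R^k\times S^{n-k}$. Hence $v\to0$ smoothly on compact sets, $F/\sqrt{-t}\to\infty$ there, and the $S^{k-1}$ factor together with the $z$-direction is asymptotically flat, of dimension $k$.

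\textbf{Linearization.} The equation for $v$ has the linearized operator
\[
\mathcal L=\partial_y^2+\frac{k-1}{y}\partial_y-\frac y2\partial_y+1
\]
on $L^2(e^{-y^2/4}y^{k-1}dy)$, i.e. the radial part of the $k$-dimensional Ornstein--Uhlenbeck operator plus one. The warping term $\frac{k-1}{F}F_z\partial_z$ contributes the $\frac{k-1}{y}$, with an error controlled by the convergence $F_z/F\approx 1/z$ on the parabolic scale. That convergence follows from the blowdown description and from curvature pinching of $F_z^2\le1$.

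The unstable modes are the constant ($+1$) and the even first-order mode ($+1/2$ via $y$, which is excluded by symmetry about $z=0$). The neutral mode is $\psi=y^2-2k$.

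The standard argument then applies. Compactness rules out the unstable mode dominating, and the Merle--Zaag ODE lemma shows that the neutral mode dominates. Truncating $v$ at $y\sim \rho(\tau)$ and projecting the quadratic nonlinearity $-\frac{1}{2\cdot}v^2$-type term onto $\psi$ gives the ODE
\[
\frac{d}{d\tau}a=-c_{n,k}\,a^2+o(a^2).
\]
Its solution is $a(\tau)=-\frac{1}{c\,|\tau|}+o(1/|\tau|)$. Squaring $G$ converts this into the stated parabolic expansion with coefficient $-\frac{(n-k-1)(z^2+2kt)}{2\log(-t)}$. The constant must be computed carefully, including the influence of $F$; we expect it to reduce to the same constant as for the shrinking $S^{n-k}$ cylinder in $\mathbb R^k$.

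\textbf{Intermediate and tip regions.} In the intermediate region we introduce $\zeta=y/\sqrt{|\tau|}$ and show that $\bar u\to\sqrt{2(n-k-1)}\sqrt{1-\zeta^2/4}$ uniformly on $\zeta\le 2\sqrt{1-\theta^2}$. This uses barriers built from the parabolic asymptotics, as in \cite{ABDS}. The key input is a monotonicity/concavity estimate for $\bar u^2$ in $y$, which must be adapted to the coupled system.

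For the tip, we rescale by $R(p_i,t_i)$. Theorem \ref{remark tangent flow} provides subsequential limits that are either cylinders or $\mathbb R^{k-1}\times$Bryant. At the tip, the $S^{n-k}$ factor closes up, so a cylinder is impossible. Since the 3D and higher Bryant soliton is unique, the limit is unique and the full sequence converges.

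\textbf{Main obstacle.} The main difficulty is controlling the coupling to $F$ in the parabolic region. We need $F_z/F-1/z$ and the induced error terms to be $o(|\tau|^{-1})$ in the weighted norm, so that they do not contaminate the neutral-mode ODE. We would try to achieve this by first proving a priori estimates $|F_z|\le1$ and $F\ge c z$ from the positive curvature operator. We would then run a separate linear analysis for $\log(F/z)$, showing that it decays at rate $e^{\tau/2}$ or faster; this is plausible because its linearized operator has no neutral mode.
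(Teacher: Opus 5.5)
Your skeleton matches the paper's: the rescaled $g$, the operator $\mathcal{L}_1$ with unstable mode $1$ and neutral mode $\xi^2-2k$, a Merle--Zaag dichotomy, the quadratic ODE for the neutral coefficient, ABDS barriers in the intermediate region, and the blowdown classification at the tip. The gap is in the step you single out as the main obstacle. Your plan to show that $\log(F/z)$ decays like $e^{\tau/2}$ fails in the neutral regime, which is the one that produces the asymptotics.

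In the $F$-equation the terms linear in $G_z$ cancel after integrating by parts, but a quadratic source survives that does not vanish when $F=z$. In rescaled variables, the equation for $h=f_\xi$ contains $-(n-k)(h+1)\,g_\xi^2/(g+\sqrt{2(n-k-1)})^2$. Equivalently, $F\equiv z$ solves the $F$-equation only if $G_z\equiv 0$. When the neutral mode dominates, $g_\xi\approx 2\alpha(\tau)\xi$ with $\alpha\sim\beta_{n,k}/\tau$. So $h$ is driven at order $\tau^{-2}$, and $\log(F/z)=\log\bigl(1+\int_0^1 h(s\xi,\tau)\,ds\bigr)$ is only polynomially small. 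Were $h$ exponentially small together with its derivatives, the $h$-equation would force $g_\xi^2$ to be exponentially small on compact sets, contradicting $\tau g\to\beta_{n,k}(\xi^2-2k)$. The heuristic ``no neutral mode, hence exponential decay'' has the time direction reversed. As $\tau\to-\infty$ the modes of $\mathcal{L}_2$ (spectrum $\{-1,-2,\dots\}$) grow, so an ancient solution tending to zero has no free homogeneous part, and $h$ is simply slaved to its forcing.

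The paper replaces your step as follows: it puts $\lambda(\tau)=\|\hat h\|^2_{\mathscr{H}}$ (weight $\xi^{k-3}e^{-\xi^2/4}$) into the same discrete Merle--Zaag system as the modes of $\hat g$ in $\mathcal{H}$ (weight $\xi^{k-1}e^{-\xi^2/4}$). This needs cross error bounds on the truncated region, up to exponentially small terms:
\[
\|E_2\|^2_{\mathscr{H}}\le\epsilon\|h\|^2_{\mathscr{H}}+o(1)\|g\|^2_{\mathcal{H}}, \qquad \|E_1\|^2_{\mathcal{H}}\le o(1)\bigl(\|h\|^2_{\mathscr{H}}+\|g\|^2_{\mathcal{H}}\bigr).
\]
Proving them uses a Gaussian weighted Hardy inequality, which controls the nonlocal term $\int_0^\xi f_\eta\eta^{-2}\,d\eta$ in $\mathcal{H}$ by $\|h\|_{\mathscr{H}}$. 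It also uses a bound of $\int\xi^{k-3}e^{-\xi^2/4}g_\xi^4$ by $o(1)\|g\|^2_{\mathcal{H}}$. The outcome is $\lambda(\tau-1)\ge c\lambda(\tau)-o(1)\sup\gamma$ with $c>1$, hence $\Lambda\le o(1)\Gamma$. So your target, $F_z/F-1/z=o(|\tau|^{-1})$ in the weighted norm, is reached by slaving rather than by exponential decay. This relative smallness of $h$ is what the paper uses to make the coupling term $E_1^4$ negligible in the neutral projection.

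The same control of $F$ is hidden in your phrase ``compactness rules out the unstable mode''. The paper runs the ABDS bootstrap $(\star_\alpha)\Rightarrow(\star_{\tilde\alpha})$, ending in a contradiction with $\operatorname{diam}(S^n,g(t))\le C(-t)$. The improvement step needs pointwise decay of $F_{zz},F_{zzz}$ on $z\le R\sqrt{-t}$, obtained from $\Lambda\le o(1)\Gamma$ plus parabolic regularity for $\xi^{-2}h$. That decay controls the source in the heat equation for $\frac12G^2+(n-k-1)t$ at $z=0$.

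Finally, the barrier ``adaptation'' you leave open is just the sign structure from the nonnegative curvature operator. The $F$-term in the equation for $U=G_z^2$ (as a function of $r=G$) is $-2(k-1)U^2F_r^2/F^2\le 0$, and $G_t\le-(n-k-1)(1-G_z^2)/G$. Hence Brendle's barrier and the ABDS intermediate-region argument go through unchanged.
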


Then, the following result allows one to algebraically reconstruct the profile $F$ from the profile $G$. In particular, the uniqueness and unique asymptotics of $F$ follow directly from the uniqueness and unique asymptotics of $G$.

\begin{theorem}[uniqueness of the $G$-profile implies uniqueness of the $F$-profile]\label{GuniqueimpliesFunique} 
Under the same assumptions as in Theorem \ref{unique asymptotics theorem}, for $z\in (0, z_{\mathrm{max}}(t)]$ the profile function $F(z, t)$ in \eqref{eqn-metric} can be explicitly represented in terms of $G(z, t)$ by
\begin{equation}\label{FQG}
F(z, t) = z \exp \left( \int^{z}_{0} \left[ \sqrt{Q^G(\tilde{z}, t)} - \frac{1}{\tilde{z}} \right] d\tilde{z} \right), 
\end{equation}
where
\begin{equation}
Q^G(z,t) = -\frac{1}{k-1} \left[ \partial_z \left( \frac{G_t - G_{zz} + (n-k-1)\frac{1-G_z^2}{G}}{G_z} \right) + (n-k)\frac{G_{zz}}{G} \right],
\end{equation}
and the representation formula \eqref{FQG} extends smoothly to $z=0$.

In particular, for two $SO(k) \times SO(n-k+1)$-symmetric $n$-dimensional ancient ovals $(M^n, g_i(t))$,  $(i=1, 2)$,  of the Ricci flow, after suitably fixing the time translation, parabolic
scaling, origin and orientation of the $z$-coordinate, and the
labeling of the two spherical factors, if they are represented by the profile functions $(F_i(z, t), G_i(z, t))$ as in \eqref{eqn-metric}, then 
\begin{equation}
G_1(z, t) = G_2(z, t)
\end{equation}
implies
\begin{equation}
F_1(z, t) = F_2(z, t).
\end{equation}
\end{theorem}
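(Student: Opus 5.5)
The plan is to derive the representation formula directly from the Ricci flow system for the warped product \eqref{eqn-metric}. Write $a=k-1$ and $b=n-k$. Because $z$ is the arclength coordinate, the flow in these coordinates picks up a nonlocal tangential term. I expect to obtain
\[
G_t = G_{zz} + a\,\frac{F_z}{F}G_z - (b-1)\frac{1-G_z^2}{G} + A(z,t)\,G_z,
\qquad
A(z,t) = -\int_0^z\Big(a\frac{F_{zz}}{F}+b\frac{G_{zz}}{G}\Big)\,d\tilde z ,
\]
with the analogous equation for $F$. This comes from $\partial_t(dz)=-\mathrm{Ric}(\partial_z,\partial_z)\,dz$ and $\mathrm{Ric}_{zz}=-aF_{zz}/F-bG_{zz}/G$; I will check the normalization of $A$ in this derivation.

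The key step is to rearrange this equation as
\[
\frac{G_t-G_{zz}+(b-1)\frac{1-G_z^2}{G}}{G_z} = a\frac{F_z}{F}+A,
\]
so that the nonlocal term becomes a single additive function. Differentiating in $z$ removes the integral: $\partial_z A=-aF_{zz}/F-bG_{zz}/G$. Since $a(F_z/F)_z-aF_{zz}/F=-a(F_z/F)^2$, the right-hand side differentiates to $-a(F_z/F)^2-bG_{zz}/G$. This gives exactly $Q^G=(F_z/F)^2$.

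The step needing care, which I expect to be the main obstacle, is justifying the division by $G_z$ and the choice of square root on $(0,z_{\max}]$. For this I will use the first bullet of Theorem \ref{remark tangent flow}, which gives positive curvature operator. The radial sectional curvatures $-G_{zz}/G$ and $-F_{zz}/F$ are then positive, so $F$ and $G$ are strictly concave in $z$.
\begin{itemize}
\item At $z=0$ the factor $S^{k-1}$ collapses, and smoothness forces $F(0)=0$, $F_z(0)=1$ and $G_z(0)=0$, with $G$ even in $z$. Hence $G_z<0$ on $(0,z_{\max}]$, and near $z_{\max}$ we have $G_z\to-1$.
\item At $z_{\max}$ the factor $S^{n-k}$ collapses, and smoothness forces $F_z(z_{\max})=0$. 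Concavity then gives $F_z>0$ on $(0,z_{\max})$.
\end{itemize}
Together these show $\sqrt{Q^G}=F_z/F$ on $(0,z_{\max}]$, where at $z_{\max}$ both sides vanish.

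To reconstruct $F$, integrate
\[
\partial_z\log\frac{F}{z}=\sqrt{Q^G}-\frac1z
\]
from $0$ and use $F/z\to F_z(0)=1$ as $z\to0$; this yields \eqref{FQG}. Smooth extension to $z=0$ follows from the even/odd parity expansions: $F=z+c_3z^3+\dots$ and $G=G_0+c_2z^2+\dots$. These give $F_z/F-1/z=O(z)$ smoothly in $z$. Equivalently, the quotient in $Q^G$ is a smooth odd function, since the numerator and $G_z$ both vanish to first order at $0$ and $F_z/F$ has the expected $1/z$ singularity.

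The uniqueness statement is then immediate. After the stated normalizations, $G_1=G_2$ gives $Q^{G_1}=Q^{G_2}$, because $Q^G$ is built from $G$ and its derivatives alone. Formula \eqref{FQG} then yields $F_1=F_2$.
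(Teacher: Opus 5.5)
Your proposal is correct and follows the paper's proof essentially step for step. You divide the $G$-equation by $G_z$ to isolate the nonlocal term, differentiate in $z$ to get $Q^G=(F_z/F)^2$, take the nonnegative root, and integrate $\partial_z\log(F/z)$ from $0$ using $F/z\to 1$. Your use of the positive curvature operator to justify $G_z<0$ on $(0,z_{\max}]$ fills in a detail the paper leaves implicit.

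One correction to your ``equivalently'' remark, which is false: the numerator tends to $(k-1)G_{zz}(0)\neq 0$ at $z=0$, so the quotient $\Phi^G$ behaves like $(k-1)/z$ there and is not smooth. The smooth extension should rest only on $\sqrt{Q^G}=F_z/F$ and the odd expansion $F=z+c_3z^3+\cdots$, as in your main argument.
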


Let us emphasize a major geometric distinction that sets the present work apart from much of the prior classification literature. To date, in virtually all known rigorous classification results for ancient solutions of the Ricci flow (such as the classical Perelman ancient oval), the proofs rely heavily on the solution having a standard round cylinder $\mathbb{R} \times S^{n-1}$ as its tangent flow at $-\infty$, while the geometry at the tip points is modeled by the $n$-dimensional Bryant steady soliton. The present paper tackles ancient Ricci flows whose tangent flows at $-\infty$ are given by {$\mathbb{R}^k \times S^{n-k}$}, while the geometry at the high-curvature tips features a non-compact, higher-dimensional Euclidean factor. Specifically, under parabolic rescaling at the tip points, these solutions converge to the Bryant soliton times $\mathbb{R}^{k-1}$. 

The presence of this higher-dimensional Euclidean factor in the asymptotics of both the parabolic and the tip regions makes the study of these solutions considerably more difficult—both geometrically and analytically—than in earlier Ricci flow literature. In higher dimensions, more lines can split off in the blowdown analysis. Because the tangent flow at $-\infty$ only provides line-splitting information near the parabolic center, obtaining this information for arbitrary pointed blowdown sequences presents a subtle geometric obstacle. Consequently, a major challenge of this work is managing the transition between the parabolic center of the oval and the non-compact $\text{Bryant} \times \mathbb{R}^{k-1}$ tip region. 

For earlier works where $k=1$, the blowdown analysis relied on known classifications of complete, non-compact $\kappa$-solutions. Because such classifications remain unavailable for $k \ge 2$, we perform a delicate blowdown analysis of $SO(k) \times SO(n-k+1)$-symmetric ancient ovals along arbitrary pointed blowdown sequences. A key ingredient of our approach is demonstrating the splitting of at least $k-1$ Euclidean factors in the blowdown limits. Establishing this structural splitting allows us to show that every pointed blowdown limit is either a standard shrinking cylinder or $\mathbb{R}^{k-1}$ times the Bryant soliton.



Furthermore, analyzing these $SO(k) \times SO(n-k+1)$-symmetric ancient ovals introduces a substantial PDE difficulty. Without loss of generality, we can assume that the tangent flow at $-\infty$ of $(M, g(t))$ is the cylinder $\mathbb{R}^{k} \times S^{n-k}(\sqrt{2(n-k-1)|t|})$. Recall that the evolving metric can be written as a double warped product, as in \eqref{eqn-metric}, and thus is governed by a fully coupled system of nonlinear parabolic PDEs for the profile functions $(F(z, t), G(z, t))$. Here, the conformal factor $G$ controls the shrinking $S^{n-k}$ fiber, while the profile function $F$ controls the $S^{k-1}$ fiber. Because previous geometric classification results have overwhelmingly focused on scalar profile equations, this coupled interaction requires new techniques to prevent the evolution of one fiber from destabilizing the other. 

More precisely, in our symmetric setting, the Ricci flow equation for the metric reduces to a fully coupled system for $F$ and $G$, where both equations contain non-local terms depending on both functions. This mutual dependence makes the analysis and the derivation of the estimates needed to establish the desired asymptotics more challenging than in previous rotationally symmetric settings. Recall that in the 3D case \cite{Bre-3d-noncpt, ABDS, BDS2021uniqueness} and certain higher-dimensional cases \cite{BN-noncompact, BDNS-compact}, the authors first showed that the ancient solutions must be rotationally symmetric, and then established the asymptotics for those rotationally symmetric solutions. In those settings, the analysis reduced to a scalar PDE for a single function containing a non-local term. 

Let us also emphasize that in the present paper, the linearization of the Ricci flow equation around the cylinder $\mathbb{R}^k \times S^{n-k}$ yields two linear operators—denoted below as $\mathcal{L}_1$ and $\mathcal{L}_2$—in \eqref{L2intro}  and \eqref{L3intro} that are distinct in structure. Consequently, we must work with two different function spaces equipped with different weighted norms, which introduces  technical obstacles when estimating the coupled error terms in both linearized equations.

Lastly, we discuss the uniqueness implications of Theorem \ref{unique asymptotics theorem}. The fact that the unique asymptotics of $G(z, t)$ are independent of $F(z, t)$, combined with the rigidity relation between the two profile functions, suggests a  geometric ``locking" mechanism in the general phenomenon of ancient, $\kappa$-noncollapsed Ricci flows: namely, that the geometry of the $S^{n-k}$ fiber strictly dictates the behavior of the $S^{k-1}$ fiber.

Using the precise asymptotics obtained in Theorem \ref{unique asymptotics theorem}, the  uniqueness of the profile $G(z,t)$ itself—and thus the final completion of the classification—will be addressed in a subsequent work. 
Note that the constructions in \cite{buttsworth2022so} and \cite{haslhofer2024kappa} can be easily generalized to higher dimensions, yielding $SO(k) \times SO(n-k+1)$-symmetric ancient ovals with  tangent flows at $-\infty$ being either $\mathbb{R}^{k} \times S^{n-k}(\sqrt{2(n-k-1)|t|})$ with $2 \leq n-k \leq n-1$, or $\mathbb{R}^{n-k+1} \times S^{k-1}(\sqrt{2(k-2)|t|})$ with $2 \leq k-1 \leq n-1$. Once the uniqueness result is established in our subsequent paper, it will follow that the $SO(k) \times SO(n-k+1)$-symmetric ancient ovals found in \cite{buttsworth2022so} and \cite{haslhofer2024kappa} are identical.

\subsection{Outline of the paper} In this section, we provide a brief overview of the strategy used to establish our main theorems. The proof is structured around three core components: the classification of arbitrary pointed blowdown limits, the derivation of sharp asymptotic estimates for the profile functions, 
and the proof of the geometric locking mechanism that recovers the full solution from the primary fiber. 

\smallskip 
{
In Section \ref{Preliminaries}, we establish the geometric framework and carry out the blowdown analysis for the ancient ovals considered in this paper, which leads to the proof of Theorem \ref{remark tangent flow}. We first show that compact, simply-connected, factorwisely non-self-similar $\kappa$-solutions with \(SO(k)\times SO(n-k+1)\)-symmetry are either ancient ovals or Perelman's ancient oval times shrinking sphere, and in dimension four without assuming such symmetry they are ancient ovals. In particular, ancient ovals are spherical and have positive  curvature operator and  cylindrical tangent flows at \(-\infty\). We then perform a detailed blowdown analysis of $SO(k) \times SO(n-k+1)$-symmetric ancient ovals along arbitrary pointed blowdown sequences. A key ingredient of this analysis is characterizing how the Euclidean factors split off in the blowdown limits. Although the tangent flow at $-\infty$ guarantees line splitting only near the parabolic center, we prove that any sequence of points whose $SO(n-k+1)$-orbit radius remains on the cylindrical scale yields a blowdown limit containing at least $k-1$ straight lines when rescaled by the scalar curvature. This line-splitting information allows us to cleanly separate the cylindrical region from the tip region, to show that points of maximal curvature are localized near the tips, and to prove that the rescaled  flow  by the scalar curvature around these tip points converges smoothly to $\mathbb{R}^{k-1}$ times the Bryant steady soliton. Consequently, we conclude that every pointed, curvature-scale blowdown limit is either the standard shrinking cylinder or $\mathbb{R}^{k-1}$ times the Bryant soliton.

}

In Section \ref{Evolution equations and barrier}, we set up some preliminaries regarding  the evolution equations 
of the profile  functions $(F(z, t), G(z, t))$. We then analyze the evolution equations of the renormalized profiles and their corresponding linearized operators acting on suitable weighted function spaces. More precisely, we consider the renormalized profile functions  $(f(\xi, \tau), g(\xi, \tau))$ defined by
\begin{equation}
    f(\xi, \tau) = e^{\frac{\tau}{2}}F(e^{-\frac{\tau}{2}}\xi, -e^{-\tau})-\xi,\,
    g(\xi, \tau) = e^{\frac{\tau}{2}}G(e^{-\frac{\tau}{2}}\xi, -e^{-\tau})-\sqrt{2(n-k-1)}.
\end{equation}
We know that $h(\xi, \tau)=f_{\xi}(\xi, \tau)$ and $f$ with $f(0, \tau)=0$ {uniquely determine each other.  } The rescaled functions 
$g$ and $h$ satisfy the evolution equations 
\begin{align}
   g_\tau =\mathcal{L}_1 g+E_1,
\end{align}
where
\begin{equation}\label{L2intro}
   \mathcal{L}_1g= g_{\xi\xi}+\frac{k-1}{\xi}g_{\xi} - \frac{1}{2}\xi g_\xi + g,
\end{equation}
and
\begin{align}
   h_\tau =\mathcal{L}_2 h+E_2,
\end{align}
where 
\begin{align}\label{L3intro}
\mathcal{L}_2 h = h_{\xi\xi} - \frac{1}{2}\xi h_\xi +\frac{k-3}{\xi}h_\xi - \frac{2k-4}{\xi^2}h
\end{align} 
{The rescaled profile $g$ will be considered on  the  Gaussian weighted space}  $\mathcal{H}=L^{2}([0, +\infty), \xi^{k-1}e^{-\frac{\xi^2}{4}})$  (for radial functions) with inner product 
\begin{equation}
    \langle g_1, g_2\rangle_{\mathcal{H}}=\int_{0}^{+\infty} g_1 g_2 \, \xi^{k-1}e^{-\frac{\xi^2}{4}}d\xi.
\end{equation}
The operator   $\mathcal{L}_1$ is  self-adjoint  on $\mathcal{H}$. Note that 
 $1$ is the only eigenfunction with respect to the only positive eigenvalue $1$,  and $\xi^{2}-2k$ spans the neutral mode eigenspace. {The rescaled profile $h$ will be considered on   the function space } 
\begin{equation}\label{mathscrHdef}
    \mathscr{H}=\{h\in L^{2}([0, +\infty), \xi^{k-3}e^{-\frac{\xi^2}{4}}): h(0, \tau)=h(0, \tau)=0\}
\end{equation}
with inner product 
\begin{equation}
    \langle h_1, h_2\rangle_{\mathscr{H}}=\int_{0}^{+\infty} h_1h_2\,  \xi^{k-3}e^{-\frac{\xi^2}{4}}d\xi.
\end{equation}
The operator
\begin{align}
\mathcal{L}_2 h = h_{\xi\xi} - \frac{1}{2}\xi h_\xi +\frac{k-3}{\xi}h_\xi - \frac{2k-4}{\xi^2}h
\end{align} 
is a self-adjoint operator on ${\mathscr{H}}$ with only negative
spectrum \begin{equation}
    \sigma(\mathcal{L}_2)= \sigma(\tilde{\mathcal{L}})=\{-(m+1)\}_{m=0}^{\infty}. 
\end{equation} In the same section, we also recall Brendle's barrier in \cite{Bre-3d-noncpt} and its properties.

In Section \ref{Spectral alternatives},  we examine the spectral mode alternatives for the suitably truncated, renormalized profile functions 
$(g,h)$. By carefully accounting for the effect of $f$-related terms, we first derive refined estimates for the highly coupled error terms 
$E_1$ and $E_2$ as summarized below: 
\begin{align}
    &\quad\|E_1\cdot \chi_{\{0 \leq \xi\leq \delta(\tau)^{-\frac{1}{100}}\}}\|^2_{\mathcal{H}}\\\notag
&\leq C\delta(\tau)^{\frac{1}{100}} \|h\cdot \chi_{\{0 \leq \xi\leq \delta^{-\frac{1}{100}}\}}\|^2_{\mathscr{H}} +C\delta(\tau)^{\frac{1}{100}} \|g\cdot \chi_{\{0 \leq \xi\leq \delta(\tau)^{-\frac{1}{100}}\}}\|^2_{\mathcal{H}} + C \exp \left(-\frac{1}{8} \delta(\tau)^{-\frac{1}{50}}\right)\notag
\end{align}

\begin{align}
  &\quad\|E_2\cdot \chi_{\{0 \leq \xi\leq \delta(\tau)^{-\frac{1}{100}}\}}\|^2_{\mathscr{H}}\\\notag
&\leq \epsilon\|h\cdot \chi_{\{0 \leq \xi\leq \delta(\tau)^{-\frac{1}{100}}\}}\|^2_{\mathscr{H}} +C\delta(\tau)^{\frac{1}{100}} \|g\cdot \chi_{\{0 \leq \xi\leq \delta(\tau)^{-\frac{1}{100}}\}}\|^2_{\mathcal{H}} + C \exp \left(-\frac{1}{8} \delta(\tau)^{-\frac{1}{50}}\right)\notag
\end{align}
where $\epsilon>0$ is sufficiently small, $\chi_{\{0 \leq \xi\leq \delta(\tau)^{-\frac{1}{100}}\}}$ is a cutoff function and
\begin{align}
    \delta({\tau}):=\sup _{\tilde{\tau} \leq {\tau}}\left(|g(0, \tilde{\tau})|+|g_{\xi\xi}(0, \tilde{\tau})|\right)\to 0\quad \textrm{as}\quad\tau\to -\infty. 
\end{align}

{Then, for the mode analysis of  the linear operator $\mathcal{L}_1$ appearing  in the equation for $g$, we   employ Brendle's barrier
technique in  \cite{Bre-3d-noncpt}.  This is justified by the observation  that the $F(z, t)$-related terms enter the equation for $U(r, t)$ (the inverse profile function  of $G^2(z, t)$)  with a  favorable sign, which allows  us to control $g$ in the coupled system of $(f, g)$. Together with  the error estimates above,  and the fact that the linear operator $\mathcal{L}_2$   in the equation for  $f$ has only strictly negative modes, we can establish Merle-Zaag type differential inequalities for $h$ and for positive, neutral and negative mode projections of $g$  following \cite{ABDS, zaag2001liouville}.
This ultimately shows that the full norm of $h$  is negligible compared to either the norm of a  dominant neutral mode projection or a dominant unstable mode projection of $g$.

In Section \ref{rule out+}, we show that the neutral mode is dominant. By a careful analysis of the regularity estimates for $h=f_{\xi}$ in Appendix \ref{regularity estimates of tildeh}, and by precisely accounting for the effect of $F$ in Perelman's pointwise curvature derivative estimates, we can apply a diameter estimates argument for these compact ancient ovals following \cite{ABDS}. This successfully rules out the dominance of unstable modes for $g$ that arise from the non-compact tip geometries. 

In Section \ref{Unique sharp asymptotics of  G}, we prove the results stated in Theorem \ref{unique asymptotics theorem} and Theorem \ref{GuniqueimpliesFunique}. For Theorem \ref{unique asymptotics theorem}, the mode analysis of $g$ and $h$ mentioned above  yields the sharp asymptotics in the parabolic region. For the intermediate region asymptotics, the equations for $G(z, t)$ and its inverse profile $U(r, t)$ (the inverse of $G^2(z, t)$) contain favorable signs from the $F(z, t)$-related terms. This allows us to apply a barrier argument using Brendle's barriers alongside the method of characteristics as in \cite{ABDS}. For the tip region asymptotics, we utilize the $SO(k)\times SO(n-k+1)$ symmetry, the new line-splitting ingredient for blowdown limits shown in Section \ref{Preliminaries}, and the arguments from \cite{ABDS}. 

Finally, we prove Theorem \ref{GuniqueimpliesFunique}, which asserts that the uniqueness of $G(z, t)$ implies the uniqueness of $F(z, t)$; this follows directly from differentiating the evolution equation for $G(z, t)$ with respect to the $z$-variable.
}

\bigskip

\textbf{Acknowledgments:}  The first author
has been supported by the  NSF grants DMS 1900702 and DMS 2454018. The second author has been supported by
Massachusetts Institute of Technology.  The third author has been  supported by the NSF
grants DMS 2105508 and DMS 2505574. The fourth author has been supported by Institute for Theoretical Sciences at Westlake University and Zhejiang Normal University.   The authors thank Professor Robert Haslhofer for
suggesting the problem and Zilu Ma for helpful communications.

\section{Geometry of ancient ovals and their blowdown analysis}\label{Preliminaries}
In this section, we prove Theorem \ref{remark tangent flow}. In particular, we examine the basic geometry of ancient ovals and their relationship to compact,  simply-connected, factorwisely non-self-similar $\kappa$-solutions. We then analyze the blowdown limits of $SO(k)\times SO(n-k+1)$-symmetric ancient ovals along an arbitrary sequence of times $t_i \to -\infty$.

\subsection{Basic geometric properties of ancient ovals}\label{geometry of ancient oval}

In this subsection, we discuss the basic 
properties of compact,  simply-connected, factorwisely non-self-similar $\kappa$-solutions. 
In particular, we establish that compact,  simply-connected, factorwisely non-self-similar $\kappa$-solutions are identical to ancient ovals in the four-dimensional case, as well as in the case of $SO(k)\times SO(n-k+1)$-symmetric solutions.

\begin{lem}\label{lemma-sphere}
    Every ancient oval is diffeomorphic to the standard sphere and has a positive curvature operator.
\end{lem}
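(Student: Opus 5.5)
The plan is to first establish strict positivity of the curvature operator, and then read off the topology from the Böhm–Wilking theorem. Let $(M^n,g(t))$ be an ancient oval. By definition it is compact, simply connected, and has nonnegative curvature operator for all $t$. Hamilton's strong maximum principle for the curvature operator, applied on $(-\infty,0)$, shows that at each time the image of $\mathrm{Rm}$ is a Lie subalgebra $\mathfrak{h}\subset\mathfrak{so}(n)$. This subalgebra is invariant under parallel transport and independent of time. If $\mathfrak{h}\neq\mathfrak{so}(n)$, the restricted holonomy is reduced. Since $M$ is simply connected, de Rham decomposition together with Berger's holonomy classification leaves only three possibilities for $(M,g(t))$:
\begin{itemize}
\item it splits isometrically as a product $M_1\times M_2$, and this splitting is preserved by the flow;
\item it is a locally symmetric space;
\item it is Kähler with nonnegative curvature operator.
\end{itemize}

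I would then rule out each case in turn.
\begin{itemize}
\item \emph{Flat factor.} A flat factor is impossible on a compact simply connected manifold, because it would give a compact simply connected flat factor, that is, a point.
\item \emph{Product splitting.} Each factor $(M_i,g_i(t))$ is then a compact simply connected $\kappa$-solution of lower dimension. Its tangent flow at $-\infty$ is compact, since an ancient compact solution with nonnegative curvature operator and positive scalar curvature shrinks each compact factor. The tangent flow of the product is the product of the factors' tangent flows, so the product's tangent flow would contain a compact factor. This contradicts its being $\mathbb{R}^k\times S^{n-k}$ with $k\ge1$, which has no compact factor of full rank along the $\mathbb{R}^k$ directions. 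More carefully, the $\mathbb{R}^k$ factor has to come from noncompact pieces of the blowdown of some $M_i$. I will argue that each $M_i$ is compact with nonnegative curvature operator and $\kappa$-noncollapsed. So if $M_i$ were non-self-similar, its own tangent flow would produce the lines, and $M_i$ would be forced to be a lower-dimensional ancient oval. In that case $M$ would be a product with at least one self-similar compact factor, or a product of ovals. For the product-of-ovals case, I will have to check that this is excluded by the definition. This is the most delicate point. The definition of an ancient oval only demands non-self-similarity and a cylindrical tangent flow, so a product such as (Perelman oval)$\times$(shrinking sphere) would seem to satisfy it unless it is ruled out. I plan to resolve this by reading the definition's requirement that the tangent flow be exactly $\mathbb{R}^k\times S^{n-k}$, a single round sphere factor. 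A product with a shrinking sphere of dimension at least $2$ contributes an additional sphere factor $S^{m}$ to the tangent flow, giving $\mathbb{R}\times S^{a}\times S^{b}$, which is not of the required form. A product of two ovals likewise gives two sphere factors. Hence no splitting occurs.
\item \emph{Locally symmetric.} In this case the flow is self-similar (an Einstein metric shrinking homothetically), contradicting non-self-similarity.
\item \emph{Kähler.} The Kähler case with nonnegative curvature operator would, in the irreducible setting, have a tangent flow that is a Kähler shrinker. A round cylinder $\mathbb{R}^k\times S^{n-k}$ with $n-k\ge2$ is not Kähler unless $n-k=2$. To dispose of the Kähler case I would use the rigidity of Kähler manifolds with nonnegative curvature operator: they are products of $\mathbb{CP}^m$'s with flat factors (Mok and Siu–Yau type rigidity), hence locally symmetric, which has already been excluded.
\end{itemize}

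In all cases we conclude that $\mathrm{Rm}>0$ for $t<0$. Once $\mathrm{Rm}>0$ holds on a compact manifold, the Böhm–Wilking theorem (or Hamilton in dimension $3$) shows that normalized Ricci flow deforms the metric to a metric of constant positive curvature. Thus $M$ is diffeomorphic to a spherical space form, and by simple connectivity it is diffeomorphic to $S^n$.

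The main obstacle is the product case in the strong maximum principle argument: showing rigorously that the tangent flow of a product decomposes as the product of the factors' tangent flows, and that this can never equal a single cylinder $\mathbb{R}^k\times S^{n-k}$. I would handle it using the uniqueness of the tangent flow together with Bamler's structure theory \cite{bamler2020structure}, which applies to each factor separately.
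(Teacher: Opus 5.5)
Your architecture (strong maximum principle and holonomy reduction, then excluding products via the tangent flow and the symmetric/Einstein cases via self-similarity, and finishing with B\"ohm--Wilking) parallels the paper. The paper instead starts from the classification of compact simply connected manifolds with $\mathrm{Rm}\ge 0$ as products of spheres, K\"ahler manifolds biholomorphic to $\mathbb{CP}^m$, and symmetric spaces.

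\textbf{The K\"ahler case is a genuine gap.} The rigidity you invoke is false. Mok's theorem and the Siu--Yau solution of the Frankel conjecture only give that the manifold is \emph{biholomorphic} to $\mathbb{CP}^m$ (or Hermitian symmetric). They do not say the metric is Fubini--Study. Concretely, a K\"ahler curvature operator vanishes on $\mathfrak{u}(m)^\perp\subset\Lambda^2$, and the Fubini--Study operator is positive definite on $\mathfrak{u}(m)$. So every small K\"ahler perturbation $\omega_{FS}+\varepsilon\, i\partial\bar\partial\phi$ still has $\mathrm{Rm}\ge 0$, yet for generic $\phi$ it is not Einstein, let alone locally symmetric.

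Hence no static curvature argument reduces this case to the symmetric one; you need a rigidity statement for the \emph{flow}. The paper appeals to classification results for compact K\"ahler $\kappa$-solutions (Deng--Zhu, Li), which force such a flow to be self-similar. These apply because the K\"ahler structure is preserved along the flow. Your tangent-flow remark only settles $n-k\ge 3$: a parallel complex structure on the limit $\mathbb{R}^k\times S^{n-k}$ must preserve the de Rham splitting, so it would restrict to one on the round $S^{n-k}$, which is impossible. For $n-k=2$ the tangent flow $\mathbb{C}^{(n-2)/2}\times\mathbb{CP}^1$ is itself K\"ahler. Nothing in your proposal then rules out a non-self-similar K\"ahler solution.

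\textbf{Two smaller points.}
\begin{enumerate}
\item In the product case, your opening claim that a compact factor has compact tangent flow is false: Perelman's oval is compact with tangent flow $\mathbb{R}\times S^{n-1}$. The clean argument, which is the paper's, runs as follows. The tangent flow of $M_1\times M_2$ is the product of the factors' tangent flows. Since $\mathbb{R}^k\times S^{n-k}$ has exactly one non-flat de Rham factor, one $M_i$ has flat tangent flow. Perelman's reduced volume monotonicity then forces that $M_i$ to be flat, contradicting compactness. Your detour through ``each factor is self-similar or a lower-dimensional oval'' tacitly needs this same non-flatness of asymptotic solitons, so state it directly.
\item Your list from Berger's theorem omits the quaternion-K\"ahler and Ricci-flat holonomies. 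These are harmless, since Einstein implies self-similar and Ricci-flat with $\mathrm{Rm}\ge 0$ implies flat, but they should be recorded.
\end{enumerate}
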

\begin{proof}
     Every ancient oval $(M^n, g(t))$ must have a positive curvature operator. Indeed, by the classification results for compact, simply connected Riemannian manifolds with a nonnegative curvature operator in \cite[Theorem 7.34]{CLN} and \cite{BS-weakly-PIC2}, $M$ must be isometric to a product of the form 
     $N_1 \times \ldots \times N_j$, where $N_1, \ldots, N_j$ are compact, simply connected, and irreducible, and each factor $N_1, \ldots, N_j$ is either diffeomorphic to a sphere, a K\"ahler manifold biholomorphic to complex projective space, or isometric to a compact symmetric space. By our assumption, $(M, g(t))$ is compact and has the cylinder $\mathbb R^{k} \times {{S}}^{n-k}(\sqrt{2(n-k-1)|t|})$ as its tangent flow at $-\infty$; thus, there are either one or two compact non-Euclidean splitting factors in $(M, g(t))$. We claim that ancient oval $(M, g(t))$ has exactly one compact factor. Arguing by contradiction, suppose that $M$ is isometric to product of compact factors $N_1\times N_2$, which implies that the tangent flow of $M$ at $-\infty$ is the product of the tangent flows at $-\infty$ of $N_1$ and $N_2$. Thus, by the above assumption of a cylindrical tangent flow at $-\infty$, one of the tangent flows at $-\infty$ of $N_1$ or $N_2$ must be a Euclidean space. Then, {by Perelman's monotonicity formula}, it follows that either $N_1$ or $N_2$ must be the Euclidean space itself. This contradicts the compactness of $M$. By the results in \cite{li2024ancient, DengZhu2020}, our definition of an ancient oval in Definition \ref{ancient oval defn}, the definition of a $\kappa$-solution in Definition \ref{kappa solution}, and the self-shrinking property of symmetric spaces with nonnegative Ricci curvature, we can exclude the aforementioned compact symmetric space and K\"ahler manifold possibilities. Hence, $(M, g(t))$ has exactly one non-Euclidean compact factor, and $(M, g(t))$ must be diffeomorphic to a sphere and has positive curvature operator by Hamilton’s strong maximum principle.
\end{proof}

By definition, we know that ancient ovals must be compact, non-self-similar $\kappa$-solutions. Conversely, we have the following lemmas. First, in the four-dimensional case, we can remove the cylindrical tangent flow assumption from the definition of ancient ovals.

\begin{lem}\label{4d tangent flow}
   Four-dimensional, compact,  simply-connected, factorwisely non-self-similar $\kappa$-solutions must be ancient ovals. 
\end{lem}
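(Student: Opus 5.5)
We must show that a 4D compact, simply-connected, factorwisely non-self-similar $\kappa$-solution $(M^4,g(t))$ has a cylindrical tangent flow at $-\infty$, namely $\mathbb R\times S^3$ or $\mathbb R^2\times S^2$; non-self-similarity and the other defining properties then come for free.

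First we decompose $M$ as in the proof of Lemma \ref{lemma-sphere}. Since $M$ is compact, simply connected and has nonnegative curvature operator, \cite[Theorem 7.34]{CLN} and \cite{BS-weakly-PIC2} show that $M$ splits isometrically, for each $t$, as a product of irreducible factors. Each factor is diffeomorphic to a sphere, Kähler and biholomorphic to $\mathbb{CP}^m$, or a compact symmetric space. By de Rham uniqueness the splitting is preserved by the flow, so $M$ is a product of lower-dimensional $\kappa$-solutions. In dimension four the possibilities are: $M$ irreducible; $M=N_1^2\times N_2^2$; or, after excluding flat and one-dimensional factors by compactness and simple connectivity, nothing else. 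A two-dimensional compact simply-connected $\kappa$-solution is a shrinking round $S^2$; this is the classical Hamilton/Daskalopoulos–Hamilton–Sesum classification, and only the round sphere is $\kappa$-noncollapsed. So $S^2\times S^2$ is factorwise shrinking self-similar, which is excluded. Likewise, symmetric spaces and the Kähler $\mathbb{CP}^2$ case are self-similar or excluded by \cite{li2024ancient, DengZhu2020}, as in Lemma \ref{lemma-sphere}. Hence $M$ is diffeomorphic to $S^4$ and, by Hamilton's strong maximum principle, has positive curvature operator. It is moreover not a shrinking soliton, since a compact shrinker with positive curvature operator is round.

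Next we identify the tangent flow at $-\infty$. By Perelman's asymptotic soliton result (see also \cite{bamler2020structure, CMZ-asymptotic-solitons}), the tangent flow is a nonflat $\kappa$-noncollapsed shrinking gradient soliton with nonnegative curvature operator. By the 4D classification \cite{munteanu2017positively, naber4dshrinker, DengZhu2020}, it is one of $S^4$, $\mathbb{CP}^2$, $S^2\times S^2$, $\mathbb{R}\times S^3$, $\mathbb R^2\times S^2$, or a quotient of these. Quotients are ruled out because $M$ is simply connected and the asymptotic soliton is a limit of smooth blow-downs of $S^4$; we can argue this via the noncollapsed limit of simply-connected spaces, or simply cite \cite{CMZ-asymptotic-solitons}.

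The remaining task is to exclude the compact shrinkers, and this is the main obstacle. If the tangent flow is compact, then $M$ has bounded rescaled diameter for $t\to-\infty$ and converges smoothly, after rescaling, to that compact shrinker. Perelman's reduced volume is then constant in the limit. Combined with monotonicity and the rigidity case, this forces the reduced volume to be constant, so $M$ itself is a shrinker. For $S^4$ in particular we can instead use Brendle–Huisken–Sinestrari: an ancient solution whose curvature becomes pinched as $t\to-\infty$ is a shrinking sphere. For $\mathbb{CP}^2$ and $S^2\times S^2$, $M\cong S^4$ cannot converge smoothly to a non-diffeomorphic compact limit, which gives a direct contradiction. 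In every case this contradicts factorwise non-self-similarity.

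Uniqueness of the tangent flow follows because the two cylinder types have different Gaussian densities (entropies), while Perelman's entropy limit is independent of the chosen sequence. If needed, the uniqueness of cylindrical tangent flows of \cite{CMZ-asymptotic-solitons} also pins down a single cylinder. Therefore $M$ is an ancient oval with tangent flow $\mathbb R\times S^3$ or $\mathbb R^2\times S^2$.
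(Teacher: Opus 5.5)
Your proof is correct in substance but takes a different route from the paper. The paper first classifies the tangent flow via \cite{munteanu2017positively}. For each compact candidate it then reads off the topology of $M$ at very negative times and invokes a separate rigidity result: the monotonicity formula for $S^4$; Berger's holonomy classification plus \cite[Theorem 1.2]{DengZhu2020} for $\mathbb{CP}^2$; and \cite[Theorem 7.34]{CLN}, pinching on each factor, and monotonicity for $S^2\times S^2$.

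You instead put all the rigidity into the structure of $M$ itself, using the splitting, the classification of $2$-dimensional $\kappa$-solutions, the Kähler classification, and the strong maximum principle. This gives $M\cong S^4$ with positive curvature operator. After that, the $\mathbb{CP}^2$ and $S^2\times S^2$ tangent flows (and any compact quotients) are excluded purely topologically, since a compact smooth Cheeger--Gromov limit is diffeomorphic to the approximating manifolds. The $S^4$ case is excluded by the pinching theorem, as in Case 1 of Lemma \ref{blowdown is cylinder}.

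What your route buys:
\begin{itemize}
\item a cleaner $S^2\times S^2$ case, avoiding the paper's detour through the forward singularity;
\item an explicit reason the $\mathbb{CP}^2$ case is Kähler, since diffeomorphism type alone does not give a holonomy reduction and the nonnegative-curvature-operator classification is what supplies it;
\item the conclusions of Lemma \ref{lemma-sphere} as a byproduct.
\end{itemize}
The price is that you need the product splitting to persist for all times (forward uniqueness plus backward uniqueness or holonomy invariance). The paper also uses this tacitly.

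Two side remarks.

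First, your opening sentence on compact tangent flows does not work as written. Reduced volume based at a regular spacetime point is constant only for flat space, and having a limit as $\bar\tau\to\infty$ is automatic. The valid monotonicity argument, which is what the paper alludes to for $S^4$, compares the density at $-\infty$ with the density of the tangent flow at the forward singular time. Since your pinching and topological arguments already cover every compact case, you can simply delete that sentence.

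Second, your justification for excluding noncompact quotients such as $(\mathbb{R}\times S^3)/\mathbb{Z}_2$ is not valid as stated. Smooth noncollapsed limits of simply connected manifolds need not be simply connected; for example, a simply connected $4$-manifold can contain arbitrarily long $\mathbb{RP}^3$-necks. A correct argument would have to use $M\cong S^4$ together with the geometry of the nonnegatively curved caps. The paper's proof does not treat quotients at all, so this is not a deficiency relative to it.
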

\begin{proof}
     For four-dimensional, compact,  simply-connected, factorwisely non-self-similar $\kappa$-solution $(M^4, g(t))$, by \cite{munteanu2017positively}, the tangent flow at $-\infty$ must be one of the gradient shrinking solitons
      ${S}^4$, $\mathbb{CP}^2$, $\mathbb{R}\times {S}^3$, $\mathbb{R}^2\times {S}^2$,  ${S}^2\times {S}^2$. The case of having a shrinking ${S}^4$ as the tangent flow at $-\infty$ of $(M^4, g(t))$ can be easily ruled out by the monotonicity formula. In the case of having  $\mathbb{CP}^2$ as the tangent flow at $-\infty$ of $(M^4, g(t))$, for sufficiently negative time, $(M^4, g(t))$ must be diffeomorphic to  $\mathbb{CP}^2$. By Berger’s holonomy classification, $(M^4, g(t))$ is compact $\kappa$-solutions to the K\"ahler Ricci flow, by the classification result in \cite[Theorem 1.2]{DengZhu2020}, $(M^4, g(t))$ must be isometric to $\mathbb{CP}^2$, which is self-similar solution.
      In the case where ${S}^2\times {S}^2$ is a tangent flow at $-\infty$ of $(M^4, g(t))$, for sufficiently negative time, $(M^4, g(t))$ must be diffeomorphic to ${S}^2\times {S}^2$. Similar to the arguments in the proof of Lemma \ref{lemma-sphere}, by \cite[Theorem 7.34]{CLN}, we know the metric must be the product of two 2-dimensional metrics. By pinching estimates in \cite{BS-weakly-PIC2} applied to each metric factor, we know the forward singularity of $(M^4, g(t))$ must be ${S}^2\times {S}^2$. Then, again by Perelman's monotonicity formula, $(M^4, g(t))$ has to be isometric to a shrinking ${S}^2\times {S}^2$, which contradicts the definition of a factorwisely non-self-similar $\kappa$-solution. Hence, the tangent flow at $-\infty$ of four-dimensional, compact, simply-connected, factorwisely non-self-similar $\kappa$-solutions must be cylindrical, i.e., either $\mathbb{R}\times{S}^3$ or $\mathbb{R}^2\times {S}^2$, and thus $(M,g(t))$ is an ancient oval.
\end{proof}

In addition, we have that an $SO(k)\times SO(n-k+1)$-symmetric, compact, simply-connected, factorwisely non-self-similar  $\kappa$-solution must have a unique cylindrical tangent flow at $-\infty$ (up to diffeomorphism).  

\begin{lem}[blowdown of $SO(k)\times SO(n-k+1)$-symmetric,  compact, simply-
connected, factorwisely non-self-similar $\kappa$-solutions]\label{blowdown is cylinder}
\label{asymptotic-soliton}
Let $(M^n,g(t))$ be a
$SO(k)\times SO(n-k+1)$-symmetric, compact, simply-connected, factorwisely
non-self-similar $\kappa$-solution. Then exactly one of the
following alternatives occurs.
\begin{enumerate}
\item
The flow is an ancient oval. Its unique tangent flow at
$-\infty$ is either
\[
\mathbb R^k\times
S^{n-k}\bigl(\sqrt{2(n-k-1)|t|}\bigr),
\]
where $2\le n-k\le n-1$, or
\[
\mathbb R^{n-k+1}\times
S^{k-1}\bigl(\sqrt{2(k-2)|t|}\bigr),
\]
where $2\le k-1\le n-1$.

\item
After passing to the universal cover and interchanging the two
factors if necessary, the flow is a Riemannian product of a
Perelman's ancient oval and a shrinking round sphere. Its tangent
flow at $-\infty$ is
\[
\mathbb R\times
S^{k-1}\bigl(\sqrt{2(k-2)|t|}\bigr)
\times
S^{n-k}\bigl(\sqrt{2(n-k-1)|t|}\bigr),
\]
where $k-1\geq 2$ and $n-k\geq 2$. More precisely, up to interchanging the factors, the flow is either a product of a $k$-dimensional Perelman ancient oval and a shrinking round  $S^{n-k}$ or a product of an $(n-k+1)$-dimensional Perelman ancient oval and a shrinking round $S^{k-1}$.
\end{enumerate}
\end{lem}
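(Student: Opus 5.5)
We start from the structure of the tangent flow at $-\infty$. By Bamler's compactness theory \cite{bamler2020structure} (see also \cite{perelman2002entropy, CMZ-asymptotic-solitons}), any tangent flow at $-\infty$ of $(M^n,g(t))$ is a nonflat gradient shrinking soliton. It is $\kappa$-noncollapsed and has nonnegative curvature operator. It also inherits the $SO(k)\times SO(n-k+1)$ symmetry: the symmetry acts isometrically on every time slice, so the limit carries an isometric action of the same group. The principal orbits are $S^{k-1}\times S^{n-k}$, so the cohomogeneity is at most one.

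We then classify the possible limits. A shrinker with nonnegative curvature operator splits as $\mathbb R^m\times N$, where $N$ is compact (by \cite{munteanu2017positively} and its higher-dimensional analogues, together with the splitting theorem). By \cite[Theorem 7.34]{CLN} and \cite{BS-weakly-PIC2}, the factors of $N$ are round spheres, or else K\"ahler or symmetric pieces. Under a cohomogeneity-one $SO(k)\times SO(n-k+1)$ action, the only possibilities compatible with the orbit structure are:
\[
S^n,\quad \mathbb R^k\times S^{n-k},\quad \mathbb R^{n-k+1}\times S^{k-1},\quad \mathbb R\times S^{k-1}\times S^{n-k},\quad S^k\times S^{n-k},\quad S^{k-1}\times S^{n-k+1}.
\]
To exclude the others, we count the dimensions of the orbits of the isometry group in each candidate. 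We also use that a flat $S^1$ factor cannot occur, because the solution is $\kappa$-noncollapsed and simply connected. Next we rule out the compact shrinkers. If the tangent flow is $S^n$, Perelman's reduced volume forces the flow to be self-similar, a contradiction. If it is $S^a\times S^b$, then for very negative times $M$ is diffeomorphic to $S^a\times S^b$. The de Rham / \cite{CLN} decomposition then makes the metric a product, and each factor is a compact $\kappa$-solution with a round tangent flow. As in Lemma \ref{4d tangent flow}, monotonicity makes each factor a shrinking sphere, so the flow is factorwisely self-similar, which is excluded. Uniqueness of the tangent flow up to diffeomorphism follows from the discreteness of this list plus connectedness of the set of tangent flows; alternatively one can cite uniqueness of cylindrical tangent flows. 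In particular, if one tangent flow is a given cylinder, all tangent flows are that cylinder.

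It remains to treat the two cylindrical-type cases. If the tangent flow is $\mathbb R^k\times S^{n-k}$ or $\mathbb R^{n-k+1}\times S^{k-1}$, the flow is by definition an ancient oval. The constraints $n-k\ge 2$, respectively $k-1\ge2$, hold because the sphere factor of a nonflat noncollapsed shrinker has dimension at least $2$. This gives alternative (1). If the tangent flow is $\mathbb R\times S^{k-1}\times S^{n-k}$, then $M$ is compact and simply connected with nonnegative curvature operator, so again by \cite[Theorem 7.34]{CLN} it is a product of irreducible factors. The argument of Lemma \ref{lemma-sphere} shows that the tangent flow of a product is the product of the tangent flows. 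Hence $M=N_1\times N_2$, where one factor has a round tangent flow and is therefore a shrinking round sphere by monotonicity. The other factor has tangent flow $\mathbb R\times S^{j}$, is compact, and is invariant under the residual rotation group $SO(j+1)$ with cohomogeneity one. Since it is not self-similar, it is a compact $\kappa$-solution with tangent flow $\mathbb R\times S^j$. Rotational symmetry plus \cite{BDNS-compact, hebbar2026kappa} (or \cite{ABDS, BDS2021uniqueness} when $j=2$) then identifies it as Perelman's ancient oval. This gives alternative (2), and the two alternatives are mutually exclusive because their tangent flows differ.

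The main obstacle is this last case. The irreducibility decomposition alone does not obviously force the metric to be a product compatible with the group action. I would first try to show, using the holonomy splitting at very negative times and its preservation under Ricci flow (Hamilton's strong maximum principle), that the reducibility persists for all times and aligns with the orbit factors.
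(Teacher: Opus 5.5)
Your argument has a genuine gap exactly where you suspect it, but the problem is more basic than aligning factors with orbits.

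In the mixed case you pass from ``the tangent flow is $\mathbb R\times S^{k-1}\times S^{n-k}$'' to ``$M=N_1\times N_2$''. Nothing you have written forces $M$ to be reducible:

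\begin{enumerate}
\item \cite[Theorem 7.34]{CLN} equally allows $M$ to be a single irreducible factor. Such an $M$ then has positive curvature operator and is $S^n$, with the $S^{k-1}$-orbits collapsing at $z=0$ and the $S^{n-k}$-orbits collapsing at $z_{\max}$.
\item The fact used in Lemma \ref{lemma-sphere}, that the tangent flow of a product is the product of the tangent flows, only runs from a split $M$ to a split blowdown, never back.
\end{enumerate}

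The repair you propose cannot work in principle. You want to transport the holonomy splitting of the blowdown back to $M$ at very negative times. But the ancient ovals of alternative (1) are irreducible with positive curvature operator, yet their tangent flow $\mathbb R^k\times S^{n-k}$ is reducible. So $C^\infty$-closeness to a product on parabolic balls of scale $\sqrt{-t}$ does not make $M$ reducible.

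Once $M$ is known to split, aligning the factors with the orbits is the routine part, since the connected symmetry group preserves each de Rham factor. What is actually missing is a rigidity statement: an irreducible $SO(k)\times SO(n-k+1)$-symmetric compact $\kappa$-solution on $S^n$ cannot have $\mathbb R\times S^{k-1}\times S^{n-k}$ as its tangent flow.

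The paper never tries to recover a splitting of $M$ from its blowdown. It reverses your order and decomposes $M$ first.

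\begin{enumerate}
\item If $M$ is reducible, the symmetry gives $M=N_1\times N_2$ with $\dim N_1\in\{k-1,k\}$. The factor on which a rotation group acts transitively is a shrinking round sphere. The other factor is a rotationally symmetric, uniformly PIC, non-round compact $\kappa$-solution, hence Perelman's oval by \cite[Theorem 1.3]{BDNS-compact}. This yields alternative (2) and its tangent flow directly.
\item If $M$ is irreducible, it is $S^n$ with positive curvature operator, so a compact blowdown can only be round. The pinching result the paper invokes then makes the flow round, a contradiction. This also makes your separate treatment of $S^k\times S^{n-k}$ and $S^{k-1}\times S^{n-k+1}$ unnecessary.
\item In the irreducible case, a noncompact blowdown is concluded from \cite{munteanu2017positively} and the symmetry to split $k$ or $n-k+1$ lines. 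Uniqueness then follows from the distinct entropies of the two cylinders together with \cite[Theorem 7.9]{fang2025strong}.
\end{enumerate}

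Be aware that this reorganization relocates your obstacle rather than removing it. The irreducible case must still exclude the mixed cylinder, and symmetry of the limit alone cannot do that, since the mixed cylinder is realized by the products in alternative (2). In the paper this exclusion rests on the one-line appeal to Munteanu--Wang plus symmetry. Whichever order you adopt, that is the step where a real argument has to be supplied.
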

\begin{remark}\label{tangent flow definitions}
Here, the tangent flow at $-\infty$ of $(S^n, g(t))$ can be formulated  in Perelman's sense in \cite[Section 11.2]{perelman2002entropy}. For example, in the situation of ancient oval case $(1)$, for a fixed point $q \in S^n$, we consider a sequence of times $t_i \to -\infty$ and a sequence of points $x_i \in S^n$ such that $\sup_i \ell(x_i,t_i) < \infty$, where $\ell$ denotes the reduced distance from $(q,0)$ (see \cite{perelman2002entropy} for the definition). If we parabolically dilate the manifold $(S^n,g(t))$ around the point $x_i$ by the factor $(-t_i)^{-\frac{1}{2}}$, then the rescaled manifolds converge either to a cylinder $\mathbb R^{k} \times {{S}}^{n-k}(\sqrt{2(n-k-1)|t|})$ or to a cylinder ${{S}}^{k-1}(\sqrt{2(k-2)|t|}) \times \mathbb{R}^{n-k+1}$. Since the convergence of any $\kappa$-solution is smooth, by \cite{CMZ-asymptotic-solitons}, the definition of the tangent flow at $-\infty$ in the weak sense (cf. \cite{bamler2020structure}) is equivalent to Perelman's definition. In addition, by \cite[Theorem 7.9]{fang2025strong}, the blowdown limit asymptotic cylinder of $\kappa$-solutions is strongly unique and independent of the choice of scaling sequences.
\end{remark}

\begin{proof}[Proof of Lemma \ref{blowdown is cylinder}]
Since our $SO(k)\times SO(n-k+1)$-symmetric, compact, simply-connected, factorwisely
non-self-similar $\kappa$-solution $(M^n, g(t))$ has nonnegative curvature operator,
by the classification results for compact, simply connected Riemannian manifolds with a nonnegative curvature operator in \cite[Theorem 7.34]{CLN} and \cite{BS-weakly-PIC2}, $M^n$ must be isometric to a product of the form 
$N_1 \times \ldots \times N_j$, where $N_1, \ldots, N_j$ are compact, simply connected, and irreducible, and each factor $N_1, \ldots, N_j$ is either diffeomorphic to a sphere, a K\"ahler manifold biholomorphic to complex projective space, or isometric to a compact symmetric space. By the $SO(k)\times SO(n-k+1)$ symmetry assumption, we have the following two possibilities: (i) $M^n$ either splits as $N_1\times N_2$, where $N_1$ is a $\ell$-dimensional $\kappa$-solution with $SO(k)$-symmetry, and $N_2$ is a $n-\ell$-dimensional $\kappa$-solution with $SO(n-k+1)$-symmetry, (ii) or $M^n$ has only one compact factor being a $SO(k)\times SO(n-k+1)$-symmetric, compact, simply-connected, 
non-self-similar $\kappa$-solution with positive curvature operator. 

For the first case in which $M^n$ splits as $N_1\times N_2$, since $\ell\geq k-1$ and $n-\ell \geq n-k$, it follows that $k-1\leq \ell\leq k$. If $\ell = k$, $N_2$ is an $(n-k)$-dimensional $\kappa$-solution with $SO(n-k+1)$-symmetry, it must be isometric to $(n-k)$-dimensional shrinking sphere. $N_1$ is a $SO(k)$-symmetric compact $k$-dimensional $\kappa$-solution which cannot be sphere by factorwisely non-self-similar property, and it  is uniformly PIC by this $SO(k)$-symmetry. Then by the classification of rotationally symmetric $\kappa$-solutions \cite[Theorem 1.3]{BDNS-compact}
, it must be isometric to $k$-dimensional Perelman's solution. In this case, its tangent flow at $-\infty$ is $\mathbb R \times {{S}}^{k-1}(\sqrt{2(k-2)|t|}) \times S^{n-k}(\sqrt{2(n-k-1)|t|})$. On the other hand, if $\ell = k-1$, $N_1$ is a $k-1$-dimensional $\kappa$-solution with $SO(k)$-symmetry, and it must be isometric to $k-1$-dimensional shrinking sphere. $N_2$ is a $SO(n-k+1)$-symmetric compact $(n-k+1)$-dimensional $\kappa$-solution which cannot be sphere by factorwisely non-self-similar property, and it is uniformly PIC by this $SO(n-k+1)$-symmetry. Then by the classification of rotationally symmetric $\kappa$-solutions \cite[Theorem 1.3]{BDNS-compact}, and it must be isometric to $(n-k+1)$-dimensional Perelman's solution. In this case, its tangent flow at $-\infty$ is $\mathbb R \times {{S}}^{k-1}(\sqrt{2(k-2)|t|}) \times S^{n-k}(\sqrt{2(n-k-1)|t|})$ as well.

Then we consider the second case where $M^n$ is a $SO(k)\times SO(n-k+1)$-symmetric, compact, non-self-similar $\kappa$-solution with positive curvature operator, so $M^n$ is diffemorphic to $S^n$. By the work of Perelman \cite{perelman2002entropy}, the rescaled manifolds converge in the Cheeger--Gromov sense to a smooth limit, which is a shrinking gradient Ricci soliton.

We claim that the limiting soliton is either a cylinder $\mathbb R^{k} \times S^{n-k}(\sqrt{2(n-k-1)|t|})$ or a cylinder ${{S}}^{k-1}(\sqrt{2(k-2)|t|}) \times \mathbb{R}^{n-k+1}$, so $M^n$ is a $SO(k)\times SO(n-k+1)$-symmetric ancient oval diffemorphic to $S^n$. To this end, we distinguish two cases:

\textit{Case 1:} If the limiting soliton is compact, then it must have constant sectional curvature $\frac{1}{2(n-1)}$. In particular, the sectional curvatures of $(S^n,g(t_i))$ lie in the interval $[\frac{1-\varepsilon_i}{(-2(n-1)t_i)},\frac{1+\varepsilon_i}{(-2(n-1)t_i)}]$, where $\varepsilon_i \to 0$ as $i \to \infty$. The result in \cite{BS} implies that $(S^n,g(t))$ has constant sectional curvature for each $t$. Thus, $(S^n,g(t))$ is a family of shrinking round spheres, contrary to our assumption of non-self-similarity.

\textit{Case 2:} If the limiting soliton is noncompact, then by the classification result for nonnegatively curved shrinking Ricci solitons \cite{munteanu2017positively} and the $SO(k)\times SO(n-k+1)$ symmetry assumption, it must split $k$ lines or $n-k+1$ lines. In the first case, the shrinking soliton is $\mathbb{R}^k\times N^{n-k}$, where $N^{n-k}$ is a shrinking Ricci soliton with a positive curvature operator. Using the $SO(k)\times SO(n-k+1)$ symmetry assumption again, it follows that $N^{n-k}$ must be an $(n-k)$-dimensional sphere with radius $\sqrt{2(n-k-1)}$, so the limiting soliton must be a cylinder $\mathbb R^{k} \times S^{n-k}(\sqrt{2(n-k-1)|t|})$ with $2\leq n-k\leq n-1$. Similarly, in the second case, the shrinking soliton is $\mathbb{R}^{n-k+1}\times{{S}}^{k-1}(\sqrt{2(k-2)|t|})$ with $2\leq k-1\leq n-1$.

Because the reduced-volume (entropy) values of these two cylinders are different (we note that when $n-k = k-1$, the two tangent flows are the same cylinder), the monotonicity formula implies that the aforementioned cylindrical type of the tangent flow at $-\infty$ of $(M^n, g(t))$ is unique up to diffeomorphism and independent of the choice of subsequence. By \cite[Theorem 7.9]{fang2025strong}, the (strong) uniqueness (subsequence limit independence) of the cylindrical tangent flow at $-\infty$ (in a fixed gauge) of $\kappa$-solutions holds; thus, the blowdown limit asymptotic cylinder is unique and independent of the choice of scaling sequences.
\end{proof}

Without loss of generality, in the following discussion, we assume that the tangent flow at $-\infty$ of the $SO(k)\times SO(n-k+1)$-symmetric, $n$-dimensional, non-self-similar compact $\kappa$-solution $(M, g(t))$ is $\mathbb R^{k} \times {{S}}^{n-k}(\sqrt{2(n-k-1)|t|})$. 

\begin{lem}
\label{diameter}
Given any sequence of times $t_i \to -\infty$, we have 
\[R_{\text{\rm max}}(t_i) \, \text{\rm diam}_{g(t_i)}(S^n,g(t_i))^2 \to \infty,\] 
where $R$ denotes the scalar curvature and $R_{\text{\rm max}}$ denotes the maximum of the scalar curvature over all points in space.
\end{lem}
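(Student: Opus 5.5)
The proof will go by contradiction, following the analogous statement in \cite{ABDS}. Suppose there is a sequence $t_i\to-\infty$ with $R_{\max}(t_i)\,\mathrm{diam}_{g(t_i)}(S^n)^2\le C$. Rescale by $\lambda_i=R_{\max}(t_i)$ and set $\tilde g_i(t)=\lambda_i\, g(t_i+\lambda_i^{-1}t)$. Each $\tilde g_i$ is a $\kappa$-solution on $(-\infty,0]$ with $\sup_{S^n}\tilde R(\cdot,0)=1$. Since curvature is nonnegative and $R_t\ge0$ by Hamilton's trace Harnack inequality, we get $\tilde R\le 1$ for $t\le 0$. The diameter at $t=0$ is at most $\sqrt C$. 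By Perelman's compactness theorem for $\kappa$-solutions, a subsequence converges smoothly to an ancient $\kappa$-solution $(M_\infty,g_\infty(t))_{t\le0}$ with $\sup R_\infty(\cdot,0)=1$. Its diameter at time $0$ is at most $\sqrt C$, so $M_\infty$ is compact, hence diffeomorphic to $S^n$, and it is $SO(k)\times SO(n-k+1)$-symmetric.

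The key step is to identify the limit as a shrinking round sphere. Since $M_\infty$ is compact, the pinching estimate of \cite{BS-weakly-PIC2}, or the structure theory of Lemma~\ref{lemma-sphere}, together with Lemma~\ref{blowdown is cylinder}, shows that it is one of the following:
\begin{enumerate}
\item a round shrinking sphere;
\item an ancient oval with a cylindrical tangent flow;
\item a product of a Perelman oval and a sphere.
\end{enumerate}
The last alternative is excluded because the limit is diffeomorphic to $S^n$ and, by the strong maximum principle, has positive curvature operator; in fact $M_\infty$ is diffeomorphic to $S^n$ in all cases.

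To rule out the ancient-oval alternative, I would use Perelman's reduced volume. The quantity $\tilde V$ of the original flow, based at $(q,0)$ in the original scale, is nonincreasing in $-t$. Its limit as $t\to-\infty$ is the Gaussian density $\Theta_{\mathrm{cyl}}$ of $\mathbb R^k\times S^{n-k}$. A cleaner route is the following. Alternatives (1) and (2) both become extinct in finite time, and the original solution is defined for all $t$ up to its own extinction time $T$, with $t_i\ll T$. The rescaled time interval $[0,\lambda_i(T-t_i))$ becomes arbitrarily long, since $\lambda_i\ge c/|t_i|$. On the other hand, a compact $\kappa$-solution with bounded diameter and $R_{\max}=1$ at time $0$ exists only for a time bounded above in terms of $C$. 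This is because the compact limit at time $0$ becomes extinct after time at most $c(n,C)$: indeed $R\ge c(C)>0$ on the whole compact limit, by Harnack comparison and the bounded diameter, so $R_t\ge \frac{2}{n}R^2$ forces extinction by time $\frac{n}{2c}$. Smooth convergence up to a definite positive time then transfers this bound back to the rescaled flows. This gives an upper bound $\lambda_i(T-t_i)\le C'$. Hence $R_{\max}(t_i)\le C'/(T-t_i)$, so $R_{\max}(t_i)\,|t_i|$ stays bounded.

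To finish, compare with the cylindrical tangent flow. By Remark~\ref{tangent flow definitions}, rescaling by $|t_i|^{-1/2}$ around points with bounded reduced distance converges to $\mathbb R^k\times S^{n-k}(\sqrt{2(n-k-1)})$. In particular, there are balls of radius $L\sqrt{|t_i|}$, for any $L$, that are $\varepsilon$-close to the cylinder, so $\mathrm{diam}(g(t_i))\ge L\sqrt{|t_i|}$ for $i$ large. Since $R\approx (n-k)/(2(n-k-1)|t_i|)$ on these balls, we have $R_{\max}(t_i)\ge c/|t_i|$. Therefore $R_{\max}\,\mathrm{diam}^2\ge cL^2$, and since $L$ is arbitrary this contradicts the assumed bound. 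This last argument is in fact direct and needs no contradiction setup at all. I therefore expect the real content of the proof to be only the lower bound on the diameter coming from the $k\ge1$ flat directions of the tangent flow. The main subtlety is to make sure that the reduced-distance base points $x_i$ give closeness to the cylinder on a ball of radius $L\sqrt{|t_i|}$ in the original metric. This follows from the smooth pointed convergence in Perelman's sense stated in Remark~\ref{tangent flow definitions}.
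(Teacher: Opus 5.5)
Your closing direct argument is correct and is exactly what the paper means when it says the lemma ``follows directly from Lemma~\ref{asymptotic-soliton}'': pointed convergence of $(-t_i)^{-1}g(t_i)$ to the noncompact cylinder gives $\operatorname{diam}_{g(t_i)}\ge (L-1)\sqrt{-t_i}$ for every $L$, and $R_{\max}(t_i)\ge c/(-t_i)$. (The cylinder's scalar curvature is $\frac{n-k}{2(-t)}$, not $\frac{n-k}{2(n-k-1)(-t)}$, but the constant is irrelevant.) You should drop the preceding contradiction and extinction-time detour: it is unnecessary, and it is also unsound as written, since $\lambda_i\ge c/|t_i|$ only gives $\lambda_i(T-t_i)\ge c$, not that the rescaled time interval becomes arbitrarily long.
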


\begin{proof}
This lemma follows directly from Lemma \ref{asymptotic-soliton}.
\end{proof}

By the neck stability theorem of Kleiner and Lott \cite{KL-singular} (see also \cite{ABDS}), there exists a point $q$ such that 
\begin{align}\label{eq-q-R}
    \sup_{t\to -\infty} (-t)R(q, t)\leq 100.
\end{align}
By Perelman's compactness theorem for asymptotic solitons and our assumption, we have the following proposition:
\begin{prop}[cf. {\cite[Proposition 3.2]{ABDS}}]\label{tangent-flow}
    If we parabolically dilate the flow around the point $(q, t)$ by the factor $(-t)^{-\frac{1}{2}}$, then the rescaled flow converges to a unique cylinder $\mathbb R^{k} \times S^{n-k}(\sqrt{2(n-k-1)|t|})$.
\end{prop}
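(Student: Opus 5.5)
The plan is to follow the argument of \cite[Proposition 3.2]{ABDS}, which rests on Perelman's reduced geometry based at $(q,0)$. The proof has three steps: show the base point $q$ has bounded reduced distance, extract an asymptotic soliton, and identify that soliton.

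\textbf{Step 1: bounded reduced distance.} Let $\ell(x,t)$ denote the reduced distance from $(q,0)$. We show that $\ell(q,t)$ stays bounded as $t\to-\infty$. Along the static path $\gamma(s)\equiv q$, the curvature bound \eqref{eq-q-R} gives
\[
\ell(q,t)\le \frac{1}{2\sqrt{-t}}\int_0^{-t}\sqrt{s}\,R(q,-s)\,ds \le \frac{1}{2\sqrt{-t}}\int_0^{-t} \frac{100}{\sqrt s}\,ds = 100 .
\]
This is slightly formal near $s=0$, where $R(q,\cdot)$ is simply bounded, so that portion contributes $o(1)$. Thus $\sup_{t\le -1}\ell(q,t)<\infty$.

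\textbf{Step 2: extracting a soliton.} By Perelman's compactness theorem for asymptotic solitons \cite[Section 11.2]{perelman2002entropy}, which uses $\kappa$-noncollapsing and Hamilton's Harnack inequality, the following holds. For every sequence $t_i\to-\infty$, the flows rescaled around $(q,t_i)$ by the factor $(-t_i)^{-1/2}$ subconverge smoothly to a nonflat shrinking gradient soliton. Nonflatness follows from the reduced volume being strictly less than $1$, since the flow is not flat. This limit is a tangent flow at $-\infty$ in the sense of Remark \ref{tangent flow definitions}.

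\textbf{Step 3: identifying the soliton and removing the subsequence.} By Lemma \ref{blowdown is cylinder} and our standing normalization, every such limit is $\mathbb R^{k}\times S^{n-k}(\sqrt{2(n-k-1)|t|})$. The competing cylinder $\mathbb R^{n-k+1}\times S^{k-1}$ has a different reduced volume (unless $n-k=k-1$, in which case the two are the same cylinder). Monotonicity of the reduced volume then forces every subsequential limit to be of the same cylindrical type. Finally, \cite[Theorem 7.9]{fang2025strong} upgrades this to strong uniqueness: the limiting cylinder, including its axis and orientation in a fixed gauge, does not depend on the sequence $t_i$. Hence the full rescaled flow converges to this unique cylinder.

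\textbf{Main obstacle.} The delicate point is Step 1. Perelman's theorem is stated for base points $x_i$ with $\ell(x_i,t_i)$ bounded, and one must confirm that the point $q$ obtained from the Kleiner--Lott neck stability theorem has this property. The estimate above supplies it, but only because \eqref{eq-q-R} holds uniformly for all $t$. Given that, the remaining steps amount to citing earlier results.
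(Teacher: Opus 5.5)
Your proposal is correct and takes essentially the same route as the paper, which simply invokes Perelman's compactness theorem for asymptotic solitons together with the standing assumption, i.e. Lemma \ref{blowdown is cylinder} and \cite[Theorem 7.9]{fang2025strong}. Your static-path bound $\ell(q,t)\le 100$ from \eqref{eq-q-R} supplies the step the paper leaves implicit, namely why $(q,t)$ is an admissible base point. One small correction: the flow here becomes extinct at $t=0$, so $R(q,\cdot)$ is not bounded near the base time. You should instead base $\ell$ at $(q,t_0)$ with $t_0<0$, or use that $(-t)R(q,t)$ stays bounded up to extinction; with either fix the integral estimate goes through unchanged.
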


Recall that $(M, g(t))$ is an ancient oval of the Ricci flow in the sense of Definition \ref{ancient oval defn} that is $SO(k)\times SO(n-k+1)$-symmetric, and $\mathbb{R}^k\times S^{n-k}(\sqrt{2(n-k-1)|t|})$ is its tangent flow at $-\infty$. We denote the fixed point of the $SO(k)$-action by $\mathcal O$. For each $t$, we denote by $F(z,t)$ the radius of the $S^{k-1}$ fiber at distance $z$ from $\mathcal O$, and by $G(z,t)$ the radius of the $S^{n-k}$ fiber at distance $z$ from $\mathcal O$. Then the metric can be written as 
\begin{align}\label{g-metric}
    g(t)&= dz\otimes dz + F^2(z,t) g_{S^{k-1}}  + G^2(z,t)g_{S^{n-k}},
\end{align}
where $z\in[0, z_{\max}]$, $g_{S^{k-1}}$ is the standard spherical metric on $S^{k-1}$ and $g_{S^{n-k}}$ is the standard spherical metric on $S^{n-k}$. In order for the Riemannian metric $g$ to close up smoothly at the singular orbits, $F$ and $G$ must satisfy the following conditions at $0$ and $z_{\max}$:\\
(i) $F(z,t)$ is a smooth function after an odd reflection at $z=0$, and $G(z,t)$ is a smooth function after an even reflection at $z=0$. They satisfy $F(0, t)=0$, $F_z(0, t)=1$, $G_z(0, t)=0$, and $G(0, t)>0$;\\
(ii) $F(z,t)$ is a smooth function after an even reflection at $z_{\max}$, and $G(z,t)$ is a smooth function after an odd reflection at $z_{\max}$. They satisfy $G_z(z_{\max}, t)=-1$, $F_{z}(z_{\max}, t)=0$, and $F(z_{\max}, t)>0$.\\
{\em By the above description, we only need to consider the uniqueness problem for $(F, G)$ defined on $[0, z_{\max}]$.}

Consider the orthonormal frame $e_0=\partial_z$, $\{e_i\}_{i=1}^{k-1}$ on $S^{k-1}$, and $\{e_a\}_{a=1}^{n-k}$ on $S^{n-k}$. In the ordered basis
\begin{equation}\label{curvature operator formula}
    \{\,e_0\wedge e_i\bigr\}_{i=1}^{k-1}
\;,\,\bigl\{\,e_0\wedge e_a\bigr\}_{a=1}^{n-k}
\;,\,\bigl\{\,e_i\wedge e_j\bigr\}_{1\le i<j\le k-1}
\;,\,\bigl\{\,e_i\wedge e_a\bigr\}_{i,a}
\;,\,\bigl\{\,e_a\wedge e_b\bigr\}_{1\le a<b\le n-k}
\end{equation}
the curvature operator $\mathcal{R}$ is represented by
\begin{equation}
\mathcal R
=
\begin{pmatrix}
-\dfrac{F_{zz}}{F}\,I_{\,k-1}
&0&0&0&0\\[6pt]
0&-\dfrac{G_{zz}}{G}\,I_{\,n-k}
&0&0&0\\[6pt]
0&0&\dfrac{1-F_z^2}{F^2}\,I_{\binom{k-1}{2}}
&0&0\\[6pt]
0&0&0&-\dfrac{F_z\,G_z}{F\,G}\,I_{(k-1)(n-k)}
&0\\[6pt]
0&0&0&0&\dfrac{1-G_z^2}{G^2}\,I_{\binom{n-k}{2}}
\end{pmatrix},
\end{equation}
where $I_{m}$ represents the $m$-dimensional identity matrix. Since the metric has a nonnegative curvature operator, we have 
\begin{align}\label{derivative sign}
    F_{zz}\leq 0,\quad G_{zz}\leq 0, \quad \text{and}\quad F_zG_z\leq 0.
\end{align}
By the smoothness conditions and \eqref{derivative sign}, we have
\begin{align}\label{gradient-sign}
    0\leq F_z\leq 1\quad \text{and} \quad -1\leq G_z\leq 0, \qquad \mbox{on }\,\, z \in [0, z_{\max}].
\end{align}
 Moreover, by symmetry, we have  
\begin{align}
    &\frac{G_{zz}}{G} = \frac{G_zF_z}{GF} \quad \text{at} \quad z=0,\\\
    &\frac{F_{zz}}{F} = \frac{G_zF_z}{GF} \quad \text{at} \quad z=z_{\max}.
\end{align}

\subsection{Blowdown analysis}\label{blowdown analysis section} 
The goal of this subsection is to classify all possible pointed blowdown limits of  $SO(k)\times SO(n-k+1)$-symmetric ancient ovals of Ricci flow as $t\to-\infty$. We first show that, for any sequence of points whose $SO(n-k+1)$-orbit radius $G$ remains comparable to the cylindrical scale $\sqrt{-t}$, the corresponding curvature-scale blowdown limits split off at least $k-1$ Euclidean directions.

\begin{lem}\label{k-1lines split} 
Let $(M, g(t))$ be the ancient oval of the Ricci flow described above. Consider a sequence of points $(x_i,t_i)$ such that $G(x_i, t_i)\geq \delta \sqrt{-2(n-k-1)t_i}$ for some $\delta>0$. For the rescaled flows 
\begin{align}
    (M,g_i(t);(x_i, t_i)) = (M, R(x_i,t_i)g(t_i+R(x_i,t_i)^{-1}t); (x_i, t_i)),
\end{align}
the limit $(M_\infty,g_\infty(t);(x_\infty, 0))$ of the rescaled flows contains at least $k-1$ straight lines.
\end{lem}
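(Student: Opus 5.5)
Throughout, write $r_i:=R(x_i,t_i)^{-1/2}$ for the curvature scale at $(x_i,t_i)$. The plan is to use the $SO(k)$-orbits through $x_i$ as the source of the $k-1$ lines. On the rescaled flow $g_i$ these orbits are round spheres $S^{k-1}$ of radius $F(x_i,t_i)/r_i$. The argument then has two parts: show this radius tends to infinity, and show the orbits become totally geodesic in the limit.

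\textbf{Step 1: the curvature scale is at most cylindrical.} By Perelman's pointwise derivative estimates and $\kappa$-noncollapsing, $R(x_i,t_i)$ controls the geometry on the parabolic neighbourhood of size $r_i$. From the curvature-operator formula, $R\ge \frac{(n-k)(n-k-1)(1-G_z^2)}{G^2}$. I expect to need $R(x_i,t_i)\,(-t_i)\lesssim 1$, possibly after a subsequence. For an upper bound on the curvature I would use $G_{zz}\le 0$, $F_{zz}\le 0$ and the lower bound $G\ge \delta\sqrt{-t_i}$. Concavity of $G$, together with $G\le C\sqrt{-t_i}$, should force $|G_z|$ and $|G_{zz}|\,G$ to be bounded at such points on the cylindrical scale. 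I would run this via a Hamilton-type Harnack comparison, or by comparing with the point $q$ from \eqref{eq-q-R} and using Proposition \ref{tangent-flow}.

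\textbf{Step 2: the $S^{k-1}$-orbit is large compared with $r_i$.} Since $F(0)=0$, $F_z\in[0,1]$ and $F_{zz}\le 0$, the function $F$ is concave and increasing. Therefore $F(z)\ge \frac{z}{z_{\max}}F(z_{\max})$. Lemma \ref{diameter} combined with Step 1 gives $z_{\max}(t_i)/\sqrt{-t_i}\to\infty$.

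The key point is to show that $F(x_i,t_i)/\sqrt{-t_i}\to\infty$. I would argue by contradiction. Suppose $F(x_i,t_i)\le C\sqrt{-t_i}$. Rescale by $\sqrt{-t_i}$ around $x_i$; by Step 1 this has bounded curvature and converges to an ancient $\kappa$-solution. Here I would use the point $q$ and the tangent flow $\mathbb R^k\times S^{n-k}$, via Perelman's reduced distance. The rescaling then sees a neighbourhood with a nearly round $S^{n-k}$ factor of radius $\approx G$, but with an $S^{k-1}$ orbit of bounded size. A bounded $S^{k-1}$ orbit is incompatible with the $k$ Euclidean directions of the blowdown near $x_i$.

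I expect this step to be the main obstacle. The tangent flow only describes a region of bounded reduced distance from $q$, while $x_i$ may be far away. To handle it, I would propagate the cylindrical structure along the $z$-direction using the monotonicity $G_z\le 0$ and the concavity of $F$ and $G$. The region $\{G\ge\delta\sqrt{-t}\}$ has length $\gg\sqrt{-t_i}$ in $z$. On it $F$ grows at least linearly from its value on the parabolic region, and there $F\approx z\gg\sqrt{-t}$ by the tangent flow. This is what rules out a bounded orbit.

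\textbf{Step 3: passing to the limit.} After rescaling by $R(x_i,t_i)$, the $SO(k)$-orbit through $x_i$ is a round $S^{k-1}$ of radius $\rho_i\to\infty$. Its second fundamental form is $F_z/F$, which tends to $0$ at scale $r_i$. So in the Cheeger--Gromov limit the orbits converge to a totally geodesic, flat $\mathbb R^{k-1}$ through $x_\infty$. Geodesics of the orbit spheres are great circles of radius $\rho_i$, and they minimize in $M$ over lengths $\ll\rho_i$ by the near-isometric product structure. Taking limits therefore produces $k-1$ independent lines. The nonnegative curvature operator of the limit lets us apply the Cheeger--Gromoll splitting theorem (Hamilton's strong maximum principle version for the flow), which splits off $\mathbb R^{k-1}$, as claimed in Lemma \ref{k-1lines split}.
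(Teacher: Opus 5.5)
Your Step~3 is the same mechanism the paper uses: second fundamental form $F_z/F\le 1/F$, intrinsic curvature $F^{-2}$, orbit injectivity radius $\pi F$, all rescaled by $R^{1/2}$. It needs $R(x_i,t_i)^{1/2}F(x_i,t_i)\to\infty$, and Steps~1--2 do not deliver this.

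\textbf{Step 1 aims at the wrong inequality.} From $F/\sqrt{-t_i}\to\infty$ you can only get $R^{1/2}F\to\infty$ if you have a \emph{lower} bound $R(x_i,t_i)\ge A/(-t_i)$. The upper bound $(-t_i)R\le C$ you set out to prove only gives $R^{1/2}F\le C\,F/\sqrt{-t_i}$, which is useless here.

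The lower bound is the easy direction, and it is what the paper proves. It uses $R\ge (n-k)(n-k-1)(1-G_z^2)/G^2$, the bound $G\le r_{\max}(t)\le C\sqrt{-t}$ from Lemma~\ref{r-up}, and $1-G_z^2\ge c_0(\delta)$ on $\{G\ge\delta\sqrt{-2(n-k-1)t}\}$.

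The upper bound is not something concavity gives you: concavity fixes the signs of $F_{zz}$ and $G_{zz}$, not their size. In the paper it is obtained only afterwards, in the proof of Proposition~\ref{epsilon-bubble-sheet}, using this very lemma plus the classification of rotationally symmetric noncompact $\kappa$-solutions. Also, ``Lemma~\ref{diameter} with Step~1 gives $z_{\max}/\sqrt{-t}\to\infty$'' does not follow. Lemma~\ref{diameter} involves $R_{\max}$, and $(-t)R_{\max}(t)\to\infty$.

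\textbf{Step 2 claims something false.} The claim $F(x_i,t_i)/\sqrt{-t_i}\to\infty$ fails whenever $z(x_i)\le L\sqrt{-t_i}$; for example at $x_i=\mathcal O$ you have $F=0$. Your proposed contradiction also fails. In $\mathbb R^k\times S^{n-k}$ the $SO(k)$-orbits are the round spheres $\{|y|=\mathrm{const}\}$ in the $\mathbb R^k$ factor, and bounded orbits are perfectly compatible with $k$ flat directions. You need a case distinction:
\begin{itemize}
\item \emph{Case $z(x_i)\le L\sqrt{-t_i}$.} Convergence near $q$ plus concavity of $F$ and $G$ forces $F_z\to 1$ and $G_z\to 0$ on $[0,z_q]$. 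The $\sqrt{-t_i}$-scale ball about $x_i$ sits inside a ball of radius $D+L$ about $\mathcal O$. So $(-t_i)R(x_i,t_i)$ stays pinched between positive constants, and the limit is a rescaled $\mathbb R^k\times S^{n-k}$, which already has $k$ lines.
\item \emph{Case $z(x_i)/\sqrt{-t_i}\to\infty$.} No reduced-distance argument is needed. Monotonicity $F_z\ge 0$ gives $F(z_i,t_i)\ge F(L\sqrt{-t_i},t_i)=(1+o(1))L\sqrt{-t_i}$ for every fixed $L$. Note that $F$ is concave, so it grows at most linearly; monotonicity is what does the work. Together with $R\ge A/(-t)$ this gives $R^{1/2}F\to\infty$, and then your Step~3 applies.
\end{itemize}
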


\begin{proof}
Let $q$ be the point such that \eqref{eq-q-R} holds, and let $t_i\to -\infty$ be an arbitrary sequence. By Proposition \ref{tangent-flow}, $(M,g_i(t);(q, t_i))$ converges to the cylinder $\mathbb R^{k} \times {{S}}^{n-k}(\sqrt{2(n-k-1)|t|})$, and hence
\begin{align}
    (-t_i)^{-\frac{1}{2}}F((-t_i)^{\frac{1}{2}}\xi, (-t_i)t)\to \xi
\end{align}
in the $C^\infty_{\text{loc}}$ sense with $(-t_i)^{\frac{1}{2}}\xi = z$. This implies that $F_z((-t_i)^{\frac{1}{2}}\xi, (-t_i)t)\to 1$ around $(q, t_i)$. Denoting the $z$-coordinate of $q$ by $z_q$, by the convexity in \eqref{derivative sign}, we have 
\begin{align}
    F_z(z_q, t_i)\leq F_z(z,t_i)\leq 1
\end{align}
for all $z\in [0,z_q]$. We find that $F_z((-t_i)^{\frac{1}{2}}\xi, (-t_i)t)\to 1$ around all points $(z, t_i)$ for $z\in[0,z_q]$.
On the other hand, we also have 
\begin{align}
    (-t_i)^{-\frac{1}{2}}G((-t_i)^{\frac{1}{2}}\xi, (-t_i)t)\to \sqrt{2(n-k-1)}
\end{align}
in the $C^\infty_{\text{loc}}$ sense. This implies that $G_z((-t_i)^{\frac{1}{2}}\xi, (-t_i)t)\to 0$ around $(q, t_i)$. Thus, by the convexity in \eqref{derivative sign}, we have 
\begin{align}
    G_z(z_q, t_i)\leq G_z(z,t_i)\leq 0
\end{align}
for all $z\in [0,z_q]$. We obtain that $G_z((-t_i)^{\frac{1}{2}}\xi, (-t_i)t)\to 0$ around all points $(z, t_i)$ for $z\in[0,z_q]$. Therefore, if we parabolically dilate the flow around the point $(z, t_i)$ by the factor $(-t_i)^{-\frac{1}{2}}$, the rescaled flows converge to a unique cylinder $\mathbb R^{k} \times S^{n-k}(\sqrt{2(n-k-1)|t|})$ for all $z\in[0,z_q]$. Since $t_i\to -\infty$ is an arbitrary sequence, the above arguments show that, due to convergence, we have 
\begin{align}
    c\leq (-t)R(z,t)\leq C
\end{align}
for all $z\in[0,z_q]$. This shows that the lemma holds for all $z\in[0,z_q]$. 

Next, we claim that there exists a uniform constant $A>0$ such that 
\begin{align}\label{curvature-lower-bound}
    R(z,t)\geq \frac{A}{-t}
\end{align}
for all $(z,t)$ such that $G(z,t)\geq \delta\sqrt{-2(n-k-1)t}$ and for sufficiently negative time $t$. We argue by contradiction. Suppose there exists a sequence of points $(z_j,t_j)$ with $t_j\to-\infty$ such that 
\begin{align}\label{tR-0}
   (-t_j) R(z_j,t_j)\to 0.
\end{align}
By assumption, we always have 
\begin{align}\label{G-cylinder-bounded}
c\sqrt{-t_j}\leq G(z_j,t_j) \leq C\sqrt{-t_j}.
\end{align}
Since the sectional curvature of the $SO(n-k+1)$-orbit $K_G$ is $\frac{1-G_z^2}{G^2}$, by convexity and our assumption, there exists a constant $c_0$ depending only on $\delta$ such that $G_z^2\leq 1-c_0$. Thus, by \eqref{G-cylinder-bounded} and the nonnegativity of curvature, we obtain
\begin{align}
    (-t_j) R(z_j,t_j)\geq (-t_j)K_G(z_j, t_j) \geq \frac{c_0}{C^2}>0,
\end{align}
which contradicts \eqref{tR-0}, thereby proving the claim. 

Lastly, we show that the lemma holds for all points satisfying the condition in the lemma with $z\geq z_q$. For all $z\leq L\sqrt{-t}$, if we parabolically dilate the flow around the point $(z, t)$ by the factor $(-t)^{\frac{1}{2}}$, any geodesic ball with diameter $D$ around $z$ is contained in the geodesic ball with diameter $D+L$ around $z = 0$. Thus, the limit of the rescaled flow is also the cylinder $\mathbb R^{k} \times S^{n-k}(\sqrt{2(n-k-1)|t|})$. For those $z$ satisfying $\frac{z}{\sqrt{-t}}\to \infty$, we first prove that 
\begin{align}\label{RF-infty}
    \liminf_{-t\to\infty} R(z,t)^{\frac{1}{2}}F(z,t) = \infty.
\end{align}
Suppose towards a contradiction that we can find a sequence $(z_j,t_j)$ with $-t_j\to\infty$ and $\frac{z_j}{\sqrt{-t_j}}\to \infty$ such that 
\begin{align}
    R(z_j,t_j)^{\frac{1}{2}}F(z_j,t_j) \leq C.
\end{align}
By \eqref{curvature-lower-bound}, we have
\begin{align}\label{rescaled-F-bound}
    (-t_j)^{-\frac{1}{2}}F(z_j,t_j)\leq \frac{C}{\sqrt{A}}.
\end{align}
Then for each $t_j$, we can find another $z_j'\leq z_j$ such that $z_j' = \frac{10C}{\sqrt{A}}\sqrt{-t_j}$. By \eqref{gradient-sign}, it always holds that $F(z_j',t_j)\leq F(z_j,t_j)$. It follows that 
\begin{align}
    (-t_j)^{-\frac{1}{2}}F(z_j',t_j)\leq \frac{C}{\sqrt{A}}.
\end{align}
Since $z_j' = \frac{10C}{A}\sqrt{-t_j}$, we recall that $(-t_i)^{\frac{1}{2}}\xi = z$, we have 
\begin{align}\label{F-convergence}
    (-t_j)^{-\frac{1}{2}}F((-t_j)^{\frac{1}{2}}\xi, (-t_j)t)\to F_\infty(\xi,\tau) = \xi
\end{align}
Substituting $z_j'$ into \eqref{F-convergence}, and take $t = -1$, we get 
\begin{align}
    (-t_j)^{-\frac{1}{2}}F(z_j', t_j)\to \xi_\infty = \frac{10C}{\sqrt{A}},
\end{align}
which contradicts \eqref{rescaled-F-bound}. This proves \eqref{RF-infty}. 

Now, fixing $t=t_j$, along an $SO(k)$-orbit, choose an orthonormal frame $e_1,\dots,e_{k-1}$ tangent to the ${{S}}^{k-1}$-factor. In the double warped product metric
\[
g=dz^2+F^2 g_{{{S}}^{k-1}}+G^2 g_{{{S}}^{n-k}},
\]
the second fundamental form of the $SO(k)$-orbit satisfies
\begin{align}\label{second fundamental=F_z/F}
\textrm{I\!I}(e_\alpha,e_\beta)=-\frac{F_z}{F}\delta_{\alpha\beta}\,\partial_z.
\end{align}
Hence,
\begin{align}
|\textrm{I\!I}|\leq C\frac{|F_z|}{F}.
\end{align}
For ancient ovals with a nonnegative curvature operator, one has the standard estimate $|F_z|\leq 1$. Therefore,
\[
|\textrm{I\!I}|
\leq
\frac{C}{F}.
\]
For any sequence of $(x_j, t_j)$ satisfying  $\frac{x_j}{\sqrt{-t_j}}\to \infty$, the rescaled flows $(M, g_j(t); (x_j,t_j))$, the rescaled second fundamental form satisfies
\begin{align}
    |\textrm{I\!I}|_{g_j(0)}=R(z_j,t_j)^{-1/2}|\textrm{I\!I}|_{g(t_j)}\leq\frac{C}{R(x_j,t_j)^{\frac{1}{2}}F(x_j,t_j)}.
\end{align}
Thus, by \eqref{RF-infty}, we have 
\[
|\textrm{I\!I}|_{g_i(0)}\to 0.
\]
Hence, the $SO(k)$-orbits become asymptotically totally geodesic.
Next, the intrinsic sectional curvature of the $SO(k)$-orbit is
\[
K_{\mathrm{orbit}}
=
\frac{1}{F^2}.
\]
After rescaling by $R(z_j,t_j)$, it becomes
\[
K_{\mathrm{orbit},j}
=
\frac{1}{R(x_j,t_j)F(x_j,t_j)^2}.
\]
Therefore, by \eqref{RF-infty}, we have 
\[
K_{\mathrm{orbit},j}\to 0.
\]
Hence, the rescaled $SO(k)$-orbits become asymptotically flat.
Moreover, their injectivity radii satisfy
\[
\operatorname{inj}_{g_j(0)}(x_j)
=
\pi R(x_j,t_j)^{\frac{1}{2}}F(x_j,t_j)
\to
\infty.
\]
Therefore, for every fixed $L<\infty$, the intrinsic ball of radius $L$ inside the orbit converges smoothly to a Euclidean ball in $\mathbb R^{k-1}$. Consequently, the orbit directions produce $k-1$ complete flat directions in the pointed limit, completing the proof.
\end{proof}

We then show that the points of maximal scalar curvature must lie in the tip region, while points away from the tips are asymptotically cylindrical.

\begin{lem}\label{Rmax-tip}
For any given $\delta>0$, let $x_t$ be points satisfying $R(x_t, t) = R_{\max}(t)$. Then we have $x_t\in \{x \mid G(x,t)<\delta\sqrt{-2(n-k-1)t}\}$ for all sufficiently negative time $t$.
\end{lem}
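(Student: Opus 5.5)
We argue by contradiction. Suppose there are $\delta>0$ and times $t_i\to-\infty$ such that the maximum points $x_i:=x_{t_i}$ satisfy $G(x_i,t_i)\ge \delta\sqrt{-2(n-k-1)t_i}$. Consider the rescaled flows
\[
g_i(t)=R_{\max}(t_i)\,g\bigl(t_i+R_{\max}(t_i)^{-1}t\bigr)
\]
based at $(x_i,0)$. By $\kappa$-noncollapsing and Perelman's compactness theorem, these converge, after passing to a subsequence, to a pointed $\kappa$-solution $(M_\infty,g_\infty(t),x_\infty)$. Two facts about this limit drive the argument.

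First, because $x_i$ maximizes $R$ at time $t_i$ and $R$ is nondecreasing in time (Hamilton's Harnack inequality for nonnegative curvature operator), the limit satisfies $R_\infty\le R_\infty(x_\infty,0)=1$ on $M_\infty\times(-\infty,0]$. So the scalar curvature attains its spacetime maximum at $(x_\infty,0)$.

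Second, Lemma \ref{k-1lines split} applies to $(x_i,t_i)$ since $G(x_i,t_i)\ge\delta\sqrt{-2(n-k-1)t_i}$. Hence $M_\infty$ splits as $\mathbb R^{k-1}\times N^{n-k+1}$ with $N$ a $\kappa$-solution. The $SO(n-k+1)$-symmetry passes to the limit, so $N$ is rotationally symmetric.

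Next we identify $N$. The bound $G\ge\delta\sqrt{-2(n-k-1)t_i}$, together with the upper bound $R(x_i,t_i)\le C/(-t_i)$ we will establish below, gives that the rescaled $G$ at $x_\infty$ is bounded above and below. Hence $N$ contains an $S^{n-k}$-orbit of bounded size at the base point. We would like to argue directly that $N$ is noncompact. Failing that, we note that if $N$ were compact, the orbit would be a compact round sphere, and we expect to obtain a contradiction from Lemma \ref{diameter} combined with $\operatorname{diam}\to\infty$ in the curvature scale. By the classification of rotationally symmetric noncompact $\kappa$-solutions (Brendle, and Brendle--Naff in higher dimensions), $N$ is then either the shrinking cylinder $\mathbb R\times S^{n-k}$ or the Bryant soliton.

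We rule out both alternatives using the first fact. On the shrinking cylinder, $R_\infty$ is strictly increasing in $t$, so at times $t<0$ it is smaller than at $t=0$. A maximum at $t=0$ is not immediately contradictory. However, the rescaling at time $t_i$ is by $R_{\max}(t_i)$, and we can compare with earlier times: $R_{\max}(t_i)\ge R_{\max}(t)$ for $t\le t_i$, which is consistent with the cylinder. So the cylinder must be excluded differently.

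The key quantitative input here is \eqref{curvature-lower-bound} together with the tip geometry. Near the tip $G\to0$, so $K_G=(1-G_z^2)/G^2$ blows up relative to $1/(-t)$. More precisely, choose tip points $p_t$ where $G(p_t,t)=0$. If the rescaled curvature near $p_t$ were only $O(1/(-t))$, the tip would be a smooth cap of size $\sqrt{-t}$ with $R\sim 1/(-t)$. We intend to show, by the diameter argument of Lemma \ref{diameter} and the analogue of ABDS, that $\sup_{G<\delta\sqrt{-t}}(-t)R\to\infty$. Concretely, if $(-t)R\le C$ everywhere, then rescaling by $(-t)^{-1}$ gives a limit of bounded curvature containing both the cylinder region and a tip. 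Its blowdown would be a compact-capped object of bounded diameter at scale $\sqrt{-t}$, contradicting the fact that the cylindrical region has length $\gg\sqrt{-t}$ (since $z_{\max}/\sqrt{-t}\to\infty$ by Lemma \ref{diameter}) together with Hamilton's Harnack. Therefore $R_{\max}(t_i)(-t_i)\to\infty$. On the other hand, the region $\{G\ge\delta\sqrt{-2(n-k-1)t}\}$ satisfies $(-t)R\le C(\delta)$; we would derive this from the cylindrical convergence of Lemma \ref{k-1lines split}'s proof and from curvature control via $G_z^2\le 1-c_0$ and $F\gtrsim\sqrt{-t}$. This contradiction finishes the proof.

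The main obstacle is this last upper bound $(-t)R\le C(\delta)$ on $\{G\ge\delta\sqrt{-t}\}$ uniformly, including for $z\gg\sqrt{-t}$. For it we would use the limit classification above: any limit there is $\mathbb R^{k-1}\times$(cylinder or Bryant). Points of $\mathbb R^{k-1}\times$Bryant have $R\,G^2$ bounded only near the tip, while $RG^2\to$ constant on the cylinder. Since $G\ge\delta\sqrt{-t}$, $R\gg 1/(-t)$ would force $RG^2\to\infty$, meaning the limit has an orbit at infinite distance. That occurs only on the asymptotic end of Bryant, where $R$ is not maximal. Hence these points cannot be curvature maxima, which already yields the contradiction with the first fact, since on Bryant $\times\mathbb R^{k-1}$ the maximum of $R$ occurs only at the tip, where the orbit shrinks to zero. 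This contradicts the bounded-below rescaled $G$ at $x_\infty$, and we expect to close the argument this way.
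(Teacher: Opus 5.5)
\textbf{There is a genuine gap.} Your contradiction ultimately rests on the uniform bound $(-t)R\le C(\delta)$ on $\{G\ge\delta\sqrt{-2(n-k-1)t}\}$. You do not prove this bound, and it is stronger than the lemma itself; in the paper it appears only later, as the first step of Proposition \ref{epsilon-bubble-sheet}. The justifications you suggest do not give it:
\begin{itemize}
\item The cylindrical convergence in the proof of Lemma \ref{k-1lines split} is only available for $z\le L\sqrt{-t}$.
\item The bound $G_z^2\le 1-c_0$ yields a \emph{lower} bound $K_G\ge c_0G^{-2}$, not an upper bound on $R$.
\item The claim that $RG^2\to\infty$ is realized on the asymptotic end of the Bryant soliton is false. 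On the Bryant soliton $R\,r^2$ is bounded: it tends to $0$ at the tip and to a finite constant at infinity.
\end{itemize}

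You also use the same bound to make the rescaled $G$ bounded at $x_\infty$, so that $N$ is rotationally symmetric with an honest $S^{n-k}$-orbit through the base point. But at a maximum point, $R(x_i,t_i)\le C/(-t_i)$ is exactly the negation of the Type II property $(-t_i)R_{\max}(t_i)\to\infty$ that you also invoke. So your identification of $N$ as a cylinder or Bryant soliton is made under an assumption inconsistent with the rest of the argument.

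Your sketch of the Type II property does not work either. Under a Type I bound, Lemma \ref{diameter} puts the tips at distance $\gg\sqrt{-t}$ from $q$, so the pointed limit at scale $\sqrt{-t}$ contains only the cylinder and no tip. The paper simply takes Type II as a known input.

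\textbf{The missing idea.} Granting Type II, no upper bound on $R$ is needed. At the maximum points
\[
R(x_i,t_i)^{1/2}G(x_i,t_i)\ge\delta\sqrt{2(n-k-1)}\,\bigl((-t_i)R_{\max}(t_i)\bigr)^{1/2}\to\infty ,
\]
so the rescaled $G$ at the base point blows up rather than staying bounded. The $SO(n-k+1)$-orbits have second fundamental form $-\frac{G_z}{G}\,\partial_z$ with $|G_z|\le 1$ and intrinsic curvature $G^{-2}$. Hence, after rescaling by $R(x_i,t_i)$, they become totally geodesic and flat in the limit, by the same mechanism used for the $SO(k)$-orbits in Lemma \ref{k-1lines split}. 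Together with the $k-1$ flat directions from Lemma \ref{k-1lines split}, every component of the curvature operator vanishes at $(x_\infty,0)$. This contradicts $R_\infty(x_\infty,0)=1$.

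That is the paper's proof. It uses neither Hamilton's Harnack inequality, nor a classification of the factor $N$, nor any curvature upper bound in the cylindrical region.
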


\begin{proof}
We argue by contradiction. Suppose there exists a sequence $-t_i\to\infty$ and points $x_{t_i}$ satisfying $R(x_{t_i}, t_i) = R_{\max}(t_i)$ such that 
\begin{equation}
    x_{t_i}\in \{x \mid G(x,t)\geq\delta\sqrt{-2(n-k-1)t}\}.
\end{equation}
We then have 
\begin{align}\label{G-bound-cyl}
   G(x_{t_i},t_i)\geq \delta\sqrt{-2(n-k-1)t_i}.
\end{align}
Since the ancient oval is a backward Type II solution, it follows that 
\begin{align}\label{tRtoinfinity}
    \lim_{i\to\infty}(-t_i)R(x_{t_i}, t_i)\to \infty.
\end{align}
Hence, by \eqref{G-bound-cyl} and \eqref{tRtoinfinity}, we obtain
\begin{align}\label{rescaled-G-infty}
    \lim_{i\to\infty}R(x_{t_i}, t_i)^{\frac{1}{2}} G(x_{t_i},t_i) \geq \lim_{i\to\infty} \delta\sqrt{-2(n-k-1)t_i}R(x_{t_i}, t_i)^{\frac{1}{2}}\to \infty.
\end{align}
We consider the rescaled Ricci flows $(M,g_i(t);(x_{t_i}, t_i))$ around $(x_{t_i}, t_i)$. By the compactness of $\kappa$-solutions, this sequence of Ricci flows converges to a limiting Ricci flow $(M_\infty,g_\infty(t);(x_\infty, 0))$ satisfying 
\begin{align}\label{R-infty-1}
    R(x_\infty, 0) = 1.
\end{align}
However, by the argument from \eqref{second fundamental=F_z/F} to the end of the proof of Lemma \ref{k-1lines split}, the limit $(M_\infty,g_\infty(t);(x_\infty, 0))$ contains at least $k-1$ straight lines. Moreover, since the second fundamental form of the $SO(n-k+1)$-orbit satisfies
\begin{align}
\textrm{I\!I}(e_\alpha,e_\beta)=-\frac{G_z}{G}\delta_{\alpha\beta}\,\partial_z,
\end{align}
and the intrinsic sectional curvature of the $SO(n-k+1)$-orbit is $K_{\mathrm{orbit}} = \frac{1}{G^2}$, and by \eqref{rescaled-G-infty}, we know that the rescaled $SO(n-k+1)$-orbits become flat after taking limit. Together with the above $k-1$ lines splitting, this implies that the limit $(M_\infty,g_\infty(t);(x_\infty, 0))$ must be flat, which contradicts \eqref{R-infty-1}. This completes the proof.
\end{proof}

\begin{defn}
   Let $\left(x_0, t_0\right)$ be a point in spacetime with $R\left(x_0, t_0\right)=r^{-2}$. We say that $\left(x_0, t_0\right)$ lies at the center of an evolving $\varepsilon$-cylinder if, after rescaling by the factor $r^{-2}$, the parabolic neighborhood $B_{g\left(t_0\right)}\left(x_0, \varepsilon^{-1} r\right) \times\left[t_0- \varepsilon^{-1} r^2, t_0\right]$ is $\varepsilon$-close in $C^{\left[\varepsilon^{-1}\right]}$ to a family of shrinking cylinders $\mathbb R^{k} \times {{S}}^{n-k}(\sqrt{2(n-k-1)(\frac{n-k}{2}-t)})$. 
\end{defn}

\begin{prop}\label{epsilon-bubble-sheet}
    Given $\varepsilon>0$ and $\delta>0$, we can find a time $t_0=t_0(\varepsilon, \delta)$ such that the following holds. Suppose $(x, t)$ is a point in spacetime such that $t \leq t_0$, and the radius of the sphere of symmetry through $(x, t)$ is at least $\delta \sqrt{-2(n-k-1) t}$, i.e., $G(x,t)\geq \delta \sqrt{-2(n-k-1) t}$. Then, $(x, t)$ lies at the center of an evolving $\varepsilon$-cylinder.
\end{prop}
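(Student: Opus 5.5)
We argue by contradiction and compactness, following \cite[Proposition 3.4]{ABDS}. Suppose the statement fails for some $\varepsilon>0$ and $\delta>0$. Then there are points $(x_i,t_i)$ with $t_i\to-\infty$ and $G(x_i,t_i)\geq \delta\sqrt{-2(n-k-1)t_i}$ such that no $(x_i,t_i)$ lies at the center of an evolving $\varepsilon$-cylinder. We rescale by the scalar curvature,
\[
g_i(t)=R(x_i,t_i)\,g\bigl(t_i+R(x_i,t_i)^{-1}t\bigr).
\]
By Perelman's compactness theorem for $\kappa$-solutions, a subsequence converges smoothly to a $\kappa$-solution $(M_\infty,g_\infty(t),x_\infty)$ with $R_\infty(x_\infty,0)=1$. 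The goal is to show that this limit is the shrinking cylinder $\mathbb R^k\times S^{n-k}$, normalized to match the definition of an evolving $\varepsilon$-cylinder. Smooth convergence on the parabolic neighborhood $B(x_\infty,\varepsilon^{-1})\times[-\varepsilon^{-1},0]$ then gives the contradiction.

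\emph{Step 1: line splitting.} By Lemma~\ref{k-1lines split}, the limit contains at least $k-1$ straight lines. By the strong maximum principle and Toponogov splitting, it splits isometrically as $\mathbb R^{k-1}\times N^{n-k+1}$, where $N$ is an $(n-k+1)$-dimensional $\kappa$-solution. The $SO(n-k+1)$-symmetry passes to the limit as an isometric action on $N$ whose orbits are $(n-k)$-spheres. These spheres have rescaled radius $R(x_i,t_i)^{1/2}G(x_i,t_i)$.

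\emph{Step 2: two-sided curvature bound at $(x_i,t_i)$.} The claim \eqref{curvature-lower-bound} in the proof of Lemma~\ref{k-1lines split} gives the lower bound $R(x_i,t_i)\geq A/(-t_i)$. We also need an upper bound $R(x_i,t_i)\leq C/(-t_i)$, and this is the main obstacle. The plan for it is as follows.
\begin{itemize}
\item Use concavity of $G$ with $-1\leq G_z\leq 0$, together with $G\geq\delta\sqrt{-t}$, to place the point within distance comparable to $\sqrt{-t}$ of the region where the tangent-flow convergence (Proposition~\ref{tangent-flow}) holds. Concretely, we compare $z(x_i)$ with $z_{\max}$ and with $\sup\{z: G\geq \tfrac{\delta}{2}\sqrt{-t}\}$.
\item Alternatively, argue by contradiction. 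If $(-t_i)R(x_i,t_i)\to\infty$, then $R^{1/2}G\to\infty$ as in \eqref{rescaled-G-infty}. The argument of Lemma~\ref{Rmax-tip} then shows that the $S^{n-k}$ orbits also become flat in the limit. Combined with Step~1, the limit would be flat, contradicting $R_\infty(x_\infty,0)=1$.
\end{itemize}
We would try the second route first, since it reuses Lemma~\ref{Rmax-tip} verbatim. With both bounds in hand, the rescaled orbit radius $R^{1/2}G$ stays in a compact subset of $(0,\infty)$.

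\emph{Step 3: identification of $N$.} Since the orbits have bounded radius, $N$ is either
\begin{itemize}
\item compact and rotationally symmetric, or
\item noncompact and rotationally symmetric with orbits of bounded-below radius near $x_\infty$.
\end{itemize}
To exclude the compact case and the Bryant-type alternatives, we use the fact that the orbit radius stays comparable to $\sqrt{-t}$ on backward parabolic neighborhoods of size $\varepsilon^{-1}R^{-1}\ll -t_i$. We also use $G_z^2\leq 1-c_0$ and $|G_z|\to0$: both follow from concavity, once the point is located at distance $O(\sqrt{-t})$ from the center as in Step 2. Together these force $N=\mathbb R\times S^{n-k}$. Equivalently, with the curvature bound from Step 2, one can rescale by $(-t_i)^{-1}$ instead. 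The limit is then the tangent flow $\mathbb R^k\times S^{n-k}(\sqrt{2(n-k-1)|t|})$ by Proposition~\ref{tangent-flow} and the uniqueness in Lemma~\ref{blowdown is cylinder}, as in the first part of the proof of Lemma~\ref{k-1lines split}. Renormalizing to $R=1$ gives the evolving cylinder, which completes the contradiction.
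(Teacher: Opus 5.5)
\textbf{Verdict: genuine gap in Step 3.} Steps 1 and 2 are sound. In particular, getting $R(x_i,t_i)\le C/(-t_i)$ by rerunning the flatness argument of Lemma \ref{Rmax-tip} is a legitimate, more elementary substitute for the paper's bound $R\le C/G^2$, which the paper obtains from the classification in \cite{BN-noncompact}.

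\textbf{Where Step 3 fails.} There you need the limit to split off a $k$-th line in the $z$-direction. You try to get it by placing $x_i$ at distance $O(\sqrt{-t_i})$ from the center $\mathcal O$, then invoking Proposition \ref{tangent-flow} or $|G_z|\to 0$.
\begin{itemize}
\item Neither Step 2 nor concavity gives this localization, and it is false in general. The set $\{G\ge\delta\sqrt{-2(n-k-1)t}\}$ reaches out to $z\approx 2\sqrt{1-\delta^2}\sqrt{(-t)\log(-t)}\gg\sqrt{-t}$ (Proposition \ref{intermediate region asymptotics}). Covering exactly this far region is the content of the proposition.
\item Proposition \ref{tangent-flow} only controls an $O(\sqrt{-t})$-neighborhood of $q$. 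Uniqueness of the tangent flow concerns blowdowns based at points of bounded reduced distance. So rescaling by $(-t_i)^{-1}$ around a far-away $x_i$ identifies nothing.
\item $|G_z|\to 0$ at such points does not follow from concavity; it is essentially part of what is to be proved. In the paper it comes downstream of this proposition, quantitatively via Brendle's barrier (Propositions \ref{G-gradient} and \ref{g_xi-barrier}). Their hypothesis on $r_{\max}$ comes from Lemma \ref{r-up}, which itself uses the present proposition, so the argument would be circular.
\item The observation that the rescaled orbit radius stays bounded on backward parabolic neighborhoods excludes neither a compact $N$ nor the Bryant soliton. Both have bounded orbit radius near any non-tip point normalized to $R=1$.
\end{itemize}

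\textbf{The missing idea is a global input.} Combine your upper bound with the Type II property $(-t)R_{\max}(t)\to\infty$ and Lemma \ref{Rmax-tip}.
\begin{enumerate}
\item This gives $R(x_{t_i},t_i)/R(x_i,t_i)\to\infty$ for maximal-curvature points $x_{t_i}$. Since $G$ is decreasing, these lie on the tip side of $x_i$.
\item Perelman's long-range curvature estimate then puts $x_{t_i}$ at rescaled distance $A_i\to\infty$ from $x_i$.
\item On the other side, either $z(x_i)\le L\sqrt{-t_i}$, where Proposition \ref{tangent-flow} applies directly, or $z(x_i)R(x_i,t_i)^{1/2}\to\infty$.
\item In the latter case, the minimizing $z$-segment from $\mathcal O$ through $x_i$ to $x_{t_i}$ converges to a line. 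Cheeger--Gromoll splits off the $k$-th factor.
\item The limit is then $\mathbb R^k\times N^{n-k}$ with $N$ homogeneous under $SO(n-k+1)$, hence a round shrinking sphere.
\end{enumerate}
This is the paper's route, after \cite[Proposition 3.4]{ABDS}.

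Alternatively, classify $N$ as the cylinder or the Bryant soliton via \cite{BN-noncompact}, and rule out Bryant. A Bryant tip would place the collapsing $SO(n-k+1)$-orbit within rescaled distance $O(1)$ of $x_i$, together with the whole tip region containing the maximal curvature. That region would then have curvature $O(1/(-t_i))$, contradicting Type II. Either way, this input is absent from your proposal.
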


\begin{proof}
By Lemma \ref{k-1lines split}, any limit flow of a sequence of rescaled Ricci flows contains $k-1$ straight lines. Thus, the curvature components $\frac{1-F_z^2}{F^2}$, $-\frac{F_{zz}}{F}$, and $-\frac{F_zG_z}{FG}$ vanish at the limit. We argue by contradiction. Suppose that there exists a sequence of points $(x_i,t_i)$ in spacetime with the following properties: 
\begin{itemize}
\item $t_i \to -\infty$.
\item The sphere of $SO(n-k+1)$-symmetry through $(x_i,t_i)$ has radius at least $\delta \sqrt{-2(n-k-1)t_i}$. 
\item The point $(x_i,t_i)$ does not lie at the center of an evolving $\varepsilon$-cylinder.
\end{itemize}

By assumption, the sphere of $SO(n-k+1)$-symmetry through $(x_i,t_i)$ has radius 
\begin{equation}
    r_i \geq \delta \sqrt{-2(n-k-1)t_i}.
\end{equation}
 At the point $(x_i,t_i)$, the second fundamental form of the $SO(n-k+1)$-orbit satisfies
\begin{align}
\textrm{I\!I}(e_\alpha,e_\beta)=-\frac{G_z}{G}\delta_{\alpha\beta}\,\partial_z,
\end{align}
and the intrinsic sectional curvature of the $SO(n-k+1)$-orbit is $K_{\mathrm{orbit}} = \frac{1}{G^2}$. Thus, by the Gauss formula, the sectional curvature of the plane tangent to the sphere of $SO(n-k+1)$-symmetry is bounded from above by $r_i^{-2}$. We denote the curvature operator component $-\frac{G_{zz}}{G}$ by $K_{\text{z}}$. 
We first prove that there exists a uniform constant $C>0$, such that $R(x,t)\leq \frac{C}{G(x,t)^2}$ for all $G(x,t)\geq \delta \sqrt{-2(n-k-1) t}$ and $t\leq t_0$. We argue by contradiction, suppose the claim fails, we can find a sequence $(x_j,t_j)\in \{(x,t)|G(x,t)\geq \delta \sqrt{-2(n-k-1) t}\}$ and $t_j\to -\infty$ such that 
\begin{align}\label{RG-infty}
    R(x_j,t_j)G(x_j,t_j)^2\to\infty.
\end{align}
By compactness, the sequence of rescaled Ricci flows $(M, g_j(t); (x_j, t_j))$ converges to a limit flow $(M_\infty, g_\infty; x_\infty)$. By Lemma \ref{k-1lines split}, the limit flow splits as 
$(\mathbb R^{k-1}\times N, g_{\mathbb R^{k-1}}+h; x_\infty)$, where $(N,h;x_\infty)$ is a rotationally symmetric noncompact $\kappa$-solution with nonnegative curvature operator. Then by the classification results in \cite{BN-noncompact}, $(N, h(s); x_\infty)$ must be either  the $(n-k+1)$-dimensional Bryant soliton or the  shrinking cylinder $\mathbb{R} \times S^{n-k}$. In both case, we have $G_\infty(x_\infty)\leq C$, which contradicts \eqref{RG-infty}. Consequently, $K_{\text{z}}$ at $(x_i,t_i)$ satisfies $K_{\text{z}}(x_i,t_i) \leq r_i^{-2} \leq \frac{1}{\delta^2 (-2(n-k-1)t_i)}$.

Since the point $(x_i,t_i)$ does not lie at the center of an evolving $\varepsilon$-cylinder, we must have $\liminf_{i \to \infty} R(x_i,t_i)^{-1} \, K_{\text{z}}(x_i,t_i) > 0$. Since $(-t_i) \, K_{\text{z}}(x_i,t_i) \leq \frac{1}{2(n-k-1)\delta^2}$, it follows that $\limsup_{i \to \infty} (-t_i) \, R(x_i,t_i) < \infty$. On the other hand, let $x_{t_i}$ be the point such that $R(x_{t_i}, t_i) = R_{\max}(t_i)$. By Lemma \ref{Rmax-tip}, $x_{t_i}\notin \{x \mid G(x,t)\geq \delta \sqrt{-2(n-k-1) t}\}$. Moreover, we have $(-t_i) \, R(x_{t_i}, t_i) \to \infty$. This gives $R(x_i,t_i)^{-1} \, R(x_{t_i}, t_i) \to \infty$. By Perelman's long-range curvature estimate, the distance of $(x_i,t_i)$ from $(x_{t_i},t_i)$ is bounded from below by $A_i \, R(x_i,t_i)^{-\frac{1}{2}}$, where $A_i \to \infty$. Hence, if we dilate the flow around the point $(x_i,t_i)$ by the factor $R(x_i,t_i)$ and pass to the limit, the limit contains a straight line in the $z$-direction. By the Cheeger--Gromov splitting theorem, the limit splits as a product. Therefore, the limit flow contains $k$ lines. This contradicts the fact that 
\begin{align}
\liminf_{i \to \infty} R(x_i,t_i)^{-1} \, K_{\text{z}}(x_i,t_i) > 0,
\end{align}
which yields a contradiction and completes the proof.
\end{proof}

We further show that the blowdown sequence at the tip points where  $\{G=0\}$ converges to the product of $\mathbb R^{k-1}$ with the Bryant soliton.

\begin{prop}\label{tip-asymptotics} 
If we dilate the flow around the points $(x_i, t_i)$ which satisfy $R(x_i,t_i)^{\frac{1}{2}}G(x_i,t_i)\to 0$ by the factor $R(x_i, t_i)$, then (after passing through a subsequence) the rescaled flows converge to $\mathbb R^{k-1}$ times the Bryant soliton.
\end{prop}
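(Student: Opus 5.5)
The plan is to combine Perelman's compactness for $\kappa$-solutions with a line-splitting argument and the classification of rotationally symmetric noncompact $\kappa$-solutions.

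\textbf{Step 1: extract a limit.} Since $(M,g(t))$ is a $\kappa$-solution, Perelman's compactness theorem gives, after passing to a subsequence, a pointed limit $(M_\infty,g_\infty(t);x_\infty)$ of the rescaled flows $g_i(t)=R(x_i,t_i)g(t_i+R(x_i,t_i)^{-1}t)$. This limit is a $\kappa$-solution with $R_\infty(x_\infty,0)=1$.

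\textbf{Step 2: locate the points relative to the cylindrical region.} The hypothesis $R(x_i,t_i)^{1/2}G(x_i,t_i)\to 0$ forces $G(x_i,t_i)\le \delta\sqrt{-t_i}$ for every fixed $\delta$ and large $i$. Indeed, by Proposition \ref{epsilon-bubble-sheet}, points with $G\ge\delta\sqrt{-2(n-k-1)t}$ are centers of $\varepsilon$-cylinders, and there $R^{1/2}G$ is close to a fixed positive constant. So the $x_i$ lie in the tip region.

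\textbf{Step 3: split off $k-1$ lines.} I want to show $R(x_i,t_i)^{1/2}F(x_i,t_i)\to\infty$ and then rerun the orbit-flattening argument from \eqref{second fundamental=F_z/F} onward in the proof of Lemma \ref{k-1lines split}. Using $0\le F_z\le1$ from \eqref{gradient-sign}, $F$ is nondecreasing in $z$. The tip points lie at $z\ge z_q+L\sqrt{-t_i}$ for every $L$, since the parabolic region is cylindrical by Proposition \ref{tangent-flow}. Combined with the convergence $F\approx z$ on the parabolic scale, this gives $F(x_i,t_i)\ge c L\sqrt{-t_i}$. It remains to bound $R(x_i,t_i)$ from below by $A/(-t_i)$. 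Here $R\ge K_G=(1-G_z^2)/G^2$, and at points where $|G_z|$ stays away from $1$ this is $\gg 1/(-t_i)$ because $G\ll\sqrt{-t_i}$. Where $G_z\to -1$, I would instead use that the curvature maximum lies in the tip (Lemma \ref{Rmax-tip}), together with Perelman's derivative estimates $|\nabla R|\le CR^{3/2}$ over the distance $\lesssim R^{-1/2}$ from $\{G=0\}$. Hence $R^{1/2}F\to\infty$, the $SO(k)$-orbits become flat and totally geodesic with injectivity radius tending to infinity, and $M_\infty=\mathbb R^{k-1}\times N$.

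\textbf{Step 4: identify $N$.} $N$ is an $(n-k+1)$-dimensional $SO(n-k+1)$-invariant $\kappa$-solution with nonnegative curvature operator, hence rotationally symmetric. It is noncompact, because the diameter in rescaled units is at least $R^{1/2}z_{\max}\to\infty$ by Lemma \ref{diameter}-type reasoning. By \cite{BN-noncompact}, $N$ is either the Bryant soliton or a shrinking round cylinder $\mathbb R\times S^{n-k}$ (or its quotient). The cylinder is excluded because the limit orbit radius at $x_\infty$ is $\lim R^{1/2}G=0$: in the limit $x_\infty$ lies on a singular orbit where $G_\infty=0$, which a cylinder $\mathbb R\times S^{n-k}$ has no such point. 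The Bryant soliton does have its tip on the fixed point of $SO(n-k+1)$. Thus the limit is $\mathbb R^{k-1}$ times the Bryant soliton.

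\textbf{Main obstacle.} I expect the hardest step to be the lower bound on $R^{1/2}F$ in Step 3, that is, controlling $R$ from below near the tip uniformly along arbitrary sequences. If the derivative-estimate approach fails, I would first try proving $R(x,t)\ge A/(-t)$ on all of $M$ by a contradiction argument. The idea is that a limit around a point with $(-t)R\to0$ would be flat in every direction, via the $F$- and $G$-orbit flattening, contradicting the strong maximum principle / positivity of curvature in the limit together with the compactness normalization.
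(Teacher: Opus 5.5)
Your overall architecture, which splits off $k-1$ lines directly at $(x_i,t_i)$, identifies the remaining factor via \cite{BN-noncompact}, and excludes the cylinder because $G_\infty(x_\infty)=0$, is sound. However, Step 3 leaves unproved the one estimate everything rests on: $(-t_i)R(x_i,t_i)\ge A>0$.

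\textbf{Where the estimate is needed.} You need it for $R^{1/2}F\to\infty$, since you only know $F(x_i,t_i)/\sqrt{-t_i}\to\infty$, with no rate. You also need it for the noncompactness of $N$ in Step 4. Lemma \ref{diameter} concerns $R_{\max}$, not $R(x_i,t_i)$, whereas what is required there is $R(x_i,t_i)^{1/2}z(x_i)\to\infty$.

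\textbf{Why your first route fails.} It breaks down exactly when $G_z(x_i,t_i)\to-1$. Concavity of $G$ and Perelman's derivative estimate do put $x_i$ within $o(R(x_i,t_i)^{-1/2})$ of the singular orbit $\{G=0\}$, so $R(x_i,t_i)$ is comparable to $R$ there. But nothing bounds $R$ from below on $\{G=0\}$. Lemma \ref{Rmax-tip} only places the maximum point somewhere in $\{G<\delta\sqrt{-2(n-k-1)t}\}$. That region's length is not controlled in units of $R_{\max}^{-1/2}$. Relating $R$ on $\{G=0\}$ to $R_{\max}$ is precisely what the second half of the paper's proof, the bound \eqref{tip-maximal-close}, is devoted to.

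\textbf{Why your fallback fails as stated.} Suppose $(-t_j)R(x_j,t_j)\to0$. Since $G\le r_{\max}(t)\le C\sqrt{-t}$ (Lemma \ref{r-up}), every $SO(n-k+1)$-orbit has rescaled radius at most $C\sqrt{(-t_j)R(x_j,t_j)}\to0$. These orbits therefore \emph{collapse} rather than flatten. You get no flat limit, and the contradiction you describe does not arise.

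\textbf{How to repair it.} The collapse is exactly what fixes the argument. After rescaling by $R(x_j,t_j)$, Perelman's bounded-curvature-at-bounded-distance estimate bounds $|\mathrm{Rm}|$ on a ball of fixed radius $r_0$ around $x_j$. Integrating over the orbit space $dz^2+F^2g_{S^{k-1}}$, using $0\le F_z\le1$, shows this ball has volume at most $C r_0^{k}\epsilon_j^{\,n-k}$, where $\epsilon_j=C\sqrt{(-t_j)R(x_j,t_j)}\to0$. This contradicts $\kappa$-noncollapsing. Hence $R\ge A/(-t)$ on all of $M$ for $-t$ large, and with that your Steps 3 and 4 go through.

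\textbf{Comparison with the paper.} Once repaired, your route is genuinely different from the paper's. The paper avoids needing a lower bound at arbitrary tip points. It first treats maximal-curvature points at times chosen by Hamilton's trick: the Type II property gives $(-t)R_{\max}\to\infty$ for free, and the equality case of the Harnack inequality makes the limit a steady soliton, hence $\mathbb R^{k-1}$ times Bryant. It then proves that every point with $R^{1/2}G\to0$ stays at bounded rescaled distance from a maximum point. Your direct argument bypasses both the Harnack/steady-soliton step and that distance comparison, and handles arbitrary sequences at once.
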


\begin{proof}
We first show that the lemma holds for the maximal scalar curvature points $(x_t, t)$. Let $\Phi(t):=|t|R_{\max}(t)$. By the Type II assumption,
\begin{align}
\Phi(t)\to\infty
\qquad\text{as }t\to-\infty.
\end{align}
Choose $A_j\to\infty$ and choose $t_j\in[-A_j,0)$ such that
\begin{align}
\Phi(t_j)=\max_{t\in[-A_j,0]}\Phi(t).
\end{align}
Then $t_j\to-\infty$ and
\begin{align}\label{type2-condition}
Q_j|t_j|=R_{\max}(t_j)|t_j|=\Phi(t_j)\to\infty.
\end{align}
Choose $x_{t_j}\in M$ such that
\begin{align}
R(x_{t_j},t_j)=R_{\max}(t_j)=Q_j.
\end{align}
Define $g_j(s):=Q_j\,g\!\left(t_j+\frac{s}{Q_j}\right)$.
Since the original flow exists for $t\in(-\infty,0)$, the rescaled flow exists for $s\in(-\infty,-Q_jt_j)$. Hence, by \eqref{type2-condition}, the rescaled time intervals exhaust $\mathbb R$.
For the rescaled scalar curvature,
\begin{align}
R_j(y,s):=R_{g_j}(y,s)
=\frac{1}{Q_j}R\!\left(y,t_j+\frac{s}{Q_j}\right).
\end{align}
At the base point,
\begin{align}
R_j(x_{t_j},0)=1.
\end{align}
Hamilton's trace Harnack inequality for ancient solutions with a nonnegative curvature operator gives
\begin{align}
\partial_t R\ge 0.
\end{align}
Therefore, $R_{\max}(t)$ is nondecreasing in $t$. Hence, for $s\leq 0$,
\begin{align}
R_j(y,s)
\leq \frac{R_{\max}\!\left(t_j+\frac{s}{Q_j}\right)}{Q_j}
\leq \frac{R_{\max}(t_j)}{Q_j}
=1.
\end{align}
For $s\geq 0$ fixed and $j$ sufficiently large,
\begin{align}
t_j+\frac{s}{Q_j}\in[t_j,0]\subset[-A_j,0].
\end{align}
Therefore, by the choice of $t_j$,
\begin{align}
\Phi\!\left(t_j+\frac{s}{Q_j}\right)
\le \Phi(t_j).
\end{align}
Thus,
\begin{align}
R_j(y,s)
\le
\frac{R_{\max}\!\left(t_j+\frac{s}{Q_j}\right)}{Q_j}
=
\frac{\Phi\!\left(t_j+\frac{s}{Q_j}\right)}
{\Phi(t_j)}
\cdot
\frac{|t_j|}{\left|t_j+\frac{s}{Q_j}\right|}
\le
\frac{|t_j|}{\left|t_j+\frac{s}{Q_j}\right|}.
\end{align}
Since $t_j<0$,
\begin{align}
\frac{|t_j|}
{\left|t_j+\frac{s}{Q_j}\right|}
=
\frac{|t_j|}{|t_j|-\frac{s}{Q_j}}
=
\frac{1}{1-\frac{s}{Q_j|t_j|}}.
\end{align}
Hence, for every fixed $S>0$, we have
\begin{align}
\sup_{M\times[-S,S]}R_j
\le
\frac{1}{1-\frac{S}{Q_j|t_j|}}
\to 1.
\end{align}
Because the curvature operator is nonnegative, $|\operatorname{Rm}_{g_j}|\le C(n)R_j$, so the curvatures of $g_j(s)$ are uniformly bounded on every compact time interval. Hamilton--Cheeger--Gromov compactness then provides a subsequence such that
\begin{align}
(M,g_j(s),(x_{t_j}, t_j))
\longrightarrow
(M_\infty,g_\infty(s),x_\infty)
\end{align}
smoothly on compact subsets for all $s\in\mathbb R$.
The limit is complete, eternal, $\kappa$-noncollapsed, has a bounded nonnegative curvature operator, and satisfies the condition $R_{g_\infty}(x_\infty,0)=1$.
Moreover, the previous curvature estimate gives
\begin{align}
R_{g_\infty}(y,s)\le 1
\qquad
\text{for all }(y,s)\in M_\infty\times\mathbb R.
\end{align}
Therefore, we obtain
\begin{align}
R_{g_\infty}(x_\infty,0)
=
\sup_{M_\infty\times\mathbb R}R_{g_\infty}.
\end{align}
Hamilton's trace Harnack inequality for the eternal limit implies that, for every vector field $V$,
\begin{align}
\partial_s R
+
2\langle\nabla R,V\rangle
+
2\operatorname{Ric}(V,V)
\ge 0.
\end{align}
Since $R$ attains a spacetime maximum at $(x_\infty,0)$, Hamilton's equality case for the Harnack inequality implies that the limit flow is a steady gradient Ricci soliton. Moreover, by \eqref{type2-condition}, $(x_t,t)$ satisfies the lower bound estimates on scalar curvature \eqref{curvature-lower-bound}. By the argument from \eqref{second fundamental=F_z/F} to the end of the proof of Lemma \ref{k-1lines split}, the rescaled $SO(k)$-orbits become asymptotically flat, which implies that $(M_\infty,g_\infty(s),x_\infty)$ contains at least $k-1$ straight lines. Thus, we have 
\begin{align}
    (M_\infty,g_\infty(s),x_\infty) = (N\times \mathbb R^{k-1}, h(s)+g_{\mathbb{R}^{k-1}}; x_\infty),
\end{align}
where $(N, h(s); x_\infty)$ is a rotationally symmetric steady gradient Ricci soliton, which is isometric to the Bryant soliton up to rescaling. Therefore, the lemma holds for the maximal scalar curvature points $(x_t, t)$.

Now, for the sequence of points $(x_i, t_i)$ satisfies
\begin{align}\label{scalar-G-0}
    R(x_i,t_i)^{\frac{1}{2}}G(x_i,t_i)\to 0.
\end{align}
We only need to prove that there exists a uniform constant $C>0$ such that  
\begin{align}\label{tip-maximal-close}
d^z_{R(x_{t_i}, t_i)g(t_i)}(x_i, x_{t_i})\leq C
\end{align}
for all sufficiently large $i$, where $d^z$ denotes the distance in the $z$-direction (by $SO(k)$-symmetry, we can always assume $p$ and $x_t$ have the same coordinate components on the $SO(k)$-orbit). Then, by compactness, the rescaled geodesic ball around $(x_i, t_i)$ must be contained in an enlarged rescaled geodesic ball around $x_{t_i}$, implying that the limiting flow of the sequence of rescaled Ricci flows around $(x_i, t_i)$ is isometric to the limiting flow of $(M, g_{x_{t_i}}(t), (x_{t_i},t_i))$ up to rescaling. Moreover, since the limiting flow of $(M, g_{x_{t_i}}(t), (x_{t_i},t_i))$ is $\mathbb R^{k-1}\times Bry_{n-k+1}$, then the scalar curvature is bounded from below in any bounded domain, which implies that the curvatures $R(x_{t_i}, t_i)$ and $R(x_{i}, t_i)$ are comparable. Thus, the lemma follows directly. We argue by contradiction: suppose that for $C_j\to\infty$, there exists a sequence of times $t_j\to-\infty$ such that 
\begin{align}
d^z_{R(x_{t_j}, t_j)g(t_j)}(x_j, x_{t_j}) > C_j.
\end{align}
We divide into two cases. In the first case, suppose that there exists a subsequence (we still denote by $(x_j, t_j)$), such that 
\begin{align}
    G(x_j, t_j)\leq G(x_{t_j}, t_j).
\end{align}
By convexity of $G$, we know the $z$-coordinate of $x_j$ is larger than $x_{t_j}$.
Then, by Lemma \ref{Rmax-tip}, 
\begin{align}
d^z_{R(x_{t_j}, t_j)g(t_j)}(\mathcal{O}, x_{t_j})\to\infty.
\end{align}
Thus, passing to the limit, the segment $\mathcal{O}x_{t_j}x_j$ will converge to a geodesic line in the $z$-direction in the limit space. Moreover, since $(x_{t_j}, t_j)$ is a point of maximal scalar curvature, we have 
\begin{align}
(-t_j)R(x_{t_j}, t_j)\to \infty.
\end{align}
By the argument from \eqref{second fundamental=F_z/F} to the end of the proof of Lemma \ref{k-1lines split}, the rescaled $SO(k)$-orbits become asymptotically flat. Thus, the limit of the rescaled Ricci flow $(M,g_j(t),(x_{t_j}, t_j))$ contains at least $k$ straight lines, which contradicts the fact that the limit is isometric to $\mathbb R^{k-1}$ times the Bryant soliton. In the second case, we may assume for all large $j$, we have 
\begin{align}
    G(x_j, t_j)\geq G(x_{t_j}, t_j).
\end{align}
In this case, we may find another sequence of $C_i'\to\infty$, such that 
\begin{align}
d^z_{R(x_{j}, t_j)g(t_j)}(x_j, x_{t_j}) > C_j'.
\end{align}
Since $(x_j, t_j)$ satisfies $R(x_j,t_j)^{\frac{1}{2}}G(x_j,t_j)\to 0$, we also have 
\begin{align}\label{xj-O-infty}
d^z_{R(x_{j}, t_j)g(t_j)}(\mathcal{O}, x_{t_j})\to\infty.
\end{align}
Passing to the limit, the segment $\mathcal{O}x_jx_{t_j}$ will converge to a geodesic line in the $z$-direction in the limit space. By Cheeger-Gromoll splitting theorem, the limiting space splits as  
\begin{align}
    \left(M_\infty, g_\infty(s); x_\infty\right) = \left(\mathbb R\times N', d\xi^2+h'(s);x_\infty\right)
\end{align}
with
\begin{align}\label{scalar-intermediate-1}
    R_{g_\infty}(x_\infty,0) = 1.
\end{align}
We recall that the metric is written as \eqref{g-metric}, function $F$ and $G$ depend only on $z$ in space. We perform the rescaling $z = z_j+ R(x_j,t_j)^{-\frac{1}{2}}\xi$, where $z_j$ is the $z$-coordinate of $x_j$, and $s = R(x_j,t_j)(t-t_j)$. Then we define the rescaled function 
\begin{align}
    F_j(\xi,s) = R(x_j,t_j)^{\frac{1}{2}}F(z_j+R(x_j,t_j)^{-\frac{1}{2}}\xi, t_j+ R(x_j,t_j)^{-1}s)
\end{align}
and 
\begin{align}
    G_j(\xi,s) = R(x_j,t_j)^{\frac{1}{2}}G(z_j+R(x_j,t_j)^{-\frac{1}{2}}\xi, t_j+ R(x_j,t_j)^{-1}s)
\end{align}
Thus, by convergence, we have $F_j(\xi,s)\to F_\infty(\xi,s)$ and $G_j(\xi,s)\to G_\infty(\xi,s)$. By the flatness of curvature in $\xi$ direction, we have $-\frac{G_{\infty,\xi\xi}}{G_{\infty}} = 0$ and $-\frac{F_{\infty,\xi\xi}}{F_{\infty}} = 0$, it follows that $G_\infty$ and $F_\infty$ are linear functions. Moreover, by \eqref{scalar-G-0}, $G_\infty = 0$ at $x_\infty$. Hence, by the completeness of the limit metric, $G_\infty = \xi$. 
For the metric on $S^{k-1}$ fiber in the limit metric $g_\infty$, it either blows up as a flat metric in $\mathbb R^{k-1}$, or converges to some  $F_{\infty}^2(\xi,t) g_{S^{k-1}}$. we can exclude the first case by \eqref{scalar-intermediate-1}. Thus the limit metric can be written as 
\begin{align}
    \left(M_\infty, g_\infty(t) = d\xi^2 + F_{\infty}^2(\xi,t) g_{S^{k-1}} + \xi^2 g_{S^{n-k}}; x_\infty\right).
\end{align}
By the completeness of the metric and $F_\infty$ is a linear function, $F_\infty$ must be some constant $c$ in space, that is, 
\begin{align}\label{F-cylinder}
    R(x_j,t_j)^{\frac{1}{2}}F(x_j,t_j)\to c. 
\end{align}
Now suppose that there exists a uniform constant $A>0$ such that \eqref{curvature-lower-bound} holds, by the argument from \eqref{second fundamental=F_z/F} to the end of the proof of Lemma \ref{k-1lines split}, the rescaled $SO(k)$-orbits become asymptotically flat, which contradicts \eqref{F-cylinder}. Thus, we may assume that 
\begin{align}
    (-t_j)R(x_j,t_j)\to 0.
\end{align}
By \eqref{xj-O-infty}, We can find a large $D>0$, such that for any $x\in B_{R(\mathcal{O},t_j)g(t_j)}(\mathcal{O},D)$ and any $y\in B_{R(x_j,t_j)g(t_j)}(x_j,D)$, the $z$-coordinate of $y$ is larger than $x$. By the convexity of $G$, we have $G(x, t_j)\geq G(y,t_j)$, while $\mathcal{O}$ lies in the cylindrical region, by Proposition \ref{epsilon-bubble-sheet}, $R(\mathcal{O}, t_j)\geq \frac{c}{-t_j}\geq R(x_j,t_j)$. Thus, we have 
\begin{align}
    R(\mathcal{O},t_j)^{\frac{1}{2}}G(x,t_j)\geq R(x_j,t_j)^{\frac{1}{2}}G(y,t_j).
\end{align}
Since $G_\infty = \xi$ is an unbounded function, $R(\mathcal{O},t_j)^{\frac{1}{2}}G(x,t_j)$ cannot converge to a constant uniformly on $B_{R(\mathcal{O},t_j)g(t_j)}(\mathcal{O},D)$, which contradicts the limit flow at $\mathcal{O}$ being a cylinder. This completes the proof of  this proposition.

\end{proof}

\begin{prop}\label{all-sequences-asymptotics}
Consider a sequence of times $t_i \rightarrow-\infty$ and an arbitrary sequence of points $x_i$ on an $SO(k)\times SO(n-k+1)$-symmetric ancient oval $(M^{n}, g(t))$. If we dilate the flow around the point $\left(x_i, t_i\right)$ by the scalar curvature $R\left(x_i, t_i\right)$, then (after passing through a subsequence) the rescaled flows converge either to a family of shrinking cylinders $\mathbb R^{k} \times S^{n-k}$ or to $\mathbb{R}^{k-1}$ times the Bryant soliton.
\end{prop}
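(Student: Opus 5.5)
The plan is to split an arbitrary sequence $(x_i,t_i)$, $t_i\to-\infty$, according to the size of the scale-invariant quantity $R(x_i,t_i)^{\frac12}G(x_i,t_i)$ and the ratio $G(x_i,t_i)/\sqrt{-t_i}$. After passing to a subsequence, exactly one of the following holds:
\begin{enumerate}[label=(\alph*)]
\item $G(x_i,t_i)\geq \delta\sqrt{-2(n-k-1)t_i}$ for some $\delta>0$;
\item $G(x_i,t_i)/\sqrt{-t_i}\to 0$ and $R(x_i,t_i)^{\frac12}G(x_i,t_i)\to 0$;
\item $G(x_i,t_i)/\sqrt{-t_i}\to 0$ and $R(x_i,t_i)^{\frac12}G(x_i,t_i)\to c\in(0,\infty]$.
\end{enumerate}

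In case (a), Proposition \ref{epsilon-bubble-sheet}, applied with $\varepsilon_j\to 0$ and the corresponding times $t_0(\varepsilon_j,\delta)$, shows that $(x_i,t_i)$ lies at the center of an evolving $\varepsilon_i$-cylinder with $\varepsilon_i\to 0$. Hence the curvature-rescaled flows converge to the shrinking cylinder $\mathbb R^{k}\times S^{n-k}$. In case (b), Proposition \ref{tip-asymptotics} applies verbatim and gives $\mathbb R^{k-1}$ times the Bryant soliton.

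The remaining case (c) is where the work lies. By compactness of $\kappa$-solutions we extract a limit $(M_\infty,g_\infty,x_\infty)$ with $R_{g_\infty}(x_\infty,0)=1$. I would first show that the limit splits off $k-1$ lines. For this I reuse the argument from \eqref{second fundamental=F_z/F} to the end of Lemma \ref{k-1lines split}, which requires $R(x_i,t_i)^{\frac12}F(x_i,t_i)\to\infty$. To obtain this, I would argue as in the proof of \eqref{RF-infty}:
\begin{itemize}
\item since $G/\sqrt{-t}\to 0$, the $z$-coordinate $z_i$ of $x_i$ satisfies $z_i/\sqrt{-t_i}\to\infty$ (otherwise the parabolic-region cylinder convergence of Proposition \ref{tangent-flow} would give $G\approx\sqrt{2(n-k-1)(-t)}$);
\item monotonicity of $F$ together with the cylinder convergence near $\mathcal O$ then gives $F(x_i,t_i)\gg\sqrt{-t_i}$;
\item the type II behaviour near the tip, i.e.\ $R\ge A/(-t)$, would then yield the claim.
\end{itemize}
The delicate point is that the lower bound \eqref{curvature-lower-bound} was proved only where $G\gtrsim\sqrt{-t}$. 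I would try to recover it from $R\ge K_G=(1-G_z^2)/G^2$, using $G_z^2\le 1$ and concavity of $G$. Where $G_z^2$ is close to $1$, I would instead exploit the alternative used in Proposition \ref{tip-asymptotics}: if $(-t_i)R(x_i,t_i)\to 0$, then comparing $G$ along the monotone segment from $\mathcal O$ gives a contradiction with the cylinder limit at $\mathcal O$, exactly as at the end of that proof.

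Once the $k-1$ lines split off, the limit is $\mathbb R^{k-1}\times N$ with $N$ an $(n-k+1)$-dimensional, rotationally symmetric ($SO(n-k+1)$-invariant, with $G_\infty$ the orbit radius) noncompact $\kappa$-solution with nonnegative curvature operator. Since the total dimension forces $N$ to be noncompact when $R^{\frac12}G$ stays bounded and positive, the classification of \cite{BN-noncompact} (as already invoked in Proposition \ref{epsilon-bubble-sheet}) gives that $N$ is either the Bryant soliton or the shrinking cylinder $\mathbb R\times S^{n-k}$. The limit is therefore either $\mathbb R^{k-1}\times$ Bryant or $\mathbb R^{k}\times S^{n-k}$.

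If instead $c=\infty$, the $SO(n-k+1)$-orbits also become flat in the limit, and the argument of Lemma \ref{Rmax-tip} shows the limit is flat, contradicting $R_{g_\infty}(x_\infty,0)=1$. So this subcase does not occur.

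The main obstacle is therefore establishing $R^{\frac12}F\to\infty$ in the intermediate regime (c), which is exactly where the curvature lower bound is not yet available. I would attack it first via the distance-to-tip argument: Perelman's long-range estimate combined with the fact that the maximal curvature points lie in the tip region (Lemma \ref{Rmax-tip}).
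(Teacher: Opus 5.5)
Your case split into (a), (b), (c) and your handling of (a) and (b) match the paper's proof. As written, however, the proposal does not prove case (c). You call the scalar curvature lower bound there ``the main obstacle'' and only list strategies you would try: the bound $R\ge K_G$, a comparison with the cylinder at $\mathcal O$, and Perelman's long-range estimate. None of these is carried out.

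The missing observation is that no such argument is needed, because the lower bound is part of the case hypothesis. In case (c), $R(x_i,t_i)^{\frac12}G(x_i,t_i)\ge c_0>0$ for large $i$ and $G(x_i,t_i)/\sqrt{-t_i}\to 0$, so
\[
(-t_i)\,R(x_i,t_i)\ \ge\ c_0^2\,\frac{-t_i}{G(x_i,t_i)^2}\ \longrightarrow\ \infty .
\]
Hence $(x_i,t_i)$ satisfies \eqref{curvature-lower-bound}, with $A$ in fact arbitrarily large. The argument of Lemma \ref{k-1lines split} from \eqref{second fundamental=F_z/F} onward then applies verbatim. This is exactly what the paper does.

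Equivalently, using your own first two bullet points, $F(x_i,t_i)/\sqrt{-t_i}\to\infty$ and $G(x_i,t_i)/\sqrt{-t_i}\to0$, so
\[
R^{\frac12}F=(R^{\frac12}G)\cdot\frac{F}{G}\ \ge\ c_0\,\frac{F}{G}\ \longrightarrow\ \infty .
\]
In particular, the scenario $(-t_i)R(x_i,t_i)\to0$ that you planned to exclude by comparison with the cylinder at $\mathcal O$ cannot occur in case (c) at all.

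With this line inserted, the rest of your case (c) is the paper's argument. The $SO(k)$-orbits become flat and the limit is $\mathbb R^{k-1}\times N$. The factor $N$ is $SO(n-k+1)$-symmetric, hence uniformly PIC, which is the hypothesis needed to invoke \cite{BN-noncompact}; you should state this explicitly. $N$ is also noncompact. A better reason than your dimension count is that the rescaled length $R(x_i,t_i)^{\frac12}z_i=\sqrt{(-t_i)R(x_i,t_i)}\cdot z_i/\sqrt{-t_i}\to\infty$, which yields a ray in the $N$-factor. So $N$ is the Bryant soliton or the cylinder. Your separate exclusion of $c=\infty$ via the argument of Lemma \ref{Rmax-tip} is fine.
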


\begin{proof}
By Perelman's compactness theorem for $\kappa$-solutions, we know that the sequence of rescaled flows around $(x_i, t_i)$ subsequentially converges to a limit $\kappa$-solution $(M_\infty, g_\infty(t))$. We divide this into two cases. First, assume there exists a constant $\delta>0$ such that the radii of the spheres of $SO(n-k+1)$-symmetry through $(x_i, t_i)$ are all at least $\delta \sqrt{-2(n-k-1) t}$, i.e., $G(x_i, t_i)\geq \delta \sqrt{-2(n-k-1) t}$. In this case, by Proposition \ref{epsilon-bubble-sheet}, the limit $\kappa$-solution $(M_\infty, g_\infty(t))$ must be the shrinking cylinder $\mathbb R^{k} \times S^{n-k}(\sqrt{2(n-k-1)|t|})$. Otherwise, we assume that 
\begin{align}\label{type2-sequence}
\frac{G(x_i, t_i)}{\sqrt{-t_i}}\to 0.
\end{align}
We divide the argument into two cases. In  the first case if this sequence of points $(x_i, t_i)$ satisfies $R(x_i,t_i)^{\frac{1}{2}}G(x_i,t_i)\to 0$, by Proposition \ref{tip-asymptotics}, the convergent limit is $\mathbb{R}^{k-1}$ times the $(n-k+1)$-dimensional Bryant soliton. In the second case, there exists a constant $c>0$, such that
\begin{align}
    R(x_i,t_i)^{\frac{1}{2}}G(x_i,t_i)\geq c,
\end{align}
combining this with \eqref{type2-sequence}, it holds
\begin{align}
    (-t_i)R(x_i,t_i)\to \infty,
\end{align}
which means $(x_i,t_i)$ satisfies the lower bound estimates on scalar curvature \eqref{curvature-lower-bound}. By the argument from \eqref{second fundamental=F_z/F} to the end of the proof of Lemma \ref{k-1lines split}, the rescaled $SO(k)$-orbits become asymptotically flat, which implies that the limiting flow $(M_\infty,g_\infty(s),x_\infty)$ contains at least $k-1$ straight lines. Thus, we have 
\begin{align}
    (M_\infty,g_\infty(s),x_\infty) = (N\times \mathbb R^{k-1}, h(s)+g_{\mathbb{R}^{k-1}}; x_\infty),
\end{align}
where $(N, h(s); x_\infty)$ is a rotationally symmetric noncompact $\kappa$-solution with nonnegative curvature operator and by symmetry, it is uniformly PIC. Then by the classification results in \cite{BN-noncompact}, $(N, h(s); x_\infty)$ must be either  the $(n-k+1)$-dimensional Bryant soliton or the  shrinking cylinder $\mathbb{R} \times S^{n-k}$. This completes the proof of  this proposition.
\end{proof}
Now, we conclude the proof of Theorem \ref{remark tangent flow} (Geometry of ancient ovals and compact non-self-similar $\kappa$-solutions) below.
\begin{proof}[Proof of Theorem \ref{remark tangent flow}]
    Now, the proof of Theorem \ref{remark tangent flow} follows from Lemma \ref{lemma-sphere}, Lemma \ref{4d tangent flow}, Lemma \ref{blowdown is cylinder}, and Proposition \ref{all-sequences-asymptotics}. 
\end{proof}

\begin{cor}[cf. {\cite[Corollary 2.5]{ABDS}}]
\label{scalar.curvature.at.tips.1}
Let $R_{\text{\rm tip}}(t)$ denote the scalar curvature at the tip region. Then, $\frac{d}{dt} R_{\text{\rm tip}}(t) \leq o(1) \, R_{\text{\rm tip}}(t)^2$.
\end{cor}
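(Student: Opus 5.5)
We follow the argument of \cite[Corollary 2.5]{ABDS}, using Proposition \ref{tip-asymptotics} as the replacement for the classification of the tip blowdown. First we fix what $R_{\text{tip}}(t)$ means: the scalar curvature at the tip point $p_t$, the point with $z=z_{\max}(t)$ where $G=0$. By the $SO(k)\times SO(n-k+1)$ symmetry, $R(\cdot,t)$ is constant along the singular orbit $\{G=0\}$, which is a round $S^{k-1}$ of radius $F(z_{\max},t)$. Hence $R_{\text{tip}}$ is a well-defined function of $t$. It is smooth in $t$, and $\frac{d}{dt}R_{\text{tip}}(t)=\partial_t R(p_t,t)$ computed in the $z$-gauge fixed at the tip. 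Equivalently, the time derivative is taken along the curve of tip points; since $\nabla R=0$ in the $z$-direction at $z_{\max}$ (even reflection) and also along the orbit, no correction term from the moving tip appears.

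The main step is to argue by contradiction. Suppose there exist $\eta>0$ and $t_i\to-\infty$ with $\frac{d}{dt}R_{\text{tip}}(t_i)\ge \eta R_{\text{tip}}(t_i)^2$. We rescale around $(p_{t_i},t_i)$ by $Q_i=R(p_{t_i},t_i)$, setting $g_i(s)=Q_i\,g(t_i+Q_i^{-1}s)$. At the tip $G=0$, so $Q_i^{1/2}G(p_{t_i},t_i)=0\to0$. Proposition \ref{tip-asymptotics} therefore applies, and after passing to a subsequence $g_i$ converges smoothly on compact subsets of space-time to $\mathbb R^{k-1}\times$ Bryant, with $p_{t_i}$ converging to a point $p_\infty$. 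Because the $SO(n-k+1)$-orbit degenerates at $p_{t_i}$, the limit point $p_\infty$ lies on the axis of symmetry of the Bryant factor, i.e.\ at its tip. The rescaled quantity $Q_i^{-2}\frac{d}{dt}R_{\text{tip}}(t_i)$ equals $\partial_s R_{g_i}(p_{t_i},0)$. Smooth convergence sends this to $\partial_s R_{g_\infty}(p_\infty,0)$.

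For a steady soliton written in the canonical gauge one has $\partial_s R=\langle\nabla R,\nabla f\rangle$. At the tip of the Bryant soliton times $\mathbb R^{k-1}$ (where $f$ is independent of the Euclidean factor) we have $\nabla f=0$. Hence $\partial_s R_{g_\infty}(p_\infty,0)=0$, which contradicts the lower bound $\eta>0$.

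The main obstacle is the gauge issue: the limit is a steady soliton only modulo diffeomorphisms, while our $\partial_t$ is taken in the symmetric coordinates. Two approaches are possible. One is to work directly with the Ricci flow equation $\partial_t R=\Delta R+2|\mathrm{Ric}|^2$, which is diffeomorphism invariant. At the tip this gives $Q_i^{-2}\partial_tR=\Delta_{g_i}R_{g_i}+2|\mathrm{Ric}_{g_i}|^2\to \Delta R_\infty+2|\mathrm{Ric}_\infty|^2$ at $p_\infty$. The second approach, which we intend to use, is to observe that on the steady soliton $\Delta R+2|\mathrm{Ric}|^2=\langle\nabla R,\nabla f\rangle$, and that this vanishes at the tip because both $\nabla R$ and $\nabla f$ vanish there by symmetry. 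This makes the argument gauge-free and completes the proof that $\frac{d}{dt}R_{\text{tip}}\le o(1)R_{\text{tip}}^2$.
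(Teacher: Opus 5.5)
Your proposal is correct and follows the paper's route: the paper's proof is just the remark that this is a direct consequence of Proposition \ref{tip-asymptotics} (as in \cite[Corollary 2.5]{ABDS}), and you supply the details. Passing to the limit through $\partial_s R=\Delta R+2|\mathrm{Ric}|^2$ at time $0$ is the right way to do it, and at $p_\infty$ this equals $\pm\langle\nabla R,\nabla f\rangle=0$ because $\nabla R$ vanishes there (already $R_z=0$ at $z_{\max}$ survives the limit), so you do not need $\nabla f=0$ or any normalization of $f$ on the $\mathbb{R}^{k-1}$ factor.
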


\begin{proof}
This is a direct consequence of Proposition \ref{tip-asymptotics}.
\end{proof}

\begin{cor}[cf. {\cite[Corollary 2.6]{ABDS}}]
\label{scalar.curvature.at.tips.2} 
We have $(-t) \, R_{\text{\rm tip}}(t) \to \infty$ as $t \to -\infty$. 
\end{cor}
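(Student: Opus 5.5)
We follow the argument of \cite[Corollary 2.6]{ABDS}, combining the differential inequality of Corollary \ref{scalar.curvature.at.tips.1} with the backward Type II property of the ancient oval. The key point is that Corollary \ref{scalar.curvature.at.tips.1} makes $1/R_{\rm tip}$ almost nondecreasing at unit rate, while Lemma \ref{Rmax-tip} and Proposition \ref{tip-asymptotics} identify $R_{\rm tip}$ with $R_{\max}$ up to a factor close to $1$.

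First, we compare $R_{\rm tip}$ with $R_{\max}$. By Lemma \ref{Rmax-tip}, the maximum points $x_t$ lie in the region $\{G<\delta\sqrt{-2(n-k-1)t}\}$. By Proposition \ref{tip-asymptotics}, the flow rescaled at $x_t$ converges to $\mathbb R^{k-1}\times$ Bryant soliton, and the tip $\{G=0\}$ lies at uniformly bounded rescaled distance from $x_t$. This is exactly estimate \eqref{tip-maximal-close} from that proof. Since the scalar curvature of the Bryant soliton is maximized at its tip, we get $R_{\rm tip}(t)=(1+o(1))R_{\max}(t)$ as $t\to-\infty$.

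Second, we use the backward Type II property. Ancient ovals are non-self-similar with noncompact cylindrical blowdown, so $\limsup_{t\to-\infty}(-t)R_{\max}(t)=\infty$. This is the Type II property already invoked in the proofs of Lemma \ref{Rmax-tip} and Proposition \ref{tip-asymptotics}. Combined with the first step, $\limsup_{t\to -\infty}(-t)R_{\rm tip}(t)=\infty$.

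Third, we upgrade the limsup to a limit. Corollary \ref{scalar.curvature.at.tips.1} gives
\[
\frac{d}{dt}\Big(\frac{1}{R_{\rm tip}}\Big)=-\frac{1}{R_{\rm tip}^2}\frac{d}{dt}R_{\rm tip}\ge -o(1).
\]
Hence for $t<t'$, both sufficiently negative, $R_{\rm tip}(t')^{-1}\ge R_{\rm tip}(t)^{-1}-\varepsilon(t'-t)$ with $\varepsilon$ small. Suppose, for contradiction, that $(-t_j)R_{\rm tip}(t_j)\le C$ along some sequence $t_j\to-\infty$. Given any large $\Lambda$, pick $s<t_j$ with $(-s)R_{\rm tip}(s)\ge\Lambda$; we may take $s$ with $|s|$ comparable to $|t_j|$ by choosing $t_j$ appropriately inside the sequence. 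Integrating from $s$ to $t_j$, with $s$ chosen so that the error $\varepsilon(t_j-s)$ is small relative to $|t_j|$, gives $R_{\rm tip}(t_j)^{-1}\le R_{\rm tip}(s)^{-1}+\varepsilon|s|\le(\Lambda^{-1}+\varepsilon)|s|$.

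The main obstacle is this third step. The inequality only controls $1/R_{\rm tip}$ from below going forward in time, so it propagates largeness of $R_{\rm tip}$ forward only if the times are comparable. If $|s|\gg|t_j|$, nothing is gained. To handle this, I would first try an alternative route following \cite{ABDS}: avoid the sequence selection and argue directly from the cylindrical region. Assume $(-t)R_{\rm tip}(t)\le C$ along a sequence. Then the region $G\ge\delta\sqrt{-t}$ extends up to the tip at scale $\sqrt{-t}$ with curvature $\sim 1/(-t)$. The rescaled flow at the tip would then be a noncompact $\kappa$-solution at the parabolic scale $\sqrt{-t}$, namely a cylinder by Proposition \ref{tangent-flow}. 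This is incompatible with $G=0$ at the tip, since by Proposition \ref{all-sequences-asymptotics} rescaling at the tip must give the Bryant factor. That yields the contradiction with only compactness arguments, and I would use it if the ODE comparison proves too lossy.
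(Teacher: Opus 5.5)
There is a genuine gap, and it is in your third step, which is the only step the statement actually requires. You correctly integrate Corollary \ref{scalar.curvature.at.tips.1} to get $R_{\rm tip}(t')^{-1}\ge R_{\rm tip}(t)^{-1}-\varepsilon(t'-t)$ for $t<t'\le t_\varepsilon$, where $t_\varepsilon$ is chosen so that the $o(1)$ is at most $\varepsilon$ on $(-\infty,t_\varepsilon]$. You then use it in the wrong direction. With $t=s<t'=t_j$ it gives only the lower bound $R_{\rm tip}(t_j)^{-1}\ge R_{\rm tip}(s)^{-1}-\varepsilon(t_j-s)$, not the upper bound $R_{\rm tip}(t_j)^{-1}\le R_{\rm tip}(s)^{-1}+\varepsilon|s|$ that you write. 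The inequality limits how fast $R_{\rm tip}$ can increase forward in time. It cannot carry largeness of $(-s)R_{\rm tip}(s)$ from an earlier time to a later one, so your contradiction argument fails, and the ``obstacle'' you describe comes from this reversal.

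Read the other way, the same inequality bounds $1/R_{\rm tip}$ from above going backward in time, which is exactly what is needed. Fix $t'=t_\varepsilon$ and let $t\to-\infty$:
\[
\frac{1}{R_{\rm tip}(t)}\le\frac{1}{R_{\rm tip}(t_\varepsilon)}+\varepsilon\,(t_\varepsilon-t)
\qquad\Longrightarrow\qquad
\limsup_{t\to-\infty}\frac{1}{(-t)\,R_{\rm tip}(t)}\le\varepsilon ,
\]
and $\varepsilon>0$ is arbitrary. This is the paper's one-line proof. It uses neither your comparison $R_{\rm tip}=(1+o(1))R_{\max}$ nor the Type II $\limsup$, so your first two steps can be dropped.

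Your fallback does not close the gap as written. Proposition \ref{tangent-flow} concerns rescaling by $(-t)^{-1/2}$ around the fixed point $q$, not around the tip. Nothing you cite shows that $(-t_j)R_{\rm tip}(t_j)\le C$ forces a cylindrical limit at the tip. Proposition \ref{tip-asymptotics} gives $\mathbb R^{k-1}\times$ Bryant at the tip regardless of that hypothesis, so there is no cylinder-versus-Bryant conflict to exploit.

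A compactness contradiction can be produced, but only with an extra input such as Lemma \ref{r-up}. Let $\phi$ be the warping function of the normalized Bryant factor; it is unbounded. At a fixed large rescaled distance $D$ from the tip one would have $G(\cdot,t_j)\ge(1-o(1))\,\phi(D)\,R_{\rm tip}(t_j)^{-1/2}\ge(1-o(1))\,\phi(D)\sqrt{(-t_j)/C}$. This exceeds $r_{\max}(t_j)\le(1+o(1))\sqrt{-2(n-k-1)t_j}$ as soon as $\phi(D)^2>2(n-k-1)C$.
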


\begin{proof}
This follows by integrating the differential inequality in Corollary \ref{scalar.curvature.at.tips.1}.
\end{proof}

\section{Evolution equations and barrier}\label{Evolution equations and barrier}
{In this section, we derive the evolution equations of profile functions, their renormalized profile functions and inverse profile function, and we also recall the barrier constructed in \cite{Bre-3d-noncpt}.}
\subsection{Evolution equation of profile functions $F(z, t)$ and $G(z, t)$}
Recall that the metric can be written as 
\begin{align}
    g(t)&= dz\otimes dz + F^2(z,t) g_{S^{k-1}}  + G^2(z,t)g_{S^{n-k}}.
\end{align}
Since metric $g(t)$ evolves under Ricci flow, the functions  $F(z, t)$ and $G(z, t)$ satisfy the following equations. 
\begin{prop} Profile functions $F$ and $G$ satisfy the following evolution equations.
\begin{align}\label{F-equation}
\begin{aligned}
    F_t &= F_{zz} - (k-2)\frac{1-F_z^2}{F} + (n-k)\frac{F_zG_z}{G} \\
    &- F_z \int_0^{z(t)} (k-1)\frac{F_{zz}}{F}(x,t)+(n-k)\frac{G_{zz}}{G}(x,t) dx
\end{aligned}
\end{align}
and
\begin{align}\label{G-equation}
\begin{aligned}
    G_t &= G_{zz} - (n-k-1)\frac{1-G_z^2}{G} + (k-1)\frac{F_zG_z}{F}\\
    &- G_z \int_0^{z(t)} (k-1)\frac{F_{zz}}{F}(x,t)+(n-k)\frac{G_{zz}}{G}(x,t) dx. 
\end{aligned}
\end{align}
\end{prop}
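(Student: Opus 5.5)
We will derive the equations by a direct computation of the Ricci tensor of the doubly warped product, followed by a reparametrization that keeps $z$ as the arclength coordinate measured from $\mathcal O$. The computation has three steps.

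\textbf{Step 1: Ricci flow in a fixed coordinate.} Let $s$ be a fixed (time-independent) coordinate on the orbit space $[0,s_{\max}]$. Write the metric as $g=\varphi(s,t)^2ds^2+F^2g_{S^{k-1}}+G^2g_{S^{n-k}}$, and set $\partial_z=\varphi^{-1}\partial_s$. In the frame $e_0,e_i,e_a$ introduced before \eqref{curvature operator formula}, the Ricci tensor is diagonal. Summing the appropriate diagonal entries of the curvature operator \eqref{curvature operator formula} gives
\begin{align*}
\operatorname{Ric}(e_0,e_0)&=-(k-1)\frac{F_{zz}}{F}-(n-k)\frac{G_{zz}}{G},\\
\operatorname{Ric}(e_i,e_i)&=-\frac{F_{zz}}{F}+(k-2)\frac{1-F_z^2}{F^2}-(n-k)\frac{F_zG_z}{FG},\\
\operatorname{Ric}(e_a,e_a)&=-\frac{G_{zz}}{G}+(n-k-1)\frac{1-G_z^2}{G^2}-(k-1)\frac{F_zG_z}{FG}.
\end{align*}
The Ricci flow $\partial_t g=-2\operatorname{Ric}$ then splits into three scalar equations:
\[
\partial_t F\big|_s=-F\operatorname{Ric}(e_i,e_i),\qquad
\partial_t G\big|_s=-G\operatorname{Ric}(e_a,e_a),\qquad
\partial_t\varphi\big|_s=-\varphi\operatorname{Ric}(e_0,e_0).
\]
The first two, written out, are exactly the local parts of \eqref{F-equation} and \eqref{G-equation}, namely $F_{zz}-(k-2)\frac{1-F_z^2}{F}+(n-k)\frac{F_zG_z}{G}$ and the analogous expression for $G$.

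\textbf{Step 2: Change to arclength.} Define $z(s,t)=\int_0^s\varphi(\sigma,t)\,d\sigma$. Since $\mathcal O$ is the fixed point of the $SO(k)$-action, it sits at $s=0$, so this $z$ is the distance from $\mathcal O$, matching the paper's normalization. Using the evolution of $\varphi$ from Step 1,
\[
\partial_t z\big|_s=\int_0^{z}\Big((k-1)\frac{F_{zz}}{F}+(n-k)\frac{G_{zz}}{G}\Big)dx .
\]
The chain rule gives $\partial_tF|_z=\partial_tF|_s-F_z\,\partial_tz|_s$, and the same for $G$. Substituting the expression for $\partial_t z|_s$ produces precisely the nonlocal terms in \eqref{F-equation} and \eqref{G-equation}.

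\textbf{Step 3: Boundary behaviour.} The integrands $F_{zz}/F$ and $G_{zz}/G$ are bounded up to the singular orbits by the closing conditions (i) and (ii). At $z=0$, $F$ is odd, so $F_{zz}=O(z)$ and $F_{zz}/F$ is bounded; at $z_{\max}$, $G$ is odd about $z_{\max}$, so $G_{zz}/G$ is bounded there. Hence the integral is finite and the equations hold on the whole interval.

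The only place where real care is needed is the sign and orientation bookkeeping: the integral must start at the $SO(k)$-fixed point $\mathcal O$, where $z=0$, and the sign of $-2\operatorname{Ric}(e_0,e_0)$ has to be tracked correctly through $\varphi$. Everything else is routine computation.
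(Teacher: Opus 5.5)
Your proof is correct and follows essentially the same route as the paper: read off the diagonal Ricci components from the curvature operator, get the evolution of $F$, $G$ and of the length element (the paper's $\frac{dz'}{dz}=-\operatorname{Ric}_{zz}$ is your $\varphi_t=-\varphi\operatorname{Ric}(e_0,e_0)$), integrate from $\mathcal O$ to obtain $\partial_t z$, and apply the chain rule. Your explicit time-independent coordinate $s$ and your check that $F_{zz}/F$ and $G_{zz}/G$ stay bounded at the singular orbits only make the paper's computation slightly more careful.
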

\begin{proof}
    
By \eqref{curvature operator formula}, we have
\begin{align}\label{Ricci}
    &{\rm Ric}_{zz} = -(k-1)\frac{F_{zz}}{F}-(n-k)\frac{G_{zz}}{G},\notag\\
    &{\rm Ric}_{S^{k-1}} =\biggl(\frac{(k-2)}{F^2}-\frac{F_{zz}}{F}-(k-2)\frac{F_z^2}{F^2}-(n-k)\frac{F_z G_z}{FG}\biggr)\;g_{S^{k-1}},\\
    &{\rm Ric}_{S^{n-k}} 
=\biggl(\frac{(n-k-1)}{G^2}-\frac{G_{zz}}{G}-(n-k-1)\frac{G_z^2}{G^2}-(k-1)\,\frac{F_zG_z}{FG}\biggr)\;g_{S^{n-k}}.\notag
\end{align}
On the other hand, we also have 
\begin{align*}
    \frac{\partial g}{\partial t} = 2\frac{dz'}{dz} dz\otimes dz + 2 \frac{F'}{F} F^2g_{S^{k-1}}+ 2 \frac{G'}{G} G^2g_{S^{n-k}}.
\end{align*}
By Ricci flow equation we have 
\begin{align*}
    &\frac{dz'}{dz} = (k-1)\frac{F_{zz}}{F}+(n-k)\frac{G_{zz}}{G},\\
    &\frac{F'}{F} = -\frac{(k-2)}{F^2}+\frac{F_{zz}}{F}+(k-2)\frac{F_z^2}{F^2}+(n-k)\frac{F_z G_z}{FG},\\
    &\frac{G'}{G} = -\frac{(n-k-1)}{G^2}+\frac{G_{zz}}{G}+(n-k-1)\frac{G_z^2}{G^2}+(k-1)\frac{F_z G_z}{FG}.
\end{align*}
Therefore
\begin{align*}
   & z' = \int_0^{z(t)} (k-1)\frac{F_{zz}}{F}(x, t)+(n-k)\frac{G_{zz}}{G}(x, t) dx.\\
    &F' = F_{zz}+(n-k)\frac{F_zG_z}{G}-\frac{(k-2)}{F}+(k-2)\frac{F_z^2}{F^2},\\
    &{G'}= G_{zz}-\frac{(n-k-1)(1-G_z^2)}{G}+(k-1)\frac{F_z G_z}{F}.
\end{align*}
Furthermore, by the chain rule, we get $F' = F_zz'+F_t$ and $G' = G_zz'+G_t$. Combining  the above equations, we get the desired evolution equations for $F$ and $G$. 
\end{proof}

We also have the following pointwise derivative estimates for $G$
\begin{prop}\label{Gmorder estimates}
For every $m \geq 0$, the   following  pointwise estimate for derivatives of $G$ holds  
$$
G(z, t)^m|\partial_z^{m+1} G(z, t)| \leq C(m)\left(1+G(z, t)\left|G_{z z}(z, t)\right|\right)^m. 
$$
\end{prop}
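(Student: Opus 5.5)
Although this is stated as a pointwise bound on $G$, I would prove it as a scale-invariant statement by contradiction plus compactness, in the style of the analogous estimate in \cite{ABDS}. Fix $m\ge 1$; the case $m=0$ is just $|G_z|\le 1$ from \eqref{gradient-sign}. Both sides are invariant under parabolic rescaling: if $g$ is replaced by $\lambda^2 g(\lambda^{-2}t)$, then $G\mapsto \lambda G$ and $\partial_z\mapsto \lambda^{-1}\partial_z$, so $G^m\partial_z^{m+1}G$ and $G G_{zz}$ are unchanged. Suppose the estimate fails. Then there are points $(z_i,t_i)$ with
\begin{equation*}
G^m|\partial_z^{m+1}G|\,\big/\,(1+G|G_{zz}|)^m\to\infty .
\end{equation*}

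\textbf{Step 1: choose the normalization.} I would rescale by $G(z_i,t_i)^{-2}$, so that the rescaled $G$ equals $1$ at the base point. Then $G|G_{zz}|$ becomes a curvature component $-G_{zz}/G$ of the rescaled metric. If it stays bounded, the rescaled $(m+1)$-st derivative blows up while the curvature stays bounded at the base point. If instead $G|G_{zz}|\to\infty$, I would rescale by $R(z_i,t_i)$ and use that $R$ is comparable to $|G_{zz}|/G$ plus the other components.

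\textbf{Step 2: obtain bounded geometry.} I need the rescaled flows to have curvature and all derivatives bounded on a backward parabolic ball of fixed size. This follows from Perelman's compactness of $\kappa$-solutions together with the pointwise estimates $|\nabla^\ell \mathrm{Rm}|\le C R^{1+\ell/2}$. The contradiction then comes from expressing $\partial_z^{m+1}G$ through curvature derivatives. Since $G^2$ is the warping factor of the $S^{n-k}$ orbit, the components $\frac{1-G_z^2}{G^2}$, $-\frac{G_{zz}}{G}$ and their $z$-derivatives are components of $\nabla^\ell\mathrm{Rm}$. Inductively one can write $\partial_z^{m+1}G$ as a polynomial in $G$, $G^{-1}$, $G_z$ and the $z$-derivatives of $G_{zz}/G$ up to order $m-1$. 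Each $\partial_z^\ell(G_{zz}/G)$ is bounded by $C R^{1+\ell/2}$.

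\textbf{Step 3: close the estimate by region.} This requires $R G^2\le C(1+G|G_{zz}|)$, which I will check region by region using Section~\ref{Preliminaries}.
\begin{itemize}
\item In the cylindrical region $G\ge\delta\sqrt{-t}$, Proposition~\ref{epsilon-bubble-sheet} and its proof give $R\le C/G^2$.
\item In the tip region $R^{1/2}G\to 0$, Proposition~\ref{tip-asymptotics} shows the rescaled picture is $\mathbb R^{k-1}\times$ Bryant soliton. There, where $G|G_{zz}|$ is small, $RG^2$ is small, and the $m$-th derivative bounds follow from the smooth limit.
\item In the intermediate region $R G^2\sim c$, Proposition~\ref{all-sequences-asymptotics} gives either the cylinder or the Bryant soliton times $\mathbb R^{k-1}$, again with $RG^2\le C$.
\end{itemize}
Combining these, $G^m|\partial_z^{m+1}G|\le C(RG^2)^{m/2}\cdot(\ldots)\le C(1+G|G_{zz}|)^m$. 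This contradicts the choice of $(z_i,t_i)$, since the limit flows are smooth.

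\textbf{Main obstacle.} The delicate part is the bookkeeping that expresses $\partial_z^{m+1}G$ via covariant curvature derivatives in a way that involves $F$ only through bounded quantities. The mixed terms $F_zG_z/(FG)$ appear in the Ricci components, and one must check that their $z$-derivatives are also controlled by $R^{1+\ell/2}$. They are, because these mixed terms are themselves curvature components of $g$. A second, simpler route I would try first is the one in \cite{ABDS}: directly use the blowdown classification Proposition~\ref{all-sequences-asymptotics} at scale $G^{-2}$. That approach needs to exclude degeneration when $G$ is much smaller than $R^{-1/2}$, which is near the tip where the flow is modeled on $\mathbb R^{k-1}\times$ Bryant. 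At such points $G$ is comparable to the distance to the tip, so the estimate is scale-invariant there and reduces to the smooth Bryant profile.
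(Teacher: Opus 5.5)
The core of your argument is the paper's. You write $\partial_z^{m+1}G$ in terms of $G$, $G_z$ and $\partial_z^{\ell}(G_{zz}/G)$ for $\ell\le m-1$, bound these by Perelman's estimate $CR^{1+\ell/2}$, and close with $RG^2\le C(1+G|G_{zz}|)$. The paper does the same, organized as an induction on $m$.

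Two remarks on presentation:
\begin{itemize}
\item The contradiction/compactness wrapper in Step 1 adds nothing, because Steps 2--3 are already a direct estimate.
\item The ``main obstacle'' you name is not one. Only the eigenvalue $-G_{zz}/G$ enters. Since $e_0,e_i,e_a$ are parallel along the $z$-geodesics, $\partial_z^{\ell}(-G_{zz}/G)$ is literally a component of $\nabla^{\ell}\mathrm{Rm}$. So $F$ enters only through $R$, and no bookkeeping of the mixed terms $F_zG_z/(FG)$ is needed.
\end{itemize}

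The genuine gap is in Step 3, which carries all the content. Your three regions are not exhaustive. In the tip and ``intermediate'' items, the bound $RG^2\le C$ is simply the defining property of the region, so nothing is proved there. The only nontrivial case outside the cylindrical region is $G=o(\sqrt{-t})$ with $R^{1/2}G\to\infty$, and you never address it. As written, the key inequality is established only where $G\ge\delta\sqrt{-t}$.

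The paper closes this without any case split:
\begin{itemize}
\item By Proposition~\ref{all-sequences-asymptotics}, every curvature-scale limit is the cylinder or $\mathbb{R}^{k-1}\times$Bryant.
\item In both models the $S^{k-1}$-orbits flatten, so the $F$-eigenvalues $-F_{zz}/F$, $(1-F_z^2)/F^2$, $-F_zG_z/(FG)$ are $o(R)$ at every point once $-t$ is large.
\item Hence globally $R\le C\bigl(-G_{zz}/G+(1-G_z^2)/G^2\bigr)\le CG^{-2}(1+G|G_{zz}|)$.
\end{itemize}

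Alternatively, you can exclude the missing case as in Lemma~\ref{Rmax-tip}. There $(-t)R\to\infty$ and $R^{1/2}G\to\infty$, so both the $S^{k-1}$- and $S^{n-k}$-orbits flatten. The limit would then be flat, contradicting $R=1$ at the base point.
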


\begin{proof}
We argue by induction on $m$. Since we have $-1\leq G_z\leq 0$
at each point in space-time, the assertion holds for $m=0$.
Moreover, the assertion clearly holds for $m=1$.

Suppose now that $m \geq 2$, and the assertion holds for all integers less than $m$. Using the standard formula for the scalar curvature of a double warped product, we obtain 

\begin{align}
\begin{aligned}
    R=&-2(k-1)\frac{F_{zz}}{F} + (k-1)(k-2)\frac{1-F_z^2}{F^2} - 2(k-1)(n-k)\frac{F_zG_z}{FG}\\
    &-2(n-k)\frac{G_{zz}}{G} + (n-k-1)(n-k)\frac{1-G_z^2}{G^2}.
\end{aligned}
\end{align}
By Proposition \ref{all-sequences-asymptotics}, any sequence of rescaled Ricci flows converges either to the shrinking cylinder, or the Euclidean space times Bryant soliton. In both cases, the curvature operator component related to $F$ {tends to zero when   $t\to -\infty$ }. Thus we have 
\begin{align}
\begin{aligned}
&\quad-2(k-1)\frac{F_{zz}}{F} + (k-1)(k-2)\frac{1-F_z^2}{F^2} - 2(k-1)(n-k)\frac{F_zG_z}{FG}\\
&\leq \frac{1}{100}\left(-2(n-k)\frac{G_{zz}}{G} + (n-k-1)(n-k)\frac{1-G_z^2}{G^2}\right)
\end{aligned}
\end{align}
for all sufficiently negative time. This implies 
\begin{align}
R\leq C\left(-\frac{G_{zz}}{G} + \frac{1-G_z^2}{G^2}\right).
\end{align}
Differentiating ${\rm Rm}_1 = -\frac{G_{zz}}{G}$ with respect to $z$ gives
\begin{align}\begin{aligned}
& \partial_z^{m-1} {\rm Rm}_1 +  G^{-1} \partial_z^{m+1} G \\
& =\sum_{k=0}^{m+1} \sum_{\substack{i_1 \geq 0, \ldots, i_k \geq 0 \\
i_1+\ldots+i_k \leq m-1}} c_{i_1 \ldots i_k} G^{i_1+\ldots+i_k-m-1} \partial_z^{i_1+1} G \cdots \partial_z^{i_k+1} G .
\end{aligned}
\end{align}
Using the induction hypothesis, we obtain
\begin{align}
\left|\partial_z^{m-1} {\rm Rm}_1 +  G^{-1} \partial_z^{m+1} G\right| \leq C(m) G^{-m-1}\left(1+G\left|G_{z z}\right|\right)^{m-1} .
\end{align}
On the other hand, Perelman's pointwise curvature derivative estimate for curvature operator (cf. \cite{CMZ23}) implies
\begin{align}
\left|\partial_z^{m-1} {\rm Rm_1}\right| \leq C(m) R^{\frac{m+1}{2}} \leq C(m) G^{-m-1}\left(1+G\left|G_{z z}\right|\right)^{\frac{m+1}{2}} .
\end{align}
Putting these facts together, we conclude that
\begin{align}
\left|G^{-1} \partial_z^{m+1} G\right| \leq C(m) G^{-m-1}\left(1+G\left|G_{z z}\right|\right)^m .
\end{align}
This completes the proof of Proposition \ref{Gmorder estimates}.
\end{proof}

\subsection{Evolution equation of renormalized profile functions $h=f_{\xi}$ and $g$}
By Proposition \ref{epsilon-bubble-sheet}, we know the Type-I blowdown around $SO(k)$-action fixed point $\mathcal{O}$ is the shrinking cylinders $\mathbb R^{k} \times S^{n-k}(\sqrt{2(n-k-1)|t|})$, we now consider the evolution of the renormalized profile functions
\begin{equation}\label{f-g}
    f(\xi, \tau) = e^{\frac{\tau}{2}}F(e^{-\frac{\tau}{2}}\xi, -e^{-\tau})-\xi,\, \quad
    g(\xi, \tau) = e^{\frac{\tau}{2}}G(e^{-\frac{\tau}{2}}\xi, -e^{-\tau})-\sqrt{2(n-k-1)},
\end{equation}
where $z = e^{-\frac{\tau}{2}}\xi$ and $t = -e^{-\tau}$.  
Proposition \ref{epsilon-bubble-sheet} implies that, as $\tau\to-\infty$, the functions $f(\xi, \tau)$ and $g(\xi, \tau)$ converge to $0$ in $C_{loc}^\infty$;  in particular $f(\xi, \tau)/\xi$ converges to $0$ in $C_{loc}^\infty$. The properties of $F, G$ at $z=0$ described in the setup section imply that 
\begin{equation}\label{fgx=0}
    f(0, \tau)=0, \qquad f_{\xi}(0, \tau)=g_{\xi}(0, \tau)=0.
\end{equation}
By the boundedness of the curvature operator
\begin{equation}
    |f_{\xi\xi}(\xi, \tau)|\leq C\, \big (f(\xi, \tau)+\xi \big )
\end{equation}
in $\xi\leq L$,
we have
\begin{equation}\label{fxx=0}
    f_{\xi\xi}(0, \tau)=0. 
\end{equation}
Furthermore, we  have    $f_{\xi}+1=F_z$ and  $f_{\xi\xi}=e^{-\tau/2}F_{zz}$. Hence,     since $F_z$ is decreasing we have     $f_{\xi}$ is decreasing,  and since $f_\xi(0,\tau)=0$, we have  $f_{\xi}\leq 0$. Also since $f(0,\tau)=0$, we also get $f\leq 0$. {We summarize these properties below for future reference} 
\begin{equation}f_{\xi\xi }(\xi,\tau) \leq 0, \qquad f_\xi(\xi,\tau) \leq 0, \qquad f (\xi, \tau) \leq 0.\end{equation}
\begin{prop}
The rescaled profiles $f(\xi,\tau)$ and $g(\xi,\tau)$ satisfy the following equations
\begin{align}\label{f-equation}
    f_\tau &=f_{\xi\xi} -\frac{1}{2}\xi f_\xi +\frac{1}{2}f \\
    &-(k-1)(f_\xi(\xi,\tau) + 1)\int_0^\xi \frac{f_{\eta\eta}(\eta,\tau)}{f(\eta,\tau)+\eta} d\eta+(k-2)\,\frac{2f_{\xi} + f^2_{\xi}}{f + \xi}\notag\\ 
    &-(n-k)(f_\xi(\xi,\tau) + 1)\int_0^\xi \frac{g_{\eta}^2(\eta,\tau)}{(g(\eta,\tau) + \sqrt{2(n-k-1)})^2} d\eta\notag
\end{align}
and
\begin{align}\label{g-equation}
    g_\tau&=  g_{\xi\xi}- \frac{1}{2}\xi g_\xi \\
    & -   \frac{(n-k-1)}{g(\xi,\tau) + \sqrt{2(n-k-1)})} +\frac{1}{2}(g(\xi,\tau) + \sqrt{2(n-k-1)}) \notag\\
    & - \frac{g_\xi^2(\xi,\tau)}{g(\xi,\tau) + \sqrt{2(n-k-1)})} -(n-k)g_{\xi}(\xi,\tau)\int_0^\xi \frac{g_{\eta}^2(\eta,\tau)}{(g(\eta,\tau) + \sqrt{2(n-k-1)})^{2}}  \notag\\
    & + (k-1)g_\xi(\xi,\tau)\left( \frac{1}{\xi} - \frac{f(\xi,\tau)}{\xi(f(\xi,\tau)+\xi)} -\int_{0}^{\xi}
     \frac{f_{\eta}(\eta,\tau)(f_{\eta}(\eta,\tau) +1) }{(f(\eta,\tau)  + \eta)^2} d\eta\right).\notag
\end{align}   

\end{prop}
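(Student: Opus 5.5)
The plan is a direct change of variables in the evolution equations \eqref{F-equation} and \eqref{G-equation}, followed by rewriting the nonlocal terms. I will write $F(z,t)=e^{-\tau/2}\bigl(\xi+f(\xi,\tau)\bigr)$ and $G(z,t)=e^{-\tau/2}\bigl(\sqrt{2(n-k-1)}+g(\xi,\tau)\bigr)$, with $z=e^{-\tau/2}\xi$ and $t=-e^{-\tau}$, so that $dt/d\tau=e^{-\tau}$. The chain rule gives the following identities:
\begin{align*}
F_z&=1+f_\xi,\qquad F_{zz}=e^{\tau/2}f_{\xi\xi},\qquad G_z=g_\xi,\qquad G_{zz}=e^{\tau/2}g_{\xi\xi},\\
F_t&=e^{\tau}\,\partial_\tau\bigl(e^{-\tau/2}(\xi+f)\bigr)\big|_z=e^{\tau/2}\Bigl(f_\tau-\tfrac12(\xi+f)+\tfrac12\xi(1+f_\xi)\Bigr).
\end{align*}
The last line uses $\partial_\tau\xi|_z=\tfrac12\xi$. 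In it, the $\tfrac12\xi$ terms cancel, leaving $f_\tau+\tfrac12\xi f_\xi-\tfrac12 f$. The analogous formula for $G$ is $G_t=e^{\tau/2}\bigl(g_\tau+\tfrac12\xi g_\xi-\tfrac12(g+\sqrt{2(n-k-1)})\bigr)$. Multiplying both evolution equations by $e^{-\tau/2}$ then makes every term scale-invariant: each of $F_{zz}$, $(1-F_z^2)/F$, $F_zG_z/G$ and $\int_0^z F_{zz}/F\,dx$ carries exactly one factor of $e^{\tau/2}$. The drift $-\tfrac12\xi\partial_\xi$ and the zeroth-order terms come out immediately.

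The main work is the nonlocal terms. In the $f$-equation, $(1-F_z^2)/F=-(2f_\xi+f_\xi^2)/(f+\xi)$, which gives the $(k-2)$ term. The integral $(k-1)\int_0^\xi f_{\eta\eta}/(f+\eta)\,d\eta$ is kept as is. The crucial step is to integrate by parts in the combined term. Using the identity
\begin{equation*}
\frac{G_{zz}}{G}=\partial_z\Bigl(\frac{G_z}{G}\Bigr)+\frac{G_z^2}{G^2},
\end{equation*}
the local term $(n-k)F_zG_z/G$ in \eqref{F-equation} cancels exactly against the boundary term of $-(n-k)F_z\int_0^z G_{zz}/G$. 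The boundary contribution at $0$ vanishes because $G_z(0)=0$. What remains is $-(n-k)(1+f_\xi)\int_0^\xi g_\eta^2/(g+\sqrt{2(n-k-1)})^2\,d\eta$, which is the claimed form.

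For the $g$-equation I will apply the same manipulation to the $G$-part, which yields the $-(n-k)g_\xi\int_0^\xi g_\eta^2/(\cdot)^2$ term. The $F$-part is handled by the analogous identity
\begin{equation*}
\frac{F_{zz}}{F}=\partial_z\Bigl(\frac{F_z}{F}\Bigr)+\frac{F_z^2}{F^2}.
\end{equation*}
This identity is singular at $z=0$, so I will integrate by parts on $[\epsilon,z]$ and let $\epsilon\to0$. Here $F_z/F\sim 1/\epsilon$ and $F_z^2/F^2\sim 1/\epsilon^2$, so the two singular pieces must be regrouped. I will combine $\partial_z(F_z/F-1/z)$ with $F_z^2/F^2-1/z^2$. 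Both of these are integrable near $0$, since $F=z+O(z^3)$ because $F$ is odd with $f_{\xi\xi}(0)=0$. Substituting $F_z/F=(1+f_\xi)/(f+\xi)$, the boundary term becomes
\begin{equation*}
\frac{1}{\xi}-\frac{f}{\xi(f+\xi)}=\frac{1}{f+\xi},
\end{equation*}
written in the form stated. The interior integrand simplifies to $-f_\eta(f_\eta+1)/(f+\eta)^2$ after cancelling against $1/\eta^2$; the local term $(k-1)F_zG_z/F$ combines with these pieces.

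I expect this last step to be the main obstacle: choosing the correct regularization at $\xi=0$ and verifying that the regrouped terms match the stated expression exactly, including signs. I will check it by expanding $\partial_\eta\bigl(1/\eta-(1+f_\eta)/(f+\eta)\bigr)$ and confirming that it reproduces $f_{\eta\eta}/(f+\eta)$ minus the quadratic terms.
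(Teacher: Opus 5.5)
Your change of variables, the scaling check, and the treatment of the $G$-part via $G_{zz}/G=\partial_z(G_z/G)+G_z^2/G^2$ all match what the paper does. In that step the boundary term at $0$ is killed by $G_z(0)=0$, and your sign is the correct one. The paper's displayed intermediate identity for this integration by parts has the opposite sign, but that is a typo and its final formula agrees with yours. One small point in the $g$-equation: the boundary term $-(n-k)g_\xi^2/(g+\sqrt{2(n-k-1)})$ does not cancel a local term as it does in the $f$-equation. Instead it combines with the $+(n-k-1)g_\xi^2/(g+\sqrt{2(n-k-1)})$ coming from $-(n-k-1)(1-G_z^2)/G$ to give the stated $-g_\xi^2/(g+\sqrt{2(n-k-1)})$.

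The real problem is the last step: your regularization does not produce what you say it produces. Integrate $F_{zz}/F=\partial_z(F_z/F-1/z)+(F_z^2/F^2-1/z^2)$ over $[0,z]$; the boundary value at $0$ vanishes since $F_z/F-1/z=O(z)$. Adding the local term then gives
\[
\frac{F_z}{F}-\int_0^z\frac{F_{zz}}{F}\,dx=\frac1z-\int_0^z\Bigl(\frac{F_z^2}{F^2}-\frac{1}{x^2}\Bigr)dx .
\]
So after rescaling the boundary term is $1/\xi$, not $1/(f+\xi)$. The integrand is $\frac{(1+f_\eta)^2}{(f+\eta)^2}-\frac1{\eta^2}$, which does not reduce pointwise to $\frac{f_\eta(1+f_\eta)}{(f+\eta)^2}$. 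The leftover is $\frac{1+f_\eta}{(f+\eta)^2}-\frac{1}{\eta^2}=\partial_\eta\bigl(\frac1\eta-\frac1{f+\eta}\bigr)$, whose integral over $[0,\xi]$ is $\frac{f}{\xi(f+\xi)}$ because $f=O(\eta^3)$. Your two misstatements cancel each other, so the target formula is correct, but you need this second integration to reach it.

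The paper avoids the detour by regularizing with $(F_z-1)/F$ instead of $F_z/F-1/z$. In rescaled form it splits $\frac{1+f_\xi}{f+\xi}=\frac{1}{f+\xi}+\frac{f_\xi}{f+\xi}$ and integrates by parts:
\[
\frac{f_\xi}{f+\xi}-\int_0^\xi\frac{f_{\eta\eta}}{f+\eta}\,d\eta=-\int_0^\xi\frac{f_\eta(f_\eta+1)}{(f+\eta)^2}\,d\eta .
\]
All terms here are bounded near $0$, and the boundary term vanishes since $f_\xi/(f+\xi)\to f_{\xi\xi}(0)=0$. This gives the boundary term $\frac{1}{f+\xi}=\frac1\xi-\frac{f}{\xi(f+\xi)}$ and the stated integrand in one step.
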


\begin{proof}
Equations \eqref{F-equation} and \eqref{G-equation} 
{expressed in terms of the rescaled profiles become}
\begin{align} 
f_{\tau} 
&=f_{\xi\xi}- \frac{\xi}{2}\,f_{\xi}+\frac12\,f 
  + (k-2)\,\frac{2f_{\xi} + f^2_{\xi}}{f + \xi}- (k-1)(f_{\xi} + 1)\int_{0}^{\xi}
\frac{f_{\eta\eta}}{f + \eta}
    d\eta
   \notag\\
&
 + (n-k)(f_{\xi} + 1)\Big[\frac{g_{\xi}}{g + \sqrt{2(n-k-1)}} - \int_{0}^{\xi}
   \frac{g_{\eta\eta}}{g + \sqrt{2(n-k-1)}}
 d\eta\Big], \notag
\end{align}
and
\begin{align}
g_{\tau}
&= g_{\xi\xi}- \frac{\xi}{2}\,g_{\xi}+\frac12 (g+\sqrt{2(n-k-1)})   
  - \frac{(n-k-1)(1 - g_{\xi}^2)}{g + \sqrt{2(n-k-1)}}
 \notag\\
& - (n-k)g_{\xi}\int_{0}^{\xi}
   \frac{g_{\eta\eta}}{g + \sqrt{2(n-k-1)}}  
    d\eta
  +(k-1)g_{\xi} \Bigr[\frac{f_{\xi} + 1}{f + \xi}- \int_{0}^{\xi}
     \frac{f_{\eta\eta}}{f + \eta} \Bigr]d\eta.\notag\\
\end{align} 
Using $g_{\xi}(0, \tau)=0$ $f_{\xi}(0)=f_{\xi\xi}(0)=0$ and integrating by parts as follows 
\begin{equation}
    \frac{g_{\xi}}{g + \sqrt{2(n-k-1)}} - \int_{0}^{\xi}
   \frac{g_{\eta\eta}}{g + \sqrt{2(n-k-1)}}
 d\eta=\int_0^\xi \frac{g_{\eta}^2(\eta,\tau)}{(g(\eta,\tau) + \sqrt{2(n-k-1)})^2} d\eta\notag
\end{equation}

\begin{align}
\begin{aligned}
    &\quad(k-1)g_\xi\left[\frac{1}{\xi} + \left(\frac{1}{f+\xi}-\frac{1}{\xi}\right) + \frac{f_\xi}{f+\xi}-\int_{0}^{\xi}
     \frac{f_{\eta\eta}}{f + \eta} d\eta\right]\\
&= (k-1)g_\xi\left[\frac{1}{\xi} - \frac{f}{\xi(f+\xi)} -\int_{0}^{\xi}
     \frac{f_{\eta}(f_{\eta}+1)}{(f + \eta)^2} d\eta\right],
\end{aligned}
\end{align}
leads to  the desired  evolution equations  for $f$ and $g$. 
\end{proof}

Next, we decompose the evolution equations into the linear part and the error part.

\begin{prop}[evolution expansion for $(f, g)$]\label{evolution of (f, g)}
For the evolution equations of $f$ and $g$, we have
\begin{align}\label{f-g-expansion}
   f_\tau =\mathcal{L}_0 f+E_0\quad
    g_\tau =\mathcal{L}_1g+E_1
\end{align}   
where $\mathcal{L}_0$ is defined by
\begin{equation}
    \mathcal{L}_0 f=f_{\xi\xi} - \frac{1}{2}\xi f_\xi +\frac{1}{2}f+\frac{k-3}{\xi}f_{\xi}-(k-1)\int_{0}^{\xi}\frac{f_{\eta}}{\eta^2}d\eta,
\end{equation}
and $\mathcal{L}_1$ is the radial Ornstein-Uhlenbeck operator
\begin{equation}\label{L2definition}
   \mathcal{L}_1g= g_{\xi\xi}+\frac{k-1}{\xi}g_{\xi} - \frac{1}{2}\xi g_\xi + g,
\end{equation}
with errors 
\begin{align}\label{f-error-expansion}
    E_0=   
    &-(k-1)\int_0^\xi {f_{\eta\eta}(\eta,\tau)}\Big[\frac{1}{f(\eta,\tau)+\eta}-\frac{1}{\eta}\Big] d\eta+(2k-4){f_{\xi}(\xi,\tau)}\Big[\frac{1}{f(\xi,\tau)+\xi}-\frac{1}{\xi}\Big]  \\
    &-\frac{f^2_{\xi}}{f + \xi}-(k-1)f_{\xi}\int_0^\xi \frac{f_{\eta}(\eta,\tau)(f_{\eta}(\eta,\tau)+1)}{(f(\eta,\tau)+\eta)^2 }d\eta \notag\\ 
    &-(n-k)(f_\xi(\xi,\tau) + 1)\int_0^\xi \frac{g_{\eta}^2(\eta,\tau)}{(g(\eta,\tau) + \sqrt{2(n-k-1)})^2} d\eta, \notag
\end{align}
\begin{align}\label{g-error-expansion}
   E_1&=-g(\xi, \tau) -   \frac{(n-k-1)}{g(\xi,\tau) + \sqrt{2(n-k-1)})} +\frac{1}{2}(g(\xi,\tau) + \sqrt{2(n-k-1)}) \\ &- \frac{g_\xi^2(\xi,\tau)}{g(\xi,\tau) + \sqrt{2(n-k-1)})}-(n-k)g_{\xi}(\xi,\tau)\int_0^\xi \frac{g_{\eta}^2(\eta,\tau)}{(g(\eta,\tau) + \sqrt{2(n-k-1)})^{2}}  \notag\\
    & -(k-1)g_\xi(\xi,\tau)\left( \frac{f(\xi,\tau)}{\xi(f(\xi,\tau)+\xi)} +\int_{0}^{\xi}
     \frac{f_{\eta}(\eta,\tau)(f_{\eta}(\eta,\tau) +1) }{(f(\eta,\tau)  + \eta)^2} d\eta\right).\notag
\end{align}
\end{prop}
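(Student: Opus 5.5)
The proof is a bookkeeping computation. We start from the rescaled equations \eqref{f-equation} and \eqref{g-equation} of the previous proposition. For each one we add and subtract the linear operators $\mathcal L_0 f$ and $\mathcal L_1 g$, and collect everything else into $E_0$ and $E_1$. No estimates are needed; the point is to check that the residuals are exactly \eqref{f-error-expansion} and \eqref{g-error-expansion}.

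\textbf{The $g$-equation.} We expand the bracket multiplying $(k-1)g_\xi$ in \eqref{g-equation}. The term $\frac{k-1}{\xi}g_\xi$ combines with $g_{\xi\xi}-\frac12\xi g_\xi$ to give $\mathcal L_1 g - g$. Subtracting $g$ explains the first term $-g$ in $E_1$. The zeroth-order terms $-\frac{n-k-1}{g+\sqrt{2(n-k-1)}}+\frac12(g+\sqrt{2(n-k-1)})$ are carried over verbatim; their Taylor expansion is $g+O(g^2)$, so the combination with $-g$ is genuinely quadratic. The quadratic $g_\xi^2$ term and the nonlocal $(n-k)$ term are copied unchanged. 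The remaining $f$-dependent pieces of the bracket give exactly the last line of \eqref{g-error-expansion}.

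\textbf{The $f$-equation.} This is where the only real care is needed: the singular $1/\xi$ and $1/\xi^2$ parts must be organized so that they assemble into $\mathcal L_0$. We write
\[
\frac{2f_\xi+f_\xi^2}{f+\xi}=\frac{2f_\xi}{\xi}+2f_\xi\Big[\frac1{f+\xi}-\frac1\xi\Big]+\frac{f_\xi^2}{f+\xi}.
\]
Multiplying by $(k-2)$, the leading piece contributes $\frac{2k-4}{\xi}f_\xi$ to the linear part. For the nonlocal $f$ term, we split
\[
(f_\xi+1)\int_0^\xi\frac{f_{\eta\eta}}{f+\eta}\,d\eta
=\int_0^\xi\frac{f_{\eta\eta}}{\eta}\,d\eta+\int_0^\xi f_{\eta\eta}\Big[\frac1{f+\eta}-\frac1\eta\Big]d\eta+f_\xi\int_0^\xi\frac{f_{\eta\eta}}{f+\eta}\,d\eta .
\]
Using \eqref{fgx=0} and \eqref{fxx=0}, we integrate by parts to get $\int_0^\xi \frac{f_{\eta\eta}}{\eta}d\eta=\frac{f_\xi}{\xi}+\int_0^\xi\frac{f_\eta}{\eta^2}d\eta$; the boundary term at $0$ vanishes because $f_\xi=O(\eta^2)$. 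After multiplying by $-(k-1)$, the linear terms sum to
\[
\Big(\tfrac{2k-4}{\xi}-\tfrac{k-1}{\xi}\Big)f_\xi=\tfrac{k-3}{\xi}f_\xi ,
\]
together with the term $-(k-1)\int_0^\xi f_\eta/\eta^2\,d\eta$. This matches $\mathcal L_0$.

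The piece $f_\xi\int_0^\xi \frac{f_{\eta\eta}}{f+\eta}\,d\eta$ is rewritten by the same integration by parts used in the previous proof:
\[
\int_0^\xi \frac{f_{\eta\eta}}{f+\eta}\,d\eta=\frac{f_\xi}{f+\xi}+\int_0^\xi\frac{f_\eta(f_\eta+1)}{(f+\eta)^2}\,d\eta .
\]
Here we note the sign issue: there appears to be a possible sign discrepancy in the $\frac{f_\xi^2}{f+\xi}$ terms, which must be reconciled between the $(k-2)$ contribution and the $-(k-1)f_\xi\cdot\frac{f_\xi}{f+\xi}$ contribution. Their total coefficient is $(k-2)-(k-1)=-1$, which yields the stated $-\frac{f_\xi^2}{f+\xi}$ in \eqref{f-error-expansion}. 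The integral part of this rewriting gives the term $-(k-1)f_\xi\int_0^\xi\frac{f_\eta(f_\eta+1)}{(f+\eta)^2}\,d\eta$. Finally, the $g$-coupling integral is copied unchanged.

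\textbf{Main obstacle.} The only delicate step is justifying the integrations by parts near $\xi=0$. Each integrand, e.g. $f_{\eta\eta}/\eta$ and $f_\eta/\eta^2$, must be integrable and the boundary terms must vanish. This follows from the smoothness of the odd reflection of $F$: $f$ is odd, so $f_\xi=O(\eta^2)$ and $f_{\eta\eta}=O(\eta)$, which makes all the integrals above finite.
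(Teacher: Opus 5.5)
Your proposal is correct and follows the paper's proof essentially line for line. You split off the $\frac{1}{\xi}$ parts of $\frac{f_\xi}{f+\xi}$ and of the nonlocal integral, and integrate $\int_0^\xi f_{\eta\eta}/\eta\,d\eta$ and $\int_0^\xi f_{\eta\eta}/(f+\eta)\,d\eta$ by parts to get the $\frac{k-3}{\xi}f_\xi$ coefficient and the net $-\frac{f_\xi^2}{f+\xi}$; for $g$ you add and subtract $g$ and peel off $\frac{k-1}{\xi}g_\xi$. The ``sign discrepancy'' you flag does not actually occur, since $(k-2)-(k-1)=-1$ gives exactly the stated term, and killing the boundary terms at $\xi=0$ via oddness of $f$ is a slightly cleaner justification than the paper's appeal to \eqref{fxx=0}.
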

\begin{proof}
{Let us first compute the evolution equation for $f$. We express equation \eqref{f-equation} as follows, after rearranging terms }
    \begin{align}
    f_\tau &=f_{\xi\xi} -\frac{1}{2}\xi f_\xi +\frac{1}{2}f-(k-1)\int_0^\xi f_{\eta\eta}\Big[\frac{1}{f(\eta,\tau)+\eta}-\frac{1}{\eta}+\frac{1}{\eta}\Big] d\eta\notag\\
    &+(2k-4)f_{\xi} \Big[\frac{1}{f(\xi,\tau)+\xi}-\frac{1}{\xi}+\frac{1}{\xi}\Big]\\
    &-(k-1)f_\xi \int_0^\xi \frac{f_{\eta\eta}(\eta,\tau)}{f(\eta,\tau)+\eta} d\eta+(k-2)\frac{f^2_{\xi}}{f + \xi}\notag\\ 
    &-(n-k)(f_\xi + 1)\int_0^\xi \frac{g_{\eta}^2(\eta,\tau)}{(g(\eta,\tau) + \sqrt{2(n-k-1)})^2} d\eta. \notag
\end{align}
Separating the terms linearly depending on $f$ and its derivatives and integrating by parts  we get
\begin{equation}
  -(k-1)\int_{0}^{\xi} \frac{f_{\eta\eta}(\eta,\tau)}{\eta}d\eta={-(k-1)}\,\frac{f_{\xi}(\xi, \tau)}{\xi}-{(k-1)}\int_{0}^{\xi}\frac{f_{\eta}(\eta, \tau)}{\eta^2}d\eta
\end{equation}
and
\begin{equation}
\begin{aligned}
    &\quad-(k-1)f_\xi (\xi, \tau)  \int_0^\xi \frac{f_{\eta\eta}(\eta,\tau)}{f(\eta,\tau)+\eta} d\eta\\
    &=-(k-1)\frac{f_{\xi}^2(\xi,\tau)}{f(\xi,\tau)+\xi}-(k-1)f_{\xi}\int_0^\xi \frac{f_{\eta}(\eta,\tau)(f_{\eta}(\eta,\tau)+1)}{(f(\eta,\tau)+\eta)^2 }d\eta
\end{aligned}
\end{equation}
By the above and $\lim_{\xi\to 0}\frac{f_{\xi}}{f+\xi}=\frac{f_{\xi\xi}(0, \tau)}{1+f_{\xi}(0, \tau)}=0$, {where we have used \eqref{fxx=0})}, 
we obtain the desired evolution equation for $f$ with linear operator $\mathcal{L}_0$ and error term $E_0$. 

The evolution expansion for $g$ follows from adding and subtracting $g$ in \eqref{g-equation} and separating the $(k- 1)g_\xi(\xi,\tau) \frac{1}{\xi}$ term from last line of \eqref{g-equation} and \eqref{gequation term expansion} follows from direct Taylor expansion.

%
%
\end{proof}

An immediate consequence of Proposition \ref{evolution of (f, g)} is the following corollary.
\begin{cor}
We have 
\begin{align}\label{f-error-derivative-expansion}
    E_{0, \xi}&=\partial_\xi E_0\notag\\
    &=(k-3) {f_{\xi \xi }}\Big[\frac{1}{f(\xi ,\tau)+\xi }-\frac{1}{\xi}\Big] \notag -(2k-4){f_{\xi}}\Big[\frac{f_{\xi}+1}{(f(\xi,\tau)+\xi)^2}-\frac{1}{\xi^2}\Big] \notag\\
    &-\frac{2f_{\xi}f_{\xi\xi}}{f+\xi} {-}(k-2)\frac{f^2_{\xi}(f_{\xi}+1)}{(f+\xi)^2}-(k-1)f_{\xi\xi}(\xi,\tau)\int_0^\xi \frac{f_{\eta}(\eta,\tau)(f_{\eta}(\eta,\tau)+1)}{(f(\eta,\tau)+\eta)^2} d\eta\\
    &-(n-k)f_{\xi\xi}(\xi,\tau)\int_0^\xi \frac{g_{\eta}^2(\eta,\tau)}{(g(\eta,\tau) + \sqrt{2(n-k-1)})^2} d\eta  \notag \\
    & -(n-k)(f_\xi(\xi,\tau) + 1) \frac{g_{\xi}^2(\xi,\tau)}{(g(\xi,\tau) + \sqrt{2(n-k-1)})^2}  \notag
\end{align}
and
\begin{align}\label{gequation term expansion}
\begin{aligned}
    &\quad-g(\xi, \tau) - \frac{(n-k-1)}{g(\xi,\tau) + \sqrt{2(n-k-1)})} +\frac{1}{2} \big (g(\xi,\tau) + \sqrt{2(n-k-1)} \big ) \\
    &=-\frac{g^2}{2\sqrt{2(n-k-1)}}+O(g^3).  
\end{aligned}
\end{align}
\end{cor}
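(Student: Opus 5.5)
The corollary has two parts, and I would treat them separately. Both are direct computations from Proposition \ref{evolution of (f, g)}.

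\emph{The formula for $E_{0,\xi}$.} I would differentiate the expression \eqref{f-error-expansion} term by term in $\xi$. The fundamental theorem of calculus handles the two integral terms:
\begin{align*}
\partial_\xi\Big(-(k-1)\int_0^\xi f_{\eta\eta}\Big[\tfrac{1}{f+\eta}-\tfrac1\eta\Big]d\eta\Big) &= -(k-1)f_{\xi\xi}\Big[\tfrac{1}{f+\xi}-\tfrac1\xi\Big],\\
\partial_\xi\Big((2k-4)f_\xi\Big[\tfrac{1}{f+\xi}-\tfrac1\xi\Big]\Big) &= (2k-4)f_{\xi\xi}\Big[\tfrac{1}{f+\xi}-\tfrac1\xi\Big]-(2k-4)f_\xi\Big[\tfrac{f_\xi+1}{(f+\xi)^2}-\tfrac{1}{\xi^2}\Big].
\end{align*}
Adding the $f_{\xi\xi}[\cdot]$ pieces gives the coefficient $(2k-4)-(k-1)=k-3$, which matches the first term of the claimed formula.

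Next I would use the quotient rule on $-\frac{f_\xi^2}{f+\xi}$. This gives $-\frac{2f_\xi f_{\xi\xi}}{f+\xi}+\frac{f_\xi^2(f_\xi+1)}{(f+\xi)^2}$.

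The term $-(k-1)f_\xi\int_0^\xi\frac{f_\eta(f_\eta+1)}{(f+\eta)^2}d\eta$ has a product derivative. It produces $-(k-1)f_{\xi\xi}\int_0^\xi(\cdots)\,d\eta$ plus the boundary contribution $-(k-1)\frac{f_\xi^2(f_\xi+1)}{(f+\xi)^2}$. Combining this with the previous quotient-rule output gives the net coefficient $1-(k-1)=-(k-2)$ on $\frac{f_\xi^2(f_\xi+1)}{(f+\xi)^2}$.

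The last term, $-(n-k)(f_\xi+1)\int_0^\xi\frac{g_\eta^2}{(g+\sqrt{2(n-k-1)})^2}d\eta$, has a product derivative as well. It yields the $f_{\xi\xi}$-integral term and the pointwise term $-(n-k)(f_\xi+1)\frac{g_\xi^2}{(g+\sqrt{2(n-k-1)})^2}$.

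Collecting all contributions gives exactly \eqref{f-error-derivative-expansion}. The only step where an error is likely is the sign and coefficient bookkeeping for the $f_\xi^2(f_\xi+1)/(f+\xi)^2$ terms, so I would double-check that combination.

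\emph{The expansion \eqref{gequation term expansion}.} Set $a=\sqrt{2(n-k-1)}$, so that $n-k-1=a^2/2$. The left side then becomes
\[
-g-\frac{a^2}{2(a+g)}+\frac{a+g}{2} = \frac{-2g(a+g)-a^2+(a+g)^2}{2(a+g)} = \frac{-g^2}{2(a+g)}.
\]
Since $g\to0$ uniformly on compact sets by Proposition \ref{epsilon-bubble-sheet}, I would expand $\frac{1}{a+g}=\frac1a-\frac{g}{a^2}+O(g^2)$. This gives $-\frac{g^2}{2a}+O(g^3)$, which is the claim. This step is an exact algebraic identity followed by a one-term Taylor expansion, so it presents no obstacle.
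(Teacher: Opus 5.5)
Your proposal is correct and follows the paper's route: you differentiate \eqref{f-error-expansion} term by term, and your coefficient bookkeeping, $(2k-4)-(k-1)=k-3$ and $1-(k-1)=-(k-2)$, checks out. For the second identity you use a Taylor expansion, as the paper does; writing the left side exactly as $-\frac{g^2}{2(\sqrt{2(n-k-1)}+g)}$ first is a slightly cleaner way to see the $O(g^3)$ remainder for small $g$.
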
 
\begin{proof}
Equation \eqref{f-error-derivative-expansion} immediately follows by differentiating \eqref{f-error-expansion}. On the other hand, \eqref{gequation term expansion} follows  directly by using Taylor's expansion.

\end{proof}

Let $h = f_\xi$ with boundary value $f(0)=0$, then $f_\xi$ and $h$ are mutually uniquely determined. Then,
\begin{prop}[evolution expansion for $h=f_\xi$]\label{evolution of h=fxi}
We have
\begin{align}\label{h-expansion}
   h_\tau =\mathcal{L}_2 h+E_2,
\end{align}
where 
\begin{align}
\mathcal{L}_2 h = h_{\xi\xi} - \frac{1}{2}\xi h_\xi +\frac{k-3}{\xi}h_\xi - \frac{2k-4}{\xi^2}h
\end{align} 
and

\begin{align}\label{e3hg}
E_2(h, g)
={}&(k-3)\, h_{\xi}(\xi ,\tau)\Bigg[\frac{1}{\xi+\int_0^\xi h(\eta,\tau)\,d\eta }-\frac{1}{\xi}\Bigg] \\\nonumber
&-(2k-4)\,h(\xi,\tau)\Bigg[\frac{h(\xi,\tau)+1}{\left(\xi+\int_0^\xi h(\eta,\tau)\,d\eta\right)^2}-\frac{1}{\xi^2}\Bigg] \\\nonumber
&-\frac{2h(\xi,\tau)h_{\xi}(\xi,\tau)}{\xi+\int_0^\xi h(\eta,\tau)\,d\eta}
+(2-k)\frac{h(\xi,\tau)^2(h(\xi,\tau)+1)}
{\left(\xi+\int_0^\xi h(\eta,\tau)\,d\eta\right)^2} \\\nonumber
&-(k-1)h_{\xi}(\xi,\tau)\int_0^\xi
\frac{h(\eta,\tau)\bigl(h(\eta,\tau)+1\bigr)}
{\left(\eta+\int_0^\eta h(s,\tau)\,ds\right)^2}\,d\eta \\\nonumber
&-(n-k)h_{\xi}(\xi,\tau)\int_0^\xi
\frac{g_{\eta}^2(\eta,\tau)}
{\left(g(\eta,\tau)+\sqrt{2(n-k-1)}\right)^2}\,d\eta \\\nonumber
&-(n-k)\bigl(h(\xi,\tau)+1\bigr)\frac{g_{\xi}^2(\xi,\tau)}
{\left(g(\xi,\tau)+\sqrt{2(n-k-1)}\right)^2}.
\end{align}
\end{prop}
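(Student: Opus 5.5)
The plan is to differentiate the expansion $f_\tau=\mathcal{L}_0 f+E_0$ of Proposition \ref{evolution of (f, g)} in $\xi$ and use $h=f_\xi$, $f(\xi,\tau)=\int_0^\xi h(\eta,\tau)\,d\eta$, the latter holding because $f(0,\tau)=0$ by \eqref{fgx=0}. Since $\partial_\xi$ commutes with $\partial_\tau$, we get
\begin{align*}
h_\tau=\partial_\xi(\mathcal{L}_0 f)+E_{0,\xi}.
\end{align*}
The two remaining steps are (a) identifying $\partial_\xi(\mathcal{L}_0 f)$, modulo nonlinear terms, with $\mathcal{L}_2 h$, and (b) rewriting $E_{0,\xi}$ from \eqref{f-error-derivative-expansion} in terms of $h$ and $g$.

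For (a), I would differentiate $\mathcal{L}_0 f$ term by term:
\begin{align*}
\partial_\xi\Big(f_{\xi\xi}-\tfrac12\xi f_\xi+\tfrac12 f\Big)=h_{\xi\xi}-\tfrac12\xi h_\xi,
\end{align*}
because the $-\tfrac12 h$ produced by the drift term cancels against $\tfrac12 f_\xi$. Next,
\begin{align*}
\partial_\xi\Big(\tfrac{k-3}{\xi}f_\xi\Big)=\tfrac{k-3}{\xi}h_\xi-\tfrac{k-3}{\xi^2}h,
\qquad
\partial_\xi\Big(-(k-1)\int_0^\xi \tfrac{f_\eta}{\eta^2}\,d\eta\Big)=-\tfrac{k-1}{\xi^2}h .
\end{align*}
Adding these gives $-\tfrac{(k-3)+(k-1)}{\xi^2}h=-\tfrac{2k-4}{\xi^2}h$, so $\partial_\xi(\mathcal{L}_0 f)=\mathcal{L}_2 h$ exactly.

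For (b), substitute $f=\int_0^\xi h$, $f_\xi=h$, $f_{\xi\xi}=h_\xi$ into \eqref{f-error-derivative-expansion}. The lines then match \eqref{e3hg} one by one: the $(k-3)$ line, the $(2k-4)$ line, $-2hh_\xi/(f+\xi)$, the $(2-k)h^2(h+1)/(f+\xi)^2$ term, the $(k-1)h_\xi\int\cdots$ term, and the two $(n-k)$ terms coming from the $g$-integral and from the derivative of its upper limit. The main place needing care is confirming that \eqref{f-error-derivative-expansion} itself is correct. Specifically, when differentiating $-(k-1)f_\xi\int_0^\xi\frac{f_\eta(f_\eta+1)}{(f+\eta)^2}\,d\eta$, the boundary term $-(k-1)\frac{f_\xi^2(f_\xi+1)}{(f+\xi)^2}$ must combine with the derivative of $-\frac{f_\xi^2}{f+\xi}$. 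That derivative contributes $-\frac{2f_\xi f_{\xi\xi}}{f+\xi}+\frac{f_\xi^2(f_\xi+1)}{(f+\xi)^2}$, and together these give the coefficient $-(k-2)=2-k$, as claimed. I would check the $(2k-4)$ bracket and the $(k-1)\int f_{\eta\eta}[\cdot]$ term the same way; the latter differentiates to $-(k-1)f_{\xi\xi}[\frac{1}{f+\xi}-\frac1\xi]$ and combines with $(2k-4)f_{\xi\xi}[\cdots]$ to give the coefficient $k-3$.

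Finally, I would justify the differentiation of the integral terms near $\xi=0$. Every integrand is smooth there, using $f_\xi(0)=f_{\xi\xi}(0)=0$ from \eqref{fgx=0}–\eqref{fxx=0}, so $h(\xi)=O(\xi^2)$ and $h/\eta^2$ is integrable. This legitimizes differentiating under the integral sign and the boundary evaluations above.
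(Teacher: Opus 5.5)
Your proposal is correct and is the paper's own argument: differentiate $f_\tau=\mathcal{L}_0 f+E_0$ in $\xi$, use $f=\int_0^\xi h$ (valid since $f(0,\tau)=0$), and substitute into the formula for $E_{0,\xi}$. Your explicit checks also hold: $\partial_\xi(\mathcal{L}_0 f)=\mathcal{L}_2 h$, with the $-\frac{k-3}{\xi^2}h$ and $-\frac{k-1}{\xi^2}h$ terms combining to $-\frac{2k-4}{\xi^2}h$, and the coefficients $k-3$ and $2-k$ arise exactly as you describe, filling in bookkeeping the paper leaves implicit.
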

\begin{proof}
    By differentiating the equation of $f$ in \eqref{f-g-expansion} and applying \eqref{f-error-derivative-expansion}, we obtain the desired result.
\end{proof}

\subsection{Evolution  of the inverse profile function of $G^2(z, t)$} 
We reparametrize  the metric $g(t) $  in  \eqref{eqn-metric} by setting $dz= U(r,t)^{-\frac{1}{2}}\, dr$ and {$G(z, t) = r$}, that is we consider 
\begin{equation}
 \bar g(t)  = U^{-1}(r,t) dr^2 + F^2(r,t) g_{{{S}}^{k-1}} + r^2 g_{{{S}}^{n-k}}
\end{equation}
Then,  we have 
\begin{equation}
    U=G_z^2,\quad \frac{dr}{dz}=G_z\leq0
\end{equation}
and here we use the following convention about choosing the square root $U^{\frac{1}{2}}:=G_{z}\leq 0$, so
\begin{align}
    \frac{\partial}{\partial z} = U^{\frac{1}{2}}\frac{\partial}{\partial r}=G_z\frac{\partial}{\partial r},
\end{align}
which leads to 
\begin{align}
    \frac{F_{zz}}{F} = U\frac{F_{rr}}{F} + \frac{1}{2}U_r \frac{F_r}{F},
\end{align}
\begin{align}
    \frac{1-F_z^2}{F^2} = \frac{1-UF_r^2}{F^2}
\end{align}
\begin{align}
    \frac{G_{zz}}{G} = \frac{1}{2r}U_r,
\end{align}
\begin{align}
    \frac{F_zG_z}{FG} = \frac{UF_r}{rF}
\end{align}
and
\begin{align}
    \frac{1-G_z^2}{G^2} = \frac{1-U}{r^2}. 
\end{align}
Then \eqref{Ricci} gives 
\begin{align}
    {\rm Ric}_{rr} &= -(k-1)\frac{F_{rr}}{F}- \frac{(k-1)U_r}{2U}\frac{F_r}{F} - \frac{(n-k)}{2}\frac{U_r}{Ur}, \notag\\
    {\rm Ric}_{{{S}}^{k-1}} &= -U\frac{F_{rr}}{F}- \frac{1}{2}U_r\frac{F_r}{F} + (k-2)\frac{1-UF_r^2}{F^2} - (n-k)\frac{UF_r}{rF}, \notag\\
    {\rm Ric}_{{{S}}^{n-k}} &= -\frac{U_r}{2r} + (n-k-1)\frac{1-U}{r^2} - (k-1)\frac{UF_r}{rF}.
\end{align}
Since the original metric $g(t)$ evolves  by  Ricci flow, the reparametrized metric $\bar g(t)$ satisfies  an evolution equation of the form
\begin{align}
    \frac{\partial \bar g}{\partial t} = -2{\rm Ric}_{\bar g} + \mathcal{L}_{V} (\bar g)
\end{align}
where $V = v(r,t)\frac{\partial}{\partial r}$,  depending  also on time.
Clearly, 
\begin{align}
    \frac{\partial \bar g}{\partial t} = -U^{-2} U_t \, dr^2 + 2FF_t \, d\theta^2.
\end{align}
Moreover, $\mathcal{L}_V(r) = v$, $\mathcal{L}_V(dr) = v_rdr$, implying  
\begin{align}
    \mathcal{L}_{V} (\bar g) = (-U^{-2}U_r v + 2U^{-1}v_r)dr^2 + 2FF_r v d\theta^2 + 2rv g_{{{S}}^{n-k}}.
\end{align}
Hence 
\begin{align}
    -2{\rm Ric}_{\bar g} + \mathcal{L}_{V} (\bar g) &= (2(k-1)\frac{F_{rr}}{F} + (k-1)\frac{U_r}{U}\frac{F_r}{F} + (n-k)\frac{U_r}{Ur} - U^{-2}U_r v + 2U^{-1}v_r) dr^2 \notag\\
    &+ (2U\frac{F_{rr}}{F} + U_r\frac{F_r}{F} - 2(k-2)\frac{1-UF_r^2}{F^2} + 2(n-k)\frac{UF_r}{rF} + 2\frac{F_r}{F}v ) F^2d\theta^2 \notag\\
    & + (\frac{1}{r}U_r - 2(n-k-1)\frac{1-U}{r^2} + 2(k-1)\frac{UF_r}{rF} + 2\frac{1}{r}v) r^2g_{{{S}}^{n-k}}.
\end{align}
Putting these facts together, we conclude that
\begin{align}
    v = -\frac{1}{2}U_r + (n-k-1)\frac{1-U}{r} - (k-1)\frac{UF_r}{F}
\end{align}
and 
\begin{align}
    v_r =& -\frac{1}{2}U_{rr} - (n-k-1)\frac{1-U}{r^2}-(n-k-1)\frac{1}{r}U_r\notag\\
    &- (k-1)U_r\frac{F_r}{F} - (k-1)U\frac{F_{rr}}{F} +(k-1) U\frac{F_r^2}{F^2}.
\end{align}
Thus we have
\begin{align}\label{u-equation}
    U_t =& -2(k-1)U^2\frac{F_{rr}}{F} - (k-1)UU_r\frac{F_r}{F} - (n-k)\frac{UU_r}{r} + U_r v - 2Uv_r\notag\\ 
    =& U\left(U_{rr} + (n-k-2)\frac{U_r}{r}\right) - \frac{1}{2}U_r^2 + (n-k-1)r^{-2}(1-U)(rU_r+2U)\notag\\
    &- 2(k-1)U^2\frac{F_r^2}{F^2}\notag\\
    =& UU_{rr} - \frac{1}{2}U_r^2 + \frac{n-k-1-U}{r}U_r + \frac{2(n-k-1)}{r^2}U(1-U) \notag\\
    &- 2(k-1)U^2\frac{F_r^2}{F^2}
\end{align}
and 
\begin{align}\label{evolution-F}
    F_t =& 2UF_{rr} + \frac{1}{2}U_rF_r - (k-2)\frac{1-UF_r^2}{F} + (n-k)\frac{UF_r}{r} + F_rv \notag\\
    =&U(F_{rr}+\frac{1}{r}F_r)+\frac{n-k-1}{r}F_r -\frac{k-2+UF_r^2}{F}.
\end{align}

\subsection{Truncated profile functions and Brendle's barrier}

Note that $G$ is monotone decreasing on $[0, z_{\max}]$, {and therefore it makes sense to define the maximal radius as follows.}
\begin{defn}[maximal radius]\label{maximal-radius} Define the maximal radius 
  \begin{equation}\label{maximal-radiusrmax}
    r_{max}(t)=G(0, t).
\end{equation}  
\end{defn}

\smallskip
\begin{lem}\label{r-low}
    We have $-\frac{1}{2}\frac{d}{dt}(r_{max}(t)^2)\geq (n-k-1)$. In particular, $r_{max}(t)\geq \sqrt{-2(n-k-1)t}$,  for each $t$.  
\end{lem}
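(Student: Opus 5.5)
Evaluate the evolution equation \eqref{G-equation} at $z=0$, where the fiber $S^{n-k}$ attains its maximal radius $r_{max}(t)=G(0,t)$. By the smoothness conditions (i), $G_z(0,t)=0$. Hence both the nonlocal term $-G_z\int_0^{z}(\cdots)$ and the coupling term $(k-1)\frac{F_zG_z}{F}$ vanish at $z=0$. The latter needs a limit, since $F(0,t)=0$: near $0$ we have $F_z\to 1$, $F\sim z$ and $G_z\sim G_{zz}(0,t)z$, so the term tends to $(k-1)G_{zz}(0,t)$.

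Evaluating the limit therefore gives
\begin{equation*}
\frac{d}{dt}r_{max}(t)=G_t(0,t)=k\,G_{zz}(0,t)-\frac{n-k-1}{G(0,t)}.
\end{equation*}
The identity $\frac{G_{zz}}{G}=\frac{G_zF_z}{GF}$ at $z=0$ recorded earlier gives the same result. Because $z=0$ is the fixed point $\mathcal O$ of the $SO(k)$-action, its $z$-coordinate is stationary, so $\frac{d}{dt}G(0,t)=G_t(0,t)$ with no chain-rule correction.

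Next we apply the sign condition. Nonnegativity of the curvature operator gives $G_{zz}\le 0$ by \eqref{derivative sign}, so
\begin{equation*}
\frac{d}{dt}r_{max}\le -\frac{n-k-1}{r_{max}},
\qquad\text{equivalently}\qquad
-\frac12\frac{d}{dt}\bigl(r_{max}^2\bigr)=-r_{max}\frac{d}{dt}r_{max}\ge n-k-1 .
\end{equation*}
This is the first claim.

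For the second claim, integrate this inequality from $t$ to $t'<0$. This gives $r_{max}(t)^2\ge r_{max}(t')^2+2(n-k-1)(t'-t)\ge 2(n-k-1)(t'-t)$. The ancient oval exists on $(-\infty,0)$ with extinction at $0$, so letting $t'\to0^-$ yields $r_{max}(t)^2\ge -2(n-k-1)t$.

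The only delicate step is justifying the limit of $\frac{F_zG_z}{F}$ at $z=0$ via L'Hôpital. This uses that $G$ extends evenly and $F$ extends oddly and smoothly across $z=0$, with $F_z(0,t)=1$, and that $G_t$ is continuous up to $z=0$, so the PDE holds there in the limiting sense. One could instead sidestep the interior computation by viewing $G(0,t)$ as the radius at a fixed point of the isometry group and using the Ricci flow directly. Here ${\rm Ric}_{S^{n-k}}$ at $\mathcal O$ equals $\bigl(\frac{n-k-1}{G^2}-k\frac{G_{zz}}{G}\bigr)g_{S^{n-k}}$, which reproduces the same formula.
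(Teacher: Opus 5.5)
Your proof is correct and follows the paper's argument: evaluate the $G$-equation at the maximal-radius point $z=0$, use $G_z=0$ and $G_{zz}\le 0$, then integrate in time. Your first sentence says the coupling term vanishes, but you then correctly compute that it tends to $(k-1)G_{zz}(0,t)\le 0$. That limit, and taking $t'\to 0^-$ using extinction at time $0$, are details the paper leaves implicit.
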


\begin{proof}
     Consider the point where the radius is maximal. At that point, $G = r_{max}(t)$, $G_z = 0$, $G_{zz} \leq 0$. Using the evolution equation for $G$, we conclude that $-\frac{1}{2} \frac{d}{dt}(G^2)\geq (n-k-1)$. Integrating over $t$, we obtain $r_{max}(t)\geq \sqrt{-2(n-k-1)t}$ for each $t$.
\end{proof}

\begin{lem}\label{r-up}
    We have $-\frac{1}{2}\frac{d}{dt}(r_{max}(t)^2)\leq (1+o(1))(n-k-1)$ as $t\to -\infty$. In particular, $r_{max}(t)\leq (1+o(1))\sqrt{-2(n-k-1)t}$ as $t\to -\infty$.
\end{lem}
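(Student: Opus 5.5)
At the point $z=0$, where $G=r_{\max}(t)$, we have $G_z=0$ and $F_z=1$. By the $G$-equation \eqref{G-equation}, the nonlocal term carries a factor $G_z$ and the coupling term $(k-1)F_zG_z/F$ has the finite limit $(k-1)G_{zz}(0,t)$ at $z=0$ (by L'H\^opital with $F(0)=0$, $F_z(0)=1$). Hence at $z=0$
\begin{equation*}
-\frac12\frac{d}{dt}\bigl(r_{\max}^2\bigr) = -G\,G_t = (n-k-1) - k\,G\,G_{zz}(0,t),
\end{equation*}
where we used $G_t = kG_{zz} - (n-k-1)/G$ at $z=0$. Since $G_{zz}\le 0$, this recovers Lemma \ref{r-low}. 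So the whole matter is to show
\begin{equation*}
G(0,t)\,|G_{zz}(0,t)| = o(1) \quad \text{as } t\to-\infty.
\end{equation*}
Once that holds, integrating from $t$ to a fixed time $t_1$ gives $r_{\max}(t)^2 \le (1+o(1))\,2(n-k-1)(-t)$, which is the second claim.

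To prove $G(0,t)G_{zz}(0,t)\to 0$, I would use the blowdown analysis of Section \ref{Preliminaries}. By Lemma \ref{r-low}, $G(0,t)\ge\sqrt{-2(n-k-1)t}$, so the hypothesis of Proposition \ref{epsilon-bubble-sheet} holds at $(\mathcal O,t)$ with $\delta=1$. Therefore $(\mathcal O,t)$ is the center of an evolving $\varepsilon$-cylinder for all $t\le t_0(\varepsilon)$. Rescaling by $R(\mathcal O,t)$, the $C^{[\varepsilon^{-1}]}$-closeness to $\mathbb R^k\times S^{n-k}$ makes the curvature component $-G_{zz}/G$ at $\mathcal O$ smaller than $\varepsilon R(\mathcal O,t)$. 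The same closeness makes $G(0,t)^2 R(\mathcal O,t)$ close to the cylinder value $(n-k)(n-k-1)$, which in particular is bounded. Multiplying,
\begin{equation*}
G\,|G_{zz}| = G^2\cdot\frac{|G_{zz}|}{G} \le \varepsilon\, G^2 R(\mathcal O,t) \le C\varepsilon .
\end{equation*}
Since $\varepsilon$ is arbitrary, this is $o(1)$.

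The main obstacle is justifying that the rescaled flow at $(\mathcal O,t)$ converges to the cylinder along every sequence $t\to-\infty$. Equivalently, one must rule out a limit in which $G_{zz}/G$ stays comparable to $R$, such as $\mathbb R^{k-1}$ times a Bryant soliton. Proposition \ref{epsilon-bubble-sheet} already handles this using the lower bound on $G$. Alternatively, one can combine Proposition \ref{all-sequences-asymptotics} with the fact that $G_z(0,t)=0$ together with $G(0,t)R(\mathcal O,t)^{1/2}\ge c$ excludes the Bryant tip.

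A technical point is that the limit $(k-1)F_zG_z/F\to(k-1)G_{zz}$ at $z=0$ requires the smoothness conditions (i) on $F$ and $G$ at $z=0$. Also, the $o(1)$ must be uniform in time in order to integrate; this follows because $t_0(\varepsilon)$ in Proposition \ref{epsilon-bubble-sheet} is uniform.
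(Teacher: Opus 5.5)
Your proposal is correct and follows essentially the same route as the paper's proof. Both evaluate the $G$-equation at the maximum point $z=0$, use Proposition \ref{epsilon-bubble-sheet} (applicable by Lemma \ref{r-low}) to get $G\,|G_{zz}|\le C\varepsilon$ there, and integrate in time. Your explicit treatment of the coupling term $(k-1)F_zG_z/F\to(k-1)G_{zz}$ at $z=0$, which yields $-G\,G_t=(n-k-1)-k\,G\,G_{zz}$, is slightly more careful than the paper, which absorbs the factor $k$ into the arbitrary $\epsilon$.
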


\begin{proof}
    Let $\epsilon>0$ be given. Let us consider the point where the radius is maximal. At that point, $G=r_{max }(t) \geq \sqrt{-2(n-k-1) t}$ and $G_z=0$. Proposition \ref{epsilon-bubble-sheet} implies that the point where the radius is maximal lies on an evolving $\epsilon$-cylinders if $-t$ is sufficiently large. Hence, if $-t$ is sufficiently large, then we have $G_{z z} \geq-\epsilon G^{-1}$ at the point where the radius is maximal. Using the evolution equation for $G$, we obtain $-\frac{1}{2} \frac{\partial}{\partial t}(G^2) \leq {(n-k-1)}+\epsilon$ at the point where the radius is maximal. Thus, we conclude that $-\frac{1}{2} \frac{d}{d t}(r_{max }(t)^2) \leq {(n-k-1)}+\epsilon$ if $-t$ is sufficiently large. Since $\epsilon>0$ is arbitrary, it follows that $-\frac{1}{2} \frac{d}{d t}(r_{\max }(t)^2) \leq {(n-k-1)\, (1 + o(1))}$ as $t \to -\infty$. This finally implies $r_{max }(t) \leq(1+o(1)) \sqrt{-2(n-k-1) t}$ as $t \to -\infty$.
\end{proof}

\begin{lem}
    We have $g(0, \tau) \geq 0$ for each $\tau$. Moreover, $g(0, \tau) \rightarrow 0$ as $\tau \rightarrow-\infty$.
\end{lem}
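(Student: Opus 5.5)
The plan is to translate Lemma \ref{r-low} and Lemma \ref{r-up} into the renormalized variables. By definition \eqref{f-g}, at $\xi=0$ we have $z=0$ and $t=-e^{-\tau}$, so
\[
g(0,\tau)=e^{\frac{\tau}{2}}G(0,-e^{-\tau})-\sqrt{2(n-k-1)}=\frac{r_{max}(t)}{\sqrt{-t}}-\sqrt{2(n-k-1)},\qquad t=-e^{-\tau},
\]
where $r_{max}(t)=G(0,t)$ is as in Definition \ref{maximal-radius}. Note that $G$ is monotone decreasing in $z$ by \eqref{gradient-sign}, so $z=0$ is indeed where the radius is maximal.

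For nonnegativity, we invoke Lemma \ref{r-low}. It gives $r_{max}(t)\geq\sqrt{-2(n-k-1)t}$ for every $t$, hence
\[
e^{\frac{\tau}{2}}r_{max}(-e^{-\tau})\geq e^{\frac{\tau}{2}}\sqrt{2(n-k-1)e^{-\tau}}=\sqrt{2(n-k-1)}.
\]
This says exactly that $g(0,\tau)\geq 0$. The integration in Lemma \ref{r-low} uses that $r_{max}(t)^2\geq 0$ as $t\to 0^-$, or more precisely that the solution becomes extinct at time $0$. This is implicit in the normalization that the flow exists on $(-\infty,0)$ with singular time $0$, and I will simply cite the lemma as stated.

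For convergence, we invoke Lemma \ref{r-up}, which gives $r_{max}(t)\leq(1+o(1))\sqrt{-2(n-k-1)t}$ as $t\to-\infty$. Dividing by $\sqrt{-t}$ yields $g(0,\tau)\leq o(1)\sqrt{2(n-k-1)}$ as $\tau\to-\infty$. Combined with $g(0,\tau)\geq0$, this gives $g(0,\tau)\to0$. Alternatively, the convergence also follows from Proposition \ref{epsilon-bubble-sheet} and the $C^\infty_{loc}$ convergence $g\to0$ noted after \eqref{f-g}.

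There is no real obstacle here. The only subtle point is the integration step in Lemma \ref{r-up}: passing from the differential inequality with an $o(1)$ error to the asymptotic bound requires integrating from a very negative time $t$ up to a fixed time $t_0$. The contribution from $[t_0,0]$ is a bounded constant, which becomes negligible relative to $-t$, so it is harmless. That lemma is already available, so the statement follows immediately.
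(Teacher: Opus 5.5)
Your proof is correct and follows the paper's approach: the paper also obtains this lemma as an immediate consequence of Lemma \ref{r-low} (for $g(0,\tau)\geq 0$) and Lemma \ref{r-up} (for $g(0,\tau)\to 0$), via the identity $g(0,\tau)=r_{max}(t)/\sqrt{-t}-\sqrt{2(n-k-1)}$ with $t=-e^{-\tau}$. Your extra remarks, that the integration in Lemma \ref{r-low} uses existence of the flow up to time $0$ and that the integration in Lemma \ref{r-up} is harmless, are accurate but not needed beyond citing those lemmas.
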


\begin{proof}
     This is an immediate consequence of Lemma \ref{r-low} and Lemma \ref{r-up}.
\end{proof}

For each time $\bar{\tau}$, we define 
\begin{align}\label{delta-def}
    \delta(\bar{\tau}):=\sup _{\tau \leq \bar{\tau}}\left(|g(0, \tau)|+|g_{\xi\xi}(0, \tau)|\right) 
\end{align}
By definition, $\delta(\bar{\tau})$ is an increasing function of $\bar{\tau}$. Moreover, $\delta(\bar{\tau}) \rightarrow 0$ as $\bar{\tau} \rightarrow-\infty$.

\begin{lem}\label{derivative-delta}
    For $-\bar{\tau}$ sufficiently large, the function $\bar{\tau} \mapsto \delta(\bar{\tau})$ is Lipschitz continuous with Lipschitz constant 1. In particular, the function $\bar{\tau} \mapsto \delta(\bar{\tau})$ is differentiable almost everywhere, and $0 \leq \delta^{\prime}(\bar{\tau}) \leq 1$ for $-\bar{\tau}$ sufficiently large.
\end{lem}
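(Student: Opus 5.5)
The plan is to reduce the claim to a pointwise bound on the $\tau$-derivative of the function
\[
\varphi(\tau):=|g(0,\tau)|+|g_{\xi\xi}(0,\tau)| ,
\]
so that $\delta(\bar\tau)=\sup_{\tau\le\bar\tau}\varphi(\tau)$. The first step is elementary. If $\varphi$ is Lipschitz with constant $1$ on $(-\infty,\tau_0]$, then so is its running supremum. Indeed, for $\bar\tau_1<\bar\tau_2\le\tau_0$ we have $0\le \delta(\bar\tau_2)-\delta(\bar\tau_1)\le \max\bigl(0,\sup_{\tau\in[\bar\tau_1,\bar\tau_2]}\varphi(\tau)-\varphi(\bar\tau_1)\bigr)\le \bar\tau_2-\bar\tau_1$. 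Monotonicity gives $\delta'\ge 0$, and Rademacher gives differentiability almost everywhere with $0\le\delta'\le1$. Since $|\,|a|'\,|\le|a'|$ almost everywhere, it suffices to show that
\[
|\partial_\tau g(0,\tau)|+|\partial_\tau g_{\xi\xi}(0,\tau)|\to 0 \quad\text{as }\tau\to-\infty.
\]

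For $\partial_\tau g(0,\tau)$, evaluate the equation $g_\tau=\mathcal L_1 g+E_1$ from Proposition \ref{evolution of (f, g)} at $\xi=0$. By the smoothness conditions, $g$ is even and $f$ is odd in $\xi$, so $g_\xi/\xi\to g_{\xi\xi}(0,\tau)$. Hence $\mathcal L_1 g(0,\tau)=k\,g_{\xi\xi}(0,\tau)+g(0,\tau)$. In $E_1$, the term \eqref{gequation term expansion} is $O(g^2)$. The remaining terms carry a factor $g_\xi$ that vanishes at $0$. The $f$-terms $f/(\xi(f+\xi))$ and $\int_0^\xi f_\eta(f_\eta+1)/(f+\eta)^2$ stay bounded near $\xi=0$ because $f=O(\xi^3)$ there, by \eqref{fgx=0} and \eqref{fxx=0}. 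By Proposition \ref{epsilon-bubble-sheet}, both $g$ and $f$ converge to $0$ in $C^\infty_{loc}$ as $\tau\to-\infty$. Therefore $\partial_\tau g(0,\tau)\to0$.

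For $\partial_\tau g_{\xi\xi}(0,\tau)$, differentiate the equation \eqref{g-equation} twice in $\xi$ and evaluate at $\xi=0$, which is legitimate since $g$ is smooth and even. The resulting expression is a polynomial in derivatives of $g$ up to order four at $\xi=0$, with coefficients that are smooth in $g$. It also involves derivatives of $f$ up to order three near $0$, which enter through the nonlocal integrals and the quotients $f/(\xi(f+\xi))$. Each term is either linear in a derivative of $g$ or contains a factor that vanishes with $f$, $g$ and their derivatives.

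The main technical point, and where I expect the care to be needed, is checking that the singular-looking terms $\frac{k-1}{\xi}g_\xi$, $\frac{f}{\xi(f+\xi)}$ and $\int_0^\xi\frac{f_\eta(f_\eta+1)}{(f+\eta)^2}d\eta$ remain smooth at $\xi=0$ after two $\xi$-derivatives. I would handle this with Taylor expansions using parity: $g_\xi/\xi$ is a smooth even function, and $f/(\xi(f+\xi))=(f/\xi^3)\cdot\xi/(1+f/\xi)$ is smooth because $f$ is odd with $f_\xi(0)=0$. This gives bounds on the second derivatives of these terms at $0$ by finitely many derivatives of $f$ and $g$ on $[0,1]$. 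The $C^\infty_{loc}$ convergence from Proposition \ref{epsilon-bubble-sheet} then yields $\partial_\tau g_{\xi\xi}(0,\tau)\to0$. Combining the two limits, for $-\bar\tau$ large we get $|\varphi'|\le1$, which proves the lemma.
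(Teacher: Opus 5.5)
Your proof is correct. Its skeleton is the same as in \cite[Lemma 3.11]{ABDS}, to which the paper simply defers: a running supremum of a function whose $\tau$-derivative tends to zero is eventually $1$-Lipschitz. The difference is in how you get the derivative bound and what it covers.

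In \cite{ABDS} the quantity $\delta$ involves only the maximal radius, i.e.\ the analogue of $g(0,\tau)$. In the present setting its derivative is controlled through the evolution of $r_{\max}$ in Lemmas \ref{r-low} and \ref{r-up}. These give $-\frac12\frac{d}{dt}r_{\max}^2=(1+o(1))(n-k-1)$, hence $\frac{d}{d\tau}g(0,\tau)=o(1)$ once $g(0,\tau)\to0$.

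Your identity $g_\tau(0,\tau)=k\,g_{\xi\xi}(0,\tau)+g(0,\tau)+O(g(0,\tau)^2)$ is just the rescaled form of $G_t=kG_{zz}-(n-k-1)/G$ at $z=0$, which is what those lemmas use. So for this component the two routes coincide. The only difference is that you use $g_{\xi\xi}(0,\tau)\to0$ from smooth convergence, while the lemmas use only $G_{zz}\le0$ and a one-sided $\varepsilon$-cylinder bound at a single point.

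The real gain of your approach is the second component. The $\delta$ of this paper also contains $|g_{\xi\xi}(0,\tau)|$, which the cited argument does not address at all. Your twice-differentiated equation at $\xi=0$ supplies exactly this, together with the parity checks showing that $g_\xi/\xi$, $f/(\xi(f+\xi))$ and the nonlocal integrals are smooth there. The price is that you use the $C^\infty_{loc}$ convergence of $f,g$ to $0$, stated after \eqref{f-g}, for several derivatives.

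If that convergence is read in space-time, $\partial_\tau g(0,\tau)\to0$ and $\partial_\tau g_{\xi\xi}(0,\tau)\to0$ follow at once. Going through the equation, as you do, needs only convergence in $\xi$ at each fixed time.
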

\begin{proof}
    The proof is similar to  \cite[Lemma 3.11]{ABDS}.
\end{proof}

Then for a smooth cut-off function  $\chi(s)$ defined by
\begin{equation}\label{cutoffchidef}
\begin{aligned}
    &\chi(s)=
        1\quad s\in [-1/2, 1/2]\\
    &\chi(s)\in [0, 1]\quad  \quad s\in [-1, -1/2]\cup [1/2, 1]\\
       &\chi(s)= 0\quad s\in \mathbb{R}\setminus [-1, 1]
    \end{aligned}
\end{equation}
and satisfying 
$s\chi'(s)\leq 0$ for all $s\in \mathbb{R}$, we define 
\begin{equation}\label{hat-fg}
   \hat{f}(\xi, \tau)=f(\xi, \tau)\chi(\delta(\tau)^{\frac{1}{100}}\xi)\quad \hat{g}(\xi, \tau)=g(\xi, \tau)\chi(\delta(\tau)^{\frac{1}{100}}\xi).
\end{equation}

Then, we recall the following barrier construction of Brendle, which is useful for gradient estimates of $G$.
\begin{prop}[Proposition 3.7 \cite{ABDS}]\label{g-barrier}
    There exists a positive real number $r_*$ with the following property. Given any large number a, we can find a continuously differentiable function $\psi_a:\left[r_* a^{-1}, 1+\frac{1}{100} a^{-2}\right] \rightarrow \mathbb{R}$ such that
    \begin{align}
        \psi_a(s)\psi_a(s)^{''} - \frac{1}{2}\psi'_a(s)^2 + \frac{n-k-1-\psi_a(s)}{s}\psi'_a(s) + \frac{2(n-k-1)}{\mathbb{S}^2}\psi_a(s)(1-\psi_a(s))<0
    \end{align}
    for all $s \in\left[r_* a^{-1}, 1+\frac{1}{100} a^{-2}\right]$. The function $\psi_a$ satisfies $\psi_a(s) \leq C a^{-2}$ for all $s \in\left[\frac{1}{10}, 1+\frac{1}{100} a^{-2}\right]$. Moreover, $\psi_a(s) \geq \frac{1}{32} a^{-4}$ for all $s \in\left[r_* a^{-1}, 1+\frac{1}{100} a^{-2}\right]$, and $\psi_a(s) \geq a^{-2}\left(s^{-2}-1\right)+\frac{1}{16} a^{-4}$ for all $s \in\left[1-\theta, 1+\frac{1}{100} a^{-2}\right]$, where $\theta$ is a small positive number. Finally, $\psi_a\left(r_* a^{-1}\right) \geq \frac{3}{2}$.
\end{prop}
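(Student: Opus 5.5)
This is the barrier of \cite{Bre-3d-noncpt} (cf. \cite[Proposition 3.7]{ABDS}), with the constant $1$ of the three-dimensional equation replaced by $m-2:=n-k-1$. Here $m=n-k+1$ is the dimension of the rotationally symmetric factor, and the symbol $\mathbb{S}^2$ in the statement should be read as $s^2$. I would therefore follow Brendle's construction and only track where the dimension enters. Write
\begin{equation*}
\mathcal{Q}[\psi](s)=\psi\psi''-\tfrac12(\psi')^2+\frac{m-2-\psi}{s}\psi'+\frac{2(m-2)}{s^2}\psi(1-\psi),
\end{equation*}
which is the stationary part of the $U$-equation \eqref{u-equation} with the $F$-term dropped. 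The plan is to build $\psi_a$ from two pieces and glue them in a $C^1$ way, keeping $\mathcal{Q}[\psi_a]<0$ throughout:
\begin{itemize}
\item an \emph{outer} piece on $s\in[a^{-1}K,\,1+\tfrac1{100}a^{-2}]$ with $K$ large, given by an explicit small perturbation of $a^{-2}(s^{-2}-1)$;
\item an \emph{inner} piece on $s\in[r_*a^{-1},\,Ka^{-1}]$, given by a rescaled Bryant profile.
\end{itemize}

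\textbf{Outer region.} I would take $\psi(s)=a^{-2}(s^{-2}-1)+\tfrac1{16}a^{-4}+\text{(lower-order correction)}$. The linear part of $\mathcal{Q}$ at $\psi=a^{-2}(s^{-2}-1)$ is
\begin{equation*}
(m-2)\Big[\frac{\psi'}{s}+\frac{2\psi}{s^2}\Big]=-2(m-2)a^{-2}s^{-2}<0 .
\end{equation*}
This is a strictly negative margin of order $a^{-2}s^{-2}$. The quadratic terms $\psi\psi''$, $-\tfrac12\psi'^2$, $-\psi\psi'/s$ and $-\tfrac{2(m-2)}{s^2}\psi^2$ are of size $O(a^{-4}s^{-6})$. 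The constant shift $\tfrac1{16}a^{-4}$ contributes only $O(a^{-4}s^{-2})$. Hence for $s\ge Ka^{-1}$ with $K$ large, all of these are absorbed by the margin. This gives the strict inequality and the bounds $\psi\le Ca^{-2}$ on $[\tfrac1{10},1+\tfrac1{100}a^{-2}]$, $\psi\ge a^{-2}(s^{-2}-1)+\tfrac1{16}a^{-4}$ near $s=1$, and $\psi\ge\tfrac1{32}a^{-4}$.

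\textbf{Inner region.} Let $\beta(\sigma)$ be the profile of the $m$-dimensional Bryant soliton written as $d\sigma^2/\beta+\sigma^2g_{S^{m-1}}$. It solves $\mathcal{Q}[\beta]=0$, with $\beta(0)=1$ and $\beta(\sigma)=c_m\sigma^{-2}+O(\sigma^{-4})$ as $\sigma\to\infty$. After rescaling $\sigma$ we may assume $c_m=1$, so that $\beta(as)\approx a^{-2}s^{-2}$, matching the leading term of the outer piece. On the inner region I would set $\psi_a(s)=\beta_\lambda(as)$, where $\beta_\lambda$ is a strict subsolution obtained by perturbing $\beta$. The first thing I would try is to use the scaling/one-parameter family of the soliton ODE: differentiating in the parameter gives a Jacobi field $w$ with a definite sign, and $\beta-\epsilon w$ has $\mathcal{Q}<0$ to first order in $\epsilon$. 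Then choose $r_*$ small so that $\beta_\lambda(r_*)\ge\tfrac32$; this is possible since $\beta\to1$ as $\sigma\to0$ would not suffice, so $r_*$ is taken in the region where the perturbed profile exceeds $\tfrac32$. Alternatively one can start the barrier where $\beta_\lambda\ge\tfrac32$, as in Brendle.

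\textbf{Gluing (main obstacle).} The delicate step is matching the two pieces at $s\sim Ka^{-1}$ with $C^1$ regularity while keeping strictness. This uses the next-order asymptotics of $\beta$ at infinity, so that the mismatch between $\beta(as)$ and $a^{-2}(s^{-2}-1)$ is of lower order than the negative margin. I would add a small multiple of an explicit correction, e.g.\ $-\epsilon a^{-2}s^{-2}\log(as)$-type terms as in Brendle, so that the difference of the two pieces changes sign on $[K a^{-1},2Ka^{-1}]$. Then I would take the maximum of the two strict subsolutions, which is still a strict viscosity subsolution, and smooth the corner by a $C^1$ interpolation. The interpolation preserves strictness because each piece has an $O(a^{-2}s^{-2})$ negative margin there. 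Checking these constants uniformly in $a$, with the dimensional factor $m-2$ replacing $1$, is where I expect the real work to lie. Otherwise the argument is verbatim that of \cite[Proposition 3.7]{ABDS}.
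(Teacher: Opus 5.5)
The paper gives no argument for this statement beyond quoting \cite[Proposition 3.7]{ABDS} (Brendle's barrier from \cite{Bre-3d-noncpt}) with $1$ replaced by $n-k-1$. Your top-level reduction therefore matches what the paper does, including reading $\mathbb{S}^2$ as $s^2$. However, the construction you actually sketch would not produce $\psi_a$, and the steps that fail are the ones that carry the content.

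\textbf{The outer piece does not reach down to $s\sim Ka^{-1}$.} Write $\mathcal{Q}=L+\mathcal{Q}_2$, where $L\psi=(m-2)(\psi'/s+2\psi/s^2)$ and $\mathcal{Q}_2[\psi]=\psi\psi''-\frac12\psi'^2-\frac{\psi\psi'}{s}-\frac{2(m-2)}{s^2}\psi^2$; this splitting is exact. A direct computation gives, for $s\le 1+\frac1{100}a^{-2}$,
\[
\mathcal{Q}\big[a^{-2}(s^{-2}-1)\big]=-2(m-2)a^{-2}s^{-2}+(10-2m)a^{-4}s^{-6}+O(a^{-4}s^{-4}).
\]
For $m\in\{3,4\}$ the second term is positive. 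This range includes $n=4$, $k=2$, and Brendle's own case $m=3$. The positive term beats the margin unless $a^{-2}s^{-4}\ll 1$, i.e.\ $s\gg a^{-1/2}$. At $s=Ka^{-1}$ it has size $a^{2}K^{-6}$, against a margin of size $K^{-2}$. Your claim holds only when $m\ge5$.

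Consequently the transition must occur at $s\sim a^{-1/2}$, i.e.\ $\sigma=as\sim a^{1/2}$. There the $\sigma^{-4}$ correction of the soliton profile and the term $-a^{-2}$ are of the \emph{same} size, so the mismatch between the pieces is not of lower order than the margin. The inner piece must carry its own correction of order $a^{-2}$, in the variable $\sigma$, that makes it strict uniformly on $r_*\le\sigma\lesssim a^{1/2}$.

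\textbf{The inner piece and the gluing.} Your strictness mechanism fails. If $w$ is a Jacobi field of a one-parameter family of solutions, then $D\mathcal{Q}[\beta]w=0$ and $\mathcal{Q}[\beta-\epsilon w]=\epsilon^2\mathcal{Q}_2[w]$, so there is no first-order gain. For the scaling field, $\beta-\epsilon w$ is just a rescaled Bryant profile up to $O(\epsilon^2)$. You need a perturbation $v$ with $D\mathcal{Q}[\beta]v>0$, and none is supplied.

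The condition $\psi_a(r_*a^{-1})\ge\frac32$ cannot come from any small perturbation of the Bryant profile, since $0<\beta\le 1$. This condition is what makes the comparison with $U=G_z^2\le1$ automatic on the inner boundary. Near $\sigma=r_*$ the barrier must follow a profile lying well above the Bryant one, and your sentence on this point is circular.

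Finally, the coefficient of $\psi''$ in $\mathcal{Q}$ is $\psi>0$. Hence the class $\{\mathcal{Q}<0\}$ is preserved by taking the \emph{minimum} of two functions, not the maximum: smoothing the convex corner of a maximum makes $\psi\psi''$ large and positive. The gluing must use a minimum, with the inner piece lying below the outer one to the left of the junction. This again forces the inner piece to be lowered relative to the Bryant profile by an amount comparable to $a^{-2}$ near the junction.
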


\section{Spectral alternatives}\label{Spectral alternatives}
\subsection{A priori estimates for renormalized profile functions $f$ and $g$}
In this section, we aim to derive  derivative estimates for $f(\xi,\tau)$ and $g(\xi,\tau)$ as defined
in \eqref{f-g}. Since we  have assumed $z >0$, we also have  $\xi >0$.   Also, recall the boundary conditions
\begin{equation}\label{BC fg}
    f(0, \tau)=0, \qquad f_{\xi}(0, \tau)=f_{\xi\xi}(0, \tau)=g_{\xi}(0, \tau)=0
\end{equation}
and also the conditions  
\begin{equation}
    f\leq 0,\qquad  -1\leq f_{\xi}\leq 0, \qquad  g_{\xi}\leq 0.
\end{equation}
Thus, by the monotonicity of $f$, we have 
\begin{align}\label{f-slope}
   \frac{|f|}{\xi} \leq |f_{\xi}|, \qquad \mbox{for all} \,\, (\xi, \tau), \,\,  \xi  >0. 
\end{align}
We also recall definition of $ \delta(\tau)$ in \eqref{delta-def}
\begin{align}\label{delta-def2}
    \delta(\tau):=\sup _{\tau' \leq \tau}\left(|g(0, \tau')|+|g_{\xi\xi}(0, \tau')| \right) 
\end{align}
\begin{prop}\label{G-gradient}
Let $r^*>0$ be as in Proposition \ref{g-barrier}   and  suppose  that $a$ is sufficiently large. If  $\bar{t}$ is a time with the property that $\frac{r_{\max }(t)}{\sqrt{-2(n-k-1)t}} \leq 1+\frac{1}{100} a^{-2}$ for all $t \leq \bar{t}$, then  $G_z(z, t)^2<\psi_a\left(\frac{G(z, t)}{\sqrt{-2(n-k-1)t}}\right)$ whenever $t \leq \bar{t}$ and $G(z, t) \geq r_* a^{-1} \sqrt{-2(n-k-1)t}$.
\end{prop}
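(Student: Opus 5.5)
This is the analogue of \cite[Proposition 3.8]{ABDS}, and I would follow Brendle's barrier argument in the $U$-picture, using the inverse profile $U(r,t)=G_z^2$ from \eqref{u-equation}. First I will pass to self-similar variables. Set $s = r/\sqrt{-2(n-k-1)t}$ and $u(s,t)=U(r,t)$. From \eqref{u-equation}, the rescaled function satisfies
\[
(-t)\partial_t u = \mathcal{Q}[u] - \tfrac12 s u_s - 2(k-1)(-t)U^2\frac{F_r^2}{F^2},
\]
where $\mathcal{Q}[u]=uu_{ss}-\frac12u_s^2+\frac{n-k-1-u}{s}u_s+\frac{2(n-k-1)}{s^2}u(1-u)$ is exactly the operator appearing in Proposition \ref{g-barrier}. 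The constants need checking, and a factor $2(n-k-1)$ may be absorbed into the time variable.

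The key point is that the coupling term $-2(k-1)U^2F_r^2/F^2$ is nonpositive. Hence $u$ is a subsolution of the uncoupled equation. This is the favorable sign mentioned in the introduction. Meanwhile $\psi_a(s)$ is time independent and satisfies $\mathcal{Q}[\psi_a]<0$. I still have to handle the drift term $-\frac12 s\partial_s$ coming from the rescaling. I expect this to work as in ABDS: either $\psi_a'\ge0$ on the relevant range, or the barrier is used in the unrescaled variables $r$ and $t$ with $\Psi(r,t)=\psi_a(r/\sqrt{-2(n-k-1)t})$. Then $\partial_t\Psi$ contributes a term of the right sign, and I will verify that $\Psi$ is a strict supersolution of the $U$-equation without the $F$-term.

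Next I set up the maximum principle on the region $\{(r,t): t\le\bar t,\ r_*a^{-1}\sqrt{-2(n-k-1)t}\le r\le r_{\max}(t)\}$. The hypothesis $r_{\max}(t)\le(1+\frac1{100}a^{-2})\sqrt{-2(n-k-1)t}$ ensures this region lies inside the domain of $\psi_a$. The boundary checks are as follows.

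\begin{itemize}
\item \textbf{Inner boundary.} At $s=r_*a^{-1}$ we have $U=G_z^2\le 1<\frac32\le\psi_a$.
\item \textbf{Outer boundary.} At $r=r_{\max}$, i.e.\ $z=0$, we have $U=0<\frac1{32}a^{-4}\le\psi_a$.
\item \textbf{Initial times.} For $t\to-\infty$, Proposition \ref{epsilon-bubble-sheet} gives $\epsilon$-cylindricity wherever $G\ge r_*a^{-1}\sqrt{-2(n-k-1)t}$. Hence $U=G_z^2\to0$ uniformly there, so $U<\frac1{32}a^{-4}\le\psi_a$ for all sufficiently negative $t$.
\end{itemize}

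With the boundary and initial data in place, suppose the inequality $U<\Psi$ fails. Take the first time $t_1$ and a point $r_1$ where $U=\Psi$. At that point $U_r=\Psi_r$, $U_{rr}\le\Psi_{rr}$, and $U_t\ge\Psi_t$. Substituting into the $U$-equation, with the coupling term dropped since it is $\le0$, and using $\mathcal{Q}[\psi_a]<0$, gives a contradiction.

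One subtlety is smoothness of $U$ near $r_{\max}$, where $G_z=0$. There is also a nonuniform-in-time first-touching issue. I will handle both by working on $r\le r_{\max}(t)-\eta$ and letting $\eta\to0$, or by using the strict inequality at $U=0$.

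I expect the main obstacle to be confirming that the time-derivative and drift contributions of $\Psi$ have the right sign. This is the step where Brendle's construction must be used exactly as in ABDS with $n-1$ replaced by $n-k-1$. I will follow that computation verbatim, noting that only the $G$-part of the equation enters.
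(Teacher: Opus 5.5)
Your route is the paper's route. The paper's proof consists of observing that the coupling term $-2(k-1)U^2F_r^2/F^2$ in \eqref{u-equation} is nonpositive, so that $U$ is a subsolution of the uncoupled equation, and then repeating the comparison argument of \cite[Proposition 3.8]{ABDS}. Your boundary and initial-time checks are the right ones:
\begin{itemize}
\item on the inner boundary, $U\le 1<\tfrac32\le\psi_a$;
\item on the outer boundary $z=0$, $U=0<\tfrac1{32}a^{-4}$;
\item on the $\epsilon$-cylindrical region, $U\to0$ uniformly as $t\to-\infty$.
\end{itemize}

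The step you flag as the main obstacle would fail as you describe it. The barrier $\psi_a$ is decreasing: $\psi_a(r_*a^{-1})\ge\tfrac32$, while $\psi_a\le Ca^{-2}$ on $[\tfrac1{10},1+\tfrac1{100}a^{-2}]$. Hence ``$\psi_a'\ge0$ on the relevant range'' is false. For $\Psi(r,t)=\psi_a(s)$ with $s=r/\sqrt{-2(n-k-1)t}$ you get $\partial_t\Psi=\frac{s}{-2t}\psi_a'(s)\le0$ wherever $\psi_a'\le0$, which is the wrong sign for an upper barrier. Your two alternatives are also the same computation: the drift $-\tfrac{s}{2}u_s$ in self-similar variables is exactly $\partial_t\Psi$ in the original variables.

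At a first touching point you obtain $\partial_t\Psi\le\mathcal{Q}_r[\Psi]$. Writing $\mathcal{Q}_s$ for the operator in Proposition \ref{g-barrier}, this is equivalent to $\mathcal{Q}_s[\psi_a]-(n-k-1)s\psi_a'\ge0$. That does not contradict $\mathcal{Q}_s[\psi_a]<0$, because $-(n-k-1)s\psi_a'\ge0$.

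The comparison closes only because Brendle's $\psi_a$ is constructed to satisfy the stronger inequality with the drift built in:
\[
\psi_a\psi_a''-\tfrac12\psi_a'^2+\frac{n-k-1-\psi_a-(n-k-1)s^2}{s}\,\psi_a'+\frac{2(n-k-1)}{s^2}\,\psi_a(1-\psi_a)<0 .
\]
Proposition \ref{g-barrier} as printed records only the drift-free inequality, so that statement together with a sign argument on $\partial_t\Psi$ cannot finish the proof. You need to invoke, or re-derive for the constant $n-k-1$, this drift-inclusive form of the barrier. Once you do, the rest of your plan goes through verbatim as in ABDS.
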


\begin{proof}
By \eqref{u-equation}, we have 
\begin{align}
    U_t\leq UU_{rr} - \frac{1}{2}U_r^2 + \frac{n-k-1-U}{r}U_r + \frac{2(n-k-1)}{r^2}U(1-U),
\end{align}
which allows us to apply the maximum principle  as in \cite[Proposition 3.8]{ABDS}. Then, the proof is identical to \cite[Proposition 3.8]{ABDS}.
\end{proof}

\begin{prop}\label{g_xi-barrier}
Fix  $\bar{\tau}$ so that $-\bar \tau$ is sufficiently large, and let $a:=\frac{1}{10} \delta(\bar{\tau})^{-\frac{1}{2}}$. Then,  \begin{equation}
    g_{\xi}(\xi, \tau)^2 \leq \psi_a\,  \big(1+\frac{g(\xi, \tau)}{\sqrt{2(n-k-1)}}\big)
\end{equation}
holds,  whenever $\tau \leq \bar{\tau}$ and 
\begin{equation}
    g(\xi, \tau) \geq\left(r_* a^{-1}-1\right) \sqrt{2(n-k-1)},
\end{equation}
 where $r_*$ is defined as in \cite{Bre-3d-noncpt}.
\end{prop}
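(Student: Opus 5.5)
The plan is to deduce the statement directly from Proposition \ref{G-gradient} by rewriting it in the rescaled variables $\xi = e^{\tau/2}z$, $t=-e^{-\tau}$. Under this change of variables,
\begin{equation*}
g_\xi(\xi,\tau) = G_z(z,t), \qquad \frac{G(z,t)}{\sqrt{-2(n-k-1)t}} = 1+\frac{g(\xi,\tau)}{\sqrt{2(n-k-1)}}.
\end{equation*}
Hence the conclusion $G_z^2<\psi_a\bigl(G/\sqrt{-2(n-k-1)t}\bigr)$ becomes exactly $g_\xi^2\le\psi_a\bigl(1+g/\sqrt{2(n-k-1)}\bigr)$. Likewise, the region condition $G\ge r_*a^{-1}\sqrt{-2(n-k-1)t}$ becomes $g\ge (r_*a^{-1}-1)\sqrt{2(n-k-1)}$. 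So the only thing to check is the hypothesis of Proposition \ref{G-gradient} with the choice $a=\frac1{10}\delta(\bar\tau)^{-1/2}$ and $\bar t=-e^{-\bar\tau}$.

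To verify that hypothesis, I would use Definition \ref{maximal-radius}: $r_{\max}(t)=G(0,t)$, so
\begin{equation*}
\frac{r_{\max}(t)}{\sqrt{-2(n-k-1)t}} = 1+\frac{g(0,\tau)}{\sqrt{2(n-k-1)}}.
\end{equation*}
By the definition \eqref{delta-def2} and the monotonicity of $\delta$, for all $\tau\le\bar\tau$ we have $0\le g(0,\tau)\le \delta(\tau)\le\delta(\bar\tau)$. Here nonnegativity comes from Lemma \ref{r-low}. With $a^{-2}=100\,\delta(\bar\tau)$, the bound $1+\frac{1}{100}a^{-2} = 1+\delta(\bar\tau)$ then gives
\begin{equation*}
\frac{r_{\max}(t)}{\sqrt{-2(n-k-1)t}}\le 1+\frac{\delta(\bar\tau)}{\sqrt{2(n-k-1)}}\le 1+\delta(\bar\tau)=1+\tfrac1{100}a^{-2}
\end{equation*}
for all $t\le\bar t$, since $\sqrt{2(n-k-1)}\ge\sqrt2>1$ (because $n-k\ge 2$).

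The remaining requirement of Proposition \ref{G-gradient} is that $a$ be sufficiently large. This holds because $\delta(\bar\tau)\to0$ as $\bar\tau\to-\infty$, so taking $-\bar\tau$ large makes $a$ as large as needed. The only minor subtlety I expect is if $\delta(\bar\tau)=0$. In that case $g(0,\tau)\equiv0$ and we can instead take $a$ arbitrarily large and let $a\to\infty$, or simply exclude this degenerate case. Everything else is a direct substitution, since the sign of the $F$-coupling in \eqref{u-equation} has already been handled in Proposition \ref{G-gradient}. The strict inequality obtained there yields the stated non-strict one.
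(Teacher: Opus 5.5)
Your proposal is correct and is essentially the paper's argument, which defers to \cite[Proposition 3.12]{ABDS}: the definition of $\delta$ gives $r_{\max}(t)/\sqrt{-2(n-k-1)t}\le 1+\delta(\bar\tau)=1+\frac{1}{100}a^{-2}$ for all $t\le -e^{-\bar\tau}$, and Proposition \ref{G-gradient}, rewritten in the variables $\xi=e^{\tau/2}z$, $t=-e^{-\tau}$, is then exactly the claimed bound. Your remark about the degenerate case $\delta(\bar\tau)=0$ is a harmless extra point that the paper leaves implicit.
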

\begin{proof}
The proof is the same as the proof in \cite[Proposition 3.12]{ABDS}.
\end{proof}

We will now prove a number of estimates  holding  for $\tau \leq \bar \tau$,  where $-\bar \tau $ will be  a sufficiently large number so that Proposition \ref{g_xi-barrier} and can be made even larger so that all our estimates below hold for $\tau \leq \bar \tau$.

\begin{lem}\label{g-C01}
    We have $|g(\xi, \tau)| + |g_\xi(\xi, \tau)|\leq C \delta^{\frac{1}{4}}(\tau)$,  for $0 \leq \xi \leq 2\delta(\tau)^{-\frac{1}{100}}$, $\tau \leq \bar \tau$.
\end{lem}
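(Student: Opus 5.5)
The plan follows the analogous estimate in \cite[Lemma 4.1]{ABDS}, combining the gradient barrier of Proposition \ref{g_xi-barrier} with the concavity of $G$ and the definition of $\delta(\tau)$. Fix $\tau\le\bar\tau$ and apply Proposition \ref{g_xi-barrier} with $\bar\tau$ replaced by $\tau$, so that $a=\frac1{10}\delta(\tau)^{-1/2}$. The barrier estimate then reads
\[
g_\xi(\xi,\tau)^2\le\psi_a\Big(1+\tfrac{g(\xi,\tau)}{\sqrt{2(n-k-1)}}\Big)
\]
whenever $g$ is not too negative.

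\emph{Step 1: pointwise bound on $g$ at $\xi=0$.} From the definition \eqref{delta-def2} we have $|g(0,\tau)|\le\delta(\tau)$. In particular $1+\frac{g(0,\tau)}{\sqrt{2(n-k-1)}}\le 1+C\delta\le 1+\frac1{100}a^{-2}$, after checking constants (note $a^{-2}=100\delta$). This confirms the hypothesis of Proposition \ref{G-gradient} and places the argument inside the domain of $\psi_a$.

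\emph{Step 2: continuity argument.} Let $\xi_1$ be the largest value in $[0,2\delta^{-1/100}]$ such that
\[
g(\xi,\tau)\ge -\tfrac12\sqrt{2(n-k-1)}\quad\text{on }[0,\xi_1].
\]
On $[0,\xi_1]$ the argument $s=1+g/\sqrt{2(n-k-1)}$ lies in $[\frac12,1+\frac1{100}a^{-2}]\subset[\frac1{10},1+\frac1{100}a^{-2}]$. Proposition \ref{g-barrier} then gives $\psi_a(s)\le Ca^{-2}=C\delta(\tau)$, and hence $|g_\xi|\le C\delta^{1/2}$ there.

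Integrating from $0$ yields
\[
|g(\xi,\tau)|\le\delta+C\delta^{1/2}\xi\le C\delta^{1/2-1/100}\quad\text{for }\xi\le 2\delta^{-1/100}.
\]
This is far smaller than $\frac12\sqrt{2(n-k-1)}$ once $-\bar\tau$ is large. By continuity, $\xi_1=2\delta^{-1/100}$.

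\emph{Step 3: conclusion.} We obtain
\[
|g|+|g_\xi|\le C\delta^{1/2-1/100}+C\delta^{1/2}\le C\delta^{1/4}.
\]
The sign information $g_\xi\le0$ is consistent with this but is not needed.

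The main obstacle is Step 2. The barrier inequality is stated for the original $G$ under a hypothesis on $r_{\max}$ at all earlier times, so one must check that $\delta$ monotone increasing makes the condition $r_{\max}(t)/\sqrt{-2(n-k-1)t}\le 1+\frac1{100}a^{-2}$ hold for all $t\le t(\tau)$. This holds because $g(0,\tau')\le\delta(\tau')\le\delta(\tau)$ for $\tau'\le\tau$. One should also confirm the admissible range of $s$ in Proposition \ref{g-barrier}, namely that $\psi_a\le Ca^{-2}$ on $[\frac1{10},\cdot]$ covers the relevant values. Everything else is integration.
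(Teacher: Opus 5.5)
Your proof is correct and is essentially the paper's argument: Proposition \ref{g_xi-barrier} with $a=\frac{1}{10}\delta(\tau)^{-1/2}$ gives $g_\xi^2\le Ca^{-2}=C\delta(\tau)$, and integrating from $\xi=0$ with $|g(0,\tau)|\le\delta(\tau)$ gives $|g|\le\delta+C\delta^{\frac12-\frac1{100}}\le C\delta^{\frac14}$. Your continuity step keeping $s\ge\frac12$ makes explicit that one stays in the range $s\ge\frac1{10}$ where $\psi_a\le Ca^{-2}$, which the paper's one-line proof takes for granted (note $\psi_a(r_*a^{-1})\ge\frac32$, so that bound fails at the lower end of the range where the barrier applies); one small correction is that the upper bound $s\le 1+\frac1{100}a^{-2}$ for $\xi>0$ uses $G(z,t)\le G(0,t)=r_{\max}(t)$, i.e. the monotonicity $g_\xi\le 0$, so that sign information is in fact used.
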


\begin{proof}
We  know that $0\leq g(0,\tau)\leq \delta(\tau)$ and also by Proposition  \ref{g_xi-barrier} with $\bar \tau = \tau$, we have 
 $g_\xi(\xi,\tau)^2\leq C\delta(\tau)$,  whenever $g(\xi, \tau) \geq\left(r_* a^{-1}-1\right) \sqrt{2(n-k-1)}$. These two bounds  imply  that $|g(\xi, \tau)|   \leq \delta(\tau) + \xi \,  \sqrt{C\delta(\tau)} \leq \delta^{\frac 14}  (\tau) $, for $\xi\leq 2\delta(\tau)^{-\frac{1}{100}}$.
\end{proof}

\begin{lem}\label{g-C-m}
 We have $|\frac{\partial^m}{\partial\xi^m}g(\xi, \tau)| \leq C(m)$,  for $0 < \xi \leq 2\delta(\tau)^{-\frac{1}{100}}$, $\tau \leq \bar \tau$.
\end{lem}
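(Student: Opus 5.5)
The plan is to turn the derivative bounds of Proposition \ref{Gmorder estimates} into bounds for the renormalized function $g$. The key point is that in the region $0<\xi\le 2\delta(\tau)^{-\frac{1}{100}}$ the profile $G$ stays comparable to the cylindrical scale $\sqrt{-t}$. Under the scaling $z=e^{-\tau/2}\xi$, $t=-e^{-\tau}$ we have
\[
\partial_\xi^m g(\xi,\tau)=e^{\frac{\tau}{2}(1-m)}\,\partial_z^m G(z,t)=(-t)^{\frac{m-1}{2}}\partial_z^m G(z,t),
\]
so it suffices to prove $|\partial_z^m G|\le C(m)(-t)^{-\frac{m-1}{2}}$ at the corresponding points. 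For $m=0,1$ this follows from Lemma \ref{g-C01}, which gives $|g|+|g_\xi|\le C\delta^{1/4}$ there.

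\textbf{Step 1: $G$ is comparable to the cylindrical scale.} By Lemma \ref{g-C01}, $G(z,t)=\sqrt{-t}\,(\sqrt{2(n-k-1)}+g)$ with $|g|\le C\delta^{1/4}$. Hence for $-\tau$ large,
\[
\tfrac12\sqrt{2(n-k-1)(-t)}\le G(z,t)\le 2\sqrt{2(n-k-1)(-t)}
\]
throughout the region.

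\textbf{Step 2: bound on $G_{zz}$.} We need $G|G_{zz}|\le C$ in the region. Proposition \ref{epsilon-bubble-sheet} with $\delta=\frac12$ shows that every such point lies at the center of an evolving $\varepsilon$-cylinder. The proof of that proposition also gives $R\le C/G^2$ whenever $G\ge \delta\sqrt{-2(n-k-1)t}$. Since the curvature operator is nonnegative, the eigenvalue $-G_{zz}/G$ is at most $R$. Therefore
\[
0\le -G\,G_{zz}\le G^2R\le C.
\]

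\textbf{Step 3: higher derivatives.} Insert Step 2 into Proposition \ref{Gmorder estimates}:
\[
|\partial_z^{m+1}G|\le C(m)\,G^{-m}(1+G|G_{zz}|)^m\le C(m)\,G^{-m}\le C(m)(-t)^{-\frac m2}.
\]
Rescaling with the identity above gives $|\partial_\xi^{m+1}g|\le C(m)$ for all $m\ge 0$, which together with the case $m=0$ is the claim.

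The main obstacle is Step 2: obtaining $G|G_{zz}|$ bounded uniformly on the whole region, not just on compact $\xi$-sets. The cylindrical closeness of Proposition \ref{epsilon-bubble-sheet} gives this directly, because the condition $G\ge \tfrac12\sqrt{-2(n-k-1)t}$ is verified uniformly by Lemma \ref{g-C01}.

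If one prefers not to use the curvature bound $R\le C/G^2$, there is a fallback. The $\varepsilon$-cylinder closeness in $C^{[\varepsilon^{-1}]}$ at scale $R^{-1/2}\sim\sqrt{-t}$, combined with $R\sim 1/(-t)$ (from the lower bound \eqref{curvature-lower-bound} and the cylinder structure), directly bounds all $z$-derivatives of $G/\sqrt{-t}$ in rescaled coordinates. Since $\varepsilon$ can be chosen for each fixed $m$, this also gives $C(m)$ bounds.
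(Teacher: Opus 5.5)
Your proof is correct and follows the paper's route. The paper deduces the lemma directly from Proposition \ref{Gmorder estimates} and the scaling identity $\partial_\xi^m g=(-t)^{\frac{m-1}{2}}\partial_z^m G$; your Steps 1--2 make explicit the two inputs that this one-line argument leaves implicit, namely $G\sim\sqrt{-t}$ (from Lemma \ref{g-C01}) and $G|G_{zz}|\le G^2R\le C$ (from the bound $R\le C/G^2$ proved inside Proposition \ref{epsilon-bubble-sheet}).
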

\begin{proof}
    This follows from Proposition \ref{Gmorder estimates} and definition of $g$ in \eqref{f-g}.
\end{proof}

\begin{lem}\label{g-C23}
    We have $|g_{\xi\xi}(\xi, \tau)| + |g_{\xi\xi\xi}(\xi, \tau)|\leq C \delta^{\frac{1}{16}}(\tau)$,  for $0 < \xi \leq 2\delta(\tau)^{-\frac{1}{100}}$, $\tau \leq \bar \tau$.
\end{lem}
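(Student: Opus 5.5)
We derive Lemma \ref{g-C23} by interpolating the small $C^1$ bound of Lemma \ref{g-C01} against the uniform higher-derivative bounds of Lemma \ref{g-C-m}, in the spirit of \cite[Lemma 3.14]{ABDS}.

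\textbf{Step 1: the second derivative.} Fix $\tau\le\bar\tau$. Lemma \ref{g-C01} gives $|g_\xi|\le C\delta(\tau)^{1/4}$ on $[0,2\delta(\tau)^{-1/100}]$. Lemma \ref{g-C-m} with $m=3$ gives $|g_{\xi\xi\xi}|\le C$ on the same interval. Take any $\xi_0$ in $(0,2\delta^{-1/100}]$ and a step $0<s\le \min\{1,\delta^{-1/100}\}$, chosen so that the interval $[\xi_0,\xi_0+s]$ or $[\xi_0-s,\xi_0]$ stays inside the region. Taylor's formula gives
\[
|g_{\xi\xi}(\xi_0)|\le \frac{|g_\xi(\xi_0\pm s)-g_\xi(\xi_0)|}{s}+Cs\le \frac{2C\delta^{1/4}}{s}+Cs .
\]
Choosing $s=\delta^{1/8}$ yields $|g_{\xi\xi}|\le C\delta^{1/8}$. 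Near $\xi=0$ we use the one-sided difference to the right, which is fine because $g$ is smooth up to $\xi=0$ after even reflection.

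\textbf{Step 2: the third derivative.} We repeat the interpolation one level up. Use $|g_{\xi\xi}|\le C\delta^{1/8}$ from Step 1 and $|\partial_\xi^4 g|\le C$ from Lemma \ref{g-C-m} with $m=4$. Taking $s=\delta^{1/16}$ gives
\[
|g_{\xi\xi\xi}|\le \frac{C\delta^{1/8}}{s}+Cs\le C\delta^{1/16}.
\]
Since $\delta^{1/8}\le\delta^{1/16}$, summing the two bounds proves the lemma.

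\textbf{Main obstacle: the domain near the right endpoint.} Lemma \ref{g-C01} holds on $[0,2\delta^{-1/100}]$, and we assume Lemma \ref{g-C-m} holds on at least the same range. At a point near $2\delta^{-1/100}$ we must take the difference quotient to the left. This requires the Taylor intervals to have length $s\ll\delta^{-1/100}$, which the choices $s=\delta^{1/8}$ and $s=\delta^{1/16}$ clearly satisfy.

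We also need Lemma \ref{g-C-m} to hold on the whole interval, including where $g$ is only moderately small. This rests on Proposition \ref{Gmorder estimates}, which requires $G|G_{zz}|$ bounded there. That in turn follows from Proposition \ref{epsilon-bubble-sheet}: in this range $G\approx\sqrt{-2(n-k-1)t}$, so we are in the cylindrical region and the rescaled curvature is bounded.
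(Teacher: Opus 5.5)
Your proof is correct and follows essentially the same route as the paper. You interpolate the $C\delta^{1/4}$ bound on $g_\xi$ from Lemma \ref{g-C01} against the uniform bound on $g_{\xi\xi\xi}$ from Lemma \ref{g-C-m} at scale $\delta^{1/8}$, which gives $|g_{\xi\xi}|\le C\delta^{1/8}$, and then repeat one derivative higher at scale $\delta^{1/16}$. The paper phrases the first step through the mean value theorem (find a nearby point with $|g_{\xi\xi}|\le C\delta^{1/8}$, then propagate using $|g_{\xi\xi\xi}|\le C$), which is equivalent to your Taylor estimate, and it handles $g_{\xi\xi\xi}$ by saying "the same argument applies," exactly as in your Step 2.
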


\begin{proof}
We only prove the estimate for $|g_{\xi\xi}(\xi, \tau)|$, since the same argument applies to  $|g_{\xi\xi\xi}(\xi, \tau)|$. By Lemma \ref{g-C-m} with $m=2,3$, we obtain $\left|g_{\xi \xi \xi}(\xi, \tau)\right| + \left|g_{\xi \xi}(\xi, \tau)\right| \leq C$, for $\tau \leq-k$ and $ 0 < \xi   \leq 2\delta(\tau)^{-\frac{1}{100}}$  Moreover, applying interpolation inequality and Lemma \ref{g-C01}, yield
\begin{align}
\inf _{\xi^{\prime} \in\left[\xi-\delta(\tau)^{\frac{1}{8}}, \xi+\delta(\tau)^{\frac{1}{8}}\right]}\left|g_{\xi \xi}\left(\xi^{\prime}, \tau\right)\right| \leq C \delta^{\frac{1}{8}}
\end{align}
for $\tau \leq \bar\tau$ and $\xi \leq \delta^{-\frac{1}{100}}$. Putting these facts together, we conclude that $\left|g_{\xi \xi}(\xi, \tau)\right| \leq C \delta^{\frac{1}{8}}$ for $\tau \leq\bar\tau$ and $\xi \leq \delta^{-\frac{1}{100}}$. This completes the proof of the lemma.
\end{proof}

\begin{lem}\label{apriori-f} 
For any $\epsilon>0$, there exists $\bar \tau$, such that  
\begin{equation}\label{eqn-sf}
f\geq -\tfrac{1}{2}\xi, \qquad   -\tfrac{1}{2} \leq f_\xi \leq 0, \qquad 0\leq -\frac{f_{\xi\xi}}{f+\xi}\leq \epsilon
\end{equation}
holds for all $0 < \xi\leq \delta(\tau)^{-\frac{1}{100}}$, $\tau \leq \bar \tau$. 
\end{lem}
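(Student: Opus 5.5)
The plan is to get all three inequalities from the blowdown analysis of Section \ref{Preliminaries}, rewritten in the renormalized variables. By \eqref{f-g}, $f+\xi = e^{\tau/2}F$, $f_\xi+1 = F_z$ and $f_{\xi\xi}=e^{-\tau/2}F_{zz}$, so
\[
-\frac{f_{\xi\xi}}{f+\xi} = -e^{-\tau}\frac{F_{zz}}{F} = (-t)\Bigl(-\frac{F_{zz}}{F}\Bigr),
\]
which is exactly the curvature operator eigenvalue on $e_0\wedge e_i$ times $(-t)$. Its nonnegativity is \eqref{derivative sign}. The signs $f_\xi\le 0$ and $f\le 0$ were recorded after \eqref{fxx=0}. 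What remains is to show that $(-t)(-F_{zz}/F)\le\epsilon$ on the region $\xi\le\delta(\tau)^{-1/100}$, and then to integrate twice.

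\textbf{Step 1: the region is cylindrical.} On $\xi\le 2\delta(\tau)^{-1/100}$, Lemma \ref{g-C01} gives $|g|\le C\delta^{1/4}$. Hence $G\ge \tfrac12\sqrt{-2(n-k-1)t}$ there once $-\bar\tau$ is large. Proposition \ref{epsilon-bubble-sheet} with $\delta=\tfrac12$ and a small $\varepsilon'$ then says every such point is the center of an evolving $\varepsilon'$-cylinder $\mathbb R^k\times S^{n-k}$. Two consequences follow:
\begin{itemize}
\item $R(x,t)\le C/(-t)$, with $C$ close to the cylinder value;
\item every curvature component other than $\frac{1-G_z^2}{G^2}$ is at most $\varepsilon' R$.
\end{itemize}
In particular $-F_{zz}/F\le \varepsilon' C/(-t)$, which gives the third inequality with $\epsilon = C\varepsilon'$.

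I will justify the second bullet by contradiction, using Lemma \ref{k-1lines split} and the tangent-flow limit. Any sequence violating it would produce a blowdown limit that is the cylinder $\mathbb R^k\times S^{n-k}$. In that limit the $e_0\wedge e_i$ eigenvalue vanishes, and the curvature scale is comparable to $(-t)^{-1}$, so there is no contradiction with scaling.

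\textbf{Step 2: integrate.} From Step 1, $0\le -f_{\xi\xi}\le \epsilon(f+\xi)\le\epsilon\xi$, using $f\le0$. Since $f_\xi(0)=0$, integrating gives
\[
|f_\xi(\xi,\tau)|\le \tfrac{\epsilon}{2}\xi^2 .
\]
This is only useful if $\epsilon\,\delta^{-2/100}\le 1$, which a fixed $\epsilon$ does not give. So I will instead use the finer fact that $F_z$ is monotone and close to $1$ on $[0,z_q]$ as in the proof of Lemma \ref{k-1lines split}. Alternatively, I will directly bound $1-F_z$ through the component $\frac{1-F_z^2}{F^2}$: Step 1 gives $(-t)\frac{1-F_z^2}{F^2}\le\epsilon$, so $1-F_z^2\le \epsilon (f+\xi)^2\le\epsilon\xi^2$. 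This also degenerates for large $\xi$.

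The robust route is therefore a contradiction argument on $F_z$ itself. Suppose $F_z(z_j,t_j)<\tfrac12$ at points with $G\ge\frac12\sqrt{-2(n-k-1)t_j}$. Rescale by $R(z_j,t_j)\sim(-t_j)^{-1}$. The limit is a cylinder $\mathbb R^k\times S^{n-k}$ in which the $S^{k-1}$ orbits must become flat $(k-1)$-planes, which by \eqref{RF-infty} forces $R^{1/2}F\to\infty$. The orbit's second fundamental form $F_z/F$ tends to zero, but $F_z$ itself need not. So flatness alone does not force $F_z\to1$, and I will need to exploit the $z$-line together with the orbit directions. In the limit, $\partial_z$ and the $k-1$ orbit directions span the $\mathbb R^k$ factor. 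The limit metric $d\xi^2+F_\infty^2 g_{S^{k-1}}$ must be flat with $F_\infty$ linear of slope $\lim F_z$. A cone of slope $s<1$ is flat only away from the vertex, and smoothness of the limit at its vertex (which sits at bounded rescaled distance, since $F\le z$ and $F\ge$?) is what forces $s=1$.

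\textbf{Main obstacle.} The weak point is the bounded-distance claim. Because of this, my first attempt will instead use $F\ge$ linear lower bounds. Concavity of $F$ together with $F(z)\ge z F_z(z)$ means that $F_z(z_j)\ge\tfrac12$ follows if $F(z_j)\le 2z_jF_z$; equivalently, I need to control $\int_0^{z_j}(1-F_z)$. I expect this control, namely a bound on $\frac{1-F_z^2}{F^2}\cdot F^2$ uniformly up to $\xi\sim\delta^{-1/100}$, to be the main obstacle. The fallback is to deduce it from $1-F_z^2\le \epsilon\,(-t)^{-1}F^2$ combined with an a priori bound $F\le C\sqrt{-t}\,\delta^{-1/100}$. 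That combination only gives smallness if $\epsilon\ll\delta^{2/100}$. I will therefore choose $\varepsilon'$ in Proposition \ref{epsilon-bubble-sheet} depending on $\tau$, shrinking $\bar\tau$ accordingly by a diagonal argument.

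Once $-\tfrac12\le f_\xi\le 0$ holds, integrating from $f(0)=0$ gives $f\ge-\tfrac12\xi$.
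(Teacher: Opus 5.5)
Your Step 1 is sound, and it is in fact the whole of the paper's argument. Lemma \ref{g-C01} places the range $\xi\le\delta(\tau)^{-1/100}$ inside $\{G\ge\tfrac12\sqrt{-2(n-k-1)t}\}$. Proposition \ref{epsilon-bubble-sheet} then makes each such point the center of an evolving $\varepsilon'$-cylinder. Since $-f_{\xi\xi}/(f+\xi)=(-t)(-F_{zz}/F)$, this gives the third inequality. The paper stops there and asserts that all of \eqref{eqn-sf} "readily follows" from this convergence to the cylinder.

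The gap is in Step 2, exactly where you suspected. With $\epsilon$ fixed, the third inequality cannot give the slope bound on a range growing like $\delta^{-1/100}$. Take $u=f+\xi$ with $u''=-\epsilon u$, $u(0)=0$, $u'(0)=1$. It satisfies every sign condition and $-f_{\xi\xi}/(f+\xi)=\epsilon$, yet $f_\xi=\cos(\sqrt\epsilon\,\xi)-1<-\tfrac12$ once $\xi>\pi/(3\sqrt\epsilon)$.

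Local closeness to $\mathbb R^k\times S^{n-k}$ at scale $\sqrt{-t}$ also cannot detect $F_z$ once $F\gg\sqrt{-t}$. For $k=2$ the limit only sees a flat cone; for $k\ge3$ you only get $1-F_z^2\le\epsilon F^2/(-t)$.

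Your cone-vertex argument does not rescue this. The vertex of the limiting cone sits at rescaled distance $F(z_j)/(F_z(z_j)\sqrt{-t_j})\ge z_j/\sqrt{-t_j}$, using $F\ge zF_z$. So it is at bounded distance only when $z_j\lesssim\sqrt{-t_j}$. That is the parabolic region, where the tangent flow already gives $F_z\to1$.

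Your fallback fails as well. Proposition \ref{epsilon-bubble-sheet} is a compactness statement, so the best closeness $\varepsilon(t)$ on $\{G\ge\tfrac12\sqrt{-2(n-k-1)t}\}$ tends to zero at an unknown rate. Meanwhile $\delta(\tau)$ is built from $g(0,\cdot)$ and $g_{\xi\xi}(0,\cdot)$ and tends to zero at an unrelated, equally unknown rate. What you need is $\varepsilon(-e^{-\tau})\,\delta(\tau)^{-1/50}\le 1$ for all $\tau\le\bar\tau$. The range $\xi\le\delta(\tau)^{-1/100}$ is fixed by the statement, and the threshold $t_0(\varepsilon')$ is not quantified. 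So no choice of $\varepsilon'$ depending on $\tau$, diagonal or otherwise, can guarantee this inequality.

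The first two inequalities therefore remain unproved for $L\sqrt{-t}\le z\le\delta^{-1/100}\sqrt{-t}$. Closing this needs a genuinely new ingredient. One option is a quantitative bound $\sup_{\xi\le\delta^{-1/100}}(-t)(-F_{zz}/F)=o(\delta^{1/50})$. Another is some global control of $F_z$ beyond the parabolic scale, for instance a maximum-principle or barrier argument for $F_z$.

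You cannot borrow this from the later Merle--Zaag estimates for $h=f_\xi$, because those take this lemma as input. The paper's one-line proof does not supply such an ingredient either. It treats the slope bound as an immediate consequence of cylindrical convergence, so the step you flagged is precisely the one being taken for granted there.
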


\begin{proof}
By  smallness of $g$ on $0 < \xi\leq \delta^{-\frac{1}{100}}$ (see Lemma  \ref{g-C01}),  the definition of $\delta(\tau)$ in \eqref{delta-def} and Proposition \ref{epsilon-bubble-sheet}, we have that the  rescaled flow  converges  to  the shrinking cylinder $\mathbb R^k\times S^{n-k}(\sqrt{2(n-k-1)|t|})$. Hence,  Lemma \ref{apriori-f}  readily  follows from these facts.

\end{proof}

\subsection{Estimates for error term $E_1$}
We will now estimate the error term $E_1$  in the evolution equation of $g$ in \eqref{g-error-expansion} of Proposition \ref{evolution of (f, g)}.
All estimates below will hold for $\tau \leq \bar \tau$, where $-\bar \tau$ is a sufficiently large constant.   

\begin{lem}\label{gxi4}
The following inequality holds for $\tau \leq \bar \tau$, 
\begin{align}\label{g-k-1-integral}
&\intde  \xi^{k-1} e^{-\frac{\xi^2}{4}}\, g_{\xi}(\xi, \tau)^4  d \xi \notag\\  &\leq C \, \delta(\tau)^{\frac{1}{100}} \intde  \xi^{k-1} e^{-\frac{\xi^2}{4}} \, g(\xi, \tau)^2 d \xi 
+C \exp \big(-\tfrac{1}{8} \delta(\tau)^{-\frac{1}{50}}\big).
\end{align}

\end{lem}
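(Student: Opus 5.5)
This is the analogue of \cite[Lemma 4.3]{ABDS} with the weight $\xi^{k-1}e^{-\xi^2/4}$ in place of the one-dimensional Gaussian. The plan is a weighted integration by parts that trades $g_\xi^4$ for $g^2$ times second derivatives, followed by absorption using the smallness bounds of Lemmas \ref{g-C01} and \ref{g-C23}. Throughout, write $\rho(\xi)=\xi^{k-1}e^{-\xi^2/4}$, $D=\delta(\tau)^{-\frac1{100}}$, and insert the cutoff $\chi(\delta^{\frac1{100}}\xi)$ (equal to $1$ on $[0,D/2]$, supported in $[0,D]$). The factor $\chi^2$ is harmless: by Lemma \ref{g-C01}, on $[D/2,D]$ we have $|g_\xi|\le C\delta^{1/4}\le C$, so the region $\xi\in[D/2,D]$ contributes at most $C\int_{D/2}^D\rho\,d\xi\le C\exp(-\tfrac18\delta^{-\frac1{50}})$. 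It therefore suffices to bound $\int_0^D\chi^2\rho\, g_\xi^4$.

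The key step is the integration by parts
\[
\int_0^D \chi^2\rho\, g_\xi^3\, g_\xi\,d\xi = -\int_0^D g\,\partial_\xi\big(\chi^2\rho\, g_\xi^3\big)\,d\xi .
\]
The boundary terms vanish: at $\xi=0$ because $\rho(0)=0$ (as $k\ge2$) and $g_\xi(0,\tau)=0$, and at $\xi=D$ because of $\chi$. Expanding the derivative produces three kinds of terms.
\begin{itemize}
\item[(a)] $3\chi^2\rho\, g\, g_\xi^2 g_{\xi\xi}$.
\item[(b)] $\chi^2\big(\tfrac{k-1}{\xi}-\tfrac{\xi}{2}\big)\rho\, g\, g_\xi^3$.
\item[(c)] Terms involving $\chi\chi'$, which are supported on $[D/2,D]$ and are absorbed into the exponential error as above.
\end{itemize}

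For (a), Lemma \ref{g-C23} gives $|g_{\xi\xi}|\le C\delta^{1/16}$, so Cauchy--Schwarz yields
\[
|(a)|\le \tfrac14\int\chi^2\rho\, g_\xi^4 + C\delta^{1/8}\int\chi^2\rho\, g^2 .
\]
For the $\xi/2$ part of (b), use $|g_\xi|\le C\delta^{1/4}$ and $\xi\le D=\delta^{-1/100}$. Then $\xi|g_\xi|\le C\delta^{1/4-1/100}$, so this part is bounded by $\tfrac14\int\chi^2\rho\, g_\xi^4 + C\delta^{1/2-1/50}\int\chi^2\rho\, g^2$. Both coefficients are at most $\delta^{1/100}$.

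The $\frac{k-1}{\xi}$ part of (b) is the one genuinely new term compared with \cite{ABDS}, and I expect it to be the main obstacle, since it is singular at $0$. I would handle it by noting that $\frac{k-1}{\xi}\rho=(k-1)\xi^{k-2}e^{-\xi^2/4}$, and by writing $|g_\xi|/\xi\le \sup_{[0,\xi]}|g_{\xi\xi}|\le C\delta^{1/16}$, using $g_\xi(0)=0$. The integrand is then bounded by $C\delta^{1/16}\rho\,|g|\,g_\xi^2$, which Cauchy--Schwarz controls by $\tfrac14\int\chi^2\rho\, g_\xi^4 + C\delta^{1/8}\int\chi^2\rho\, g^2$.

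Absorbing the three $\tfrac14$ terms into the left-hand side gives the claimed inequality for $\tau\le\bar\tau$, with coefficient $C\delta^{1/100}$ and the exponential error $C\exp(-\tfrac18\delta^{-\frac1{50}})$. The integral of $\chi^2\rho\, g^2$ is bounded by the full integral over $[0,D]$, which is what appears on the right-hand side.
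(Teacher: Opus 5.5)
Apart from one step, your argument is the paper's own proof: the same identity from $\partial_\xi\bigl(\xi^{k-1}e^{-\xi^2/4}g_\xi^3 g\bigr)$, the same bounds on each term, and the same absorption of $\int\rho\, g_\xi^4$ into the left-hand side. The three terms are the $g_{\xi\xi}$ term (via Lemma \ref{g-C23}), the $\frac{\xi}{2}$ term (via $\xi|g_\xi|\le C\delta^{1/4-1/100}$), and the singular $\frac{k-1}{\xi}$ term (via $|g_\xi|/\xi\le\sup|g_{\xi\xi}|$ and $g_\xi(0,\tau)=0$).

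The step that fails is the one you added, the cutoff $\chi^2$. Since $\chi(\delta^{1/100}\xi)\equiv 1$ only on $[0,D/2]$, two pieces live on $[D/2,D]$: the discarded part $\int_0^D(1-\chi^2)\rho\,g_\xi^4$ and the $\chi\chi'$ terms. There your only information is $|g|,|g_\xi|\le C\delta^{1/4}$ from Lemma \ref{g-C01}. That bounds both pieces by $C\delta\int_{D/2}^D\xi^{k-1}e^{-\xi^2/4}\,d\xi$. This integral is of order $D^{k-2}e^{-D^2/16}=D^{k-2}\exp(-\tfrac1{16}\delta^{-1/50})$, and its ratio to $\exp(-\tfrac18\delta^{-1/50})$ tends to infinity. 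The extra factor $\delta=D^{-100}$ is only polynomial in $D$, so it cannot compensate. As written, your argument therefore gives the inequality only with an error term $C\exp(-c\,\delta^{-1/50})$ for some $c<\tfrac1{16}$, which is weaker than the statement.

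The fix is to do what the paper does. Drop the cutoff, integrate by parts directly on $[0,D]$, and keep the boundary term at $\xi=D$. By Lemma \ref{g-C01}, which holds up to $\xi=2D$, this term is at most $C D^{k-1}e^{-D^2/4}\,\delta\le C\exp(-\tfrac18\delta^{-1/50})$. The rest of your argument then goes through unchanged with $\chi\equiv1$. Alternatively, a cutoff that is identically $1$ on $[0,cD]$ for some $c>1/\sqrt2$ also gives the stated exponent. In the later applications the exponential is absorbed via $\exp(-c\,\delta^{-1/50})\le C\delta^5$, so any $c>0$ would suffice there, but the lemma as stated needs this correction.
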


\begin{proof}
We begin by observing  that for $\tau \leq \bar \tau$,  we have 
\begin{align}
\begin{aligned}
& \intde \xi^{k-1} e^{-\frac{\xi^2}{4}} g_{\xi}(\xi, \tau)^4 d \xi  \\&+3 \intde \xi^{k-1} e^{-\frac{\xi^2}{4}}  g_{\xi}(\xi, \tau)^2 g(\xi, \tau) g_{\xi \xi}(\xi, \tau) d \xi \\& -\frac{1}{2} \intde \xi^{k} e^{-\frac{\xi^2}{4}} g_{\xi}(\xi, \tau)^3 g(\xi, \tau) d \xi \\
&+ (k-1)\intde \xi^{k-2} e^{-\frac{\xi^2}{4}} g_{\xi}(\xi, \tau)^3 g(\xi, \tau) d \xi \\
& =\intde \frac{\partial}{\partial \xi}\Big (\xi^{k-1} e^{-\frac{\xi^2}{4}}  g_{\xi}(\xi, \tau)^3 g(\xi, \tau)\Big ) d \xi \\
 &\leq C \exp \left(-\tfrac{1}{8} \delta^{-\frac{1}{50}}\right)
\end{aligned}
\end{align}
where in the last inequality we use the Lemma \ref{g-C01}. Using Lemma \ref{g-C23}, we obtain 
\begin{align}
\begin{aligned}
& -3 \intde \xi^{k-1} e^{-\frac{\xi^2}{4}} g_{\xi}(\xi, \tau)^2 g(\xi, \tau) g_{\xi \xi}(\xi, \tau) d \xi \\
& \leq C \delta^{\frac{1}{100}}(\tau)\intde \xi^{k-1} e^{-\frac{\xi^2}{4}}\,  g_{\xi}(\xi, \tau)^2 \, |g(\xi, \tau)| d \xi
\end{aligned}
\end{align}
while  using Lemma \ref{g-C01}, we obtain 
\begin{align}
\begin{aligned}
& \frac{1}{2} \intde \xi^{k} e^{-\frac{\xi^2}{4}} g_{\xi}(\xi, \tau)^3 g(\xi, \tau) d \xi \\
& \leq C \delta^{\frac{1}{100}}(\tau)\intde \xi^{k-1} e^{-\frac{\xi^2}{4}} \, g_{\xi}(\xi, \tau)^2 \, |g(\xi, \tau)| d \xi. 
\end{aligned}
\end{align}
Moreover, by Lemma \ref{g-C01} and Lemma \ref{g-C23} we have
\begin{align}
\begin{aligned}
&-(k-1)\intde \xi^{k-2} e^{-\frac{\xi^2}{4}} g_{\xi}^3\,  g(\xi, \tau) d \xi  \\
&= -(k-1)\intde \xi^{k-1} e^{-\frac{\xi^2}{4}} \,\, \frac{g_{\xi}}{\xi}g_{\xi}^2 \, g \, d \xi\\
&\leq C \max_{\{ 0 < \xi < 2 \delta(\tau)^{-\frac 1{100}} \}} \, |g_{\xi\xi}| \, \intde   \xi^{k-1} e^{-\frac{\xi^2}{4}} g_{\xi}^2\,  |g| \,  d \xi \\
&\leq C \, \delta^{\frac{1}{100}}(\tau) \, \intde \xi^{k-1} e^{-\frac{\xi^2}{4}} \, g_{\xi}^2\, |g| d \xi
\end{aligned}
\end{align}
for $\tau\leq \bar\tau$. Adding these inequalities  together gives 
\begin{align}
\begin{aligned}
& \intde \xi^{k-1} e^{-\frac{\xi^2}{4}} g_{\xi}(\xi, \tau)^4 d \xi\\
&\leq C \delta^{\frac{1}{100}}(\tau) \, \intde \xi^{k-1} e^{-\frac{\xi^2}{4}} g_{\xi}(\xi, \tau)^2\, |g(\xi, \tau)| d \xi 
+ C \exp \left(-\tfrac{1}{8} \delta^{-\frac{1}{50}}\right)\\
 & \leq C \delta^{\frac{1}{100}}(\tau)\intde \xi^{k-1} e^{-\frac{\xi^2}{4}} \, \left( g_{\xi}(\xi, \tau)^4 + |g(\xi, \tau)|^2\right) d \xi
+ C \exp \left(-\tfrac{1}{8} \delta^{-\frac{1}{50}}\right).
\end{aligned}
\end{align}
\end{proof}

\begin{lem}\label{gequation errorf_1}
The following inequality holds for  $\tau \leq \bar \tau$, 
\begin{align}
\begin{aligned}
&\intde \xi^{k-1}e^{-\frac{\xi^2}{4}}\left|-(k-1)g_\xi(\xi,\tau)\left( \frac{f(\xi,\tau)}{\xi(f(\xi,\tau)+\xi)}\right)\right|^2d\xi\\
&\leq C \delta^{\frac{1}{100}}(\tau)\intde \xi^{k-3}e^{-\frac{\xi^2}{4}} f_\xi^2 d\xi +C \exp \big(-\frac{1}{8} \delta(\tau)^{-\frac{1}{50}}\big).
\end{aligned}
\end{align}

\end{lem}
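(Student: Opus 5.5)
The plan is a pointwise estimate of the integrand, followed by integration against the weight $\xi^{k-1}e^{-\xi^2/4}$; the factor $\xi^{k-1}$ is converted into $\xi^{k-3}$ by absorbing two powers of $\xi$ into the weight.

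First, on $0<\xi\le \delta(\tau)^{-\frac{1}{100}}$, Lemma \ref{apriori-f} gives $f+\xi\ge \tfrac12\xi$, hence
\[
\Big|\frac{f}{\xi(f+\xi)}\Big|\le \frac{2|f|}{\xi^2}\le \frac{2|f_\xi|}{\xi},
\]
where the last step uses \eqref{f-slope}. The integrand is therefore bounded by
\[
C\,\xi^{k-1}e^{-\frac{\xi^2}{4}}\, g_\xi^2\,\frac{f_\xi^2}{\xi^2}=C\,\xi^{k-3}e^{-\frac{\xi^2}{4}}\, g_\xi^2 f_\xi^2 .
\]

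Second, I need $g_\xi^2\le C\delta(\tau)^{\frac{1}{100}}$ on the whole range. Lemma \ref{g-C01} gives $|g_\xi|\le C\delta^{\frac14}$ for $\xi\le 2\delta^{-\frac1{100}}$, so $g_\xi^2\le C\delta^{\frac12}\le C\delta^{\frac{1}{100}}$ for $\delta$ small. This already yields
\[
\int_0^{\delta(\tau)^{-\frac{1}{100}}}\xi^{k-1}e^{-\frac{\xi^2}{4}}\Big|(k-1)g_\xi\frac{f}{\xi(f+\xi)}\Big|^2 d\xi\le C\delta(\tau)^{\frac{1}{100}}\int_0^{\delta(\tau)^{-\frac{1}{100}}}\xi^{k-3}e^{-\frac{\xi^2}{4}} f_\xi^2\, d\xi .
\]
The exponential error term is then not even needed; it is harmless to keep it, for instance if one prefers to work with the cutoff versions $\hat f,\hat g$ and handle the region $\tfrac12\delta^{-\frac1{100}}\le\xi\le\delta^{-\frac1{100}}$ using the boundedness of $f_\xi$ and $g_\xi$, which gives a contribution $\le C e^{-\frac18\delta^{-\frac1{50}}}$.

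The one point needing care is integrability near $\xi=0$ when $k=2$, where the weight $\xi^{k-3}=\xi^{-1}$ is singular. Here I will use that $f_\xi(0)=f_{\xi\xi}(0)=0$ together with smoothness after odd reflection, so $f_\xi=O(\xi^2)$. Both sides are then finite, and the pointwise inequality holds for every $\xi>0$, so no extra argument is needed.

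I expect the main (mild) obstacle to be justifying $f+\xi\ge\tfrac12\xi$ uniformly on the full interval up to $\delta^{-\frac1{100}}$ rather than only on compact sets. This is exactly what Lemma \ref{apriori-f} supplies, and I would invoke it directly.
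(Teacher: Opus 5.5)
Your proof is correct and takes the same approach as the paper. The paper's proof just cites the slope estimate \eqref{f-slope}, the bound $|g_\xi|\le C\delta^{\frac14}$ from Lemma \ref{g-C01}, and $f+\xi\ge \tfrac12\xi$ from Lemma \ref{apriori-f}, which are exactly the ingredients you combine pointwise. You are also right that the exponential term is not needed for this particular bound.
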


\begin{proof}
This estimate  directly follows  by the slope estimates for $f$ in \eqref{f-slope} and by Lemmas \ref{g-C01} and  \ref{apriori-f}.
\end{proof}

\begin{lem}\label{gequation errorf_2}
For $\tau \leq \bar \tau$, we have
\begin{align}
\begin{aligned}
&\intde \xi^{k-1}e^{-\frac{\xi^2}{4}}\left(\int_0^\xi \frac{{f_\eta (\eta,\tau) (f_{\eta}(\eta,\tau)+1)}}{(f+\eta)^2}d\eta\right)^2 g_{\xi}^2\, d\xi\\
&\leq C \delta^{\frac{1}{100}}(\tau)\intde \xi^{k-3}e^{-\frac{\xi^2}{4}} f_\xi^2 d\xi +C \exp \big(-\frac{1}{8} \delta(\tau)^{-\frac{1}{50}}\big).
\end{aligned}
\end{align}

\end{lem}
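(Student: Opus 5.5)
The plan is to bound the inner integral pointwise in $\xi$ by a weighted $L^2$ norm of $f_\xi$, using Lemma \ref{apriori-f} and the slope estimate \eqref{f-slope}, and then to use the smallness of $g_\xi$ from Lemma \ref{g-C01} to produce the factor $\delta(\tau)^{\frac{1}{100}}$.

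\textbf{Pointwise bound on the inner integral.} On $0<\eta\leq\delta(\tau)^{-\frac{1}{100}}$, Lemma \ref{apriori-f} gives $f+\eta\geq\frac12\eta$ and $0\leq f_\eta+1\leq 1$. Hence
\[
\left|\frac{f_\eta(f_\eta+1)}{(f+\eta)^2}\right|\leq \frac{4|f_\eta|}{\eta^2}.
\]
This is integrable near $0$: by the boundary conditions \eqref{BC fg}, $f_\eta(0)=f_{\eta\eta}(0)=0$, so $|f_\eta|=O(\eta^2)$. I would estimate
\[
I(\xi):=\int_0^\xi\frac{|f_\eta|}{\eta^2}\,d\eta
\]
by Cauchy--Schwarz against the weight $\eta^{k-3}e^{-\eta^2/4}$:
\[
I(\xi)^2\leq\Big(\int_0^\xi \eta^{k-3}e^{-\eta^2/4}f_\eta^2\,d\eta\Big)\Big(\int_0^\xi \eta^{-1-k}e^{\eta^2/4}\,d\eta\Big).
\]

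\textbf{Main obstacle: the second factor.} The second factor diverges at $\eta=0$, so this direct Cauchy--Schwarz fails near the origin. I would split $\int_0^\xi=\int_0^{1}+\int_1^\xi$.

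On $[1,\xi]$ the second factor is at most $C e^{\xi^2/4}\xi^{-1-k}$. It would then be multiplied by the outer weight $\xi^{k-1}e^{-\xi^2/4}g_\xi^2$. This leaves $\xi^{-2}g_\xi^2\leq C\delta^{\frac14}$ times the $f_\xi$ norm after integrating in $\xi$ over $[1,\delta^{-\frac1{100}}]$. That is an extra factor of at most $\log\delta^{-1}$, which is harmless.

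Near the origin I would use the vanishing instead of Cauchy--Schwarz. On $[0,1]$, write $|f_\eta|\leq \eta\sup_{[0,\eta]}|f_{\eta\eta}|$. Lemma \ref{apriori-f} gives $|f_{\eta\eta}|\leq\epsilon(f+\eta)\leq\epsilon\eta$, so the integrand on $[0,1]$ is bounded. However, this only yields a bound by $\epsilon$, not by the $f_\xi$ norm.

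A cleaner route avoids pointwise bounds altogether and uses a Hardy-type inequality. Since $g_\xi^2\leq C\delta^{\frac12}$, it suffices to show
\[
\int_0^{L}\xi^{k-1}e^{-\xi^2/4}I(\xi)^2\,d\xi\leq C\int_0^{L}\xi^{k-3}e^{-\xi^2/4}f_\xi^2\,d\xi+C e^{-\frac18\delta^{-\frac1{50}}},\qquad L=\delta^{-\frac1{100}}.
\]
This is a weighted Hardy inequality for $I'=|f_\xi|/\xi^2$. I would integrate $\frac{d}{d\xi}\big(\xi^{k-2}e^{-\xi^2/4}I^2\big)$, using that $I(\xi)=O(\xi)$ at $0$, so the boundary term vanishes for $k\geq2$. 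Then I would absorb via Cauchy--Schwarz, relying on the Gaussian term $\frac{\xi}{2}\cdot\xi^{k-2}$ and the lower-order term $(k-2)\xi^{k-3}$. The boundary term at $\xi=L$ has a favorable sign. For $k=2$ the weight $\xi^{k-2}$ is degenerate, and there I would instead use $\frac{d}{d\xi}\big(\xi^{k}e^{-\xi^2/4}I^2\big)$ together with the bound $I\leq C\epsilon$ on $[0,1]$.

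\textbf{Conclusion.} Combining the Hardy inequality with $g_\xi^2\leq C\delta^{\frac12}\leq C\delta^{\frac1{100}}$ from Lemma \ref{g-C01} gives the claimed estimate. Any contributions from the cutoff region, or from tails beyond $L$ if one enlarges the range to make the boundary term favorable, are controlled using the uniform bounds of Lemma \ref{apriori-f} and Lemma \ref{g-C01} together with $e^{-L^2/8}$. These produce the exponential error term.
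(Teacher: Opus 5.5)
There is a genuine gap, and it sits exactly at your ``main obstacle'': the part of $I(\xi)$ that comes from $\eta$ near $0$.

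\textbf{What is fine.} Your reduction (via $g_\xi^2\le C\delta^{1/2}$) to
\[
\int_0^L\xi^{k-1}e^{-\frac{\xi^2}{4}}I^2\,d\xi\le C\int_0^L\xi^{k-3}e^{-\frac{\xi^2}{4}}f_\xi^2\,d\xi+Ce^{-\frac18\delta^{-\frac1{50}}}
\]
is precisely the paper's claim \eqref{decay times k-3}. Your Cauchy--Schwarz treatment of $\int_1^\xi$ is also fine.

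\textbf{Why your integration by parts fails.} Integrating $\frac{d}{d\xi}\big(\xi^{k-2}e^{-\xi^2/4}I^2\big)$ over $[0,L]$ gives
\[
\tfrac12\int_0^L\xi^{k-1}e^{-\frac{\xi^2}{4}}I^2\,d\xi+L^{k-2}e^{-\frac{L^2}{4}}I(L)^2=(k-2)\int_0^L\xi^{k-3}e^{-\frac{\xi^2}{4}}I^2\,d\xi+2\int_0^L\xi^{k-2}e^{-\frac{\xi^2}{4}}II'\,d\xi .
\]
So the $(k-2)\xi^{k-3}$ term is not an ally. It is an extra positive term on the right, and it dominates the left side near $0$. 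In addition, Cauchy--Schwarz on the cross term against $\xi^{k-1}I^2$ leaves $\int\xi^{k-3}e^{-\xi^2/4}(I')^2=\int\xi^{k-7}e^{-\xi^2/4}f_\xi^2$, which is not the $\mathscr H$-weight. With the weight $\xi^{k}e^{-\xi^2/4}$ the cross term does close, but then $\frac12\int\xi^{k+1}e^{-\xi^2/4}I^2$ lands on the wrong side and wins once $\xi^2>2k$.

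\textbf{No weight can fix this.} For the exponent $a=k+1>1$, a Hardy inequality $\int\xi^{a-2}u^2\lesssim\int\xi^{a}(u')^2$ is governed by the outer endpoint, not by $u(0)=0$. Here the outer boundary term is exponentially small. Concretely, take $f_{\xi\xi}=-\frac{\epsilon}{2}\xi$ on $[0,\eta_0]$ and $f_{\xi\xi}=0$ beyond (smoothed). This satisfies \eqref{BC fg}, $f_{\xi\xi}\le0$ and \eqref{eqn-sf}. Yet $I\ge\frac{\epsilon\eta_0}{4}$ on $[\eta_0,L]$, so the left side is $\ge c\,\epsilon^2\eta_0^2$, while $\int_0^L\xi^{k-3}e^{-\xi^2/4}f_\xi^2\le C\epsilon^2\eta_0^4(1+|\log\eta_0|)$.

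\textbf{The paper's proof has the same defect.} It proves \eqref{decay times k-3} by combining Lemma \ref{weights varied Gaussian Hardy} with $\xi\rho(\xi)\ge c$ from \eqref{rho behavior}. But the denominator of $\rho$ is at least $A^{-(k+1)}e^{A^2/4}$, so e.g.\ $\rho(1)\le e^{1/4}A^{k+1}e^{-A^2/4}$, and the lower bound fails. Pairing the example with an admissible $g$ having $g_\xi^2\approx\delta$ on $[1,2]$, and taking $\eta_0=\delta$, shows that the lemma as stated cannot follow from the fixed-time a priori bounds of this section.

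\textbf{What can be proved.} What does work, and is all that Proposition \ref{Error-E2-estimate} and Lemma \ref{gamma-lambda-diff} need, is the bound with $\int_0^L\xi^{k-1}e^{-\xi^2/4}g^2$ also on the right. The missing idea is to upgrade your ``bounded by $\epsilon$'' observation on $[0,1]$ using monotonicity. Let $N^2:=\int_0^L\xi^{k-3}e^{-\xi^2/4}f_\xi^2$ and $m:=-f_\xi$. Since $f_{\xi\xi}\le0$, $m$ is nondecreasing, so comparing with $[1,2]$ gives $m(1)\le CN$. Lemma \ref{apriori-f} gives $m(\eta)\le\frac{\epsilon}{2}\eta^2$. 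Hence
\[
\int_0^1\frac{m(\eta)}{\eta^2}\,d\eta\le\int_0^1\min\Big(\frac{\epsilon}{2},\frac{m(1)}{\eta^2}\Big)d\eta\le\sqrt{2\epsilon\,m(1)}\le C\sqrt{\epsilon N}.
\]
After squaring this is only linear in $N$, so it must be paired with a factor that is quadratic in $g$. By Lemma \ref{gxi4} and Cauchy--Schwarz,
\[
\int_0^L\xi^{k-1}e^{-\frac{\xi^2}{4}}g_\xi^2\le C\delta^{\frac1{200}}\Big(\int_0^L\xi^{k-1}e^{-\frac{\xi^2}{4}}g^2\Big)^{1/2}+Ce^{-\frac1{16}\delta^{-\frac1{50}}}.
\]
Combine this with your estimate for $\int_1^\xi$ (which contributes $C\delta^{1/2}N^2$) and with AM--GM. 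The left side of the lemma is then at most
\[
C\delta^{\frac1{200}}\Big(N^2+\int_0^L\xi^{k-1}e^{-\frac{\xi^2}{4}}g^2\Big)+Ce^{-\frac19\delta^{-\frac1{50}}},
\]
which suffices downstream.
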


\begin{proof}
We first claim we have the following estimate
\begin{align}\label{decay times k-3}
\begin{aligned}
    &\intde\xi^{k-1} \left(\int_0^\xi \frac{{f_\eta (\eta,\tau)}}{\eta^2}d\eta\right)^2 e^{-\frac{\xi^2}{4}}d\xi\\
    &\leq C \intde\xi^{k-3}\, e^{-\frac{\xi^2}{4}} f_\xi(\xi,\tau)^2 \, d\xi
    + C \exp \big(-\tfrac{1}{8} \delta(\tau)^{-\frac{1}{50}}\big). 
\end{aligned}
\end{align}
To prove  \eqref{decay times k-3}, we set for simplicity $A:= \delta^{-\frac{1}{100}}(\tau)$,  and consider the  weight
    \begin{equation}\label{rho weight new}
  \rho(\xi)=  \frac{
  \xi^{-(k+1)} e^{\tfrac{\xi^2}{4}} 
}{A^{-(k+1)}e^{\frac{A^2}{4}}+2\int^{A}_{\xi}
    \sigma^{-(k+1)}
    e^{\frac{\sigma^2}{4}} \,
  \mathrm{d}\sigma
}, \quad \xi\in (0, A]
    \end{equation}
that satisfies 
\begin{equation}\label{rho behavior}
 \rho >   0, \quad \rho(A)=1,\quad  \lim_{\xi\to 0+}\xi \, \rho(\xi)=\frac{k}{2}>0, \quad 
 \xi \, \rho(\xi)\geq c>0,  \quad \mbox{on}\,\,  \xi\in (0, A].
\end{equation}
Also, to simplify the notation, for a fixed $\tau \leq \bar \tau$, set
\begin{equation}\label{eqn-tilf} 
\tilde f(\xi) := \int_0^\xi \frac{f_{\eta}(\eta, \tau)}{\eta^2}\, d\eta.
\end{equation}One can easily see  that
$  \tilde{f}(\xi)$ defined  by \eqref{eqn-tilf} satisfies $
    \tilde{f}(0)=0$ and  $\tilde f'(\xi) =\xi^{-2} f_\xi(\xi,\tau).$
We will now apply the  weighted Hardy inequality \eqref{decay times k-3}  shown in Lemma \ref{weights varied Gaussian Hardy},
 applied to  the function $\tilde f(\xi)$  on $[0, \delta(\tau)^{-\frac{1}{100}}]$,  with designed weight $\rho(\xi)^2$,  
 and $p=2$,  $a=k+1>1$. Also using the bound  $\xi^{-2} \leq c^{-1}  \tilde f(\xi)^2$ shown in  \eqref{rho behavior} we get  
\begin{align}
& \quad  \intde \xi^{k-1} \tilde f(\xi)^2  e^{-\frac{\xi^2}{4}}d\xi =\intde \xi^{k+1}\frac{1}{\xi^2} \tilde f(\xi)^2 e^{-\frac{\xi^2}{4}}d\xi\notag\\
        &\leq C\intde \tilde f(\xi)^2 \xi^{k+1}\rho(\xi)^2  e^{-\frac{\xi^2}{4}}d\xi\notag \leq C    \intde  \tilde{f}' (\xi)^2 \xi^{k+1} e^{-\frac{\xi^2}{4}} d\xi+ \tilde{f} 
        \big (\delta(\tau)^{-\frac 1{100}}\big )^2\\
        &\leq C\intde \xi^{k-3} f_{\xi}(\xi, \tau)^2 \, e^{-\frac{\xi^2}{4}}d\xi+C \exp \left(-\tfrac{1}{8} \delta(\tau)^{-\frac{1}{50}}\right)\notag
\end{align}
where in the last inequality we used that by  definition,  $ \tilde f'(\xi) =\xi^{-2}  \, f_{\xi}$. This readily implies  \eqref{decay times k-3}. 

Now  the lemma directly follows from  \eqref{decay times k-3}, $|g_\xi(\xi, \tau)|  \leq C \delta^{\frac{1}{4}}(\tau)$ for $\xi\leq 2\delta(\tau)^{-\frac{1}{100}}$,  and the bound 
$$0 \leq  \frac{{(-f_\eta) \, (f_{\eta}+1)}}{(f+\eta)^2} \leq \frac{ 4 (-f_\eta)}{\eta^2}$$ which follows  from  the bounds $f(\eta,\tau)  \geq - \frac \eta2$
and $-1 \leq f_\eta \leq 0$ shown in  \eqref{eqn-sf}.
\end{proof}

Together with these lemmas, we have following estimate for $E_1$,

\begin{prop}\label{Error-E2-estimate}
For all $\tau \leq \bar \tau$,  we have 
\begin{align}
\begin{aligned}
\intde \xi^{k-1} &e^{-\frac{\xi^2}{4}}|E_{1}|^2 d\xi \leq C \delta^{\frac{1}{100}}(\tau)\intde \xi^{k-3}e^{-\frac{\xi^2}{4}} f_{\xi}^2 d\xi \\
&+ C \delta^{\frac{1}{100}}(\tau)\intde \xi^{k-1}e^{-\frac{\xi^2}{4}} \, g^2 d\xi
+ C \exp \left(-\tfrac{1}{8} \delta(\tau)^{-\frac{1}{50}}\right). 
\end{aligned}
\end{align}

\end{prop}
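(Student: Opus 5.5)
The plan is to split $E_1$ into its pieces as written in \eqref{g-error-expansion} and estimate each weighted $L^2$ norm on $[0,\delta(\tau)^{-\frac{1}{100}}]$ separately. The estimate then follows from $|E_1|^2\le C\sum_j |T_j|^2$, where there are five terms $T_j$:
\begin{itemize}
\item[(i)] the purely algebraic part $-g-\frac{n-k-1}{g+\sqrt{2(n-k-1)}}+\frac12(g+\sqrt{2(n-k-1)})$;
\item[(ii)] the gradient term $-\frac{g_\xi^2}{g+\sqrt{2(n-k-1)}}$;
\item[(iii)] the nonlocal term $-(n-k)g_\xi\int_0^\xi \frac{g_\eta^2}{(g+\sqrt{2(n-k-1)})^2}\,d\eta$;
\item[(iv)] the $f$-term $-(k-1)g_\xi\frac{f}{\xi(f+\xi)}$;
\item[(v)] the $f$-integral term $-(k-1)g_\xi\int_0^\xi \frac{f_\eta(f_\eta+1)}{(f+\eta)^2}\,d\eta$.
\end{itemize}

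The $f$-dependent terms (iv) and (v) are already handled. Lemma \ref{gequation errorf_1} and Lemma \ref{gequation errorf_2} give exactly the bound $C\delta^{\frac1{100}}\int\xi^{k-3}e^{-\xi^2/4}f_\xi^2 + C\exp(-\frac18\delta^{-\frac1{50}})$.

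For term (i), I use the Taylor expansion \eqref{gequation term expansion}: it equals $-\frac{g^2}{2\sqrt{2(n-k-1)}}+O(g^3)$. Its square is therefore bounded by $Cg^4\le C\sup|g|^2\, g^2$. By Lemma \ref{g-C01}, $\sup|g|^2\le C\delta^{\frac12}\le C\delta^{\frac1{100}}$ on the relevant interval, which yields $C\delta^{\frac1{100}}\int \xi^{k-1}e^{-\xi^2/4}g^2$.

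For term (ii), since $g+\sqrt{2(n-k-1)}$ is bounded below by a positive constant (again by Lemma \ref{g-C01}), its square is bounded by $Cg_\xi^4$. Lemma \ref{gxi4} then controls it by $C\delta^{\frac1{100}}\int\xi^{k-1}e^{-\xi^2/4}g^2+C\exp(-\frac18\delta^{-\frac1{50}})$.

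Term (iii) is the only one needing a small extra argument, and I expect it to be the main (mild) obstacle. Its square is bounded by $C g_\xi^2\bigl(\int_0^\xi g_\eta^2\,d\eta\bigr)^2$. I would use the pointwise bound $|g_\eta|\le C\delta^{\frac14}$ to get
\[
\int_0^\xi g_\eta^2\,d\eta\le C\delta^{\frac12}\xi\le C\delta^{\frac12-\frac1{100}}
\]
on $\xi\le\delta^{-\frac1{100}}$. Hence the square of (iii) is at most $C\delta\, g_\xi^2$, and I need to reduce this to $g^2$-type control. The cleanest route is to write $\delta g_\xi^2\le \delta^{\frac1{50}}+\delta^{2-\frac1{50}}g_\xi^4$, which is not good enough since the constant $\delta^{\frac1{50}}$ is not integrable into the stated form. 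So instead I would bound the square by $C\,(\sup|g_\xi|)^2\,\delta^{1-\frac1{50}}\,g_\xi^2$ and then use $g_\xi^2\le \delta^{\frac1{100}}+\delta^{-\frac1{100}}g_\xi^4$.

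Alternatively, and more directly, I would treat $\bigl(\int_0^\xi g_\eta^2\bigr)^2\le \xi\int_0^\xi g_\eta^4$ together with $g_\xi^2\le C\delta^{\frac12}$. This reduces the term to $C\delta^{\frac12}\int\xi^{k}e^{-\xi^2/4}\int_0^\xi g_\eta^4\,d\eta\,d\xi$. Exchanging the order of integration, and noting that the tail $\int_\eta^\infty \xi^{k}e^{-\xi^2/4}d\xi$ is bounded by $C(1+\eta^{k-1})e^{-\eta^2/4}$ (up to a harmless polynomial factor at most $\delta^{-\frac{2}{100}}$, or at small $\eta$ by the Gaussian integral), this is controlled by $C\delta^{\frac12-\frac2{100}}\int \eta^{k-1}e^{-\eta^2/4}g_\eta^4$ plus a region near $\eta=0$. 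Near $\eta=0$ the weight $\eta^{k-1}$ degenerates. There I would simply use $g_\eta^4\le C\delta$ on a small interval, or better the bound $|g_\eta|\le C\eta\,\sup|g_{\xi\xi}|$, which restores the factor $\eta^{k-1}$. Lemma \ref{gxi4} then finishes the estimate.

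Summing the five bounds gives the proposition.
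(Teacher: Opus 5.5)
Your handling of terms (i), (ii), (iv), (v) is correct and matches the paper: the Taylor expansion \eqref{gequation term expansion} with Lemma \ref{g-C01}, then Lemma \ref{gxi4}, then Lemmas \ref{gequation errorf_1} and \ref{gequation errorf_2}. The gap is in the nonlocal term (iii), where neither of your routes closes.

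In your first route, the splitting $g_\xi^2\le\delta^{\frac1{100}}+\delta^{-\frac1{100}}g_\xi^4$ leaves an additive constant of size a fixed power of $\delta$ after integrating against $\xi^{k-1}e^{-\xi^2/4}$. This is the same defect you noticed in the route you discarded. In the Fubini route, the effective weight near $\eta=0$ is a constant rather than $\eta^{k-1}$. Your two fixes for $\int_0^1 g_\eta^4\,d\eta$ are $g_\eta^4\le C\delta$, or $|g_\eta|\le C\eta\sup|g_{\xi\xi}|$ with $\sup|g_{\xi\xi}|\le C\delta^{\frac1{16}}$. These produce remainders like $C\delta^{\frac32}$ or $C\delta^{\frac34}$. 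The bound $g_\eta^4\le C\eta^4\sup|g_{\xi\xi}|^4$ does not compare $\int_0^1 g_\eta^4$ with $\int_0^1\eta^{k-1}g_\eta^4$; it only replaces it by a constant.

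Such remainders are not of the form in the statement. They cannot be absorbed into $C\delta^{\frac1{100}}\int\xi^{k-1}e^{-\xi^2/4}g^2\,d\xi$. They would also break the later mode analysis, which needs the remainder to be $\le C\delta\,\Gamma$ while knowing only $\delta\le C\Gamma^{\frac14}$; this is why the remainder must be exponentially small.

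The missing ingredient is concavity. The facts $g_{\xi\xi}\le 0$ (from $G_{zz}\le 0$), $g_\xi\le 0$ and $g_\xi(0,\tau)=0$ give $|g_\eta|\le|g_\xi|$ for $0\le\eta\le\xi$. Hence
\[
\int_0^\xi\frac{g_\eta^2}{(g+\sqrt{2(n-k-1)})^2}\,d\eta\le C\,\xi\,g_\xi^2 .
\]
So term (iii) is pointwise at most $C\xi|g_\xi|^3\le C\delta^{\frac14-\frac1{100}}g_\xi^2\le Cg_\xi^2$ on $[0,\delta^{-\frac1{100}}]$. Its square is then $\le Cg_\xi^4$, and Lemma \ref{gxi4} finishes it exactly as for term (ii).

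This is what the paper does. Its bound $\int_0^\xi\frac{g_\eta^2}{(g+\sqrt{2(n-k-1)})^2}\le C|g||g_\xi|$ rests on the same monotonicity (compare Lemma \ref{fequation-error_7term}), and yields $|E_1|\le Cg^2+Cg_\xi^2+C|g_\xi|\,|\cdots|$.

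Monotonicity would also rescue your Fubini route, via $\int_0^1g_\eta^4\,d\eta\le g_\xi(1,\tau)^4\le C\int_1^2\eta^{k-1}e^{-\eta^2/4}g_\eta^4\,d\eta$. The pointwise bound makes that detour unnecessary.
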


\begin{proof}
We recall the error term $E_1$ is 
\begin{align}
   E_1&=-g(\xi, \tau) -   \frac{(n-k-1)}{g(\xi,\tau) + \sqrt{2(n-k-1)})} +\frac{1}{2}(g(\xi,\tau) + \sqrt{2(n-k-1)}) \\ &- \frac{g_\xi^2(\xi,\tau)}{g(\xi,\tau) + \sqrt{2(n-k-1)})}-(n-k)g_{\xi}(\xi,\tau)\int_0^\xi \frac{g_{\eta}^2(\eta,\tau)}{(g(\eta,\tau) + \sqrt{2(n-k-1)})^{2}}  \notag\\
    & -(k-1)g_\xi(\xi,\tau)\left( \frac{f(\xi,\tau)}{\xi(f(\xi,\tau)+\xi)} +\int_{0}^{\xi}
     \frac{f_{\eta}(\eta,\tau)(f_{\eta}(\eta,\tau) +1) }{(f(\eta,\tau)  + \eta)^2} d\eta\right)\notag.
\end{align}
By a similar argument as in \cite[Prop 3.6, Cor 3.3]{Bre-3d-noncpt} and noticing that $g_{\xi}(0, \tau)=0$, we have
\begin{align}\label{g error geta^2 integral estimate}
    &\int_0^\xi \frac{g_{\eta}^2(\eta,\tau)}{(g(\eta,\tau) + \sqrt{2(n-k-1)})^{2}}\\  &\leq C\left|\frac{1}{\sqrt{2(n-k-1)}+g(\xi, \tau)}-\frac{1}{\sqrt{2(n-k-1)}}\right||g_{\xi}(\xi, \tau)|\\
    &\leq C|g(\xi, \tau)||g_{\xi}(\xi, \tau)|.\notag
\end{align} 
Then using \eqref{gequation term expansion} and Cauchy inequality, we have
\begin{align}
   |E_1|&\leq Cg^2+Cg^2_{\xi}\notag\\
   &+C|g_\xi(\xi,\tau)|\left|\left(\frac{f(\xi,\tau)}{\xi(f(\xi,\tau)+\xi)} +\int_{0}^{\xi}
     \frac{{f_{\eta}(\eta,\tau)(f_{\eta}(\eta,\tau)+1) }}{(f(\eta,\tau)  + \eta)^2} d\eta\right)\right|. 
\end{align}
Then the proposition follows by Lemma \ref{gxi4}, \ref{gequation errorf_1} and \ref{gequation errorf_2}
\end{proof}

\subsection{Estimates for error terms $E_2$}
Let $E_2$ be the error in the evolution equation of $h=f_{\xi}$ in \eqref{h-error-expansion}, in Proposition \ref{evolution of h=fxi}. We estimate it term by term.
\begin{lem}\label{fequation-error_1term}
 For any $\epsilon > 0$ there exists a $\bar{\tau}$,  so that for $\tau \le \bar{\tau}$ we have 
\begin{align}
\begin{aligned}
&\intde \xi^{k-3}e^{-\frac{\xi^2}{4}} {f_{\xi \xi }^2} \, \Big[\frac{1}{f(\xi ,\tau)+\xi }-\frac{1}{\xi}\Big]^2d\xi\leq \epsilon^2  \intde \xi^{k-3} \, e^{-\frac{\xi^2}{4}}\, f_\xi^2 \, d \xi.
\end{aligned}
\end{align}
\end{lem}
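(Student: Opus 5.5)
The plan is to reduce the claimed inequality to a pointwise bound on the integrand, using only the a priori estimates of Lemma \ref{apriori-f} and the slope estimate \eqref{f-slope}. On $0<\xi\le \delta(\tau)^{-\frac1{100}}$ we first rewrite
\begin{equation*}
\frac{1}{f+\xi}-\frac{1}{\xi} = -\frac{f}{\xi(f+\xi)}.
\end{equation*}
By Lemma \ref{apriori-f} we have $f+\xi\ge \tfrac12\xi$, and by \eqref{f-slope} we have $|f|\le \xi |f_\xi|$. Together these give
\begin{equation*}
\Big|\frac{1}{f+\xi}-\frac1\xi\Big| \le \frac{2|f_\xi|}{\xi}.
\end{equation*}

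Next we control $f_{\xi\xi}$. The third estimate in \eqref{eqn-sf} says $|f_{\xi\xi}|\le \epsilon' (f+\xi)\le \epsilon' \xi$, where $\epsilon'$ is a small constant we are free to choose by taking $\bar\tau$ sufficiently negative. Multiplying the two bounds yields the pointwise estimate
\begin{equation*}
f_{\xi\xi}^2\Big[\frac{1}{f+\xi}-\frac1\xi\Big]^2 \le \epsilon'^2\xi^2\cdot \frac{4 f_\xi^2}{\xi^2} = 4\epsilon'^2 f_\xi^2 .
\end{equation*}
Choosing $\epsilon'=\epsilon/2$ and integrating against $\xi^{k-3}e^{-\xi^2/4}$ over $[0,\delta(\tau)^{-\frac1{100}}]$ gives the lemma. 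No exponential tail term appears, because the bound is pointwise on the whole truncated range.

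The only step that needs real care is the uniformity of $|f_{\xi\xi}|\le \epsilon'(f+\xi)$ on the growing interval $\xi\le \delta(\tau)^{-1/100}$, not merely on compact sets. That uniformity is exactly what Lemma \ref{apriori-f} provides: it comes from the $\varepsilon$-cylinder structure of Proposition \ref{epsilon-bubble-sheet}, which applies because $g$ is small on this range by Lemma \ref{g-C01}. The lemma therefore reduces to it. If that bound turns out to hold only with a factor $\xi^{-1}$ of decay missing, the fallback is to bound $|f_{\xi\xi}|/(f+\xi)$ by the rescaled curvature component $-F_{zz}/F$. That component tends to zero uniformly by Proposition \ref{epsilon-bubble-sheet} and the $k-1$-line splitting of Lemma \ref{k-1lines split}, which gives the same pointwise estimate.
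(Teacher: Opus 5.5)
Your argument is correct and is essentially the paper's proof. Both rewrite the bracket as $-f/(\xi(f+\xi))$ and combine $|f_{\xi\xi}|/(f+\xi)\le\epsilon$ from Lemma \ref{apriori-f} with the slope estimate \eqref{f-slope}; the paper keeps $f_{\xi\xi}/(f+\xi)$ together and so avoids your extra factor $2$ from $f+\xi\ge\xi/2$, which is harmless since you rescale $\epsilon'$.
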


\begin{proof}
By Lemma \ref{apriori-f} and the slope estimate for $f$ \eqref{f-slope}, we can estimate 
\begin{align}\label{3.3E31}
    \left| {f_{\xi \xi }}\, \Big[\frac{1}{f(\xi ,\tau)+\xi }-\frac{1}{\xi}\Big]\right| = \left|\frac{f_{\xi\xi}}{(f+\xi)} \cdot \frac{f}{\xi}\right| \leq \epsilon \left| \frac{f}{\xi}\right|\leq \epsilon\,  |f_\xi| 
\end{align}
for all $\xi \leq \delta(\tau)^{- \frac{1}{100}}$. The lemma readily follows.

\end{proof}

\begin{lem}\label{fequation-error_2term}
\begin{align}
\begin{aligned}
&\quad\intde \xi^{k-3} \left|-(2k-4){f_{\xi}(\xi,\tau)}\Big[\frac{f_{\xi}+1}{(f(\xi,\tau)+\xi)^2}-\frac{1}{\xi^2}\Big]\right|^2 e^{-\frac{\xi^2}{4}}d\xi\\
&\leq \epsilon \intde \xi^{k-3} e^{-\frac{\xi^2}{4}}|f_\xi(\xi, \tau)|^2 d \xi. 
\end{aligned}
\end{align}
\end{lem}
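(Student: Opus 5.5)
The plan is to bound the integrand pointwise by a small multiple of $|f_\xi|$, as in Lemma \ref{fequation-error_1term}, and then integrate against the weight $\xi^{k-3}e^{-\frac{\xi^2}{4}}$. Write $F=f+\xi$, so that $F_\xi = f_\xi + 1$, and split the bracket as
\begin{align*}
\frac{f_{\xi}+1}{(f+\xi)^2}-\frac{1}{\xi^2}
= \frac{f_\xi}{(f+\xi)^2} + \Big[\frac{1}{(f+\xi)^2}-\frac{1}{\xi^2}\Big]
= \frac{f_\xi}{(f+\xi)^2} - \frac{f(2\xi+f)}{\xi^2(f+\xi)^2}.
\end{align*}
By Lemma \ref{apriori-f} we have $f+\xi\geq \frac{\xi}{2}$, and by \eqref{f-slope} we have $|f|\leq \xi|f_\xi|$. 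Together these give the pointwise bound
\[
\Big|\frac{f_{\xi}+1}{(f+\xi)^2}-\frac{1}{\xi^2}\Big| \leq C\,\frac{|f_\xi|}{\xi^2}.
\]
The whole integrand is therefore controlled by $C\,\xi^{k-3}e^{-\frac{\xi^2}{4}}\, f_\xi^2 \cdot \frac{f_\xi^2}{\xi^4}$.

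It then suffices to show that $\frac{|f_\xi|}{\xi^2}\leq \epsilon$ uniformly on $0<\xi\leq \delta(\tau)^{-\frac{1}{100}}$ once $\tau\leq\bar\tau$. The argument splits according to the size of $\xi$, using concavity of $f$ and the bound $0\leq -\frac{f_{\xi\xi}}{f+\xi}\leq\epsilon$ from Lemma \ref{apriori-f}.

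\begin{itemize}
\item \emph{Region $\xi\geq 1$.} Here $|f_\xi|/\xi^2\leq |f_\xi|$. Since $f_\xi(0)=0$, we can write $f_\xi(\xi)=\int_0^\xi f_{\xi\xi}$. The integrand satisfies $|f_{\xi\xi}|\leq \epsilon(f+\xi)\leq \epsilon\eta$, so $|f_\xi|\leq \epsilon\xi^2/2$. This gives $|f_\xi|/\xi^2\leq\epsilon/2$, which is exactly the bound needed on all of $(0,\delta^{-1/100}]$.
\item \emph{Region $\xi$ small.} The same computation applies verbatim, since $f_\xi(0)=f_{\xi\xi}(0)=0$ (see \eqref{BC fg}) and the bound $|f_{\xi\xi}|\leq\epsilon\xi$ holds uniformly. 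In fact no genuine case split is needed.
\end{itemize}

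Hence
\[
\Big|(2k-4)f_\xi\Big[\frac{f_\xi+1}{(f+\xi)^2}-\frac{1}{\xi^2}\Big]\Big|^2\leq C\epsilon^2 f_\xi^2,
\]
and integrating against $\xi^{k-3}e^{-\frac{\xi^2}{4}}$ and renaming $\epsilon$ yields the lemma.

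The main point to check is that the constant in $|f_{\xi\xi}|\leq\epsilon(f+\xi)$ from Lemma \ref{apriori-f} is uniform down to $\xi=0$. This is what lets the $\xi^{-2}$ singularity be absorbed by the quadratic vanishing of $f_\xi$ at the origin, and it follows from \eqref{fxx=0} together with the smooth convergence to the cylinder. The same bound $|f_\xi|\leq\epsilon\xi^2/2$ also controls the first term of the splitting, so no separate Hardy-type argument is needed.
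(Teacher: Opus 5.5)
Your proof is correct and follows essentially the same route as the paper: an algebraic pointwise bound of the bracket by a constant times $|f_\xi|$ divided by a quadratic quantity vanishing at the origin, followed by showing that this ratio is at most $\epsilon$ using $f_\xi(0)=0$ and $|f_{\xi\xi}|\le\epsilon(f+\xi)$ from Lemma \ref{apriori-f}. The only cosmetic difference is that the paper keeps the denominator $(f+\xi)^2$ and applies the Cauchy mean value theorem (using $f_\xi+1\ge\frac{1}{2}$), whereas you first replace $(f+\xi)^2$ by $\xi^2$ via $f\ge-\frac{\xi}{2}$ and then integrate $f_{\xi\xi}$ directly.
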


\begin{proof}
By slope estimate for $f$ \eqref{f-slope}, 
\begin{align}
\begin{aligned}
    \left|\frac{f_{\xi}+1}{(f(\xi,\tau)+\xi)^2}-\frac{1}{\xi^2}\right| = \frac{| \xi^2f_\xi-f^2-2\xi f| }{\xi^2(f(\xi,\tau)+\xi)^2} \leq \frac{|f_\xi|\, (3+|f_\xi|)|}{(f(\xi,\tau)+\xi)^2} \leq 4\, \frac{|f_\xi|}{(f(\xi,\tau)+\xi)^2}. 
\end{aligned}
\end{align}
By Lemma \ref{apriori-f} and differential mean value theorem, we can find a $0\leq \theta\leq \xi$ such that
\begin{align}\label{fxi/f+xi2}
 \frac{|f_\xi|}{(f(\xi,\tau)+\xi)^2}  \leq \left|\frac{f_{\xi\xi}(\theta)}{2(f(\theta,\tau)+\theta)(f_\xi(\theta,\tau)+1)}\right|\leq \epsilon
\end{align}
for $\xi \leq \delta(\tau)^{- \frac{1}{100}}$. The lemma readily  follows. 
\end{proof}

\begin{lem}\label{fequation-error_34term} We have 
    \begin{align}
\begin{aligned}
\intde \xi^{k-3} \left|\frac{f^2_{\xi}(f_{\xi}+1)}{(f+\xi)^2}-\frac{2f_{\xi}f_{\xi\xi}}{f+\xi}\right|^2 e^{-\frac{\xi^2}{4}}d\xi\leq \epsilon\intde \xi^{k-3} e^{-\frac{\xi^2}{4}}f_\xi^2 d \xi.
\end{aligned}
\end{align}
\end{lem}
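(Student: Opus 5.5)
We bound each of the two terms pointwise by a small multiple of $|f_\xi|$ on $0<\xi\le\delta(\tau)^{-\frac1{100}}$, exactly as in Lemmas \ref{fequation-error_1term} and \ref{fequation-error_2term}. Then we square, multiply by the weight $\xi^{k-3}e^{-\frac{\xi^2}{4}}$ and integrate. By $(a+b)^2\le 2a^2+2b^2$ it suffices to treat each term separately, adjusting $\epsilon$ at the end.

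\emph{Second term.} Lemma \ref{apriori-f} gives $|f_{\xi\xi}|/(f+\xi)\le\epsilon$ and $|f_\xi|\le\frac12$ on the relevant range. Hence
\begin{equation*}
\left|\frac{2f_\xi f_{\xi\xi}}{f+\xi}\right|\le 2\epsilon\,|f_\xi|,
\end{equation*}
which is already of the required form.

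\emph{First term.} We use $0\le f_\xi+1\le 1$ together with the bound
\begin{equation*}
\frac{|f_\xi|}{(f+\xi)^2}\le\epsilon
\end{equation*}
from \eqref{fxi/f+xi2} in the proof of Lemma \ref{fequation-error_2term}. That bound comes from the mean value theorem applied to $f_\xi$ and $(f+\xi)^2$, both of which vanish at $\xi=0$, combined with Lemma \ref{apriori-f}. Consequently
\begin{equation*}
\left|\frac{f_\xi^2(f_\xi+1)}{(f+\xi)^2}\right|\le |f_\xi|\cdot\frac{|f_\xi|}{(f+\xi)^2}\le\epsilon\,|f_\xi|.
\end{equation*}

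\emph{Conclusion.} Summing the two bounds gives $|\cdot|^2\le 10\epsilon^2 f_\xi^2$ pointwise. Integrating against the weight yields the lemma with $10\epsilon^2$ in place of $\epsilon$. Since $\epsilon$ is arbitrary, after enlarging $-\bar\tau$ as in Lemma \ref{apriori-f} this gives the stated bound.

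The only delicate step is the first term near $\xi=0$, where $(f+\xi)^{-2}$ blows up. The mean value argument in \eqref{fxi/f+xi2} handles this: it uses $f_\xi(0)=0$ and $f_{\xi\xi}(0)=0$ from \eqref{BC fg}, and the fact that $f+\xi\ge\frac{\xi}{2}$ is comparable to $\xi$. Near the outer end of the range, the estimates of Lemma \ref{apriori-f} hold up to $\xi=\delta(\tau)^{-\frac1{100}}$, so no cutoff tail term is needed.
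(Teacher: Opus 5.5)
Your proposal is correct and matches the paper's proof. The paper likewise bounds the integrand pointwise on $0<\xi\le\delta(\tau)^{-\frac{1}{100}}$ by $|f_\xi|\cdot\frac{|f_\xi|}{(f+\xi)^2}+\epsilon|f_\xi|\le 2\epsilon|f_\xi|$. It uses Lemma \ref{apriori-f} for the factor $f_{\xi\xi}/(f+\xi)$ and the mean value bound \eqref{fxi/f+xi2} for $|f_\xi|/(f+\xi)^2$, then squares and integrates against $\xi^{k-3}e^{-\frac{\xi^2}{4}}$.
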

\begin{proof}
 Lemma \ref{apriori-f} and    \eqref{fxi/f+xi2} yield
\begin{align}
\begin{aligned}\label{3.13E3}
\left|\frac{f^2_{\xi}(f_{\xi}+1)}{(f+\xi)^2}-\frac{2f_{\xi}f_{\xi\xi}}{f+\xi}\right| &\leq\frac{f^2_{\xi}| f_{\xi}+1 |}{(f+\xi)^2} + \left|\frac{2f_{\xi}f_{\xi\xi}}{f+\xi}\right| \\
&\le |f_{\xi}|  \frac{|f_{\xi}|}{(f + \xi)^2} + \epsilon |f_{\xi}| \le 2 \epsilon|f_\xi|
\end{aligned}
\end{align}
for $\xi \leq \delta(\tau)^{- \frac{1}{100}}$, where we use the same symbol $\epsilon$ to denote a small constant, as long as it changes from line to line in a uniform way. The lemma readily follows. 
\end{proof}

\begin{lem}\label{fequation-error_5term}
We have 
\begin{align}
\begin{aligned}
&\quad\intde \xi^{k-3} f_{\xi\xi}^2 \left|\int_0^\xi \frac{|(f_{\eta}(\eta,\tau)+1)f_{\eta}(\eta,\tau)|}{(f(\eta,\tau)+\eta)^2} d\eta\right|^2e^{-\frac{\xi^2}{4}}d\xi\\
&\leq \epsilon \intde \xi^{k-3} e^{-\frac{\xi^2}{4}}|f_\xi(\xi, \tau)|^2 d \xi+C \exp \big(-\frac{1}{8} \delta(\tau)^{-\frac{1}{50}}\big). 
\end{aligned}
\end{align}
\end{lem}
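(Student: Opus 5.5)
The plan is to bound the two factors separately: a pointwise smallness bound for $f_{\xi\xi}$, and a reduction of the inner integral to the quantity $\tilde f$ already controlled in Lemma \ref{gequation errorf_2}.

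\textbf{Pointwise control of the inner integral.} Lemma \ref{apriori-f} gives $f+\eta\geq \eta/2$ and $-\tfrac12\le f_\eta\le 0$. Hence the integrand satisfies
\[
0\le \frac{|(f_\eta+1)f_\eta|}{(f+\eta)^2}\le \frac{4|f_\eta|}{\eta^2},
\]
exactly as at the end of the proof of Lemma \ref{gequation errorf_2}. The inner integral is therefore bounded by $4|\tilde f(\xi)|$, where $\tilde f(\xi)=\int_0^\xi f_\eta\eta^{-2}\,d\eta$ as in \eqref{eqn-tilf}.

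\textbf{Smallness of $f_{\xi\xi}$.} Lemma \ref{apriori-f} gives $|f_{\xi\xi}|\le \epsilon (f+\xi)\le \epsilon\xi$ on $0<\xi\le\delta(\tau)^{-1/100}$. Since $f_{\xi\xi}^2$ is multiplied by the weight $\xi^{k-3}$, we get
\[
\xi^{k-3}f_{\xi\xi}^2\tilde f^2\le \epsilon^2\,\xi^{k-1}\tilde f(\xi)^2 .
\]
This lands precisely in the form of the left side of \eqref{decay times k-3}.

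\textbf{Hardy estimate.} Applying \eqref{decay times k-3} bounds
\[
\int_0^{\delta(\tau)^{-1/100}}\xi^{k-1}e^{-\xi^2/4}\tilde f^2\,d\xi
\le C\int_0^{\delta(\tau)^{-1/100}}\xi^{k-3}e^{-\xi^2/4}f_\xi^2\,d\xi+C\exp\big(-\tfrac18\delta(\tau)^{-1/50}\big).
\]
This estimate comes from the weighted Gaussian Hardy inequality with the weight $\rho$ from \eqref{rho weight new}. Multiplying by $16\epsilon^2$ and renaming $16C\epsilon^2$ as $\epsilon$ yields the claim, since $\epsilon$ can be made arbitrarily small by taking $\bar\tau$ more negative.

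\textbf{Main obstacle.} The main point is the Hardy step: the boundary term $\tilde f(A)^2$ at $A=\delta^{-1/100}$ must be absorbed into the exponentially small error. It carries the factor $e^{-A^2/4}$ coming from $\rho(A)=1$ and the Gaussian weight. Also, $|\tilde f(A)|\le C\int_0^A\eta^{-2}|f_\eta|\,d\eta$ must be at most polynomially large in $A$. Near $\eta=0$ this integral is finite because $f_\eta(0)=f_{\eta\eta}(0)=0$, so $|f_\eta|\le C\eta^2$ near $0$ by smoothness and the bounded $f_{\eta\eta\eta}$. Away from $0$ we use $|f_\eta|\le\tfrac12$. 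So $\tilde f(A)^2e^{-A^2/4}$ is bounded by the stated exponential error, as was already used in Lemma \ref{gequation errorf_2}.
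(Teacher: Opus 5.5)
Your proof is correct and follows essentially the same route as the paper: you use the pointwise bound $|f_{\xi\xi}|\le\epsilon(f+\xi)\le\epsilon\xi$ from Lemma \ref{apriori-f}, bound the inner integral by $4\int_0^\xi|f_\eta|\eta^{-2}\,d\eta$, and then apply the Hardy-type estimate \eqref{decay times k-3}. Your handling of the boundary term is also fine; it can be shortened by noting that $|f_{\eta\eta}|\le\epsilon\eta$ already gives $|f_\eta|\le\epsilon\eta^2/2$, hence $|\tilde f(A)|\le\epsilon A/2$, without any appeal to bounds on $f_{\eta\eta\eta}$.
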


\begin{proof}
By Lemma \ref{apriori-f}, we have 
\begin{align}
\begin{aligned}\label{eq-useful}
&\quad\left|f_{\xi\xi}(\xi,\tau)\int_0^\xi \frac{f_{\eta}(\eta,\tau)(f_{\eta}(\eta,\tau)+1)}{(f(\eta,\tau)+\eta)^2} d\eta\right|\\
& = \left|\frac{f_{\xi\xi}(\xi,\tau)}{(f(\xi,\tau)+\xi)}(f(\xi,\tau)+\xi)\int_0^\xi \frac{f_{\eta}(\eta,\tau)(f_{\eta}(\eta,\tau)+1)}{(f(\eta,\tau)+\eta)^2} d\eta\right|\\
&\leq   4 \epsilon\left|\xi\int_0^\xi \frac{|f_{\eta}(\eta,\tau)|}{\eta^2} d\eta\right|. 
\end{aligned}
\end{align}
{where we also used that $f_{\xi\xi} < 0$ and $f(0) = 0$ imply  $f(\xi) \ge \xi\, f_{\xi}$, yielding $f(\xi) + \xi \ge \xi\, (f_{\xi} + 1)$, and hence
\begin{equation}
\label{eq-use-again}
\frac{f_{\xi}+1}{f + \xi} \le \frac 1\xi.
\end{equation}}

{ Then by \eqref{eq-useful} and \eqref{decay times k-3} from the weighted Hardy inequality in Lemma \ref{weights varied Gaussian Hardy} 
\begin{align}
\begin{aligned}
&\quad  {\intde \xi^{k-3} f_{\xi\xi}^2 \left (\int_0^\xi \frac{|(f_{\eta}(\eta,\tau)+1)f_{\eta}(\eta,\tau)|}{(f(\eta,\tau)+\eta)^2} d\eta\right )^2e^{-\frac{\xi^2}{4}}d\xi } \\
&\le \epsilon^2\,\intde \xi^{k-1} \,\left (\int_0^\xi \frac{f_{\eta}(\eta,\tau)}{\eta^2} d\eta\right )^2e^{-\frac{\xi^2}{4}}d\xi\\ 
    &\leq C\, \epsilon^2 \intde\xi^{k-3} f_{\xi}(\xi,\tau)^2 e^{-\frac{\xi^2}{4}}d\xi+C\epsilon^2 \exp \big(-\frac{1}{8} \delta(\tau)^{-\frac{1}{50}}\big).\\
\end{aligned}
\end{align}}
This completes the proof of Lemma \ref{fequation-error_5term}.

\end{proof}

\begin{lem}\label{fequation-error_6term}
\begin{align}
\begin{aligned}
&\quad\intde \xi^{k-3} \left|-(k-1){f_{\xi}(\xi,\tau)}\frac{f_\xi(\xi,\tau)(f_{\xi}(\xi,\tau)+1)}{(f(\xi,\tau)+\xi)^2}\right|^2 e^{-\frac{\xi^2}{4}}d\xi\\
&\leq \epsilon \intde \xi^{k-3} e^{-\frac{\xi^2}{4}}|f_\xi(\xi, \tau)|^2 d \xi. 
\end{aligned}
\end{align}
\end{lem}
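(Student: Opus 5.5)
The plan is a pointwise bound on the integrand, after which the weighted integral estimate follows directly. The term is
\[
(k-1)\,f_\xi\,\frac{f_\xi(f_\xi+1)}{(f+\xi)^2}
=(k-1)\,f_\xi\cdot\Big[f_\xi\cdot\frac{f_\xi+1}{(f+\xi)^2}\Big],
\]
so it suffices to show that the bracket is bounded by a small constant on $0<\xi\le \delta(\tau)^{-\frac{1}{100}}$ once $-\bar\tau$ is large.

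The first step is to control the factor $\frac{f_\xi+1}{(f+\xi)^2}$. By Lemma~\ref{apriori-f} we have $-\tfrac12\le f_\xi\le 0$, so $0\le f_\xi+1\le 1$. It therefore suffices to bound $\frac{|f_\xi|}{(f+\xi)^2}$.

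This is exactly what \eqref{fxi/f+xi2}, established in the proof of Lemma~\ref{fequation-error_2term}, provides. Since $f_\xi(0,\tau)=0$, the mean value theorem gives
\[
|f_\xi(\xi,\tau)|\le \xi \sup_{[0,\xi]}|f_{\xi\xi}|.
\]
Combined with $f+\xi\ge \xi/2$ and the curvature bound $-\frac{f_{\xi\xi}}{f+\xi}\le\epsilon$ from Lemma~\ref{apriori-f}, this yields $\frac{|f_\xi|}{(f+\xi)^2}\le C\epsilon$.

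One point needs care in that step, and it is the only place where something could go wrong. To compare $|f_{\xi\xi}(\theta)|$ with $(f(\xi)+\xi)$ for $\theta\le\xi$, I would use the monotonicity of $f+\xi$. Its derivative is $f_\xi+1\ge \tfrac12$, so $f(\theta)+\theta\le f(\xi)+\xi$, and hence
\[
|f_{\xi\xi}(\theta)|\le \epsilon\,(f(\theta)+\theta)\le \epsilon\,(f(\xi)+\xi).
\]
Dividing by $(f+\xi)^2\ge \xi(f+\xi)/2$ and multiplying by $\xi$ gives the bound $2\epsilon$. Alternatively, one can simply cite \eqref{fxi/f+xi2} verbatim.

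Putting these together, the bracket is bounded pointwise by $|f_\xi|\cdot 2\epsilon\le \epsilon$, using $|f_\xi|\le \tfrac12$. Hence the integrand is at most $(k-1)^2\epsilon^2 f_\xi^2$, pointwise for $\xi\le\delta(\tau)^{-\frac1{100}}$. Integrating against $\xi^{k-3}e^{-\xi^2/4}$ and renaming $\epsilon$ (permitted since $\epsilon$ is arbitrary and $\bar\tau$ depends on it) gives the claim. No boundary or Hardy-type term is needed, because the bound is purely local.
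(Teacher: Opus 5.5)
Your argument is correct and is essentially the paper's proof. The paper also bounds the integrand pointwise using Lemma~\ref{apriori-f} and the mean-value estimate \eqref{fxi/f+xi2} for $|f_\xi|/(f+\xi)^2$. It obtains that estimate via Cauchy's mean value theorem applied to $f_\xi$ and $(f+\xi)^2$, whereas you use the ordinary mean value theorem plus the monotonicity of $f+\xi$, which is equally valid.

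One small slip: in your last step the bracket is bounded by $(f_\xi+1)\cdot 2\epsilon\le 2\epsilon$, not by $|f_\xi|\cdot 2\epsilon$. The factor $\frac{f_\xi+1}{(f+\xi)^2}$ on its own behaves like $\xi^{-2}$ near $0$ and is not small. This only changes constants and is absorbed when you rename $\epsilon$.
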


\begin{proof}
This lemma follows directly by Lemma \ref{apriori-f} and differential mean value theorem as in \eqref{fxi/f+xi2}.
\end{proof}

\begin{lem}\label{fequation-error_7term}
We have 
\begin{align}
\begin{aligned}
&\quad\intde \xi^{k-3} f_{\xi\xi}^2 \left|\int_0^\xi \frac{g_{\eta}^2(\eta,\tau)}{(g(\eta,\tau) + \sqrt{2(n-k-1)})^2} d\eta\right|^2e^{-\frac{\xi^2}{4}}d\xi\\
&\leq C \delta^{\frac{1}{100}}(\tau)\intde \xi^{k-1} e^{-\frac{\xi^2}{4}}|g(\xi, \tau)|^2 d \xi+ C \exp \big(-\frac{1}{8} \delta(\tau)^{-\frac{1}{50}}\big).
\end{aligned}
\end{align}
\end{lem}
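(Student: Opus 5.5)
The plan is to reduce the integrand pointwise to a product of a small factor times $g^2$. First, $f_{\xi\xi}^2$ will be made small in a weighted way, and then the inner integral will be controlled by the bound \eqref{g error geta^2 integral estimate} from the proof of Proposition \ref{Error-E2-estimate}. That bound gives
\begin{equation*}
\int_0^\xi \frac{g_{\eta}^2(\eta,\tau)}{(g(\eta,\tau)+\sqrt{2(n-k-1)})^2}\,d\eta \leq C\,|g(\xi,\tau)|\,|g_\xi(\xi,\tau)|
\end{equation*}
for $0<\xi\le \delta(\tau)^{-\frac{1}{100}}$. We use Brendle's integration by parts argument together with $g_\xi(0,\tau)=0$ and $g_{\xi\xi}\le 0$; the $f$-terms do not enter this integral. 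Squaring it and using Lemma \ref{g-C01}, which gives $|g_\xi|\le C\delta^{\frac14}(\tau)$, the inner integral squared is bounded by $C\delta^{\frac12}(\tau)\, g^2$.

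Next, the factor $\xi^{k-3}f_{\xi\xi}^2$ must be converted into the weight $\xi^{k-1}$. By Lemma \ref{apriori-f}, $|f_{\xi\xi}|\le \epsilon (f+\xi)\le \epsilon\,\xi$, since $f\le 0$. Hence $\xi^{k-3}f_{\xi\xi}^2\le \epsilon^2 \xi^{k-1}$ on $(0,\delta(\tau)^{-\frac1{100}}]$. This is the key point that handles the singular weight near $\xi=0$: the $F$-curvature bound $-F_{zz}/F$ small, equivalently the boundary condition \eqref{fxx=0}, supplies the two missing powers of $\xi$.

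Combining the two pointwise bounds gives the integrand estimate
\begin{equation*}
\xi^{k-3} f_{\xi\xi}^2\Big(\int_0^\xi \tfrac{g_\eta^2}{(g+\sqrt{2(n-k-1)})^2}d\eta\Big)^2 e^{-\frac{\xi^2}{4}} \le C\,\delta^{\frac12}(\tau)\,\xi^{k-1}e^{-\frac{\xi^2}{4}}\, g^2 .
\end{equation*}
Since $\delta^{\frac12}\le\delta^{\frac1{100}}$ for small $\delta$, integrating over $[0,\delta(\tau)^{-\frac1{100}}]$ yields the claim, and in fact without the exponential tail term. Alternatively, one can keep a factor $g_\xi^2$ and invoke Lemma \ref{gxi4}, which produces the $C\exp(-\frac18\delta(\tau)^{-\frac1{50}})$ term.

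The main obstacle is justifying the inner-integral bound uniformly on the whole truncated range, rather than only near $\xi=0$. If that bound is unavailable, I would fall back on the cruder estimate
\begin{equation*}
\int_0^\xi \frac{g_\eta^2}{(g+\sqrt{2(n-k-1)})^2}\,d\eta\le C\int_0^\xi g_\eta^2\,d\eta ,
\end{equation*}
followed by Cauchy--Schwarz and a Hardy-type inequality, Lemma \ref{weights varied Gaussian Hardy}, to pass from $g_\eta^2$ to $g^2$. In that route the boundary terms at $\xi=\delta^{-\frac1{100}}$ give the exponential tail, and the smallness comes from $|g_\xi|\le C\delta^{\frac14}$ together with Lemma \ref{gxi4}.
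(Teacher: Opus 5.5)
Your bound $\xi^{k-3}f_{\xi\xi}^2\le\epsilon^2\xi^{k-1}$ via Lemma \ref{apriori-f} is correct and is what the paper does. The main route, however, breaks at the inner integral. With $a=\sqrt{2(n-k-1)}$, the bound $\int_0^\xi g_\eta^2(g+a)^{-2}\,d\eta\le C|g(\xi,\tau)|\,|g_\xi(\xi,\tau)|$ does not hold as written, even though a display of this form appears in the proof of Proposition \ref{Error-E2-estimate}. The integration by parts you describe, using that $|g_\eta|$ is nondecreasing, actually gives
\[
\int_0^\xi \frac{g_\eta^2}{(g+a)^2}\,d\eta\le |g_\xi(\xi,\tau)|\Big(\frac{1}{g(\xi,\tau)+a}-\frac{1}{g(0,\tau)+a}\Big)\le C\,|g_\xi(\xi,\tau)|\,\big(g(0,\tau)-g(\xi,\tau)\big).
\]
Since $g(0,\tau)\ge 0$, you cannot replace $g(0,\tau)$ by $0$. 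At any zero $\xi_0>0$ of $g(\cdot,\tau)$ (e.g.\ near $\sqrt{2k}$ when $g\approx\alpha(\tau)(\xi^2-2k)$), your right-hand side vanishes while the left-hand side is strictly positive. So there is no pointwise domination of the integrand by $\delta^{1/2}\xi^{k-1}g^2$; your remark that the exponential tail can be dropped is a symptom of this. The fallback does not repair it either: Lemma \ref{weights varied Gaussian Hardy} controls a function by its derivative, so it cannot turn $g_\eta^2$ into $g^2$.

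What works, and is the paper's argument, is to use the monotonicity once more. Since $g(0,\tau)-g(\xi,\tau)\le\xi|g_\xi(\xi,\tau)|$ (equivalently, $g_\eta^2\le g_\xi^2$ and $g+a\ge\sqrt{n-k-1}$), the inner integral is at most $\frac{\xi}{n-k-1}g_\xi^2$. Combined with your $f_{\xi\xi}$ bound, the left side is then $\le C\epsilon^2\intde\xi^{k+1}e^{-\xi^2/4}g_\xi^4\,d\xi$. The weight is now $\xi^{k+1}$, so Lemma \ref{gxi4} (weight $\xi^{k-1}$) does not apply directly, and trading $\xi^2\le\delta^{-1/50}$ would eat the gain $\delta^{1/100}$.

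The missing step is the $\xi^{k+1}$-weighted analogue of Lemma \ref{gxi4}:
\begin{itemize}
\item Integrate $\partial_\xi\bigl(\xi^{k+1}e^{-\xi^2/4}g_\xi^3g\bigr)$ over $[0,\delta(\tau)^{-1/100}]$; the boundary term is where $C\exp(-\frac18\delta^{-1/50})$ comes from.
\item Bound the three remaining terms by $C\delta^{1/100}\intde\xi^{k}e^{-\xi^2/4}g_\xi^2|g|\,d\xi$, using Lemmas \ref{g-C01} and \ref{g-C23}.
\item Absorb via $\xi^kg_\xi^2|g|\le\frac12(\xi^{k+1}g_\xi^4+\xi^{k-1}g^2)$.
\end{itemize}
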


\begin{proof}
Since $g_\xi\leq 0$ and $g_{\xi\xi} \leq 0$ and $g + \sqrt{2(n-k-1)} \geq  \sqrt{(n-k-1)}$  by smallness of $g$  from Lemma \ref{g-C01}, we have 
\begin{align}
\int_0^\xi \frac{g_{\eta}^2(\eta,\tau)}{(g + \sqrt{2(n-k-1)})^2} d\eta &\leq g_{\xi}^2 \, \int_0^\xi \frac{1}{(g + \sqrt{2(n-k-1)})^2} d\eta \leq \frac{\xi}{(n-k-1)} \, g_{\xi}^2.\notag
\end{align}
Also, since by Lemma \ref{apriori-f}, we have $\left|\frac{f_{\xi\xi}}{\xi}\right|\leq \epsilon$, so $|\xi^{k-3}f_{\xi\xi}^2|=|\xi^{k-1}(f_{\xi\xi}^2\xi^{-2})|\leq \epsilon^2 \xi^{k-1}$,
we obtain 
\begin{align}
 &\intde \xi^{k-3} f_{\xi\xi}^2 \left ( \int_0^\xi \frac{g_{\eta}^2(\eta,\tau)}{(g+ \sqrt{2(n-k-1)})^2} d\eta\right )^2e^{-\frac{\xi^2}{4}}d\xi\\
&\leq C\epsilon^2\intde\xi^{k-1}\left(\frac{\xi}{n-k-1}|g_{\xi}|^2\right)^2 e^{-\frac{\xi^2}{4}}d\xi.\notag
\end{align}
Thus, it is sufficient  to show 
\begin{align}
\begin{aligned}
\intde \xi^{k+1} e^{-\frac{\xi^2}{4}} g_\xi^4\,  d \xi&\leq C \delta^{\frac{1}{100}}(\tau) \intde \xi^{k-1} e^{-\frac{\xi^2}{4}}  \, g^2 d \xi
+ C \exp \left(-\tfrac{1}{8} \delta^{-\frac{1}{50}}\right).
\end{aligned}
\end{align}
We observe that 
\begin{align}
\begin{aligned}
&\intde\xi^{k+1} e^{-\frac{\xi^2}{4}} g_{\xi}(\xi, \tau)^4 d \xi  +3\intde\xi^{k+1} e^{-\frac{\xi^2}{4}}  g_{\xi}(\xi, \tau)^2 g(\xi, \tau) g_{\xi \xi}(\xi, \tau) d \xi \\
& -\frac{1}{2}\intde\xi^{k+2} e^{-\frac{\xi^2}{4}} g_{\xi}(\xi, \tau)^3 g(\xi, \tau) d \xi  + (k+1)\intde \xi^{k} e^{-\frac{\xi^2}{4}} g_{\xi}(\xi, \tau)^3 g(\xi, \tau) d \xi \\
& =\intde \frac{\partial}{\partial \xi}\left(\xi^{k+1} e^{-\frac{\xi^2}{4}}  g_{\xi}(\xi, \tau)^3 g(\xi, \tau)\right) d \xi  \leq C \exp \left(-\tfrac{1}{8} \delta^{-\frac{1}{50}}\right)
\end{aligned}
\end{align}
for $\tau\leq \bar\tau$, where in the last inequality we use Lemma \ref{g-C01}. Using lemma \ref{g-C23}, we obtain 
\begin{align}
\begin{aligned}
& -3\intde\xi^{k+1} e^{-\frac{\xi^2}{4}} g_{\xi}(\xi, \tau)^2 g(\xi, \tau) g_{\xi \xi}(\xi, \tau) d \xi  \\
&\leq C \delta^{\frac{1}{100}}(\tau)\intde \xi^{k} e^{-\frac{\xi^2}{4}} g_{\xi}(\xi, \tau)^2|g(\xi, \tau)| d \xi\notag
\end{aligned}
\end{align}
for $\tau\leq \bar\tau$. Moreover, using Lemma \ref{g-C01} again, we have 
\begin{align}
\begin{aligned}
& \frac{1}{2}\intde\xi^{k+2} e^{-\frac{\xi^2}{4}} g_{\xi}(\xi, \tau)^3 g(\xi, \tau) d \xi  \leq C \delta^{\frac{1}{100}}(\tau)\intde \xi^{k} e^{-\frac{\xi^2}{4}} g_{\xi}(\xi, \tau)^2|g(\xi, \tau)| d \xi
\end{aligned}
\end{align}
for $\tau\leq \bar\tau$. Then by Lemma \ref{g-C01},
\begin{align}
\begin{aligned}
&-(k+1)\intde \xi^{k} e^{-\frac{\xi^2}{4}} g_{\xi}(\xi, \tau)^3 g(\xi, \tau) d \xi\\
&\leq C \max_{\left\{ \xi \leq \delta(\tau)^{-\frac{1}{100}}\right\}}|g_{\xi}(\xi,\tau)| \, \intde \xi^{k} e^{-\frac{\xi^2}{4}} g_{\xi}(\xi, \tau)^2 |g(\xi, \tau)| d \xi\\
&\leq C \delta^{\frac{1}{100}}(\tau)\intde \xi^{k} e^{-\frac{\xi^2}{4}} g_{\xi}(\xi, \tau)^2|g(\xi, \tau)| d \xi
\end{aligned}
\end{align}
for $\tau\leq \bar\tau$. Adding these inequalities together yields 
\begin{align}
\begin{aligned}
&\intde\xi^{k+1} e^{-\frac{\xi^2}{4}} g_{\xi}(\xi, \tau)^4 d \xi \\
&\leq C \delta^{\frac{1}{100}}(\tau)\intde \xi^{k} e^{-\frac{\xi^2}{4}} g_{\xi}(\xi, \tau)^2\, |g(\xi, \tau)| d \xi + C \exp \big(-\frac{1}{8} \delta(\tau)^{-\frac{1}{50}}\big)\\&\leq C \delta^{\frac{1}{100}}(\tau)\intde \xi^{k+1} e^{-\frac{\xi^2}{4}} g_{\xi}(\xi, \tau)^4 d \xi+ C \delta^{\frac{1}{100}}(\tau)\intde \xi^{k-1} e^{-\frac{\xi^2}{4}}  g(\xi, \tau)^2 d \xi\\
&+ C \exp \big(-\frac{1}{8} \delta(\tau)^{-\frac{1}{50}}\big).
\end{aligned}
\end{align}
This implies the Lemma \ref{fequation-error_7term}.
\end{proof}

\begin{lem}\label{k-3gxig} 
For all $\tau \leq \bar \tau$, we have 
\begin{align}\label{g-k-3-integral}
\intde \xi^{k-3} e^{-\frac{\xi^2}{4}}\, g_{\xi}(\xi, \tau)^4 d \xi \leq& C\,  \delta(\tau)^{\frac{1}{200}}\intde\xi^{k-1} e^{-\frac{\xi^2}{4}} \, g(\xi, \tau)^2 d \xi \notag\\
& +C \exp \big(-\frac{1}{8} \delta(\tau)^{-\frac{1}{50}}\big).
\end{align}
\end{lem}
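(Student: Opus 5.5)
The plan is to reduce the weight $\xi^{k-3}$ to the weights already treated in Lemma \ref{gxi4} and Lemma \ref{fequation-error_7term}, using the fact that $g_\xi(0,\tau)=0$ to absorb the extra singular factor $\xi^{-2}$ near the origin. We split the integral at a small scale $\xi=\lambda$, to be chosen as a power of $\delta(\tau)$. On $[\lambda,\delta(\tau)^{-\frac1{100}}]$ we simply bound $\xi^{k-3}\le \lambda^{-2}\xi^{k-1}$ and invoke Lemma \ref{gxi4}. This gives a contribution
\[
C\lambda^{-2}\delta(\tau)^{\frac1{100}}\intde \xi^{k-1}e^{-\frac{\xi^2}{4}}g^2\,d\xi+C\lambda^{-2}\exp\big(-\tfrac18\delta(\tau)^{-\frac1{50}}\big).
\]

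On $[0,\lambda]$ we exploit $g_\xi(0,\tau)=0$. Writing $g_\xi(\xi,\tau)=\int_0^\xi g_{\eta\eta}\,d\eta$ gives $|g_\xi|\le \xi\sup|g_{\xi\xi}|\le C\delta^{\frac1{16}}\xi$ by Lemma \ref{g-C23}. Hence $\xi^{k-3}g_\xi^4\le C\delta^{\frac18}\xi^{k+1}g_\xi^2$ on this piece, and more crudely $\xi^{k-3}g_\xi^4\le C\xi^{k-1}g_\xi^2\cdot \delta^{\frac18}$. This term still has to be tied to $\int \xi^{k-1}g^2$. To do that, I would prove an auxiliary weighted gradient bound
\[
\int \xi^{k-1}e^{-\frac{\xi^2}{4}}g_\xi^2\,\chi \;\lesssim\; \int \xi^{k-1}e^{-\frac{\xi^2}{4}}g^2+\text{exp. small}.
\]
The idea is to integrate $\partial_\xi(\xi^{k-1}e^{-\xi^2/4}g g_\xi)$ and use $g_{\xi\xi}$-smallness together with the identity
\[
\xi^{k-1}e^{-\frac{\xi^2}{4}}(g_{\xi\xi}+\tfrac{k-1}{\xi}g_\xi-\tfrac\xi2g_\xi)=\partial_\xi(\xi^{k-1}e^{-\frac{\xi^2}{4}}g_\xi).
\]
A simpler alternative follows the integration-by-parts scheme of Lemma \ref{gxi4}: integrate $\partial_\xi(\xi^{k-3}e^{-\xi^2/4}g_\xi^3 g)$ over $[0,\delta^{-\frac1{100}}]$. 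The boundary term at $0$ vanishes since $g_\xi=O(\xi)$ and, for $k\ge2$, $\xi^{k-3}\xi^3\to0$. The resulting terms involve $\xi^{k-3}g_\xi^2gg_{\xi\xi}$, $\xi^{k-2}g_\xi^3g$ and $\xi^{k-4}g_\xi^3g$. In the last of these we write $g_\xi/\xi$, which is bounded by $\sup|g_{\xi\xi}|$, reducing it to $\xi^{k-3}g_\xi^2|g|$. Each term is then of the form $C\delta^{\frac1{100}}\int\xi^{k-3}g_\xi^2|g|$ (for the middle term use $\xi\le\delta^{-1/100}$ and $|g_\xi|\le C\delta^{1/4}$).

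The main obstacle is the final Cauchy--Schwarz step: $\xi^{k-3}g_\xi^2|g|\le \tfrac12\eta\,\xi^{k-3}g_\xi^4+\tfrac1{2\eta}\xi^{k-3}g^2$ produces $\xi^{k-3}g^2$ rather than $\xi^{k-1}g^2$. To handle it, I would split $g_\xi^2|g|\,\xi^{k-3}=(\xi^{\frac{k-3}{2}}g_\xi^2)(\xi^{\frac{k-3}2}|g|)$ and choose the split only where $\xi\ge\lambda$, controlling the region $\xi<\lambda$ as above via $|g_\xi|\le C\delta^{1/16}\xi$. That bound gives $\int_0^\lambda\xi^{k-3}g_\xi^2|g|\le C\delta^{1/8}\int_0^\lambda \xi^{k-1}|g|\le C\delta^{1/8}\lambda^{k/2}\big(\int\xi^{k-1}g^2\big)^{1/2}$. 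That square root is then handled by Young's inequality against the exponential term, or by noting $\int\xi^{k-1}g^2\gtrsim g(0)^2$ is comparable. Choosing $\lambda=\delta^{\frac1{400}}$ balances the loss $\lambda^{-2}\delta^{\frac1{100}}=\delta^{\frac1{200}}$, which is exactly the exponent claimed. After absorbing the $\eta\int\xi^{k-3}g_\xi^4$ term into the left side, this yields the lemma.
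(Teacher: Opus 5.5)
Your treatment of $[\lambda,\delta(\tau)^{-\frac1{100}}]$ is fine, but the region near $\xi=0$ is not closed, and that is exactly where the singular weight $\xi^{k-3}$ makes the lemma nontrivial. There you use only $|g_\xi|\le \xi\sup|g_{\xi\xi}|\le C\delta(\tau)^{\frac1{16}}\xi$. This bound is in terms of $\delta(\tau)$, not of $g(\cdot,\tau)$ itself, so you are left with a term $C\delta^{\frac1{100}+\frac18}\lambda^{\frac k2}\|g(\cdot,\tau)\|_{\mathcal H}$ that is only \emph{linear} in $\|g\|_{\mathcal H}$.

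Neither remedy you propose turns this into $C\delta^{\frac1{200}}\|g\|_{\mathcal H}^2+C\exp(-\frac18\delta^{-\frac1{50}})$. Young's inequality leaves a remainder that is a fixed positive power of $\delta(\tau)$, which is far larger than the exponential term. A comparison like $\|g\|_{\mathcal H}^2\gtrsim g(0,\tau)^2$ is not available: interpolation gives only $|g(0,\tau)|\le C\|g\|_{\mathcal H}^{1-\theta}$. Even if it were available it would not help, because you would need a \emph{lower} bound $\|g(\cdot,\tau)\|_{\mathcal H}\gtrsim\delta(\tau)^c$. No such bound exists, since $\delta$ is a supremum over all earlier times. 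The auxiliary gradient bound is also not delivered by your integration by parts. Estimating the $g_{\xi\xi}$ and $g_\xi/\xi$ terms with $C\delta^{\frac1{16}}$ produces $C\delta^{\frac1{16}}\int\xi^{k-1}e^{-\xi^2/4}|g|\,d\xi$, which is again linear in $g$.

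What is missing is control of $g(\cdot,\tau)$ near the origin by a \emph{superlinear} power of $\|g(\cdot,\tau)\|_{\mathcal H}$ at the same time $\tau$. The paper gets the right balance of weights by subtracting the Taylor term, $\mathfrak h=g-\frac{\xi^2}{2}g_{\xi\xi}(0,\tau)$. Since $\mathfrak h_\xi=O(\xi^2)$ and $\mathfrak h_{\xi\xi}=O(\xi)$, every term from integrating $\partial_\xi(\xi^{k-3}e^{-\xi^2/4}\mathfrak h_\xi^3\mathfrak h)$ is bounded by $C\delta^{\frac1{100}}\int\xi^{k-2}e^{-\xi^2/4}\mathfrak h_\xi^2|\mathfrak h|\,d\xi$. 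The factorization $\xi^{k-2}=\xi^{\frac{k-3}2}\xi^{\frac{k-1}2}$ is precisely what your Cauchy--Schwarz step lacks. The subtracted quadratic costs a $|g_{\xi\xi}(0,\tau)|$ term, which the paper controls by Gagliardo--Nirenberg interpolation against $\|g\|_{\mathcal H}$, using the uniform $C^m$ bounds of Lemma \ref{g-C-m}.

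That same interpolation, $\sup_{[0,1]}|g_{\xi\xi}(\cdot,\tau)|\le C\|g(\cdot,\tau)\|_{\mathcal H}^{1-\theta}$, also repairs your splitting directly. On $[0,1]$ keep the fourth power:
\[
\int_0^1\xi^{k-3}e^{-\xi^2/4}g_\xi^4\,d\xi\le C\sup_{[0,1]}|g_{\xi\xi}|^4\le C\|g\|_{\mathcal H}^{4(1-\theta)}\le C\delta^{\frac{1-2\theta}{2}}\|g\|_{\mathcal H}^2 .
\]
Here you use $\|g\|_{\mathcal H}^2\le C\delta^{\frac12}$ from Lemma \ref{g-C01}, with norms truncated at $\delta(\tau)^{-\frac1{100}}$. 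On $[1,\delta^{-\frac1{100}}]$ use $\xi^{k-3}\le\xi^{k-1}$ and Lemma \ref{gxi4}. Keeping the fourth power is what makes the near-origin bound superquadratic, and it removes any need to compare $\|g(\cdot,\tau)\|_{\mathcal H}$ from below with $\delta(\tau)$.
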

\begin{proof}
Let us consider the function 
\begin{equation}
   \mathfrak{h}(\xi, \tau)=g(\xi, \tau)-\frac{\xi^2 g_{\xi\xi}(0, \tau)}2, \quad 
    \mathfrak{h}_{\xi}(0, \tau)=\mathfrak{h}_{\xi\xi}(0, \tau)=0. 
\end{equation}
We observe that 
\begin{align}
\begin{aligned}
&\intde\xi^{k-3} e^{-\frac{\xi^2}{4}} \mathfrak{h}_{\xi}(\xi, \tau)^4 d \xi \\
& +3\intde\xi^{k-3} e^{-\frac{\xi^2}{4}}  \mathfrak{h}_{\xi}(\xi, \tau)^2 \mathfrak{h}(\xi, \tau) \mathfrak{h}_{\xi \xi}(\xi, \tau) d \xi \\
& -\frac{1}{2}\intde\xi^{k-2} e^{-\frac{\xi^2}{4}} \mathfrak{h}_{\xi}(\xi, \tau)^3 \mathfrak{h}(\xi, \tau) d \xi \\
& + (k-3)\intde \xi^{k-4} e^{-\frac{\xi^2}{4}} \mathfrak{h}_{\xi}(\xi, \tau)^3 \mathfrak{h}(\xi, \tau) d \xi \\
& =\intde \frac{\partial}{\partial \xi}\left(\xi^{k-3} e^{-\frac{\xi^2}{4}}  \mathfrak{h}_{\xi}(\xi, \tau)^3 \mathfrak{h}(\xi, \tau)\right) d \xi \\
& \leq C \exp \big(-\frac{1}{8} \delta(\tau)^{-\frac{1}{50}}\big)
\end{aligned}
\end{align}
for $\tau\leq \bar\tau$, where the last inequality we use  Lemma \ref{g-C01} and Lemma \ref{g-C23}. Using Lemma \ref{g-C23} again, we obtain 
\begin{align}
\begin{aligned}
& 3\, \Big|\intde \xi^{k-3} e^{-\frac{\xi^2}{4}} \mathfrak{h}_{\xi}(\xi, \tau)^2 \mathfrak{h}(\xi, \tau) \,\mathfrak{h}_{\xi \xi}(\xi, \tau) d \xi \Big|\\
& = 3 \, \Big|\intde \xi^{k-2} e^{-\frac{\xi^2}{4}} \mathfrak{h}_{\xi}(\xi, \tau)^2 \mathfrak{h}(\xi, \tau) \frac{1}{\xi}\mathfrak{h}_{\xi \xi}(\xi, \tau) d \xi\, \Big| \\
& \leq C\sup_{\{\xi\leq \delta(\tau)^{-\frac{1}{100}}\}} |\mathfrak{h}_{\xi\xi\xi}(\xi, \tau)|
\intde\xi^{k-2} e^{-\frac{\xi^2}{4}} |\mathfrak{h}_{\xi}(\xi, \tau)|^2|\mathfrak{h}(\xi, \tau)| d \xi\\
& \leq C \delta^{\frac{1}{100}}(\tau)\intde \xi^{k-2} e^{-\frac{\xi^2}{4}} |\mathfrak{h}_{\xi}(\xi, \tau)|^2|\mathfrak{h}(\xi, \tau)| d \xi
\end{aligned}
\end{align}
for $\tau\leq \bar\tau$. Moreover, using Lemma \ref{g-C01} and Lemma \ref{g-C23} again, we have 
\begin{align}
\begin{aligned}
& \frac{1}{2}\intde\xi^{k-2} e^{-\frac{\xi^2}{4}} \mathfrak{h}_{\xi}(\xi, \tau)^3 \mathfrak{h}(\xi, \tau) d \xi \\
& \leq C \delta^{\frac{1}{100}}(\tau)\intde \xi^{k-2} e^{-\frac{\xi^2}{4}} |\mathfrak{h}_{\xi}(\xi, \tau)|^2|\mathfrak{h}(\xi, \tau)| d \xi
\end{aligned}
\end{align}
for $\tau\leq \bar\tau$. Then by Lemma \ref{g-C01} and Lemma \ref{g-C23} we have
\begin{align}
\begin{aligned}
&(k-3)\, \Big|\,\intde\xi^{k-4} e^{-\frac{\xi^2}{4}} \mathfrak{h}_{\xi}(\xi, \tau)^3 \mathfrak{h}(\xi, \tau) d \xi\, \Big|\\
& = (k-3)\, \Big|\intde\xi^{k-2} e^{-\frac{\xi^2}{4}} \frac{\mathfrak{h}_{\xi}(\xi, \tau)}{\xi^2}\mathfrak{h}_{\xi}(\xi, \tau)^2 \mathfrak{h}(\xi, \tau) d \xi\, \Big|\\
&\leq C \max_{\left\{ \xi  \leq \delta(\tau)^{-\frac{1}{100}}\right\}}|\mathfrak{h}_{\xi\xi\xi}(\xi,\tau)|\, \intde \xi^{k-2} e^{-\frac{\xi^2}{4}} \mathfrak{h}_{\xi}(\xi, \tau)^2 |\mathfrak{h}(\xi, \tau)| d \xi\\
&\leq C \delta^{\frac{1}{100}}(\tau)\intde \xi^{k-2} e^{-\frac{\xi^2}{4}} \mathfrak{h}_{\xi}(\xi, \tau)^2|\mathfrak{h}(\xi, \tau)| d \xi
\end{aligned}
\end{align}
for $\tau\leq \bar\tau$. Adding these inequality together gives 
\begin{align}
\begin{aligned}
 & \intde\xi^{k-3} e^{-\frac{\xi^2}{4}} \mathfrak{h}_{\xi}(\xi, \tau)^4 d \xi \notag\\
&\leq C \delta^{\frac{1}{100}}(\tau)\intde \xi^{k-2} e^{-\frac{\xi^2}{4}} |\mathfrak{h}_{\xi}(\xi, \tau)|^2|\mathfrak{h}(\xi, \tau)| d \xi + C \exp \big(-\frac{1}{8} \delta(\tau)^{-\frac{1}{50}}\big)\\
&  \leq C \delta^{\frac{1}{100}}(\tau)\intde e^{-\frac{\xi^2}{4}} \left(\xi^{k-3}\mathfrak{h}_{\xi}(\xi, \tau)^4 + \xi^{k-1}|\mathfrak{h}(\xi, \tau)|^2\right) d \xi+ C \exp \big(-\frac{1}{8} \delta(\tau)^{-\frac{1}{50}}\big)
\end{aligned}
\end{align}

After rearranging terms we obtain
\begin{align}
\begin{aligned}
&\intde\xi^{k-3} e^{-\frac{\xi^2}{4}} \mathfrak{h}_{\xi}(\xi, \tau)^4 d \xi  \\
&\leq C \delta^{\frac{1}{100}}(\tau)\intde \xi^{k-1}e^{-\frac{\xi^2}{4}}  |\mathfrak{h}(\xi, \tau)|^2 d \xi+ C \exp \big(-\frac{1}{8} \delta(\tau)^{-\frac{1}{50}}\big)
\end{aligned}
\end{align}
Using this and  the definition of $\mathfrak{h}$,  we obtain 
\begin{align}\label{k-3g_xi4estimates}
\begin{aligned}
&\quad\intde\xi^{k-3} e^{-\frac{\xi^2}{4}}\left|g_{\xi}(\xi, \tau)\right|^4  d \xi\\
&\leq C\intde\xi^{k-3} e^{-\frac{\xi^2}{4}}\left|\mathfrak{h}_{\xi}\right|^4  d \xi+ C\intde\xi^{k-3} e^{-\frac{\xi^2}{4}}\left|\xi g_{\xi\xi}(0, \tau)\right|^4  d \xi\\
&\leq C\delta^{\frac{1}{100}}(\tau)\intde \xi^{k-1} e^{-\frac{\xi^2}{4}}\left|\mathfrak{h}\right|^2 d \xi+ C\intde\xi^{k-3} e^{-\frac{\xi^2}{4}}\left|\xi g_{\xi\xi}(0, \tau)\right|^4  d \xi \\
&+C \exp \big(-\frac{1}{8} \delta(\tau)^{-\frac{1}{50}}\big)\\
&\leq C\delta^{\frac{1}{100}}(\tau)\intde\xi^{k-1} e^{-\frac{\xi^2}{4}}\left|g\right|^2 d \xi+ C\intde\xi^{k-3} e^{-\frac{\xi^2}{4}}\left|\xi g_{\xi\xi}(0, \tau)\right|^4  d \xi\\
&+C\delta^{\frac{1}{100}}(\tau)\intde \xi^{k-1} e^{-\frac{\xi^2}{4}}\, \big | \xi^2 g_{\xi\xi}(0, \tau)\big |^2 d \xi+ C \exp \big(-\frac{1}{8} \delta(\tau)^{-\frac{1}{50}}\big)\\
&\leq C\delta^{\frac{1}{100}}(\tau)\intde\xi^{k-1} e^{-\frac{\xi^2}{4}}\left|g\right|^2 d \xi+C\delta^{\frac{1}{100}}(\tau)|g_{\xi\xi}(0, \tau)|^2 + C \exp \big(-\frac{1}{8} \delta(\tau)^{-\frac{1}{50}}\big)\\
\end{aligned}
\end{align}
where we have  bounded   $|g_{\xi\xi}(0,\tau)|^4$ by $|g_{\xi\xi}(0,\tau)|^2$. 
In order to conclude the proof of Lemma we need the following claim.\begin{claim}\label{lem error gxixi gamma} 
    \begin{equation}\label{error gxixi gamma}
    \delta(\tau)^{\frac{1}{100}}|g_{\xi\xi}(0, \tau)|^{2}\leq C\delta(\tau)^{\frac{1}{200}}\intde \xi^{k-1} e^{-\frac{\xi^2}{4}}|g(\xi, \tau)|^2 d \xi 
\end{equation}
\end{claim}
\begin{proof}
Let ${\bf{g}}({{\bf y}}, \tau)=g(|{{\bf y}}|, \tau)$. By  $k$-dimensional Gagliardo-Nirenberg interpolation inequality on $B(0, 1)\subset \mathbb{R}^k$ and similar Gaussian weight comparison on $B(0, 1)$ as in proof of \cite[Lem 3.10]{Bre-3d-noncpt} we have
\begin{equation}\label{GNinterpolation}
\begin{split}
\|D^{2}{\bf{g}}\|_{L^{\infty}(B(0, 1))}
&\leq
C_{n,m,r}
\|{\bf{g}}\|_{W^{m, r}(B(0, 1))}^{\theta}
\|{\bf{g}}\|_{L^{2}(B(0, 1))}^{1-\theta}\\&\leq 
C_{n,m,r}
\|{\bf{g}}\|_{W^{m, r}(B(0, 1))}^{\theta}
\|{\bf{g}}\|_{\mathcal{H}(B(0, \delta(\tau)^{-\frac{1}{100}})}^{1-\theta}
\end{split} 
\end{equation}  
with 
\[
\theta\;=\;
\frac{\tfrac12+\tfrac{2}{k}}
     {\tfrac{m}{k}+\tfrac12-\tfrac1r},
\qquad
\frac{2}{m}\le\theta<1, \qquad m>\frac{k}{r}+2.
\]
The above interpolation inequality,   Lemma \ref{g-C-m}, and the standard interpolation inequality,   yield  
\begin{align}\label{g-0}
    |g(0, \tau)|\leq C\left(\intde \xi^{k-1} e^{-\frac{\xi^2}{4}}|g(\xi, \tau)|^2 d \xi \right)^\frac{1}{4}
\end{align}
and by \eqref{GNinterpolation},
\begin{equation}\label{g-xixi-0}
    |g_{\xi\xi}(0, \tau)|\leq C \left(\intde \xi^{k-1} e^{-\frac{\xi^2}{4}}|g(\xi, \tau)|^2 d \xi \right)^{\frac{1-\theta}{2}}. 
\end{equation}
Then,  for $\theta>0$ sufficiently small  (i.e. $m$ large), by \eqref{delta-def}, \eqref{g-0} and \eqref{g-xixi-0}  we have
\begin{align}\label{eq-bound}
\begin{aligned}
    \delta^{1/200}&\leq |g(0, \tau)|^{1/200}+|g_{\xi\xi}(0, \tau)|^{1/200}\\
    &\leq C\left(\!\intde \xi^{k-1} e^{-\frac{\xi^2}{4}}|g(\xi, \tau)|^2 d \xi \!\right)^{\frac{1}{800}}\!\!\!+\!C\!\left(\!\intde \xi^{k-1} e^{-\frac{\xi^2}{4}}|g(\xi, \tau)|^2 d \xi \!\right)^{\frac{1-\theta}{400}}\\
    &\leq \left(\intde \xi^{k-1} e^{-\frac{\xi^2}{4}}|g(\xi, \tau)|^2 d \xi \right)^{\theta}. 
    \end{aligned}
\end{align}
Therefore,  by \eqref{g-xixi-0}, we have
\begin{align}
\delta^{\frac{1}{100}}|g_{\xi\xi}(0, \tau)|^{2}\leq C\delta^{\frac{1}{100}}\left(\intde \xi^{k-1} e^{-\frac{\xi^2}{4}}|g(\xi, \tau)|^2 d \xi \right)^{1-\theta}.
\end{align}
Moreover, by \eqref{eq-bound}, we have
\begin{align}
\begin{aligned}
\delta^{\frac{1}{100}}|g_{\xi\xi}(0, \tau)|^{2}&\leq C\, \delta^{\frac{1}{200}} \cdot \delta^{\frac{1}{200}}\left(\intde \xi^{k-1} e^{-\frac{\xi^2}{4}}|g(\xi, \tau)|^2 d \xi \right)^{1-\theta}\\
&\leq C\delta^{\frac{1}{200}}\left(\intde \xi^{k-1} e^{-\frac{\xi^2}{4}}|g(\xi, \tau)|^2 d \xi \right)^{1-\theta+\theta}\\
& = C\delta^{\frac{1}{200}}\left(\intde \xi^{k-1} e^{-\frac{\xi^2}{4}}|g(\xi, \tau)|^2 d \xi \right)
\end{aligned}
\end{align}
completing the proof of the claim. 
\end{proof} 

The desired estimate \eqref{g-k-3-integral} now readily follows by  \eqref{k-3g_xi4estimates} and Claim \ref{lem error gxixi gamma}.  This completes
the proof of the lemma. \end{proof}

\begin{lem}\label{fequation-error_8term}
We have 
\begin{align}
\begin{aligned}
&\intde \xi^{k-3}e^{-\frac{\xi^2}{4}}\left|-(n-k)(f_\xi(\xi,\tau) + 1) \frac{g_{\xi}^2(\xi,\tau)}{(g(\xi,\tau) + \sqrt{2(n-k-1)})^2}\right|^2 d\xi \\
&\leq C \delta^{\frac{1}{200}}(\tau)\intde \xi^{k-1} e^{-\frac{\xi^2}{4}}|g(\xi, \tau)|^2 d \xi + C \exp \big(-\frac{1}{8} \delta(\tau)^{-\frac{1}{50}}\big).
\end{aligned}
\end{align}
\end{lem}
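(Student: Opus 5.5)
The plan is to reduce the statement directly to Lemma \ref{k-3gxig}. First, the integrand is bounded pointwise. On $0<\xi\le \delta(\tau)^{-\frac{1}{100}}$, Lemma \ref{apriori-f} gives $-\tfrac12\le f_\xi\le 0$, so $0\le f_\xi+1\le 1$. Lemma \ref{g-C01} gives $|g|\le C\delta(\tau)^{\frac14}$, so $g+\sqrt{2(n-k-1)}\ge \sqrt{n-k-1}$ once $-\bar\tau$ is large. Together these yield
\begin{equation*}
\left|(n-k)(f_\xi+1)\frac{g_\xi^2}{(g+\sqrt{2(n-k-1)})^2}\right|^2\le \frac{(n-k)^2}{(n-k-1)^2}\, g_\xi^4 .
\end{equation*}
Hence the left-hand side of the lemma is at most $C\intde \xi^{k-3}e^{-\frac{\xi^2}{4}}g_\xi^4\,d\xi$.

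Next I apply Lemma \ref{k-3gxig}, which bounds exactly this weighted quartic integral by
\begin{equation*}
C\delta(\tau)^{\frac{1}{200}}\intde \xi^{k-1}e^{-\frac{\xi^2}{4}}g^2\,d\xi+C\exp\big(-\tfrac18\delta(\tau)^{-\frac{1}{50}}\big).
\end{equation*}
This is precisely the claimed estimate, with the same exponent $\delta^{\frac{1}{200}}$. The loss from $\delta^{\frac1{100}}$ to $\delta^{\frac1{200}}$ comes from Claim \ref{lem error gxixi gamma}, which absorbs the $|g_{\xi\xi}(0,\tau)|^2$ term.

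The only point needing care is the singular weight $\xi^{k-3}$ near $\xi=0$ when $k=2$. The bound is still integrable there because $g_\xi(0,\tau)=0$ and $|g_{\xi\xi}|\le C$ by Lemma \ref{g-C-m}, so $g_\xi^4\le C\xi^4$, which cancels the weight $\xi^{-1}$. Lemma \ref{k-3gxig} already handles this through the auxiliary function $\mathfrak h$, so nothing new is required. I expect no substantial obstacle beyond checking that the pointwise constants are uniform for $\tau\le\bar\tau$.
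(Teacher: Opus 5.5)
Your proposal is correct and is essentially the paper's proof. The paper also bounds $(f_\xi+1)/(g+\sqrt{2(n-k-1)})^2$ pointwise using Lemma \ref{apriori-f} and the smallness of $g$ from Lemma \ref{g-C01}. This reduces the left-hand side to the integral of $\xi^{k-3}e^{-\xi^2/4}g_\xi^4$ over $[0,\delta(\tau)^{-1/100}]$, and the paper then applies Lemma \ref{k-3gxig} directly.
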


\begin{proof}
This lemma follows directly by  Lemma \ref{apriori-f} and Lemma \ref{k-3gxig}.
\end{proof}

Combining all lemmas above, we obtain the following estimate for $E_2$.

\begin{prop}\label{Error-E3-estimate}
For any given $\epsilon>0$, there exists a uniform $C_\epsilon<\infty$, such that 
\begin{align}
\begin{aligned}
&\quad\intde \xi^{k-3}e^{-\frac{\xi^2}{4}}|E_{2}|^2 d\xi \\
&\leq \epsilon\intde \xi^{k-3}e^{-\frac{\xi^2}{4}}|h|^2 d\xi + C\delta^{\frac{1}{100}}(\tau)\intde \xi^{k-1}e^{-\frac{\xi^2}{4}}|g|^2 d\xi\\
& + C \exp \big(-\frac{1}{8} \delta(\tau)^{-\frac{1}{50}}\big)
\end{aligned}
\end{align}
for all $\tau\leq \bar\tau$. 
\end{prop}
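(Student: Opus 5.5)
The plan is to bound $E_2$ term by term, following the seven lines of \eqref{e3hg}, and then sum the bounds using $(a_1+\dots+a_7)^2\le 7\sum a_i^2$. Throughout, write $h=f_\xi$ and note that $\xi+\int_0^\xi h\,d\eta=f+\xi$, so that each term of \eqref{e3hg} is one of the terms of $E_{0,\xi}$ in \eqref{f-error-derivative-expansion} with $f_\xi$ replaced by $h$. Every integral is over $0\le\xi\le\delta(\tau)^{-\frac1{100}}$, where Lemma \ref{apriori-f} holds with a small parameter $\epsilon'$. We will choose $\epsilon'$ in terms of the target $\epsilon$ at the end.

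The terms depending only on $f$ are handled by the lemmas already proved.

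1. The first term, $(k-3)h_\xi[\frac{1}{f+\xi}-\frac1\xi]$, is Lemma \ref{fequation-error_1term}. It contributes $\epsilon'^2\|h\|^2_{\mathscr H}$.
2. The second term, $-(2k-4)h[\frac{h+1}{(f+\xi)^2}-\frac1{\xi^2}]$, is Lemma \ref{fequation-error_2term}.
3. The third and fourth terms, $-\frac{2hh_\xi}{f+\xi}+(2-k)\frac{h^2(h+1)}{(f+\xi)^2}$, are handled by Lemma \ref{fequation-error_34term}. The constant $(k-2)$ only changes the absolute constant, since the pointwise bound \eqref{3.13E3} controls each piece separately by $C\epsilon'|f_\xi|$.
4. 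The term $-(k-1)h_\xi\int_0^\xi\frac{h(h+1)}{(f+\eta)^2}d\eta$ is handled by Lemma \ref{fequation-error_5term}. That lemma relies on the Hardy-type estimate \eqref{decay times k-3} and produces $C\epsilon'^2\|h\|^2_{\mathscr H}+C\exp(-\frac18\delta^{-\frac1{50}})$.

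The two terms coupling $h$ and $g$ are handled as follows.

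5. The term $-(n-k)h_\xi\int_0^\xi\frac{g_\eta^2}{(g+\sqrt{2(n-k-1)})^2}d\eta$ is exactly Lemma \ref{fequation-error_7term}. It gives $C\delta^{\frac1{100}}\|g\|^2_{\mathcal H}+C\exp(-\frac18\delta^{-\frac1{50}})$.
6. The last term, $-(n-k)(h+1)\frac{g_\xi^2}{(g+\sqrt{2(n-k-1)})^2}$, is Lemma \ref{fequation-error_8term}. It gives $C\delta^{\frac1{200}}\|g\|^2_{\mathcal H}$ plus the exponential error.

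Adding the six bounds gives
\[
\|E_2\|^2_{\mathscr H}\le C\epsilon'\|h\|^2_{\mathscr H}+C\delta^{\frac1{200}}\|g\|^2_{\mathcal H}+C\exp\big(-\tfrac18\delta^{-\frac1{50}}\big).
\]
Choosing $\epsilon'=\epsilon/C$, that is, taking $\bar\tau$ negative enough depending on $\epsilon$, gives the coefficient $\epsilon$ in front of $\|h\|^2_{\mathscr H}$.

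The main obstacle is the $g$-coefficient. Lemma \ref{fequation-error_8term} only yields $\delta^{\frac1{200}}$, while the statement asks for $\delta^{\frac1{100}}$. My plan is to sharpen that single term. First, I would redo the Claim \ref{lem error gxixi gamma} step keeping the full factor. Since $|g_{\xi\xi}(0,\tau)|^4\le\delta^2|g_{\xi\xi}(0,\tau)|^2$, the $\xi g_{\xi\xi}(0)$ contribution in \eqref{k-3g_xi4estimates} is already $O(\delta^2|g_{\xi\xi}(0)|^2)$. The Gagliardo--Nirenberg bound \eqref{g-xixi-0}, together with \eqref{eq-bound}, then controls this by $\delta^{\frac1{100}}\|g\|^2_{\mathcal H}$ once $\theta$ is small enough. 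If this sharpening turns out to be delicate, the fallback is to accept $C\delta^{\frac1{200}}$ in place of $C\delta^{\frac1{100}}$. This costs nothing downstream, because every later use of the estimate only needs a coefficient that tends to $0$ as $\tau\to-\infty$.
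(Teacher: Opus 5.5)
Your term-by-term reduction is the paper's proof. The paper also just sums Lemmas \ref{fequation-error_1term}, \ref{fequation-error_2term}, \ref{fequation-error_34term}, \ref{fequation-error_5term}, \ref{fequation-error_6term}, \ref{fequation-error_7term} and \ref{fequation-error_8term}. The only cosmetic difference is that it splits $(2-k)=1-(k-1)$ and treats the second piece with Lemma \ref{fequation-error_6term}, whereas you absorb the whole $(2-k)$-term into the pointwise bound \eqref{3.13E3}; that is equally fine.

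You are right that Lemma \ref{fequation-error_8term} only yields $C\delta^{\frac1{200}}$. So this chain of lemmas, and hence the paper's one-line proof, really gives $C\delta^{\frac1{200}}$ in front of the $g$-term, and the paper does not comment on this. Your fallback is harmless: Lemma \ref{gamma-lambda-diff}, Proposition \ref{discrete-MZ} and Section \ref{rule out+} only use that this coefficient is a positive power of $\delta$.

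Your proposed sharpening, however, does not work as described. In \eqref{k-3g_xi4estimates} the loss does not come from the term $C\intde\xi^{k-3}e^{-\frac{\xi^2}{4}}|\xi g_{\xi\xi}(0,\tau)|^4\,d\xi$ that you treat. It comes from $C\delta^{\frac1{100}}|g_{\xi\xi}(0,\tau)|^2$, which is produced when $\|\mathfrak h\|^2$ is replaced by $\|g\|^2$. Since \eqref{g-xixi-0} only gives $|g_{\xi\xi}(0,\tau)|^2\le C\|g\|^{2(1-\theta)}$, converting this into a multiple of $\|g\|^2$ costs a power of $\delta$. That is exactly where Claim \ref{lem error gxixi gamma} spends $\delta^{\frac1{200}}$. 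Moreover, \eqref{eq-bound} compares the supremum $\delta(\tau)$ with quantities at the single time $\tau$, so it is not a safe tool to build on.

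If you want the exponent $\frac1{100}$, bypass $\mathfrak h$ and the Claim altogether and split at $\xi=1$.
\begin{itemize}
\item On $[1,\delta(\tau)^{-\frac1{100}}]$, use $\xi^{k-3}\le\xi^{k-1}$ and Lemma \ref{gxi4}.
\item On $[0,1]$, use $g_\xi(0,\tau)=0$, so that $|g_\xi|\le\xi\sup_{[0,1]}|g_{\xi\xi}|$. Combine this with \eqref{GNinterpolation} and Lemma \ref{g-C-m}, which give $\sup_{[0,1]}|g_{\xi\xi}|\le C\gamma_*^{\frac{1-\theta}{2}}$, where $\gamma_*:=\intde\xi^{k-1}e^{-\frac{\xi^2}{4}}g^2\,d\xi\le C\delta(\tau)^{\frac12}$ by Lemma \ref{g-C01}.
\end{itemize}
Hence $\int_0^1\xi^{k-3}e^{-\frac{\xi^2}{4}}g_\xi^4\,d\xi\le C\gamma_*^{2-2\theta}\le C\delta(\tau)^{\frac{1-2\theta}{2}}\gamma_*$, which is at most $C\delta(\tau)^{\frac1{100}}\gamma_*$ for $\theta$ small. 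Insert this into Lemma \ref{fequation-error_8term}, where $(h+1)^2\le 1$ and $g+\sqrt{2(n-k-1)}$ is bounded below. This gives the statement exactly as written.
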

\begin{proof}
Recalling that $h=f_{\xi}$,  this proposition immediately follows by sequence of lemmas \ref{fequation-error_1term}, \ref{fequation-error_2term}, \ref{fequation-error_34term}, \ref{fequation-error_5term}, \ref{fequation-error_6term}, \ref{fequation-error_7term} and \ref{fequation-error_8term}. 

\end{proof}

In addition, we also give the $L^{\infty}$ boundedness estimates of $E_2$ in compact region.
\begin{prop}
    We have for any fixed $R>0$,
    \begin{equation}
        \|E_2\|_{L^{\infty}([0, R])}\leq C(R, k)
    \end{equation}
\end{prop}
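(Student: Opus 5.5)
The plan is to bound each term of $E_2$ in \eqref{e3hg} pointwise on $[0,R]$ by a constant depending only on $R$ and $k$ (and the fixed quantity $n$). All ingredients are already available: the a priori bounds of Lemma \ref{apriori-f}, the derivative bounds for $g$ of Lemmas \ref{g-C01}, \ref{g-C-m}, \ref{g-C23}, and the smooth local convergence of $(f,g)$ to $(0,0)$ from Proposition \ref{epsilon-bubble-sheet}. Take $\tau\le\bar\tau$ so negative that $R\le \delta(\tau)^{-1/100}$; then every estimate stated on $0<\xi\le\delta(\tau)^{-1/100}$ applies on $[0,R]$.

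\textbf{Step 1: the terms with no singular denominators.} Write $f+\xi=\xi+\int_0^\xi h$. Lemma \ref{apriori-f} gives $f+\xi\ge \xi/2$ and $-\frac12\le h\le 0$. Boundedness of the curvature operator in the parabolic region gives $|f_{\xi\xi}|\le C(f+\xi)\le C R$ on $[0,R]$, so $h_\xi=f_{\xi\xi}$ is bounded. The $g$-factors $g_\xi^2/(g+\sqrt{2(n-k-1)})^2$ are bounded by Lemma \ref{g-C01}, since the denominator is at least $\sqrt{n-k-1}$. Hence the last two lines of \eqref{e3hg} are bounded by $C(R,k)$, the integral there being over an interval of length at most $R$.

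\textbf{Step 2: the terms singular at $\xi=0$.} Here one uses the boundary values \eqref{BC fg}, namely $h(0)=h_\xi(0)=0$. From $|h_\xi(\xi)|\le \xi\sup_{[0,\xi]}|h_{\xi\xi}|$ and $|h(\xi)|\le \frac{\xi^2}{2}\sup|h_{\xi\xi}|$, we get
\[
\Big|\frac{1}{f+\xi}-\frac1\xi\Big|=\frac{|f|}{\xi(f+\xi)}\le \frac{2|h|}{\xi}\le C\xi ,
\]
and likewise
\[
\Big|\frac{h+1}{(f+\xi)^2}-\frac1{\xi^2}\Big|\le \frac{C|h|}{\xi^2}\le C ,
\]
as in the proof of Lemma \ref{fequation-error_2term}. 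The remaining terms are handled the same way. The term $h h_\xi/(f+\xi)$ is controlled by $C|h_\xi|/\xi$. The term $h^2(h+1)/(f+\xi)^2$ is controlled by $Ch^2/\xi^2$. In the integral term, the integrand $h(h+1)/(f+\eta)^2$ is bounded by $C|h|/\eta^2\le C$.

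\textbf{Main obstacle: a uniform $C^3$ bound for $f$ up to $\xi=0$.} The argument above needs $h_{\xi\xi}=f_{\xi\xi\xi}$ bounded on $[0,R]$ independently of $\tau$, because the smooth convergence in Proposition \ref{epsilon-bubble-sheet} is in the Riemannian sense and does not directly control the profile $F$ up to the fixed point $z=0$ of the $SO(k)$-orbit. To obtain it, apply Perelman's derivative estimates, as in Proposition \ref{Gmorder estimates}, to the curvature component $-F_{zz}/F$. This gives $|\partial_z(F_{zz}/F)|\le C R^{3/2}\le C(-t)^{-3/2}$ in the parabolic region. Rescaled, it reads
\[
\big|\partial_\xi\big(f_{\xi\xi}/(f+\xi)\big)\big|\le C .
\]
Combined with $f_{\xi\xi}/(f+\xi)\to 0$ at the origin and the already established bounds on $f+\xi$ and $h$, this yields $|f_{\xi\xi\xi}|\le C(R)$.

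Alternatively, one can use that $F$ extends to a smooth odd function of $z$. Then $\mathbf f(\mathbf y)=F$ on $\mathbb R^k$ is smooth with uniformly bounded derivatives, by smooth convergence of the rescaled metrics to the cylinder near $\mathcal O$, and Taylor expansion gives the bound directly. With this bound in hand, Steps 1 and 2 give $\|E_2\|_{L^\infty([0,R])}\le C(R,k)$.
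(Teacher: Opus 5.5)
Your proposal is correct and follows essentially the paper's route. The paper's proof just points to the term-by-term estimates in Lemma~\ref{x-2E3Calpha}, which bound even $\xi^{-2}E_2$ pointwise using $f(0)=f_\xi(0)=f_{\xi\xi}(0)=0$, Lemma~\ref{apriori-f}, the mean value theorem and a bound on $f_{\xi\xi\xi}$ used without justification, which your Perelman-derivative argument supplies.

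However, the $C^3$ bound you call the main obstacle is not needed for the $L^\infty$ bound as stated. Lemma~\ref{apriori-f} already gives $|h_\xi|=|f_{\xi\xi}|\le\epsilon(f+\xi)\le\epsilon\xi$, hence $|h|\le\epsilon\xi^2/2$ since $h(0)=0$, and these two bounds are all that your Step 2 uses.
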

\begin{proof}
We recall the $h=f_{\xi}$ and the definition of $E_2$  in \eqref{e3hg}. 
\begin{align}\label{h-error-expansion}
    E_{2}&=(k-3) {f_{\xi \xi }(\xi ,\tau)}\Big[\frac{1}{f(\xi ,\tau)+\xi }-\frac{1}{\xi}\Big] \\
    &-(2k-4){f_{\xi}(\xi,\tau)}\Big[\frac{f_{\xi}+1}{(f(\xi,\tau)+\xi)^2}-\frac{1}{\xi^2}\Big] \notag\\
    &-\frac{2f_{\xi}f_{\xi\xi}}{f+\xi}+\frac{f^2_{\xi}(f_{\xi}+1)}{(f+\xi)^2}-(k-1)f_{\xi\xi}(\xi,\tau)\int_0^\xi \frac{f_{\eta}(\eta,\tau)(f_{\eta}(\eta,\tau)+1)}{(f(\eta,\tau)+\eta)^2} d\eta\notag\\
    &-(k-1)f_\xi(\xi,\tau)\frac{f_{\xi}(\xi,\tau)(f_{\xi}(\xi,\tau)+1)}{(f(\xi,\tau)+\xi)^2} \notag\\
    &-(n-k)f_{\xi\xi}(\xi,\tau)\int_0^\xi \frac{g_{\eta}^2(\eta,\tau)}{(g(\eta,\tau) + \sqrt{2(n-k-1)})^2} d\eta  \notag\\
    & -(n-k)(f_\xi(\xi,\tau) + 1) \frac{g_{\xi}^2(\xi,\tau)}{(g(\xi,\tau) + \sqrt{2(n-k-1)})^2} \notag.
\end{align}
The detailed estimates follow from term by term estimates as proved in Lemma \ref{x-2E3Calpha}.

\end{proof}

\subsection{Spectral ODE and spectral alternatives} 
We recall that $\mathcal{H}=L^2(\xi^{k-1}e^{-\frac{\xi^2}{4}})$ and $\mathscr{H}=L^2(\xi^{k-3}e^{-\frac{\xi^2}{4}})$. 
For the $k$-dimensional radial Ornstein-Uhlenbeck operator below  acting on $\mathcal{H}$
\begin{equation}
   \mathcal{L}_1g= g_{\xi\xi}+\frac{k-1}{\xi}g_{\xi} - \frac{1}{2}\xi g_\xi + g,
\end{equation}
the spectrum is 
\begin{equation}
\sigma(\mathcal{L}_1)=\{1-m\}_{m=0}^{+\infty},    
\end{equation} the positive and neutral mode eigenspaces
\begin{equation}
    \mathcal{H}_{+}=\textrm{span}\{1\}, \qquad   \mathcal{H}_{0}=\textrm{span}\{\xi^2-2k\}, 
\end{equation}
and for $\mathcal{L}_2$ below acting on $\mathscr{H}$
\begin{align}
\mathcal{L}_2 h &= h_{\xi\xi} - \frac{1}{2}\xi h_\xi +\frac{k-3}{\xi}h_\xi - \frac{2k-4}{\xi^2}h
\end{align} 
by \eqref{negative spectrum of L3}
the spectrum is the set of negative integers
\begin{equation}
\sigma(\mathcal{L}_2)=\{-(m+1)\}_{m=0}^{+\infty}.    
\end{equation}
Recall that we defined $\hat{g}$ as in \eqref{hat-fg}. We define similarly $\hat{h}(\xi,\tau) = h(\xi,\tau) \chi(\delta(\tau)^{\frac{1}{100}}\, \xi)$.
Let us now define
\begin{equation}
    \gamma_+(\tau)=\|\mathfrak{p}_+ \hat{g}(\cdot, \tau)\|^2_{\mathcal{H}}\quad \gamma_0(\tau)=\|\mathfrak{p}_0 \hat{g}(\cdot, \tau)\|^2_{\mathcal{H}}\quad \gamma_-(\tau)=\|\mathfrak{p}_- \hat{g}(\cdot, \tau)\|^2_{\mathcal{H}}
\end{equation}
and 
\begin{equation}
\gamma(\tau) = \|\hat{g}(\cdot,\tau)\|^2_{\mathcal H},  \qquad \lambda(\tau)=\|\hat{h}(\cdot, \tau)\|^2_{{\mathscr{H}}}
\end{equation}
Clearly, $\frac{1}{C} \gamma(\tau) \leq \gamma^{+}(\tau)+\gamma^0(\tau)+\gamma^{-}(\tau) \leq C \gamma(\tau)$. Using Lemma \ref{g-C01}, we obtain
$$
\gamma(\tau) \leq C \sup _{\left\{\xi \leq \delta(\tau)^{-\frac{1}{100}} \right\}}|g(\xi, \tau)|^2 \leq C \delta(\tau)^{\frac{1}{4}} .
$$
In particular, $\gamma(\tau) \rightarrow 0$ as $\tau \rightarrow-\infty$.
We first analyze the evolution of $\gamma^{+}(\tau), \gamma^0(\tau)$, and $\gamma^{-}(\tau)$.

\begin{lem}\label{gamma-lambda-diff}
We have
\begin{align}
\begin{aligned}
& \gamma^{+}(\tau-1) \leq c^{-1} \gamma^{+}(\tau)+C \delta(\tau)^{\frac{1}{200}} \sup _{[\tau-1, \tau]} \left(\gamma(\cdot) + \lambda(\cdot)\right) +
C \exp \left(-\tfrac{1}{64} \delta(\tau)^{-\frac{1}{50}}\right), \\
& \left|\gamma^0(\tau-1)-\gamma^0(\tau)\right| \leq C \delta(\tau)^{\frac{1}{200}} \sup _{[\tau-1, \tau]} \left(\gamma(\cdot) + \lambda(\cdot)\right)+C 
\exp \left(-\tfrac{1}{64} \delta(\tau)^{-\frac{1}{50}}\right), \\
& \gamma^{-}(\tau-1) \geq c \gamma^{-}(\tau)-C \delta(\tau)^{\frac{1}{200}} \sup _{[\tau-1, \tau]} \left(\gamma(\cdot) + \lambda(\cdot)\right)-
C \exp \left(-\tfrac{1}{64} \delta(\tau)^{-\frac{1}{50}}\right) \\
& \lambda(\tau-1) \geq c \lambda(\tau)-C \delta(\tau)^{\frac{1}{200}} \sup _{[\tau-1, \tau]} \gamma(\cdot)-C \exp \left(-\tfrac{1}{64} \delta(\tau)^{-\frac{1}{50}}\right) \\
\end{aligned}
\end{align}
where $c>1$ is a constant.
\end{lem}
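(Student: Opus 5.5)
The plan is the standard Merle--Zaag energy computation, as in \cite[Lemma 3.13]{ABDS}, done separately for the two truncated equations. From Proposition \ref{evolution of (f, g)} and Proposition \ref{evolution of h=fxi}, the truncated functions satisfy
\[
\partial_\tau \hat g=\mathcal L_1\hat g+\hat E_1,\qquad \partial_\tau\hat h=\mathcal L_2\hat h+\hat E_2,
\]
where
\[
\hat E_i=E_i\chi+\text{(commutator terms)}.
\]
The commutator terms are $-2\delta^{\frac1{100}}\chi' g_\xi-\delta^{\frac1{50}}\chi'' g-\delta^{\frac1{100}}\chi'\big(\tfrac{k-1}{\xi}-\tfrac{\xi}{2}\big)g$. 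There is also the term from $\partial_\tau\chi(\delta(\tau)^{\frac1{100}}\xi)=\tfrac1{100}\delta^{-\frac{99}{100}}\delta'\,\xi\chi'$, and analogously for $h$. All of these are supported in $\{\tfrac12\delta^{-\frac1{100}}\le\xi\le\delta^{-\frac1{100}}\}$. To bound the $\delta'$ term I use $0\le\delta'\le1$ from Lemma \ref{derivative-delta}, together with $s\chi'(s)\le0$, which gives it a definite sign. Using the pointwise bounds $|g|+|g_\xi|\le C\delta^{1/4}$ (Lemma \ref{g-C01}) and the bounds $|h|\le\frac12$, $|h_\xi|\le C$ on that annulus (Lemma \ref{apriori-f}), the Gaussian weight makes their $\mathcal H$ and $\mathscr H$ norms at most $C\exp(-\tfrac1{32}\delta^{-\frac1{50}})$.

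Combining these cutoff bounds with Proposition \ref{Error-E2-estimate} and Proposition \ref{Error-E3-estimate}, and choosing $\epsilon$ small relative to the spectral gap, gives
\begin{align*}
\|\hat E_1\|_{\mathcal H}^2&\le C\delta^{\frac1{100}}(\gamma+\lambda)+Ce^{-\frac1{32}\delta^{-1/50}},\\
\|\hat E_2\|_{\mathscr H}^2&\le \epsilon\lambda+C\delta^{\frac1{100}}\gamma+Ce^{-\frac1{32}\delta^{-1/50}}.
\end{align*}
One point needs care: the error propositions are stated with the integrals cut off sharply at $\delta^{-\frac1{100}}$, not with $\chi$. The difference between the two is again exponentially small, by the same pointwise bounds.

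Next come the mode projections. Since $\mathfrak p_+$ and $\mathfrak p_0$ commute with $\mathcal L_1$, we get $\frac{d}{d\tau}\gamma^+ \ge 2\gamma^+-2\sqrt{\gamma^+}\|\hat E_1\|$ and $|\frac{d}{d\tau}\gamma^0|\le 2\sqrt{\gamma^0}\|\hat E_1\|$. Using $\mathcal L_1\le-1$ on the negative modes gives $\frac{d}{d\tau}\gamma^-\le-2\gamma^-+2\sqrt{\gamma^-}\|\hat E_1\|$. Self-adjointness of $\mathcal L_2$ on $\mathscr H$ with $\sigma(\mathcal L_2)\le -1$ gives $\frac{d}{d\tau}\lambda\le-2\lambda+2\sqrt\lambda\,\|\hat E_2\|$. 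The boundary terms at $\xi=0$ vanish because $h(0)=0$ and $g_\xi(0)=0$, which makes the integration by parts legitimate. Now apply Young's inequality $2ab\le \eta a^2+\eta^{-1}b^2$ with $\eta=\delta^{\frac1{200}}$. This produces, for instance,
\[
\tfrac{d}{d\tau}\gamma^+\ge(2-\delta^{\frac1{200}})\gamma^+-C\delta^{\frac1{200}}(\gamma+\lambda)-Ce^{-\frac1{32}\delta^{-1/50}}.
\]
For $\lambda$ I use $\eta=1$ and absorb $\epsilon\lambda$, which leaves
\[
\tfrac{d}{d\tau}\lambda\le-\lambda+C\delta^{\frac1{100}}\gamma+Ce^{-\frac1{32}\delta^{-1/50}}.
\]

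Finally, integrate each inequality over $[\tau-1,\tau]$. Since $\delta$ is monotone, $\delta(\cdot)\le\delta(\tau)$ on that interval, so the error terms can be bounded by their values at $\tau$ together with the suprema of $\gamma$ and $\lambda$. Multiplying by the integrating factors $e^{\pm(2-o(1))s}$ or $e^{s}$ then yields the four inequalities with some constant $c>1$, for instance $c=e$. The exponent $\tfrac1{64}$ is weaker than the $\tfrac1{32}$ obtained above, so it leaves room for the square roots and Young constants.

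The main obstacle is the $\lambda$ inequality. Its error bound $C\delta^{\frac1{200}}\sup\gamma$ contains no $\lambda$ term, so the $\epsilon\lambda$ contribution of $\hat E_2$ must be absorbed into the coercivity of $\mathcal L_2$ rather than carried along. A second difficulty is checking that the $\mathcal L_2$ cutoff commutator, which includes the singular coefficients $\frac{k-3}{\xi}$ and $\frac{2k-4}{\xi^2}$, is harmless. It is, because $\chi'$ is supported away from $\xi=0$.
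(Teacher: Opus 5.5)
Your proposal is correct and follows essentially the same route as the paper. You bound $\|\hat g_\tau-\mathcal L_1\hat g\|_{\mathcal H}$ and $\|\hat h_\tau-\mathcal L_2\hat h\|_{\mathscr H}$ via Propositions \ref{Error-E2-estimate} and \ref{Error-E3-estimate} plus exponentially small cutoff errors, derive the mode-wise differential inequalities (absorbing $\epsilon\lambda$ into the spectral gap of $\mathcal L_2$), and integrate over $[\tau-1,\tau]$.

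The one bookkeeping difference is in how the moving cutoff is handled. You differentiate $\chi(\delta(\tau)^{\frac1{100}}\xi)$ directly, where the $\delta^{-1}$-sized term is killed by $0\le\delta'\le 1$ and the Gaussian weight. The paper instead compares at the end with $\tilde g,\tilde h$, i.e.\ the cutoff $\chi(\delta(\tau)^{\frac1{100}}\xi)$ applied at time $\tau-1$. Both work.

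Two small slips, neither of which affects the argument. For $k=2$, the vanishing of the boundary term at $\xi=0$ needs $h_\xi(0,\tau)=f_{\xi\xi}(0,\tau)=0$, not just $h(0,\tau)=0$. Also, on the annulus one only has $|h_\xi|\le\epsilon\xi$ rather than $|h_\xi|\le C$, which is equally harmless against the Gaussian weight.
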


\begin{proof}
We note that 
\begin{align}
    \|\hat{g}(\cdot, \tau')\|^2_{\mathcal{H}} = \int_{0}^{+\infty}  \xi^{k-1}e^{-\frac{\xi^2}{4}}|\hat{g}(\xi, \tau')|^2 d \xi \leq \sup _{[\tau-1, \tau]} \gamma(\cdot)
\end{align}
and 
\begin{align}
    \|\hat{h}(\cdot, \tau')\|^2_{{\mathscr{H}}} = \int_{0}^{+\infty}  \xi^{k-3}e^{-\frac{\xi^2}{4}}|\hat{f_\xi}(\xi, \tau')|^2 d \xi \leq \sup _{[\tau-1, \tau]} \lambda(\cdot)
\end{align}
for $\tau' \in[\tau-1,\tau]$. Using Proposition \ref{Error-E2-estimate}, Lemma \ref{g-C01}, and Lemma \ref{g-C23}, we obtain
\begin{align}
\begin{aligned}
& \quad\int_{0}^{+\infty} \xi^{k-1} e^{-\frac{\xi^2}{4}}\left|\hat{g}_\tau(\xi, \tau') - \mathcal{L}_1\hat{g}(\xi, \tau')\right|^2 d \xi \\
& \leq C \delta^{\frac{1}{100}}(\tau')\sup _{[\tau-1, \tau]} \left(\gamma(\cdot) + \lambda(\cdot)\right)+C \exp \left(-\tfrac{1}{32} \delta(\tau')^{-\frac{1}{50}}\right)
\end{aligned}
\end{align}
for $\tau' \in[\tau-1,\tau]$. On the other hand, by Proposition \ref{Error-E3-estimate} and Lemma \ref{apriori-f}. we obtain 
\begin{align}
\begin{aligned}\label{hatE_2 error rough}
& \quad\int_{0}^{+\infty} \xi^{k-3} e^{-\frac{\xi^2}{4}}\left|\hat{h}_\tau(\xi, \tau') - \mathcal{L}_2\hat{h}(\xi, \tau')\right|^2 d \xi \\
& \leq \epsilon \lambda(\cdot) +  C \delta^{\frac{1}{100}}(\tau')\sup _{[\tau-1, \tau]} \gamma(\cdot) +C \exp \left(-\tfrac{1}{32} \delta(\tau')^{-\frac{1}{50}}\right).
\end{aligned}
\end{align}
Consequently,
\begin{align}\label{eq-proj-g}
\begin{aligned}
\frac{d}{d \tau'}\left(\|\mathfrak{p}_+ \hat{g}(\cdot, \tau')\|^2_{\mathcal{H}}\right) & \geq \|\mathfrak{p}_+ \hat{g}(\cdot, \tau')\|^2_{\mathcal{H}} - C \delta(\tau')^{\frac{1}{200}}\sup _{[\tau-1, \tau]} \left(\gamma(\cdot) + \lambda(\cdot)\right)-C \exp \left(-\tfrac{1}{64} \delta(\tau')^{-\frac{1}{50}}\right) \\
\left|\frac{d}{d \tau'}\left(\|\mathfrak{p}_0 \hat{g}(\cdot, \tau')\|^2_{\mathcal{H}}\right)\right| & \leq  C \delta(\tau')^{\frac{1}{200}}\sup _{[\tau-1, \tau]} \left(\gamma(\cdot) + \lambda(\cdot)\right)+C \exp \left(-\tfrac{1}{64} \delta(\tau')^{-\frac{1}{50}}\right) \\
\frac{d}{d \tau'}\left(\|\mathfrak{p}_- \hat{g}(\cdot, \tau')\|^2_{\mathcal{H}}\right) & \leq-\|\mathfrak{p}_- \hat{g}(\cdot, \tau')\|^2_{\mathcal{H}}  +C \delta(\tau')^{\frac{1}{200}}\sup _{[\tau-1, \tau]} \left(\gamma(\cdot) + \lambda(\cdot)\right)+C \exp \left(-\tfrac{1}{64} \delta(\tau')^{-\frac{1}{50}}\right)
\end{aligned}
\end{align}
and by $\sigma(\mathcal{L}_2)=\{-(m+1)\}_{m=0}^{+\infty}$ in \eqref{negative spectrum of L3}, we have
\begin{align}\label{eq-proj-h}
\begin{aligned}
\frac{d}{d \tau'}\left(\|\hat{h}(\cdot, \tau')\|^2_{{\mathscr{H}}}\right) & \leq-(1-\epsilon)\|\hat{h}(\cdot, \tau')\|^2_{{\mathscr{H}}}  +C \delta(\tau')^{\frac{1}{200}}\sup _{[\tau-1, \tau]} \gamma(\cdot)+C \exp \left(-\tfrac{1}{64} \delta(\tau')^{-\frac{1}{50}}\right)
\end{aligned}
\end{align}
for $\tau' \in[\tau-1,\tau]$. We now denote  
\begin{align}
\tilde{h}(\xi,\tau) = h(\xi,\tau)\chi(\delta(\tau+1)^{\frac{1}{100}}\xi),\quad\tilde{g}(\xi,\tau) = g(\xi,\tau)\chi(\delta(\tau+1)^{\frac{1}{100}}\xi),
\end{align}
where $\chi$ is given by \eqref{cutoffchidef}.
Integrating \eqref{eq-proj-g} and \eqref{eq-proj-h} over the interval $[\tau-1, \tau]$ and using the monotonicity of $\delta(\tau)\geq \delta(\tau')$ for $\tau'\in [\tau-1, \tau]$, we have
\begin{align}
\begin{aligned}
&\|\mathfrak{p}_+ \hat{g}(\cdot, \tau-1)\|^2_{\mathcal{H}}\leq e^{-1} \|\mathfrak{p}_+ \hat{g}(\cdot, \tau)\|^2_{\mathcal{H}} + C \delta(\tau)^{\frac{1}{200}}\sup _{[\tau-1, \tau]} \left(\gamma(\cdot) + \lambda(\cdot)\right)+C \exp \left(-\tfrac{1}{64} \delta(\tau)^{-\frac{1}{50}}\right)\\
&\left|\|\mathfrak{p}_0 \hat{g}(\cdot, \tau-1)\|^2_{\mathcal{H}} - \|\mathfrak{p}_0 \hat{g}(\cdot, \tau)\|^2_{\mathcal{H}}\right|\leq C \delta(\tau)^{\frac{1}{200}}\sup _{[\tau-1, \tau]} \left(\gamma(\cdot) + \lambda(\cdot)\right)+C \exp \left(-\tfrac{1}{64} \delta(\tau)^{-\frac{1}{50}}\right)\\
&\|\mathfrak{p}_- \hat{g}(\cdot, \tau-1)\|^2_{\mathcal{H}} \geq e \, \|\mathfrak{p}_- \hat{g}(\cdot, \tau)\|^2_{\mathcal{H}} - C \delta(\tau)^{\frac{1}{200}}\sup _{[\tau-1, \tau]} \left(\gamma(\cdot) + \lambda(\cdot)\right)-C \exp \left(-\tfrac{1}{64} \delta(\tau)^{-\frac{1}{50}}\right)
\end{aligned}
\end{align}
and
\begin{align}
\|\hat{h}(\cdot, \tau-1)\|^2_{\mathscr{H}}\geq e^{1-\epsilon}\|\hat{h}(\cdot, \tau)\|^2_{\mathscr{H}} - C \delta(\tau)^{\frac{1}{200}}\sup _{[\tau-1, \tau]}  \lambda(\cdot)-C \exp \left(-\tfrac{1}{64} \delta(\tau)^{-\frac{1}{50}}\right).
\end{align}
Using Lemma \ref{g-C01} and Lemma \ref{apriori-f} and the monotonicity of $\delta(\tau)$, we obtain 
\begin{align}
\begin{aligned}
&\quad  \|\tilde{g}(\cdot, \tau-1)-  \hat{g}(\cdot, \tau-1)\|^2_{\mathcal{H}}\\
& \leq \int_{\frac{1}{2} \delta(\tau)^{-\frac{1}{100}}}^{\delta(\tau-1)^{-\frac{1}{100}}} \xi^{k-1} e^{-\frac{\xi^2}{4}}|g(\xi, \tau-1)|^2 d \xi \leq C \exp \left(-\tfrac{1}{32} \delta(\tau)^{-\frac{1}{50}}\right)
\end{aligned}
\end{align}
and
\begin{align}
\begin{aligned}
&\quad \|\tilde{h}(\cdot, \tau-1)- \hat{h}(\cdot, \tau-1)\|^2_{\mathscr{H}}\\
& \leq \int_{\frac{1}{2} \delta(\tau)^{-\frac{1}{100}}}^{\delta(\tau-1)^{-\frac{1}{100}}} \xi^{k-3} e^{-\frac{\xi^2}{4}}|h(\xi, \tau-1)|^2 d \xi \leq C \exp \left(-\tfrac{1}{32} \delta(\tau)^{-\frac{1}{50}}\right).
\end{aligned}
\end{align}

Putting these facts together, we conclude that
\begin{align}
\begin{aligned}
&\|\mathfrak{p}_+ \tilde{g}(\cdot, \tau-1)\|^2_{\mathcal{H}}\leq e^{-1} \|\mathfrak{p}_+ \hat{g}(\cdot, \tau)\|^2_{\mathcal{H}} + C \delta(\tau)^{\frac{1}{200}}\sup _{[\tau-1, \tau]} \left(\gamma(\cdot) + \lambda(\cdot)\right)+C \exp \left(-\tfrac{1}{64} \delta(\tau)^{-\frac{1}{50}}\right)\\
&\left|\|\mathfrak{p}_0 \tilde{g}(\cdot, \tau-1)\|^2_{\mathcal{H}} - \|\mathfrak{p}_0 \hat{g}(\cdot, \tau)\|^2_{\mathcal{H}}\right|\leq C \delta(\tau)^{\frac{1}{200}}\sup _{[\tau-1, \tau]} \left(\gamma(\cdot) + \lambda(\cdot)\right)+C \exp \left(-\tfrac{1}{64} \delta(\tau)^{-\frac{1}{50}}\right)\\
&\|\mathfrak{p}_- \tilde{g}(\cdot, \tau-1)\|^2_{\mathcal{H}}\geq e \, \|\mathfrak{p}_- \hat{g}(\cdot, \tau)\|^2_{\mathcal{H}} - C \delta(\tau)^{\frac{1}{200}}\sup _{[\tau-1, \tau]} \left(\gamma(\cdot) + \lambda(\cdot)\right)-C \exp \left(-\tfrac{1}{64} \delta(\tau)^{-\frac{1}{50}}\right)
\end{aligned}
\end{align}
and
\begin{align}
\|\tilde{h}(\cdot, \tau-1)\|^2_{\mathscr{H}}\geq e^{1-\epsilon}\|\hat{h}(\cdot, \tau)\|^2_{\mathscr{H}} - C \delta(\tau)^{\frac{1}{200}}\sup _{[\tau-1, \tau]}  \lambda(\cdot)-C \exp \left(-\tfrac{1}{64} \delta(\tau)^{-\frac{1}{50}}\right).
\end{align}
Thus, choosing some constant $1<c<e^{1-\epsilon}$, the assertion follows. This completes the proof of Lemma \ref{gamma-lambda-diff}.

\end{proof}

Now we define
\begin{align}\label{Lambda+Gamma defs}
\Lambda({\tau})=\sup_{\tau'\leq \tau}\lambda(\tau')\quad &\Gamma(\tau)=\sup_{\tau'\leq \tau}\gamma(\tau')\\
 \Gamma_{+}(\tau) &=\sup_{\tau' \leq \tau}\gamma_{+}(\tau')\\
        \Gamma_{0}(\tau)&=\sup_{\tau'\leq \tau}\gamma_{0}(\tau')\\
         \Gamma_{-}(\tau)&=\sup_{\tau'\leq \tau}\gamma_{-}(\tau')
\end{align}
Clearly, $\frac{1}{C}\,  \Gamma(\tau) \leq \Gamma^{+}(\tau)+\Gamma^0(\tau)+\Gamma^{-}(\tau) \leq C\,  \Gamma(\tau)$. It follows from Lemma \ref{g-C01} that $\gamma(\tau) \leq C \delta(\tau)^{\frac{1}{4}}$. Putting these facts together gives $\Gamma(\tau) \leq C \delta(\tau)^{\frac{1}{4}}$. In particular, $\Gamma(\tau) \rightarrow 0$ as $\tau \rightarrow-\infty$. Using Lemma \ref{gamma-lambda-diff}, we obtain
\begin{align}\label{diff-mode}
\begin{aligned}
& \Gamma^{+}(\tau-1) \leq c^{-1} \Gamma^{+}(\tau)+C \delta(\tau)^{\frac{1}{200}}\left( \Gamma(\tau)+\Lambda(\tau)\right)+C \exp \left(-\tfrac{1}{64} \delta(\tau)^{-\frac{1}{50}}\right), \\
& \left|\Gamma^0(\tau-1)-\Gamma^0(\tau)\right| \leq C \delta(\tau)^{\frac{1}{200}} \left( \Gamma(\tau)+\Lambda(\tau)\right)+C \exp \left(-\tfrac{1}{64} \delta(\tau)^{-\frac{1}{50}}\right), \\
& \Gamma^{-}(\tau-1) \geq c \Gamma^{-}(\tau)-C \delta(\tau)^{\frac{1}{200}} \left( \Gamma(\tau)+\Lambda(\tau)\right)-C \exp \left(-\tfrac{1}{64} \delta(\tau)^{-\frac{1}{50}}\right) 
\end{aligned}
\end{align}
and
\begin{align}
\Lambda(\tau-1) \geq c \Lambda(\tau)-C \delta(\tau)^{\frac{1}{200}} \Gamma(\tau)-C \exp \left(-\tfrac{1}{64} \delta(\tau)^{-\frac{1}{50}}\right) .
\end{align}
It follows from standard interpolation inequalities,  \eqref{g-0} and \eqref{g-xixi-0} , that we have $|g(0, \tau)| \leq C \gamma(\tau)^{\frac{1}{4}}$ and $|g_{\xi\xi}(0, \tau)| \leq C \gamma(\tau)^{\frac{1}{4}}$, hence $\sup _{\tau' \leq \tau}|g(0, \tau')| \leq C \Gamma(\tau)^{\frac{1}{4}}$ and $\sup _{\tau' \leq \tau}|g_{\xi\xi}(0, \tau')| 
\leq C \Gamma(\tau)^{\frac{1}{4}}$.  Hence, 
\begin{align}
\delta(\tau)=\sup _{\tau' \leq \tau}\left(|g(0, \tau')|+|g_{\xi\xi}(0, \tau')|\right) \leq C \, \Gamma(\tau)^{\frac{1}{4}}.
\end{align}
Consequently, $\exp \left(-\tfrac{1}{64} \delta(\tau)^{-\frac{1}{50}}\right) \leq C \, \delta(\tau)^5 \leq C\,  \delta(\tau) \Gamma(\tau)$. Putting these facts together, we conclude that
$$
\begin{aligned}
& \Gamma^{+}(\tau-1) \leq c^{-1} \Gamma^{+}(\tau)+C\, \delta(\tau)^{\frac{1}{200}} \left(\Gamma(\tau)+\Lambda(\tau)\right), \\
& \left|\Gamma^0(\tau-1)-\Gamma^0(\tau)\right| \leq C\, \delta(\tau)^{\frac{1}{200}} \left(\Gamma(\tau)+\Lambda(\tau)\right), \\
& \Gamma^{-}(\tau-1) \geq c \Gamma^{-}(\tau)-C\, \delta(\tau)^{\frac{1}{200}} \left(\Gamma(\tau)+\Lambda(\tau)\right),\\
& \Lambda(\tau-1) \geq c \Lambda(\tau)-C \, \delta(\tau)^{\frac{1}{200}} \Gamma(\tau).
\end{aligned}
$$

\begin{prop}\label{discrete-MZ}
We either have $\Gamma^0(\tau)+\Gamma^{-}(\tau) + \Lambda( \tau) \leq o(1) \Gamma^{+}(\tau)$, or $\Gamma^{+}(\tau)+ \Gamma^{-}(\tau) + \Lambda(\tau) 
\leq o(1) \Gamma^0(\tau)$,  as $\tau \rightarrow-\infty$.

\end{prop}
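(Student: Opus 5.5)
The plan is to reduce the four discrete inequalities stated just before the proposition to the classical three-mode Merle--Zaag alternative, as in \cite{ABDS, zaag2001liouville}. Throughout, write $\eta(\tau):=C\,\delta(\tau)^{\frac{1}{200}}$, which tends to $0$ as $\tau\to-\infty$ and is nondecreasing in $\tau$. We also use that $\Gamma$, $\Gamma^{\pm}$, $\Gamma^0$ and $\Lambda$ are nondecreasing, since they are running suprema, and that all of them tend to $0$ as $\tau\to-\infty$.

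\emph{Step 1: $h$ is slaved to $g$.} Rewrite the $\Lambda$-inequality as $\Lambda(\tau)\le c^{-1}\Lambda(\tau-1)+c^{-1}\eta(\tau)\Gamma(\tau)$ and iterate it $j$ times. By monotonicity, $\eta(\tau-i)\Gamma(\tau-i)\le \eta(\tau)\Gamma(\tau)$, so
\[
\Lambda(\tau)\le c^{-j}\Lambda(\tau-j)+\eta(\tau)\Gamma(\tau)\sum_{i\ge1}c^{-i}.
\]
Since $c>1$ and $\Lambda$ is bounded, letting $j\to\infty$ gives $\Lambda(\tau)\le C\eta(\tau)\Gamma(\tau)=o(1)\Gamma(\tau)$. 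Substituting this into the three $\Gamma$-inequalities removes $\Lambda$ entirely, at the cost of enlarging the constant in $\eta$. We are left with a closed system in $\Gamma^{+},\Gamma^{0},\Gamma^{-}$ with error $\eta(\tau)\Gamma(\tau)$, where $\Gamma\le C(\Gamma^++\Gamma^0+\Gamma^-)$.

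\emph{Step 2: the stable mode is negligible.} Iterate the $\Gamma^-$-inequality in exactly the same way to get $\Gamma^-(\tau)\le C\eta(\tau)\Gamma(\tau)$. Because $\Gamma\le C(\Gamma^++\Gamma^0+\Gamma^-)$, we can absorb the $\Gamma^-$ term for $-\tau$ large, which gives $\Gamma^-+\Lambda\le o(1)(\Gamma^++\Gamma^0)$. After this step the two remaining quantities satisfy
\[
\Gamma^+(\tau-1)\le c^{-1}\Gamma^+(\tau)+\eta(\tau)\big(\Gamma^+(\tau)+\Gamma^0(\tau)\big),\qquad
|\Gamma^0(\tau-1)-\Gamma^0(\tau)|\le \eta(\tau)\big(\Gamma^+(\tau)+\Gamma^0(\tau)\big).
\]

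\emph{Step 3: the dichotomy between $\Gamma^+$ and $\Gamma^0$.} I expect this to be the main step and would follow the discrete Merle--Zaag lemma of \cite[Section 4]{ABDS}. Fix a small $\varepsilon>0$. The $\Gamma^+$ inequality shows that, going backward in time, $\Gamma^+$ contracts by a factor of almost $c^{-1}$, while $\Gamma^0$ is almost constant.

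First I would show that the cone $\{\Gamma^+\le\varepsilon\Gamma^0\}$ is preserved going backward for $-\tau$ large. If it holds at $\tau$, then
\[
\Gamma^+(\tau-1)\le (c^{-1}+\varepsilon^{-1}\eta)\,\varepsilon\Gamma^0(\tau)\le\varepsilon\Gamma^0(\tau-1),
\]
using $\Gamma^0(\tau-1)\ge(1-2\eta\varepsilon^{-1})\Gamma^0(\tau)$ and $c^{-1}<1$. Consequently, if the cone holds at arbitrarily negative times, it holds on a whole half-line $(-\infty,\tau_\varepsilon]$. Letting $\varepsilon\to0$ along a sequence then yields the neutral alternative $\Gamma^++\Gamma^-+\Lambda\le o(1)\Gamma^0$.

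Otherwise, $\Gamma^+>\varepsilon\Gamma^0$ for all sufficiently negative $\tau$. In that case the $\Gamma^0$ inequality gives $|\Gamma^0(\tau-1)-\Gamma^0(\tau)|\le C\varepsilon^{-1}\eta\,\Gamma^+(\tau)$. Summing these increments backward against the geometric decay $\Gamma^+(\tau-i)\lesssim c^{-i}\Gamma^+(\tau)$, and using $\Gamma^0(-\infty)=0$, gives
\[
\Gamma^0(\tau)\le C\varepsilon^{-1}\eta(\tau)\Gamma^+(\tau)=o(1)\Gamma^+(\tau).
\]
Combined with Step 2, this is the unstable alternative.

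The delicate point is making the choice of $\varepsilon$ compatible with $\eta(\tau)\to0$ so that the two alternatives genuinely exhaust all cases. Following ABDS, I would handle this by choosing $\varepsilon=\varepsilon(\tau)\to0$ slowly, for instance $\varepsilon=\eta^{1/2}$, so that $\varepsilon^{-1}\eta\to0$ in both branches.
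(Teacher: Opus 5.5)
Your architecture is sound, and Steps 1--2 and your Case~B summation are correct: slave $\Lambda$ and $\Gamma^-$ to $\Gamma$, then split into a cone dichotomy between $\Gamma^+$ and $\Gamma^0$. The paper gets Steps 1--2 in one line from monotonicity, $\Lambda(\tau)\ge\Lambda(\tau-1)\ge c\Lambda(\tau)-o(1)\Gamma(\tau)$. It runs Step 3 through the sets $I,J$ of cone apertures $\alpha$, with the claim that $\alpha\in J$ implies $c^{1/2}\alpha\in J$ and $c^{-1/2}\alpha\in I$, so that $J=\emptyset$ or $I=(0,\infty)$.

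The gap is the sentence ``if the cone holds at arbitrarily negative times, it holds on a whole half-line.'' Your preservation step only goes from $\tau$ to $\tau-1$. From a time $\tau_1$ it therefore gives the cone only on $\tau_1-\mathbb{N}$, and a set closed under $\tau\mapsto\tau-1$ need not contain a half-line. Using more starting times $\tau_j\to-\infty$ does not help, since that only pushes further into the past. As written, the cone and its complement could both occur at arbitrarily negative times, so your two cases are not exhaustive in the form you need.

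The missing ingredient is interpolation between integer translates, using that $\Gamma^+$ and $\Gamma^0$ are running suprema. Suppose $\Gamma^+\le\varepsilon\Gamma^0$ at $\tau_1-j$, below the threshold $\tau_*(\varepsilon)$ where $\eta\ll\varepsilon$. Then for $\tau'\in[\tau_1-j-1,\tau_1-j]$,
\[
\Gamma^+(\tau')\le\Gamma^+(\tau_1-j)\le\varepsilon\,\Gamma^0(\tau_1-j)\le\varepsilon\,(1-2\eta)^{-1}\Gamma^0(\tau_1-j-1)\le 2\varepsilon\,\Gamma^0(\tau').
\]
Here the third inequality is the neutral-mode inequality at $\tau_1-j$ combined with the cone there. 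This is exactly the content of the paper's ``$c^{-1/2}\alpha\in I$''; the loss of a constant factor in the aperture is harmless. With it, a single time $\tau_1\le\tau_*(\varepsilon)$ in the cone already gives $\Gamma^+\le 2\varepsilon\Gamma^0$ on $(-\infty,\tau_1]$.

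Separately, drop your last paragraph, because a fixed $\varepsilon$ already works. Either for some $\varepsilon_0$ the cone fails at all sufficiently negative times, and your Case~B gives $\Gamma^0\le C\varepsilon_0^{-1}\eta\,\Gamma^+=o(1)\Gamma^+$. Or for every $\varepsilon$ it holds at some $\tau_1\le\tau_*(\varepsilon)$, and the interpolation above gives the neutral alternative.

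A $\tau$-dependent aperture $\varepsilon(\tau)=\eta(\tau)^{1/2}$ would actually break the backward preservation. The target cone at $\tau-1$ would be narrower, and you have no lower bound on $\eta(\tau-1)/\eta(\tau)$. The paper's argument, following ABDS, works with fixed apertures $\alpha$ for this reason.
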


\begin{proof}
 By definition, the function $\Gamma^{-}(\cdot)$ and $\Lambda(\cdot)$ are monotone increasing. This implies 
\begin{align}
\Gamma^{-}(\bar{\tau}) \geq \Gamma^{-}(\bar{\tau}-1) \geq c \Gamma^{-}(\bar{\tau})-o(1) \left(\Gamma(\bar{\tau})+\Lambda(\bar \tau)\right)
\end{align}
and 
\begin{align}
\Lambda(\bar \tau) \geq \Lambda(\bar \tau-1) \geq c \Lambda(\bar \tau)-o(1) \Gamma(\bar{\tau}).
\end{align}
Thus, $\Lambda(\bar{\tau}) \leq o(1) \Gamma(\bar{\tau})$ and $\Gamma^{-}(\bar{\tau}) \leq o(1) \Gamma(\bar{\tau})$.

Let $I$ denote the set of all positive real numbers $\alpha$ with the property that the set $\left\{\bar{\tau}: \Gamma^0(\bar{\tau})<\alpha \Gamma^{+}(\bar{\tau})\right\}$ is bounded. Moreover, let $J$ denote the set of all positive real numbers $\alpha$ with the property that the set $\left\{\bar{\tau}: \Gamma^0(\bar{\tau}) \geq\right. \left.\alpha \Gamma^{+}(\bar{\tau})\right\}$ is unbounded. Note that $I \subset J$. Then, we claim that
\begin{claim}
If $\alpha \in J$, then $c^{\frac{1}{2}} \alpha \in J$ and $c^{-\frac{1}{2}} \alpha \in I$.    
\end{claim}
\begin{proof}[Proof of claim]
The proof of this claim is  the same as the computation and proof in \cite[Proposition 3.23 ]{ABDS}.
\end{proof}
Using the above claim, we conclude that either $J=\emptyset$ or $I=(0, \infty)$. If $I=(0, \infty)$, it follows that $\Gamma^{+}(\bar{\tau}) \leq o(1) \Gamma^0(\bar{\tau})$ as $\bar{\tau} \rightarrow-\infty$. On the other hand, if $J=\emptyset$, then $\Gamma^0(\bar{\tau}) \leq o(1) \Gamma^{+}(\bar{\tau})$ as $\bar{\tau} \rightarrow-\infty$. This completes the proof of the proposition.
\end{proof}

\section{Ruling out the dominance
of the unstable  modes of $g$}\label{rule out+}

In this section, we will show that the unstable (positive)  modes of  $\mathcal{L}_1$ (defined in \eqref{L2definition}) cannot dominate. Recall that $r_{\max }(t) \geq \sqrt{-2(n-k-1) t}$ for all $t$, where $r_{\max}(t)=G(0, t)=\sup_z G(z, t)$ as defined in \eqref{maximal-radiusrmax}. 

\begin{defn}
Given $0<\alpha<1$, we say that condition $\left(\star_\alpha\right)$ holds if 
\[ \left(\star_\alpha\right) \qquad  \qquad r_{\max }(t) \leq \sqrt{-2(n-k-1) t}\left(1+O(-t)^{-\alpha}\right).\]
\end{defn}

Then, the following estimates hold, and the argument is similar to those in \cite[Section 4]{ABDS} besides some technical estimates for new error term in our parabolic system situation.
\begin{prop}\label{star-Gz}
Suppose that $\left(\star_\alpha\right)$ holds for some $0<\alpha<1$. Then,  $G_z(z, t)^2 \leq C\, (-t)^{-\frac{\alpha}{1-\alpha}}$ holds, 
whenever $G(z, t) \geq \sqrt{-t}$ and $-t$ is sufficiently  large. 
\end{prop}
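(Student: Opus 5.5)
Under $(\star_\alpha)$ we have $r_{\max}(t)/\sqrt{-2(n-k-1)t}\le 1+C(-t)^{-\alpha}$ for all sufficiently negative $t$. We will feed this into Proposition~\ref{G-gradient} (Brendle's barrier) with a time-dependent choice of the parameter $a$, exactly as in \cite[Proposition 4.2]{ABDS}. The $F$-dependent term $-2(k-1)U^2F_r^2/F^2$ in \eqref{u-equation} is nonpositive. So $U=G_z^2$ is a subsolution of the same scalar operator used for the rotationally symmetric case, and the barrier comparison of Proposition~\ref{G-gradient} applies verbatim. The only new input is therefore to check that the hypotheses of Proposition~\ref{G-gradient} can be met with $a$ as large as a power of $(-t)$.

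\emph{Choice of $a$ at a fixed time.} Fix $\bar t$ with $-\bar t$ large and set
\[
a=a(\bar t):=c_0(-\bar t)^{\frac{\alpha}{2}},
\]
with $c_0$ small depending on the constant in $(\star_\alpha)$. Then for all $t\le \bar t$,
\[
\frac{r_{\max}(t)}{\sqrt{-2(n-k-1)t}}\le 1+C(-t)^{-\alpha}\le 1+C(-\bar t)^{-\alpha}\le 1+\tfrac{1}{100}a^{-2},
\]
since $(-t)^{-\alpha}$ is decreasing in $-t$. Hence Proposition~\ref{G-gradient} gives
\[
G_z(z,t)^2<\psi_a\Big(\tfrac{G(z,t)}{\sqrt{-2(n-k-1)t}}\Big)
\]
for all $t\le\bar t$ and all $z$ with $G\ge r_*a^{-1}\sqrt{-2(n-k-1)t}$. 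We evaluate this at $t=\bar t$ itself. The condition $G(z,\bar t)\ge\sqrt{-\bar t}$ means the argument $s$ of $\psi_a$ satisfies $s\ge (2(n-k-1))^{-1/2}$. This is at least $\tfrac1{10}$, and it is at least $r_*a^{-1}$ once $-\bar t$ is large. Also $s\le 1+\tfrac1{100}a^{-2}$ by the choice of $a$. Proposition~\ref{g-barrier} then gives $\psi_a(s)\le Ca^{-2}$, so
\[
G_z(z,\bar t)^2\le C(-\bar t)^{-\alpha}.
\]

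\emph{Improving the exponent.} This already gives decay $(-t)^{-\alpha}$, but the statement asks for $\alpha/(1-\alpha)$, which is larger. We would bootstrap. A gradient bound $G_z^2\le C(-t)^{-\beta}$ on the region $\{G\ge\sqrt{-t}\}$ feeds back into the evolution of $r_{\max}^2$ via the $G$-equation \eqref{G-equation} at $z=0$, together with concavity of $G$. The aim is to improve $(\star_\alpha)$ to a better rate, for instance by relating $r_{\max}^2+2(n-k-1)t$ to integrals of $G_{zz}$, and then to feed the improved rate back into the barrier step. Iterating should produce the exponents $\alpha,\ \alpha+\alpha^2,\ \dots\to\alpha/(1-\alpha)$, which is the geometric series. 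This is how we read the target exponent.

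\emph{Main obstacle.} The hardest step is deriving the improved $(\star)$ rate from the gradient bound. That argument is where the new $F$-terms in \eqref{G-equation} enter: the term $(k-1)F_zG_z/F$ and the nonlocal integral. At $z=0$, $G_z=0$, so these terms vanish there. Along the way, however, one needs to control $G_{zz}$ near $z=0$, and there the $F$-term has a favorable sign, since $F_zG_z\le0$. We would first try to use that sign to drop the term, as in Proposition~\ref{G-gradient}. If that fails, we would fall back on Lemma~\ref{apriori-f} to bound $F_z/F\le C/z$.
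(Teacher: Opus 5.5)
Your reduction is correct and matches the paper: the term $-2(k-1)U^2F_r^2/F^2$ in \eqref{u-equation} is nonpositive, so $U=G_z^2$ is a subsolution and Brendle's barrier applies. Your choice $a=c_0(-\bar t)^{\alpha/2}$ does give $G_z^2\le C(-\bar t)^{-\alpha}$.

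\textbf{The gap.} The passage from $\alpha$ to $\frac{\alpha}{1-\alpha}$ is left to an unproven bootstrap. The missing idea is a time translation inside the barrier step: compare with a cylinder that becomes extinct at a positive time $\Lambda$ rather than at $0$. Fix $\bar t$ and set $\Lambda:=-\bar t$. Apply Proposition~\ref{G-gradient} to $\tilde g(s):=g(s+\Lambda)$, $s<-\Lambda$. This is legitimate because its proof uses only the differential inequality for $U$, the maximum principle and the cylindrical behaviour as $s\to-\infty$, all of which are translation invariant. In the original time $t=s+\Lambda$ its hypothesis reads
\[
r_{\max}(t)\le\sqrt{2(n-k-1)(\Lambda-t)}\,\bigl(1+\tfrac{1}{100}a^{-2}\bigr)\qquad\text{for all } t\le\bar t .
\]
By $(\star_\alpha)$, $r_{\max}(t)^2\le 2(n-k-1)\bigl[(-t)+K(-t)^{1-\alpha}\bigr]$. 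So it suffices that $Kx^{1-\alpha}\le\Lambda+\frac{1}{50}a^{-2}x$ for $x\ge-\bar t$. Since $\sup_{x>0}\bigl(Kx^{1-\alpha}-\frac{1}{50}a^{-2}x\bigr)=C_{\alpha,K}\,a^{2(1-\alpha)/\alpha}$, this holds for $a:=c\,(-\bar t)^{\frac{\alpha}{2(1-\alpha)}}$ with $c$ small.

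At $t=\bar t$ the argument of $\psi_a$ is $G/\bigl(2\sqrt{(n-k-1)(-\bar t)}\bigr)$. When $G\ge\sqrt{-\bar t}$ this is bounded below by a constant depending only on $n-k$, just as in your step. Proposition~\ref{g-barrier} then gives $G_z^2\le Ca^{-2}=C(-\bar t)^{-\frac{\alpha}{1-\alpha}}$. The exponent comes from balancing the excess $K(-t)^{1-\alpha}$ against the slack $\Lambda+a^{-2}(-t)$, not from a geometric series. This is the argument of \cite[Proposition 4.2]{ABDS} that the paper invokes.

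\textbf{Why the bootstrap fails.} It would not close, and it runs the logic of this section backwards. Improving $(\star_\alpha)$ is exactly what Proposition~\ref{Laplace-G} and Corollaries~\ref{refine-max-radius}--\ref{iterating-alpha} do, and they take Proposition~\ref{star-Gz} as input. The source $S$ in the equation for $H=\frac12G^2+(n-k-1)t$ has leading part $-(n-k-2)G_z^2$. The gain to $(-t_0)^{-(1+\frac{\varepsilon}{2})\alpha}$ is possible only because $\frac{\alpha}{1-\alpha}\ge\alpha+\alpha^2$ exceeds $\alpha$ by more than the $(-t_0)^{O(\varepsilon)}$ losses from the enlarged cylinder $Q$. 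Fed only $G_z^2\lesssim(-t)^{-\alpha}$, that argument returns $(-t_0)^{-\alpha+O(\varepsilon)}$, i.e.\ no improvement of $(\star_\alpha)$. Moreover, a sup bound on $G_z$ does not by itself control $G_{zz}(0,t)$, which is what governs $\frac{d}{dt}r_{\max}^2$.

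A smaller point: $(k-1)F_zG_z/F$ does not vanish at $z=0$. It is of the form $0/0$ and tends to $(k-1)G_{zz}(0,t)$, which is why the operator $\partial_{zz}+\frac{k-1}{z}\partial_z$ appears in Proposition~\ref{Laplace-G}.
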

\begin{proof}
    The proof follows from the same barrier argument as in  \cite[Proposition 4.2]{ABDS} since by \eqref{u-equation}
    \begin{align}
    U_t \leq  UU_{rr} - \frac{1}{2}U_r^2 + \frac{n-k-1-U}{r}U_r + \frac{2(n-k-1)}{r^2}U(1-U), \notag
\end{align}
we can still apply Brendle's barrier $\psi_a$ in Proposition \ref{g-barrier}.
\end{proof}

\begin{cor}\label{diam-alpha}
Suppose that   $\left(\star_\alpha\right)$ holds  for some $0<\alpha<1$. Then,   
 $\operatorname{diam}\left(S^n, g(t)\right) \geq \frac{1}{C}(-t)^{\frac{1}{2(1-\alpha)}}$ holds, provided  $-t$ is sufficiently large.
\end{cor}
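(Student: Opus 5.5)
The plan follows the argument of \cite[Corollary 4.3]{ABDS}: integrate the gradient bound of Proposition \ref{star-Gz} along the $z$-direction. The quantity $G$ has to decrease from roughly $\sqrt{-2(n-k-1)t}$ down to $\sqrt{-t}$ while its slope is tiny, and this forces a long interval in $z$.

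First, by Lemma \ref{r-low} we have $G(0,t)=r_{\max}(t)\geq \sqrt{-2(n-k-1)t}$. Since $n-k\geq 2$, the constant $2(n-k-1)\geq 2$, so $r_{\max}(t)\geq \sqrt 2\,\sqrt{-t}$. At the tip $z_{\max}(t)$ we have $G=0$, and $G$ is monotone decreasing in $z$ because $G_z\le 0$ by \eqref{gradient-sign}. Let $z_1(t)\in(0,z_{\max}(t))$ be the unique first point where $G(z_1,t)=\sqrt{-t}$; if $G$ is constant on an interval, take the smallest such $z$. On $[0,z_1(t)]$ we have $G\geq\sqrt{-t}$, so Proposition \ref{star-Gz} gives
\[
|G_z(z,t)|\le C(-t)^{-\frac{\alpha}{2(1-\alpha)}} \qquad \text{for } z\in[0,z_1(t)],
\]
once $-t$ is large.

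Next, integrate this bound from $0$ to $z_1$:
\[
(\sqrt2-1)\sqrt{-t}\le G(0,t)-G(z_1,t)=\int_0^{z_1}|G_z|\,dz\le C z_1 (-t)^{-\frac{\alpha}{2(1-\alpha)}}.
\]
This yields
\[
z_1(t)\geq c\,(-t)^{\frac12+\frac{\alpha}{2(1-\alpha)}}=c\,(-t)^{\frac{1}{2(1-\alpha)}}.
\]
Since $z$ is arclength along a minimizing geodesic from $\mathcal O$ orthogonal to the orbits, $z_1$ is the length of a segment. The distance in $(S^n,g(t))$ between the point $\mathcal O$ at $z=0$ and any point on the orbit at $z_1$ is at least $z_1$, because any curve between the two orbit levels has length at least the change in $z$ (indeed $|dz|\le |\cdot|_g$). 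Hence $\operatorname{diam}(S^n,g(t))\geq z_1(t)\geq \frac1C(-t)^{\frac{1}{2(1-\alpha)}}$.

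The only real subtlety is that Proposition \ref{star-Gz} must apply on the whole segment $[0,z_1]$, which holds by monotonicity of $G$. Everything else is a one-line integration. No coupling with $F$ enters, since the metric's $dz^2$ term already controls distances.
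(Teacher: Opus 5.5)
Your argument is correct and is essentially the paper's proof, which just invokes the argument of \cite[Corollary 4.3]{ABDS}: integrate the gradient bound of Proposition \ref{star-Gz} over the region $\{G\geq\sqrt{-t}\}$. Lemma \ref{r-low} and $n-k\geq 2$ give $G(0,t)=r_{\max}(t)\geq\sqrt{2}\sqrt{-t}$, so this region has $z$-length at least $c(-t)^{\frac{1}{2(1-\alpha)}}$, and the $dz^2$ term of the metric turns that length into a lower bound on the diameter; your write-up supplies exactly the details the paper leaves to the reference.
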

\begin{proof}
By argument in \cite[Corollary 4.3]{ABDS}, Corollary \ref{diam-alpha} follows from Proposition \ref{star-Gz}.
\end{proof}

\begin{lem}\label{Gamma-alpha}
Suppose $\left(\star_\alpha\right)$ holds for some $0<\alpha\leq\frac{3}{4}$. Then we have 
\begin{align}\label{eq-gamma-decay}
\Gamma(\tau) \leq C\,  \delta(\tau)^{\frac{1}{4}}\leq C(-t)^{-\frac{\alpha}{16(1-\alpha)}}
\end{align}
when $-t$ is sufficiently large.
\end{lem}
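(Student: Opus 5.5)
The plan is to follow \cite[Lemma 4.4]{ABDS}. The first inequality $\Gamma(\tau)\le C\delta(\tau)^{1/4}$ was already obtained above from Lemma \ref{g-C01}: we have $\gamma(\tau)\le C\sup_{\xi\le\delta^{-1/100}}|g|^2\le C\delta(\tau)^{1/4}$, and taking the supremum over $\tau'\le\tau$ uses only the monotonicity of $\delta$. It therefore remains to bound $\delta(\tau)$ by a negative power of $-t=e^{-\tau}$, i.e.\ to show separately that $g(0,\tau)$ and $g_{\xi\xi}(0,\tau)$ decay.

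For $g(0,\tau)$, condition $(\star_\alpha)$ gives directly
\[
0\le g(0,\tau)=\frac{r_{\max}(t)}{\sqrt{-t}}-\sqrt{2(n-k-1)}\le C(-t)^{-\alpha}.
\]
The lower bound $g(0,\tau)\ge 0$ comes from Lemma \ref{r-low}. Since $\delta$ is a supremum over earlier times and $(-t)^{-\alpha}$ is decreasing in $-t$, the same bound holds for $\sup_{\tau'\le\tau}|g(0,\tau')|$.

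For $g_{\xi\xi}(0,\tau)$, I would use Proposition \ref{star-Gz}. In the region $\xi\le 2\delta^{-1/100}$ we have $G\ge\sqrt{-t}$, so $|g_\xi|^2=G_z^2\le C(-t)^{-\frac{\alpha}{1-\alpha}}$ there. Combining this with the uniform higher-derivative bounds of Lemma \ref{g-C-m}, the interpolation inequality used in Lemma \ref{g-C23} gives
\[
|g_{\xi\xi}(0,\tau)|\le C\,\|g_\xi\|_{L^\infty([0,1])}^{1/2}\,\|g_{\xi\xi\xi}\|_{L^\infty([0,1])}^{1/2}\le C(-t)^{-\frac{\alpha}{4(1-\alpha)}}.
\]
Here the derivatives are taken in $\xi$ at fixed $\tau$, and $|g_\xi|$ equals $|G_z|$ because the rescaling preserves first derivatives. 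Since $\alpha\le\tfrac34$, we have $\alpha\ge \frac{\alpha}{4(1-\alpha)}$, so the $g(0,\tau)$ term is dominated and $\delta(\tau)\le C(-t)^{-\frac{\alpha}{4(1-\alpha)}}$. Raising this to the power $\tfrac14$ gives the exponent $\frac{\alpha}{16(1-\alpha)}$.

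The delicate point is the self-referential region: Proposition \ref{star-Gz} requires $G\ge\sqrt{-t}$, which in the rescaled variables means $g\ge 1-\sqrt{2(n-k-1)}$. On $\xi\le 2\delta^{-1/100}$ this holds because of the smallness $|g|\le C\delta^{1/4}$ from Lemma \ref{g-C01}. This ensures that the interpolation near $\xi=0$ uses only the barrier-controlled region. The rest is bookkeeping of exponents, and uniformity follows from taking suprema over $\tau'\le\tau$.
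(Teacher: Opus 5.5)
Your proposal is correct and follows the paper's proof essentially step by step: $(\star_\alpha)$ gives $0\le g(0,\tau)\le C(-t)^{-\alpha}$, Proposition \ref{star-Gz} gives the decay of $g_\xi$, interpolation against the bounds of Lemma \ref{g-C-m} gives $|g_{\xi\xi}(0,\tau)|\le C(-t)^{-\frac{\alpha}{4(1-\alpha)}}$, and $\alpha\le\frac34$ lets you combine the two into the stated decay of $\delta$ and then of $\Gamma$. One small correction: on the bounded interval $[0,1]$ the interpolation inequality also carries a lower-order term $C\|g_\xi\|_{L^\infty}$, but this is harmless, since $\|g_\xi\|_{L^\infty}\le 1$ makes it dominated by $\|g_\xi\|_{L^\infty}^{1/2}$.
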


\begin{proof}
Since $\left(\star_\alpha\right)$ holds for some $0<\alpha\leq\frac{3}{4}$, we have $g(0, \tau)\leq C(-t)^{-\alpha}\leq C(-t)^{-\frac{\alpha}{4(1-\alpha)}}$, by Proposition \ref{star-Gz}, we have $\left|g_\xi(\xi, \tau)\right| \leq C(-t)^{-\frac{\alpha}{2(1-\alpha)}}$ whenever $G(z, t) \geq \sqrt{-t}$. By the same argument as Lemma \ref{g-C01}, \ref{g-C-m}, \ref{g-C23}, we have 
\begin{align}
    |g(0,\tau)|+|g_{\xi\xi}(0,\tau)|\leq  C(-t)^{-\frac{\alpha}{4(1-\alpha)}}.
\end{align}
Recall that the definition of $\delta(\tau)$ in \eqref{delta-def}, we get $\delta(\tau)\leq (-t)^{-\frac{\alpha}{4(1-\alpha)}}$. Then we have 
\begin{align}
\Gamma(\tau) \leq C\,  \delta(\tau)^{\frac{1}{4}}\leq C(-t)^{-\frac{\alpha}{16(1-\alpha)}}.
\end{align}
\end{proof}

\begin{prop}\label{F-derivative-estimate}
Suppose $\left(\star_\alpha\right)$ holds for some $0<\alpha\leq \frac{3}{4}$. Then we have 
\begin{equation}
    \sqrt{-t}\left|F_{zz}(z, t)\right| + (-t)\left|F_{zzz}(z, t)\right|  \leq C(R)(-t)^{-\frac{\alpha}{32(1-\alpha)}}
\end{equation}
whenever $z \leq R\sqrt{-t}$ with constant $0<R<+\infty$ and $-t$ is sufficiently large.  Moreover, we have 
\begin{align}
\left|\frac{\partial^m}{\partial z^m} {F}(z, t)\right| \leq C(m, R)\left(-t\right)^{-\frac{m-1}{2}}
\end{align}
whenever $z \leq R\sqrt{-t}$.
\end{prop}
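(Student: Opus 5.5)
The plan is to translate the claim into the renormalized variables $f,h=f_\xi$ and to obtain it from the spectral control of $h$ together with parabolic regularity. Since $F(z,t)=\sqrt{-t}\,\bigl(f(\xi,\tau)+\xi\bigr)$ with $\xi=z/\sqrt{-t}$ and $\tau=-\log(-t)$, we have
\[
F_{zz}=(-t)^{-\frac12}f_{\xi\xi}=(-t)^{-\frac12}h_\xi ,\qquad F_{zzz}=(-t)^{-1}h_{\xi\xi},
\]
and more generally $\partial_z^m F=(-t)^{-\frac{m-1}{2}}\partial_\xi^{m-1}(h+1)$. The first estimate is therefore equivalent to
\[
|h_\xi|+|h_{\xi\xi}|\le C(R)(-t)^{-\frac{\alpha}{32(1-\alpha)}}\quad\text{on } \xi\le R .
\]
The second estimate is equivalent to uniform $C^m$ bounds for $h$ on $[0,R]$.

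The uniform $C^m$ bounds come from Proposition \ref{epsilon-bubble-sheet}. The region $z\le R\sqrt{-t}$ lies on evolving $\varepsilon$-cylinders at scale $\sqrt{-t}$, so $f\to0$ smoothly on compact sets. Shi-type (Perelman) derivative estimates then give $|\partial_\xi^{m}f|\le C(m,R)$, which yields the second bound. The boundary behaviour at $\xi=0$ needs care. I would handle it by viewing $F$ as the radial profile of a smooth $SO(k)$-invariant object on $\mathbb R^k$, using the odd reflection at $z=0$.

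For the decay I would proceed in four steps.

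\textbf{Step 1.} From Lemma \ref{Gamma-alpha}, $\Gamma(\tau)\le C\delta(\tau)^{1/4}\le C(-t)^{-\frac{\alpha}{16(1-\alpha)}}$.

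\textbf{Step 2.} From Proposition \ref{discrete-MZ}, in either alternative $\Lambda(\tau)\le o(1)\,\Gamma(\tau)$. I would rather use directly the inequality $\Lambda(\tau-1)\ge c\Lambda(\tau)-C\delta^{1/200}\Gamma(\tau)$ together with the monotonicity of $\Lambda$, which gives $\Lambda\le C\delta^{1/200}\Gamma$. Hence $\|\hat h\|_{\mathscr H}^2\le C(-t)^{-\frac{\alpha}{16(1-\alpha)}}$.

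\textbf{Step 3.} Convert this weighted $L^2$ smallness into pointwise smallness of $h_\xi$ and $h_{\xi\xi}$ on $[0,R]$. On $[0,R]$ the weight $\xi^{k-3}e^{-\xi^2/4}$ is comparable to $\xi^{k-3}$. Using the uniform $C^m$ bounds above, a Gagliardo--Nirenberg interpolation as in Claim \ref{lem error gxixi gamma} gives
\[
\|h\|_{C^2([0,R])}\le C\,\|h\|_{C^m}^{\theta}\,\|h\|_{L^2(\xi^{k-3})}^{1-\theta}.
\]
Choosing $m$ large makes $\theta$ small, so the exponent $\frac{\alpha}{16(1-\alpha)}\cdot\frac{1-\theta}{2}\ge\frac{\alpha}{32(1-\alpha)}$ whenever $\theta\le$ something small. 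Strictly, $\tfrac{1-\theta}{2}<\tfrac12$ gives slightly less than $\frac{\alpha}{32(1-\alpha)}$. To absorb this loss I would use the extra factor $\delta^{1/200}$ from Step 2 and, if needed, slightly relax the constant.

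\textbf{Step 4.} Scale back to $F_{zz}$ and $F_{zzz}$ using the identities above.

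The main obstacle is Step 3 near $\xi=0$, where the weight $\xi^{k-3}$ can degenerate or blow up (for $k=2$ it is $\xi^{-1}$). I would first treat $h$ via the extension $\mathbf h(\mathbf y)$ on $\mathbb R^{k}$ (or on $\mathbb R^{k-2}$ adapted to $\mathcal L_2$) and apply the Euclidean interpolation on a ball, as was done for $g$. The boundary conditions $h(0)=h_\xi(0)=0$ from \eqref{BC fg} keep this extension smooth.

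A fallback for the interior is to use the equation $h_\tau=\mathcal L_2 h+E_2$ with Proposition \ref{Error-E3-estimate} and local parabolic $L^2\to L^\infty$ estimates on $[\tau-1,\tau]\times[0,2R]$. These give $\sup|h|\lesssim\Lambda^{1/2}$, and interpolating this with the $C^m$ bounds then controls $h_\xi$ and $h_{\xi\xi}$.
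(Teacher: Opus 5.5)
Your proposal is correct, but it reaches the decay by a different and considerably lighter route than the paper.

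\textbf{The paper's route.} Both arguments start from $\Gamma\le C(-t)^{-\frac{\alpha}{16(1-\alpha)}}$ (Lemma \ref{Gamma-alpha}) and $\Lambda\le o(1)\Gamma$ (Proposition \ref{discrete-MZ}). The paper then feeds the space-time bound $\|h\|_{L^2_\rho(Q(2R))}\lesssim\Lambda^{1/2}$, together with a matching bound on $\|E_2\|_{L^2_\rho(Q(2R))}$ from Proposition \ref{Error-E3-estimate}, into the parabolic machinery of Appendix \ref{regularity estimates of tildeh}. There it passes to $\xi^{-2}h$, for which $\mathcal L_2$ becomes a radial $(k+2)$-dimensional Ornstein--Uhlenbeck operator minus the identity. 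It runs a Nash--Moser argument to get a decaying $C^\alpha$ bound, estimates $\xi^{-2}E_2$ in $C^{m,\alpha}$ term by term, applies Schauder estimates, and only then interpolates (Lemma \ref{decayDlh}). The non-decaying $C^m$ bounds are read off from the same lemma.

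\textbf{Your route.} You interpolate at a single time between $\lambda(\tau)\le\Lambda(\tau)$ and uniform $C^m$ bounds on $h$ obtained geometrically. These come from smooth convergence to the cylinder, or from Perelman's derivative estimates applied to the curvature eigenvalue $-F_{zz}/F$ as in Proposition \ref{Gmorder estimates}. For the latter, write $\partial_z^{m+1}F=\partial_z^{m-1}(F\cdot F_{zz}/F)$ to avoid dividing by $F$ near $z=0$, and note that the frame is parallel along radial geodesics. With this, the $h$-equation, the $E_2$ bounds and all parabolic regularity become unnecessary.

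\textbf{Why this is sound.} It is not a shortcut the paper avoids for a reason. Lemma \ref{x-2E3Calpha} already takes uniform bounds on $f_{\xi\xi\xi}$, $g_{\xi\xi}$, etc.\ as input, so the same geometric bounds are implicitly assumed there. Your interpolation also yields decay of $\xi^{-2}h$ in $C^j$, since $\xi^{-2}h(\xi)=\int_0^1(1-s)h_{\xi\xi}(s\xi)\,ds$. The paper's route would only be preferable if regularity near $\xi=0$ had to be extracted from the PDE alone.

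\textbf{Two details to fix.}
\begin{enumerate}
\item The absorption of the interpolation loss works because $\delta\le(-t)^{-\frac{\alpha}{4(1-\alpha)}}$. Hence $\Lambda^{1/2}\le C(-t)^{-\frac{\alpha}{1-\alpha}(\frac{1}{32}+\frac{1}{1600})}$, and any $\theta\le\frac{1}{51}$ gives exactly the stated exponent. No ``relaxing of the constant'' is involved. This also covers the same $(1-\theta)$ loss that appears in Lemma \ref{decayDlh}.
\item Near $\xi=0$, the cleanest choice is the radial extension to a ball in $\mathbb R^k$. It is smooth because $h$ is even with $h(0)=0$. Its $L^2$ norm is controlled by the $\mathscr H$-weight since $\xi^{k-1}\le R^2\xi^{k-3}$ on $[0,R]$. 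This works for every $k\ge 2$, whereas the $\mathbb R^{k-2}$ variant degenerates for $k=2$.
\end{enumerate}
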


\begin{proof}
Recall the definitions of $\Gamma(\tau)$ and $\Lambda(\tau)$ in \eqref{Lambda+Gamma defs}-\eqref{Lambda+Gamma defs}
and the definitions of $h=f_\xi$ and $\hat h$ from the previous section. By Lemma \ref{Gamma-alpha} we have 
\begin{align}
\Gamma(\tau) \leq C\,  \delta(\tau)^{\frac{1}{4}}\leq C(-t)^{-\frac{\alpha}{16(1-\alpha)}}.
\end{align}
By Proposition \ref{discrete-MZ}, we obtain 
\begin{align}\label{h-L2}
\Lambda(\tau)\leq o(1)\Gamma(\tau)\leq C(-t)^{-\frac{\alpha}{16(1-\alpha)}}.
\end{align}
 In particular, in any bounded domain $Q(2R)=[0, 2R]\times [\tau-4R^2, \tau]$ for any very negative $\tau<0$, $h(\xi, \tau)=\hat{h}(\xi, \tau)$ we have
\begin{equation}\label{hL2tdecay}
   \|h\|_{L^{2}_{{\rho}}(Q(2R))}\leq   2R\, {\sup_{s\in[\tau-4R^2,\tau]}}\|\hat{h}(\cdot,s)\|_{L^{2}_{{\rho}}([0, +\infty])}\leq 2R\Lambda(\tau)^{\frac{1}{2}}\leq C(R)(-t)^{-\frac{\alpha}{32(1-\alpha)}}
\end{equation}
where $\rho(\xi)=\xi^{k-3}e^{-\xi^2/4}$ and the $\|\cdot\|_{L^2_\rho(Q(2R))}$ norm is defined in \eqref{weight l2 norm definition}. 

Then, we note that
\begin{equation}
    \hat{h}_\tau(\xi, \tau) =\mathcal{L}_2\hat{h}(\xi, \tau)+\hat{E}_2. 
\end{equation}
{By  \eqref{hatE_2 error rough} we have}
\begin{align}
\begin{aligned}\label{hatE_2 error rough0}
\|\hat{E}_2(\cdot, \tau)\|^2_{L^2_\rho([0, +\infty])}&=\int_{0}^{+\infty} \xi^{k-3} e^{-\frac{\xi^2}{4}}\left|\hat{h}_\tau(\xi, \tau) - \mathcal{L}_2\hat{h}(\xi, \tau)\right|^2 d \xi \\
& \leq \epsilon \, \lambda(\cdot) +  C \delta^{\frac{1}{100}}(\tau)\sup _{[\tau-1, \tau]} \gamma(\cdot) +C \exp \left(-\frac{1}{32} \delta(\tau)^{-\frac{1}{50}}\right).
\end{aligned}
\end{align}
Using {\eqref{eq-gamma-decay}}, \eqref{h-L2},  \eqref{Lambda+Gamma defs} and Merle-Zaag type estimates in Proposition \ref{discrete-MZ}, {as well as the estimates $\exp \left(-\frac{1}{32} \delta({\tau})^{-\frac{1}{50}}\right) \leq C \delta({\tau})^5 \leq C \delta({\tau}) \Gamma({\tau})$, we have}
\begin{align}\label{E-L2}
\|\hat{E}_2(\cdot, \tau)\|^2_{L^2_\rho([0, +\infty])}=\int_{0}^{+\infty} \xi^{k-3} e^{-\frac{\xi^2}{4}}\left|\hat{h}_\tau(\xi, \tau) - \mathcal{L}_2\hat{h}(\xi, \tau)\right|^2 d \xi \leq C(-t)^{-\frac{\alpha}{16(1-\alpha)}}.
\end{align}
Note that in any bounded domain, we have $\hat{h}=h$ and $E_2=\hat{E}_2$. Hence,  we also have
\begin{equation}\label{E-L2Q}
    \|{E}_2\|^2_{L^2_\rho(Q(2R))}=\|\hat{E}_2\|^2_{L^2_\rho(Q(2R))}\leq 2R\, {\sup_{s \in [\tau - 4R^2,\tau]}}\|\hat{E}_2(\cdot, s)\|^2_{L^2_\rho([0, +\infty])} \leq C(R)(-t)^{-\frac{\alpha}{16(1-\alpha)}}. 
\end{equation}
By \eqref{hL2tdecay}, \eqref{E-L2Q} and the parabolic estimates and interpolation estimates summarized in Lemma \ref{decayDlh} with $\theta\in (0, 1)$ small enough,  we have 
\begin{align}
\left|{h}(\xi,s)\right|_{C^m(Q(R))}=\left|\hat{h}(\xi,s)\right|_{C^m(Q(R))}\leq C_m(-t)^{-\frac{\alpha}{32(1-\alpha)}}
\end{align}
where $Q(2R)=[0, 2R]\times [\tau - 4R^2, \tau]$. Since the constants in above estimates are independent of $\tau \ll -1$,  then the proposition follows directly.
\end{proof}

\begin{prop}\label{Laplace-G}
Suppose that $\left(\star_\alpha\right)$ holds for some $0<\alpha<\frac{16}{31}$. Moreover, suppose that $t_0$ is the time such that $r_{\max}(t_0)\geq \sqrt{-2(n-k-1)t_0}$. If $-t_0$ is sufficiently large, then $-\left(G^2\right)_{z z} - \frac{k-1}{z} \left(G^2\right)_{z}\leq C\left(-t_0\right)^{-\left(1+\frac{\alpha^2}{200}\right) \alpha}$ at the point $\left(0, t_0\right)$.
\end{prop}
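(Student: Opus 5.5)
This is the analogue of \cite[Proposition 4.5]{ABDS}, and I would follow that argument. The plan is to combine the maximum-time condition on $r_{\max}$ with the evolution equation for $G$ at the center $z=0$, and to control the new coupling term through Proposition \ref{F-derivative-estimate}.

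\textbf{Step 1: evolution of $G^2$ at the center.} From \eqref{G-equation} I compute the equation for $G^2$. At $z=0$ one has $G_z=0$, and by L'Hôpital $\frac{F_zG_z}{F}\to G_{zz}$, since $F_z(0)=1$ and $F\approx z$. The nonlocal term carries the factor $G_z$, so it vanishes at $z=0$. This gives
\[
-\tfrac12\partial_t(G^2)(0,t)=(n-k-1)-\tfrac12\Big[(G^2)_{zz}+\tfrac{k-1}{z}(G^2)_z\Big](0,t)+\mathcal{E}(t).
\]
Here $\mathcal{E}$ collects the discrepancy between $\frac{(k-1)F_zG_z}{F}$ and its flat model $\frac{(k-1)G_z}{z}$ near $z=0$, together with the corresponding $G_z^2$-type terms. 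At the point itself this discrepancy vanishes. I would instead work in a small parabolic neighbourhood $z\le R\sqrt{-t}$, where Proposition \ref{F-derivative-estimate} bounds $|F_z-1|$ and $|F/z-1|$ by $C(-t)^{-\frac{\alpha}{32(1-\alpha)}}$. Lemma \ref{g-C23} and Proposition \ref{star-Gz} control $G_z$ and $G_{zz}$.

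\textbf{Step 2: choice of comparison time.} Following \cite{ABDS}, I would use $(\star_\alpha)$ together with Lemma \ref{r-low}, $-\tfrac12\frac{d}{dt}r_{\max}^2\ge n-k-1$. The function $r_{\max}^2(t)+2(n-k-1)t$ is nonincreasing and bounded by $C(-t)^{1-\alpha}$. Integrating the identity of Step 1 over the interval $[t_0-(-t_0)^{\beta},t_0]$, with $\beta$ slightly less than $1$ (for instance $\beta=1-\alpha+\frac{\alpha^2}{200}$-type choices), gives an averaged bound
\[
\int\Big(-\Delta_{\mathbb R^k}G^2\Big)(0,t)\,dt\le C(-t_0)^{1-\alpha}.
\]
The equality condition $r_{\max}(t_0)\ge\sqrt{-2(n-k-1)t_0}$ is what lets the integral start at $t_0$ with the right sign.

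\textbf{Step 3: from average to pointwise.} To pass from the time average to the value at $t_0$, I need a bound on the time derivative of $\Delta G^2(0,t)$. I would get it from the higher derivative estimates: Proposition \ref{Gmorder estimates} for $G$, and the $C^m$ bounds of Proposition \ref{F-derivative-estimate} for $F$, which enter through the $F$-coupling in $\partial_t$ of the $G$-equation. These give $|\partial_t\Delta G^2|\le C(-t)^{-1}$ in the parabolic region. Writing $s=(-t_0)^\beta$ for the interval length, the averaged bound yields
\[
-\Delta G^2(0,t_0)\le C\big(s^{-1}(-t_0)^{1-\alpha}+s(-t_0)^{-1}\big)+\sup|\mathcal{E}|.
\]
Optimizing $s$ gives a gain of $\alpha/2$. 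I would then bootstrap the \emph{improved} decay $\Gamma\le C(-t)^{-\frac{\alpha}{16(1-\alpha)}}$ into the oscillation estimate, which reduces the effective derivative bound on $\Delta G^2$ and reaches the exponent $(1+\frac{\alpha^2}{200})\alpha$. The restriction $\alpha<\frac{16}{31}$ is exactly what makes $\frac{\alpha}{32(1-\alpha)}$ beat the required margin $\frac{\alpha^3}{200}$ in this bookkeeping.

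\textbf{Main obstacle.} The hard step is Step 3. The pointwise smallness of $\partial_t\Delta G^2$ has to be of order $(-t)^{-1-\text{small}}$, and this small gain comes only from $\Gamma$, $\Lambda$ and the decay of $F$ in Proposition \ref{F-derivative-estimate}. I would use Lemma \ref{Gamma-alpha} with parabolic interpolation (Lemma \ref{decayDlh}) to convert the $L^2$ decay of $g$ and $h$ into $C^4$ decay on $Q(R)$. The risk is that the exponents do not close.
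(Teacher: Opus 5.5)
\textbf{There is a genuine gap, in Step 3, and it is structural rather than a matter of bookkeeping.}

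Your Steps 1 and 2 are fine. With $H=\frac12G^2+(n-k-1)t$ and $\Delta=\partial_{zz}+\frac{k-1}{z}\partial_z$ one has exactly $\frac{d}{dt}H(0,t)=\Delta H(0,t)$, so your $\mathcal{E}$ vanishes identically at the center. Hence $\int_{t_0-s}^{t_0}(-\Delta G^2)(0,t)\,dt\le 2H(0,t_0-s)\le C(-t_0)^{1-\alpha}$ for $s\le -t_0$.

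Averaging along $z=0$ cannot give an exponent larger than $\alpha$ unless you already know $|\partial_t\Delta G^2(0,t)|\le C(-t)^{-1-\gamma}$ with $\gamma\ge(1+\frac{\alpha^2}{200})\alpha$. Indeed, with such a bound the average over $[t_0-s,t_0]$ gives at best $C(-t_0)^{-\min\{\alpha,(\alpha+\gamma)/2\}}$ if $s\le -t_0$, and $C(s^{-\alpha}+(-t_0)^{-\gamma})$ if $s\ge -t_0$.

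The model trace $H(0,t)=(-t)^{1-\alpha}$ shows this obstruction is real. It is nonincreasing, satisfies $0\le H(0,t)\le(-t)^{1-\alpha}$, and has $-\Delta G^2(0,t)=2(1-\alpha)(-t)^{-\alpha}$ and $|\partial_t\Delta G^2(0,t)|\sim(-t)^{-1-\alpha}$. It therefore satisfies every input of your scheme, yet violates the conclusion.

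The decay actually available is $\Gamma\le C(-t)^{-\frac{\alpha}{16(1-\alpha)}}$ from Lemma \ref{Gamma-alpha}, and rate $\frac{\alpha}{32(1-\alpha)}$ in Proposition \ref{F-derivative-estimate}. This gives $\gamma$ at most of order $\frac{\alpha}{16(1-\alpha)}<\frac{31}{240}\alpha$ when $\alpha<\frac{16}{31}$. So your bootstrap does not even reach the exponent $\alpha$, let alone $(1+\frac{\alpha^2}{200})\alpha$. It is precisely this strict gain over $\alpha$ that Corollary \ref{iterating-alpha} iterates.

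The missing idea, which is what the paper takes from \cite{ABDS} (the argument there is not a time average at the center), is to use information off the axis on a cylinder longer than the parabolic scale. Take
$Q=[0,(-t_0)^{\frac{1+\varepsilon}{2}}]\times[t_0-(-t_0)^{1+\varepsilon},t_0]$ with $\varepsilon=\frac{\alpha^2}{100}$, and set $\rho^2=(-t_0)^{1+\varepsilon}$. On $Q$, $H$ solves $H_t-\Delta H=S$ and satisfies $-L(-t)^{1-\frac{\alpha}{8}}\le H\le L(-t)^{1-\alpha}$. The upper bound holds at every $z$ because $G\le r_{\max}$ and $(\star_\alpha)$ holds; the lower bound comes from Proposition \ref{star-Gz} and Lemma \ref{r-low}.

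One writes $H+H^{(1)}+H^{(2)}=L(-2t_0)^{(1+\varepsilon)(1-\alpha)}$, where $H^{(1)}$ carries the source $S$ with zero boundary data and $H^{(2)}\ge0$ is caloric. Since $H(0,t_0)\ge0$, the center value $H^{(2)}(0,t_0)$ is at most of order $\rho^{2(1-\alpha)}$. The heat kernel estimate of Proposition \ref{refine-Laplace-esitimate}, rescaled to the unit cube with $\mu$ a small multiple of $(\log(-t_0))^{-1}$, then yields
$|\Delta H^{(2)}(0,t_0)|\le C(\log(-t_0))^2\rho^{-2}\rho^{2(1-\alpha)}\le C(-t_0)^{-(1+\frac{\alpha^2}{200})\alpha}$.

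The weak lower bound on $H$ enters only multiplied by $e^{-\frac{1}{8\mu}}$. The term $\Delta H^{(1)}(0,t_0)$ is handled by interior estimates from the bounds on $S$ and its derivatives. This Harnack-type control of a nonnegative caloric function on an enlarged cylinder is exactly what excludes the model trace above, and no center-only argument can see it.

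Correspondingly, in the paper the restriction $\alpha<\frac{16}{31}$ enters only through the bound on $S$, keeping its $F$-coupling part no larger than its $G_z^2$ part. It is not used the way you describe.
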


\begin{proof}
 The proof closely follows the proof of  \cite[Proposition 4.4 ]{ABDS}.
By \eqref{G-equation}, we have
\begin{align}
    G_t = G_{zz} + \frac{k-1}{z}G_z - (n-k-1)\frac{1-G_z^2}{G} + E(G,F) 
\end{align}
where 
\begin{align}
\begin{aligned}
E(G,F) = (k-1)\frac{F_zz - F}{Fz}G_z
- G_z \int_0^{z(t)} (k-1)\frac{F_{zz}}{F}(x,t)+(n-k)\frac{G_{zz}}{G}(x,t) dx.
\end{aligned}
\end{align}
We next consider the parabolic cylinder
\begin{align}
Q:=\left[0,\left(-t_0\right)^{\frac{1+\varepsilon}{2}}\right] \times\left[t_0-\left(-t_0\right)^{1+\varepsilon}, t_0\right] .
\end{align}
In the following, we put $\varepsilon:=\frac{\alpha^2}{100}$. Using the estimate for $G_z(z, t)^2$ in Proposition \ref{star-Gz} and Lemma \ref{r-low}, we obtain
\begin{align}\label{Gstar-low}
{G}(z, t)^2 \geq(-2(n-k-1) t)\left(1-C(-t)^{-\frac{\alpha}{8}}\right)
\end{align}
for all $t \leq t_0$ and all $z \in\left[0,(-t)^{\frac{1+\varepsilon}{2}}\right]$. On the other hand, the condition  $(\star_\alpha)$  gives
\begin{align}\label{Gstar-up}
{G}(z, t)^2 \leq(-2(n-k-1) t)\left(1+C(-t)^{-\alpha}\right)
\end{align}
for all $t \leq t_0$ and all $z$.
Moreover, we define a function $H$ by
\begin{align}\label{Hztdef}
{H}(z, t):=\frac{1}{2} {G}(z, t)^2+(n-k-1)t.
\end{align}
We have ${H}\left(0, t\right) \geq 0$ for all $-t\gg 1$. Moreover, by \eqref{Gstar-low} and \eqref{Gstar-up}, we can find a positive constant $L$ such that
\begin{align}
-L(-t)^{1-\frac{\alpha}{8}} \leq {H}(z, t) \leq L(-t)^{1-\alpha}
\end{align}
in $Q$. In particular,
\begin{align}
-L\left(-2 t_0\right)^{(1+\varepsilon)\left(1-\frac{\alpha}{8}\right)} \leq {H}(z, t) \leq L\left(-2 t_0\right)^{(1+\varepsilon)(1-\alpha)}
\end{align}
in $Q$.
The function ${H}$ satisfies an equation of the form
\begin{align}
{H}_t(z, t)-{H}_{z z}(z, t)-\frac{k-1}{z}H_z(z,t)=S(z, t), 
\end{align}
where the source term $S$ is defined by
\begin{align}
S(z, t) = -(n-k-2){G}_z(z, t)^2 + G \, E(F,G).
\end{align}
{By Proposition  \ref{star-Gz}  we have ${G}_z(z, t)^2 \leq C(-t)^{-\frac{\alpha}{1-\alpha}}$ at each point in $Q$}. We know that $\frac{1}{2}z\leq F\leq z$ on $z\leq L\sqrt{-t}$ for some large $L$ and $-t$ sufficiently large $-t$. We notice that 
{\begin{align}
\left|\frac{F_zz - F}{Fz}\right|\leq \left|\frac{F_{zz}(\theta)}{F}\right|\leq 2  \frac{\left|F_{zz}(\theta)\right|}{z}  \le 2\frac{|F_{zzz}(\bar{\theta})| \theta}{z} \le 2 |F_{zzz}(\bar{\theta})|
\end{align}
}
for some $\bar{\theta} \in [0,z]$. By same argument, we also obtain
\begin{align}
\left|\int_0^{z(t)} \frac{F_{zz}}{F}(x,t)dx\right|\leq {2\,z\,|F_{zzz}(\theta')|\,}
\end{align}
for some $\theta' \in [0,z]$. Thus, by Proposition \ref{F-derivative-estimate}, we obtain
\begin{align}
\left|\frac{F_zz - F}{Fz} - \int_0^{z(t)} \frac{F_{zz}}{F}(x,t)dx\right|\leq  C(-t)^{-\frac{1}{2}-\frac{\alpha}{32(1-\alpha)}}
\end{align}
in $Q$.
Since $\alpha\in (0, \frac{16}{31})$, this implies
\begin{align}
\begin{aligned}
|S(z, t)| &\leq C(-t)^{-\frac{\alpha}{1-\alpha}} + C(-t)^{-\frac{1}{2}-\frac{17\alpha}{32(1-\alpha)}} \\
&\leq C(-t_0)^{-\frac{\alpha}{1-\alpha}} + C(-t_0)^{-\frac{1}{2}-\frac{17\alpha}{32(1-\alpha)}}\\
&\leq C(-t_0)^{-\frac{\alpha}{1-\alpha}}
\end{aligned}
\end{align}
at each point in $Q$. Moreover, by Proposition \ref{F-derivative-estimate}, the higher order derivatives of ${F}$ satisfy the estimate $\left|\frac{\partial^m}{\partial z^m} {F}(z, t)\right| \leq C(m)\left(-t_0\right)^{-\frac{m-1}{2}}$ in the parabolic cylinder $\left[0,\left(-t_0\right)^{\frac{1}{2}}\right] \times \left[2 t_0, t_0\right]$. Then by the pointwise curvature derivative estimate of $G$ in proposition \ref{Gmorder estimates}. We have $\left|\frac{\partial^m}{\partial z^m} S(z, t)\right| \leq C(m)\left(-t_0\right)^{-\frac{m}{2}}$ in the parabolic cylinder $\left[0,\left(-t_0\right)^{\frac{1}{2}}\right] \times \left[2 t_0, t_0\right]$. 
Using standard interpolation inequalities, we obtain
\begin{align}\label{S-estimates}
\begin{aligned}
& \left|\frac{\partial}{\partial z} S(z, t)\right| \leq C\left(-t_0\right)^{-\frac{1}{2}-\frac{\alpha}{1-\alpha}+\varepsilon} \\
& \left|\frac{\partial^2}{\partial z^2} S(z, t)\right| \leq C\left(-t_0\right)^{-1-\frac{\alpha}{1-\alpha}+\varepsilon} \\
& \left|\frac{\partial}{\partial t} S(z, t)\right| \leq C\left(-t_0\right)^{-1-\frac{\alpha}{1-\alpha}+\varepsilon}
\end{aligned}
\end{align}
in the parabolic cylinder $\left[0,\left(-t_0\right)^{\frac{1}{2}}\right] \times\left[2 t_0, t_0\right]$.
We now introduce two auxiliary functions ${H}^{(1)}$ and ${H}^{(2)}$ on the parabolic cylinder $Q$. Let ${H}^{(1)}$ denote the solution of the linear heat equation
\begin{align}
{H}_t^{(1)}(z, t)-{H}_{z z}^{(1)}(z, t) -\frac{k-1}{z}{H}_{z}^{(1)}(z, t)=S(z, t)
\end{align}
on $Q$ with Dirichlet boundary condition ${H}^{(1)}=0$ on the parabolic boundary of $Q$. Moreover, let ${H}^{(2)}$ denote the solution of the linear heat equation
\begin{align}
{H}_t^{(2)}(z, t)-{H}_{z z}^{(2)}(z, t) -\frac{k-1}{z}{H}_{z}^{(2)}(z, t)=0
\end{align}
on $Q$ with Dirichlet boundary condition ${H}^{(2)}=L\left(-2 t_0\right)^{(1+\varepsilon)(1-\alpha)}-{H}$ on the parabolic boundary of $Q$.
Clearly, ${H}^{(2)}$ is nonnegative, and we have
\begin{align}\label{H12+=constant}
{H}^{(1)}(z, t)+{H}^{(2)}(z, t)+{H}(z, t)=L\left(-2 t_0\right)^{(1+\varepsilon)(1-\alpha)}
\end{align}
at each point in $Q$. In particular, by \eqref{H12+=constant} we have
\begin{align}
0&={H}_{z z}^{(1)}\left(0, t_0\right)+\frac{k-1}{z}{H}_{z}^{(1)}\left(0, t_0\right)\\\notag
&+{H}_{z z}^{(2)}\left(0, t_0\right)+ \frac{k-1}{z}{H}_{z}^{(2)}\left(0, t_0\right)\\\notag
&+{H}_{z z}\left(0, t_0\right)+ \frac{k-1}{z}{H}_{z}\left(0, t_0\right).
\end{align}
Therefore, in order to estimate $\left|{H}_{z z}\left(0, t_0\right) + \frac{k-1}{z}{H}_{z}\left(0, t_0\right)\right|$, it suffices to  estimate the bounds of $\left|{H}_{z z}^{(1)}\left(0, t_0\right)+\frac{k-1}{z}{H}_{z}^{(1)}\left(0, t_0\right)\right|$ and $\left|{H}_{z z}^{(2)}\left(0, t_0\right)+\frac{k-1}{z}{H}_{z}^{(2)}\left(0, t_0\right)\right|$.
By the same argument in \cite[Propostion 4.4]{ABDS}, we  apply \eqref{S-estimates}, and standard interior estimates for   $H^{(1)}$ estimates and  we also apply Proposition \ref{refine-Laplace-esitimate} to  $H^{(2)}$ estimates (we notice that ${H}^{(2)}
\leq C\left(-t_0\right)^{(1+\varepsilon)\left(1-\frac{\alpha}{8}\right)}$ on the parabolic boundary of $Q$), then we sum up the two estimates and conclude that
\begin{align}
\begin{aligned}
\left|{H}_{z z}\left(0, t_0\right)+\frac{k-1}{z}{H}_{z}\left(0, t_0\right)\right| \leq C\left(-t_0\right)^{-\left(1+\frac{\varepsilon}{2}\right) \alpha}
\end{aligned}
\end{align}
This gives $-\left(G^2\right)_{z z} - \frac{k-1}{z} \left(G^2\right)_{z} \leq C\left(-t_0\right)^{-\left(1+\frac{\varepsilon}{2}\right) \alpha}$ at the point $\left(0, t_0\right)$. This completes the proof of the Proposition \ref{Laplace-G}.

\end{proof}

\begin{cor}\label{refine-max-radius}
Suppose that $\left(\star_\alpha\right)$ holds for some $0<\alpha<\frac{16}{31}$. If $-t$ is sufficiently large, then $-\frac{1}{2} \frac{d}{d t}\left(r_{\max }(t)^2\right) \leq (n-k-1)+C(-t)^{-\left(1+\frac{\alpha^2}{200}\right) \alpha}$.
\end{cor}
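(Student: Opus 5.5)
The idea is to evaluate the evolution equation \eqref{G-equation} for $G$ at the point $z=0$, where $G=r_{\max}(t)$. Then apply Proposition \ref{Laplace-G} at each sufficiently negative time.

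\textbf{Reducing to an operator at the origin.} At $z=0$ we have $G_z(0,t)=0$, so the nonlocal term $-G_z\int_0^{z}(\cdots)$ in \eqref{G-equation} vanishes. The term $(k-1)\frac{F_zG_z}{F}$ has the indeterminate form $0/0$; since $F(0,t)=0$, $F_z(0,t)=1$ and $G$ is even in $z$, its limit as $z\to0$ is $(k-1)G_{zz}(0,t)$. The equation at the origin therefore reads
\begin{align*}
\frac{d}{dt} r_{\max}(t) = G_t(0,t) = k\,G_{zz}(0,t) - \frac{n-k-1}{G(0,t)}.
\end{align*}
Multiplying by $-G(0,t)$ gives
\begin{align*}
-\frac12\frac{d}{dt}\bigl(r_{\max}(t)^2\bigr) = (n-k-1) - k\,G\,G_{zz}(0,t).
\end{align*}

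\textbf{Matching with the operator in Proposition \ref{Laplace-G}.} Because $(G^2)_z=2GG_z$ vanishes at $z=0$, we have
\begin{align*}
\lim_{z\to0}\frac{k-1}{z}(G^2)_z = (k-1)(G^2)_{zz}(0,t),
\end{align*}
and $(G^2)_{zz}(0,t)=2GG_{zz}(0,t)+2G_z^2=2GG_{zz}(0,t)$. Hence
\begin{align*}
-(G^2)_{zz}-\frac{k-1}{z}(G^2)_z\Big|_{z=0} = -2k\,GG_{zz}(0,t),
\end{align*}
and so
\begin{align*}
-\frac12\frac{d}{dt}\bigl(r_{\max}^2\bigr) = (n-k-1) + \frac12\Bigl[-(G^2)_{zz}-\frac{k-1}{z}(G^2)_z\Bigr](0,t).
\end{align*}

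\textbf{Applying the proposition.} By Lemma \ref{r-low}, every time $t$ satisfies the hypothesis $r_{\max}(t)\ge\sqrt{-2(n-k-1)t}$ of Proposition \ref{Laplace-G}. For $-t$ large, that proposition bounds the bracket by $C(-t)^{-(1+\frac{\alpha^2}{200})\alpha}$, which gives the claim.

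\textbf{Main subtleties.} I expect two points to need care. The first is justifying the limit $F_zG_z/F\to G_{zz}(0,t)$ at the origin, which follows from the smoothness conditions (i). The second is making sure the constant $C$ and the threshold for $-t_0$ in Proposition \ref{Laplace-G} are uniform in $t_0$. This holds because that proof depends only on $(\star_\alpha)$ and universal estimates.
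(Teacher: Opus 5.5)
Your proposal is correct and follows the paper's own proof. Like the paper, you evaluate the $G$-equation at $z=0$, where $G=r_{\max}(t)$ and $G_z=0$, and rewrite $-GG_t$ there as $(n-k-1)$ plus one half of the operator bounded in Proposition \ref{Laplace-G}. The hypothesis of that proposition holds at every time by Lemma \ref{r-low}, so applying it gives the claim. You are more explicit than the paper about resolving the $0/0$ terms $(k-1)F_zG_z/F$ and $\frac{k-1}{z}(G^2)_z$ at the origin, which is a welcome detail.
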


\begin{proof}
Consider the point where the radius is maximal. At that point, $G=r_{\max }(t) \geq \sqrt{-2(n-k-1) t}, G_z=0$, and $-\left(G^2\right)_{z z} - \frac{k-1}{z} \left(G^2\right)_{z} \leq C(-t)^{-\left(1+\frac{\alpha^2}{200}\right) \alpha}$ by Proposition \ref{Laplace-G}. Using the evolution equation for $G$, we obtain $-\frac{1}{2(n-k-1)} \frac{\partial}{\partial t}\left(G^2\right) \leq 1+C(-t)^{-\left(1+\frac{\alpha^2}{200}\right) \alpha}$ at the point where the radius is maximal. From this, the assertion follows.
\end{proof}

\begin{cor}\label{iterating-alpha}
Suppose that $\left(\star_\alpha\right)$ holds for some $0<\alpha<\frac{16}{31}$. If $0<\tilde{\alpha}< \min \left\{\left(1+\frac{\alpha^2}{200}\right) \alpha, \frac{16}{31}\right\}$, then $\left(\star_{\tilde{\alpha}}\right)$ holds.
\end{cor}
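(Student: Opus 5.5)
Integrating the differential inequality of Corollary \ref{refine-max-radius} backwards from time $0$ (or from any fixed very negative reference time) gives the claim. Set $\beta:=\left(1+\frac{\alpha^2}{200}\right)\alpha$. Since $\left(\star_\alpha\right)$ holds with $0<\alpha<\frac{16}{31}$, Corollary \ref{refine-max-radius} gives, for all $t\leq T_0$ with $-T_0$ sufficiently large,
\[
-\frac{1}{2}\frac{d}{dt}\left(r_{\max}(t)^2\right)\leq (n-k-1)+C(-t)^{-\beta}.
\]
Because $r_{\max}(t)=G(0,t)$ need not be differentiable, I interpret this inequality in the barrier or upper Dini derivative sense, exactly as in Lemmas \ref{r-low} and \ref{r-up}. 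Integrating requires only that $r_{\max}^2$ be locally Lipschitz. That follows from the evolution equation \eqref{G-equation} and the local smoothness of the flow.

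Integrate from $t$ to $T_0$ for $t<T_0$:
\[
r_{\max}(t)^2\leq r_{\max}(T_0)^2+2(n-k-1)(T_0-t)+2C\int_t^{T_0}(-s)^{-\beta}\,ds.
\]
Since $\beta<1$ (as $\alpha<\frac{16}{31}$ forces $\beta<1$), the integral is bounded by $C'(-t)^{1-\beta}$. The constant $r_{\max}(T_0)^2+2(n-k-1)T_0$ is absorbed into $C''(-t)^{1-\beta}$ for $-t$ large. This yields
\[
r_{\max}(t)^2\leq -2(n-k-1)t\left(1+C(-t)^{-\beta}\right).
\]
Taking square roots and using $\sqrt{1+x}\leq 1+x$ gives $r_{\max}(t)\leq\sqrt{-2(n-k-1)t}\,\bigl(1+C(-t)^{-\beta}\bigr)$.

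For any $0<\tilde\alpha<\min\{\beta,\frac{16}{31}\}$ we have $(-t)^{-\beta}\leq(-t)^{-\tilde\alpha}$ once $-t\geq1$, so $\left(\star_{\tilde\alpha}\right)$ holds. If $\tilde\alpha\le\alpha$ the statement is already implied by $\left(\star_\alpha\right)$, so the content is the improvement $\tilde\alpha>\alpha$.

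There is no serious obstacle. The only points needing care are two. First, the derivative inequality must be integrated rigorously when $r_{\max}$ is merely Lipschitz; I would handle this via the upper Dini derivative, as in Lemma \ref{r-up}. Second, $\beta<1$ is needed so that the integrated error is of order $(-t)^{1-\beta}$ and not logarithmic or larger. The upper cap $\frac{16}{31}$ in the conclusion is imposed only so that the result can be fed back into Corollary \ref{refine-max-radius} in an iteration.
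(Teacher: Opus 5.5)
Your proposal is correct and is essentially the paper's argument: the paper defers to \cite[Corollary 4.6]{ABDS}, whose proof integrates the differential inequality of Corollary \ref{refine-max-radius} from $t$ up to a fixed very negative time and absorbs the resulting constant into $C(-t)^{1-\beta}$. This yields $(\star_\beta)$ itself, and hence $(\star_{\tilde\alpha})$ for every smaller $\tilde\alpha$. Your Dini-derivative caveat is harmless but unnecessary. Since $r_{\max}(t)=G(0,t)$ is the radius of the sphere fixed by the $SO(k)$-action, it is smooth in $t$ and can be integrated directly.
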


\begin{proof}
The proof is the same as \cite[Corollary 4.6]{ABDS}.
\end{proof}

After these preparations, we can now rule out the case in which the positive modes dominate. Suppose that the positive modes dominate, that is, by Proposition \ref{discrete-MZ} we have $\Gamma^0({\tau})+\Gamma^{-}({\tau})+ \Lambda( \tau) \leq o(1) \Gamma^{+}({\tau})$. By \eqref{diff-mode} we would have $\Gamma^{+}({\tau}-1) \leq e^{-1} \Gamma^{+}({\tau})+C \delta({\tau})^{\frac{1}{200}} \Gamma^{+}({\tau})$. Iterating this inequality would yield $\Gamma^{+}({\tau}) \leq O\left(e^{\frac{\ln c}{2}{\tau}}\right)$. In particular, $\rho_{\max }(\tau) \leq O\left(e^{\frac{\ln c}{16}\tau}\right)$. Equivalently, $\frac{r_{\max }(t)}{ \sqrt{-2(n-k-1) t}}\leq\left(1+O\left((-t)^{-\frac{\ln c}{16}}\right)\right.$. Therefore, $\left(\star_\alpha\right)$ holds for $\alpha=\frac{\ln c}{16}$. Iterating Corollary \ref{iterating-alpha} finitely many times, we would conclude that $\left(\star_\alpha\right)$ holds for some $\frac{16}{31}<\alpha\leq \frac{3}{4}$. Using Corollary \ref{diam-alpha}, we would  obtain
\begin{align}\label{eq-contra100}
\operatorname{diam}\left(S^n, g(t)\right)>c(-t)^{\frac{1}{2(1-\alpha)}} > c(-t)^{\frac{31}{30}},
\end{align}
if $(-t)$ is sufficiently large. On the other hand, standard estimates for the change of distances under Ricci flow imply that $-\frac{d}{d t} \operatorname{diam}\left(S^n, g(t)\right) \leq C \sqrt{R_{\text {max }}(t)}$. Since $R_{\max }(t)$ is uniformly bounded from above by Hamilton's Harnack inequality, we conclude that
\begin{align}
\limsup _{t \rightarrow-\infty} (-t)^{-1}\, \operatorname{diam}\left(S^n, g(t)\right)<C.
\end{align}
This contradicts \eqref{eq-contra100}. Thus, we have ruled out the case that the positive mode dominates.


\section{Unique asymptotics of $G(z, t)$ and unique dependence of $F(z, t)$}\label{Unique sharp asymptotics of  G}
In this section, we prove Theorem \ref{unique asymptotics theorem} on unique sharp asymptotics of profile of $G(z, t)$ and Theorem \ref{GuniqueimpliesFunique} that the uniqueness of $G(z, t)$ implies the uniqueness of $F(z, t)$  for  $SO(k)\times SO(n-k+1)$-symmetric ancient ovals.

\subsection{Unique sharp asymptotics in the parabolic region}
In view of the preceding discussion, we now focus on the case that neutral mode dominates, that is, when $\Gamma^+(\tau) + \Gamma^-(\tau) + \Lambda(\tau) \leq o(1) \, \Gamma^0(\tau)$. Recall that
\begin{equation}
    ||g||_{\mathcal{H}}^2=\int_{0}^{+\infty} g^2 \xi^{k-1}e^{-\frac{\xi^2}{4}}d\xi.
\end{equation}
Let us define 
\begin{equation}
    ||g||_{\mathcal{D}}^2=\int_{0}^{+\infty} (g^2+g_\xi^2) \xi^{k-1}e^{-\frac{\xi^2}{4}}d\xi.
\end{equation}
Note that $\|g\|_{\mathcal{D}}^2 = \langle g,(2-\mathcal{L})g \rangle_{\mathcal{H}}$ if $g$ is compactly supported.

Recall that the subspace $\mathcal{H}_0$ is one-dimensional and is spanned by the Hermite polynomial $\xi^2-2k$. We consider the projection of the function 
\begin{align}
\hat{g}(\xi,\tau) := g(\xi,\tau) \, \chi(\delta(\tau)^{\frac{1}{100}} \xi)
\end{align}
onto the subspace $\mathcal{H}_0$. More precisely, we write 
\begin{align}
\mathfrak{p}_0(\hat{g}(\xi,\tau)) =  \alpha(\tau) \, (\xi^2-2k),
\end{align}
where 
\begin{align}
\alpha(\tau) := \frac{1}{\|\xi^2-2k\|^2_{\mathcal{H}}} \int_{0}^\infty (\xi^2-2k) \, \hat{g}(\xi,\tau) \, \xi^{k-1} e^{-\frac{\xi^2}{4}} \, d\xi.
\end{align}
Furthermore, we define 
\begin{align}
A(\tau) := \sup_{\tau' \leq \tau} |\alpha(\tau')|.
\end{align}
Clearly, $\frac{1}{C} \, A(\tau)^2 \leq \Gamma^0(\tau) \leq C \, A(\tau)^2$ for some constant $C$. This implies $\frac{1}{C} \, A(\tau)^2 \leq \Gamma(\tau) \leq C \, A(\tau)^2$. Since $\delta(\tau) \leq C \, \Gamma(\tau)^{\frac{1}{8}}$, we conclude that $\delta(\tau) \leq C \, A(\tau)^{\frac{1}{4}}$. Moreover, by Proposition \ref{discrete-MZ}, we have 
$\Lambda(\tau)\leq o(1) \Gamma(\bar\tau) \leq o(1)A(\bar\tau)^2$.

\begin{lem}\label{neutral-dominate-positive}
We have $\|\mathfrak{p}_+ \hat{g}(\xi,\tau)\|_{\mathcal{H}} \leq o(1) \, A(\tau)$.
\end{lem}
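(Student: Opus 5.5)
The plan is to read the estimate off directly from the neutral-mode-dominance alternative of Proposition \ref{discrete-MZ}. We are now in the case where the positive modes have been ruled out in Section \ref{rule out+}. So we have
\begin{equation*}
\Gamma^+(\tau)+\Gamma^-(\tau)+\Lambda(\tau)\leq o(1)\,\Gamma^0(\tau)\qquad\text{as }\tau\to-\infty.
\end{equation*}
Since this only controls suprema over past times, the first step is to pass from $\Gamma^+$ to the instantaneous quantity. By definition,
\begin{equation*}
\|\mathfrak{p}_+\hat g(\cdot,\tau)\|_{\mathcal H}^2=\gamma_+(\tau)\leq \sup_{\tau'\leq\tau}\gamma_+(\tau')=\Gamma^+(\tau),
\end{equation*}
so it suffices to bound $\Gamma^+(\tau)$ by $o(1)A(\tau)^2$.

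The second step is to compare $\Gamma^0$ with $A^2$. Since $\mathcal H_0$ is one-dimensional and spanned by $\xi^2-2k$, we have
\begin{equation*}
\gamma_0(\tau')=\|\mathfrak{p}_0\hat g(\cdot,\tau')\|_{\mathcal H}^2=\alpha(\tau')^2\,\|\xi^2-2k\|_{\mathcal H}^2 .
\end{equation*}
Taking the supremum over $\tau'\leq\tau$ gives
\begin{equation*}
\Gamma^0(\tau)=\|\xi^2-2k\|_{\mathcal H}^2\,A(\tau)^2\leq C\,A(\tau)^2 .
\end{equation*}
This is the inequality $\Gamma^0\leq CA^2$ already recorded just before the lemma.

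Combining the two steps gives
\begin{equation*}
\|\mathfrak{p}_+\hat g(\cdot,\tau)\|_{\mathcal H}^2\leq\Gamma^+(\tau)\leq o(1)\,\Gamma^0(\tau)\leq o(1)\,C\,A(\tau)^2 .
\end{equation*}
Taking square roots yields $\|\mathfrak{p}_+\hat g(\cdot,\tau)\|_{\mathcal H}\leq o(1)A(\tau)$, because the square root of a quantity tending to zero still tends to zero.

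I expect no serious obstacle. The only point needing care is that the $o(1)$ in Proposition \ref{discrete-MZ} is uniform as $\tau\to-\infty$, and that the neutral alternative really is the one in force. The latter is exactly what was established by the diameter contradiction at the end of Section \ref{rule out+}.

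If a sharper, non-sup form were needed later, the fallback is to integrate the differential inequality for $\|\mathfrak{p}_+\hat g\|^2_{\mathcal H}$ from \eqref{eq-proj-g} forward from $-\infty$, using the error bound $C\delta^{1/200}(\Gamma+\Lambda)\leq o(1)A^2$. That route yields the same conclusion, but it is not required here.
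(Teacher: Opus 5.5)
Your proof is correct and is essentially the paper's argument: bound $\gamma_+(\tau)$ by $\Gamma^+(\tau)$, invoke the neutral-dominance alternative of Proposition \ref{discrete-MZ} (the positive-dominance case having been excluded in Section \ref{rule out+}), and compare with $A(\tau)^2$. The only difference is that the paper passes through $\Gamma\leq CA^2$, while you use the exact identity $\Gamma^0=\|\xi^2-2k\|_{\mathcal H}^2A^2$; these amount to the same step.
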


\begin{proof}
We have 
\begin{align}\|\mathfrak{p}_+ \hat{g}(\xi,\tau)\|_{\mathcal{H}}^2 \leq \Gamma^+(\tau) \leq o(1) \, \Gamma(\tau) \leq o(1) \, A(\tau)^2.
\end{align}
This proves the assertion.
\end{proof}

\begin{lem}\label{neutral-dominate-negative}
We have $\|\mathfrak{p}_- \hat{g}(\xi,\tau)\|_{\mathcal{H}} \leq C \, \delta(\tau)^{\frac{1}{400}} \, A(\tau)$.
\end{lem}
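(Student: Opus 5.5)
We follow the argument of \cite[Lemma 4.?]{ABDS}, which rests on the differential inequality for the stable modes. The plan is to redo the integration of the third inequality in \eqref{eq-proj-g}, now feeding in the information that the neutral mode dominates. In that regime Proposition \ref{discrete-MZ} gives $\Gamma^+(\tau)+\Gamma^-(\tau)+\Lambda(\tau)\le o(1)\Gamma^0(\tau)$. It also gives $\Gamma(\tau)+\Lambda(\tau)\le C A(\tau)^2$. Moreover, $\exp(-\tfrac{1}{64}\delta(\tau)^{-\frac{1}{50}})\le C\delta(\tau)\Gamma(\tau)\le C\delta(\tau)A(\tau)^2$.

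The first step is to substitute these bounds into the evolution inequality for $\gamma^-$. For $\tau'\le\tau$ this yields
\begin{equation*}
\frac{d}{d\tau'}\|\mathfrak{p}_-\hat g(\cdot,\tau')\|_{\mathcal H}^2\le -\|\mathfrak{p}_-\hat g(\cdot,\tau')\|_{\mathcal H}^2+C\delta(\tau')^{\frac{1}{200}}A(\tau')^2 .
\end{equation*}
Note that the right-hand side involves $\sup_{[\tau'-1,\tau']}$, and $\Gamma,\Lambda$ are monotone, so it is controlled by $A(\tau')^2$. The second step is to integrate this from $-\infty$ to $\tau$. We use $\gamma^-(\tau')\to0$ as $\tau'\to-\infty$, which holds since $\gamma\le C\delta^{1/4}\to 0$. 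Since $A$ and $\delta$ are nondecreasing, we get
\begin{equation*}
\gamma^-(\tau)\le \int_{-\infty}^{\tau}e^{-(\tau-\tau')}C\delta(\tau')^{\frac{1}{200}}A(\tau')^2\,d\tau'\le C\delta(\tau)^{\frac{1}{200}}A(\tau)^2 .
\end{equation*}
Taking square roots then gives $\|\mathfrak{p}_-\hat g\|_{\mathcal H}\le C\delta(\tau)^{\frac1{400}}A(\tau)$.

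The main obstacle is justifying the differential inequality with the cutoff $\hat g$. The cutoff depends on $\tau$ through $\delta(\tau)$, so differentiating produces extra terms of the form $\delta'\delta^{-1}\xi\chi'(\delta^{1/100}\xi)\,g$. These are supported where $\xi\sim\delta^{-1/100}$. We bound them using $0\le\delta'\le1$ from Lemma \ref{derivative-delta} together with the Gaussian weight, which gives a contribution $\le C\exp(-\tfrac1{32}\delta^{-1/50})$. That contribution is absorbed into $C\delta A^2$ as above.

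The coupling term in $E_1$ involving $f$ is controlled through Proposition \ref{Error-E2-estimate}. Its contribution is $C\delta^{1/100}\lambda\le C\delta^{1/100}\Lambda\le o(1)\delta^{1/100}A^2$, so it does not affect the rate. If the pointwise-in-time derivative is problematic, we can instead work with the discrete version in Lemma \ref{gamma-lambda-diff}: iterate $\gamma^-(\tau-1)\ge c\gamma^-(\tau)-C\delta^{1/200}A^2$ backward and sum the geometric series in $c^{-j}$.
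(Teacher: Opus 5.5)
Your proof is correct and follows essentially the same route as the paper, which defers to \cite[Lemma 5.2]{ABDS}. That argument takes the stable-mode differential inequality \eqref{eq-proj-g}, bounds its error by $C\delta^{\frac{1}{200}}A^2$ using $\Gamma+\Lambda\le CA^2$ in the neutral-dominant regime and the absorption of the exponential term, and then integrates from $-\infty$ using the monotonicity of $\delta$ and $A$, exactly as you do. A minor remark: only boundedness of $\gamma^-$, not its decay, is needed to discard the boundary term at $-\infty$.
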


\begin{proof}
The proof is similar to \cite[Lemma 5.2]{ABDS}.
\end{proof}

\begin{lem}\label{neutral-dominate-negative-D}
We have $\|\mathfrak{p}_- \hat{g}(\xi,\tau)\|_{\mathcal{D}} \leq C \, \delta(\tau)^{\frac{1}{400}} \, A(\tau)$.
\end{lem}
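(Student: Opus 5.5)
The plan is to follow the $\mathcal{D}$-norm upgrade in ABDS (their Lemma 5.3). We first use the evolution equation for $\hat g$ to control $\|\mathfrak{p}_-\hat g\|_{\mathcal D}$ by the $\mathcal{H}$-norm bound of Lemma \ref{neutral-dominate-negative}, plus a time-integrated error. Set $\hat g_-:=\mathfrak{p}_-\hat g$. Since $\hat g$ is compactly supported in $[0,\delta(\tau)^{-1/100}]$ and $\mathfrak{p}_-$ commutes with $\mathcal L_1$, we have
\begin{equation*}
\partial_\tau \hat g_- = \mathcal L_1 \hat g_- + \mathfrak{p}_-\bigl(\hat g_\tau-\mathcal L_1\hat g\bigr).
\end{equation*}
We pair this with $(2-\mathcal L_1)\hat g_-$ in $\mathcal H$. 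Using $\|\hat g_-\|_{\mathcal D}^2=\langle \hat g_-,(2-\mathcal L_1)\hat g_-\rangle_{\mathcal H}$ and the spectral gap on the negative modes ($\mathcal L_1\le -1$ on $\mathcal H_-$), we get
\begin{equation*}
\frac{d}{d\tau}\|\hat g_-\|_{\mathcal D}^2 \le -\|\hat g_-\|_{\mathcal D}^2 + C\,\|\hat g_\tau-\mathcal L_1\hat g\|_{\mathcal H}^2 .
\end{equation*}
To get this, we absorb the cross term $\langle (2-\mathcal L_1)\hat g_-, \mathfrak{p}_-(\hat g_\tau-\mathcal L_1\hat g)\rangle$ by Cauchy--Schwarz against the good term $-\|(-\mathcal L_1)^{1/2}(2-\mathcal L_1)^{1/2}\hat g_-\|^2$. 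This is cleanest in the eigenbasis.

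Next we bound the forcing term. The estimate in the proof of Lemma \ref{gamma-lambda-diff} (coming from Proposition \ref{Error-E2-estimate}, Lemmas \ref{g-C01} and \ref{g-C23}, and the cutoff commutator terms) gives
\begin{equation*}
\|\hat g_\tau-\mathcal L_1\hat g\|_{\mathcal H}^2\le C\delta^{\frac1{100}}\bigl(\Gamma+\Lambda\bigr)+C\exp\bigl(-\tfrac1{32}\delta^{-\frac1{50}}\bigr).
\end{equation*}
In the neutral-dominant regime we have $\Gamma\le CA^2$ and $\Lambda\le o(1)A^2$. We also have $\delta\le CA^{1/4}$, so the exponential term is $\le C\delta A^2$. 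Hence the forcing is $\le C\delta(\tau)^{1/100}A(\tau)^2$.

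Finally we integrate the differential inequality from $-\infty$, or equivalently over $[\tau-1,\tau]$ with a Gronwall factor, using the monotonicity of $\delta$ and $A$ to get
\begin{equation*}
\|\hat g_-(\tau)\|_{\mathcal D}^2\le C\delta(\tau)^{\frac1{100}}A(\tau)^2 .
\end{equation*}
Taking square roots gives $\|\hat g_-\|_{\mathcal D}\le C\delta^{1/200}A\le C\delta^{1/400}A$, since $\delta\le 1$.

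The main obstacle is the cross term. The error is only in $L^2_{\mathcal H}$, not in $\mathcal D$, so we need the extra dissipation $-\langle \hat g_-,\mathcal L_1(2-\mathcal L_1)\hat g_-\rangle$ to control $\langle (2-\mathcal L_1)\hat g_-,e\rangle$. On eigenmodes with eigenvalue $\mu\le -1$, the bound $(2-\mu)|c\,e|\le \tfrac12|\mu|(2-\mu)c^2+C(2-\mu)|\mu|^{-1}e^2$ keeps $(2-\mu)/|\mu|$ bounded, so this works.

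A second, minor point is justifying the time-integration from $-\infty$. This follows because $\|\hat g_-\|_{\mathcal D}\to0$ as $\tau\to-\infty$, by Lemma \ref{g-C01}. Alternatively, if the regularity bookkeeping is an issue, we can interpolate $\|\hat g_-\|_{\mathcal D}^2\le \|\hat g_-\|_{\mathcal H}\|(2-\mathcal L_1)\hat g_-\|_{\mathcal H}$. We then combine Lemma \ref{neutral-dominate-negative} with a crude bound $\|(2-\mathcal L_1)\hat g_-\|_{\mathcal H}\le C A$ obtained from the derivative bounds in Lemmas \ref{g-C-m} and \ref{g-C23}. That route gives $\delta^{1/800}$ and would need Lemma \ref{neutral-dominate-negative} with a slightly better exponent, so the energy method is the main route.
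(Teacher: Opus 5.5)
Your main energy argument is correct. It differs from the paper's route mainly in which lemma it relies on. The paper only cites ABDS and lists Lemma \ref{neutral-dominate-negative} among its inputs: it upgrades the already established $\mathcal H$-bound $\|\mathfrak p_-\hat g\|_{\mathcal H}\le C\delta^{1/400}A$ to a $\mathcal D$-bound, which is why the exponent $1/400$ is carried over unchanged.

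You instead keep the full dissipative term in the $\mathcal D$-energy identity. The eigenmode absorption with $\mu\le -1$ gives $\frac{d}{d\tau}\|\mathfrak p_-\hat g\|_{\mathcal D}^2\le -\|\mathfrak p_-\hat g\|_{\mathcal D}^2+C\|\hat g_\tau-\mathcal L_1\hat g\|_{\mathcal H}^2$. You bound the forcing pointwise in time by $C\delta^{1/100}A^2$, using $\Gamma\le CA^2$, $\Lambda\le o(1)A^2$ and $\delta\le CA^{1/4}$ to absorb the exponential tail. You then integrate over $(-\infty,\tau]$, using the monotonicity of $\delta$ and $A$ and the boundedness of $\|\mathfrak p_-\hat g\|_{\mathcal D}$ from Lemma \ref{g-C01}.

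This buys two things. Lemma \ref{neutral-dominate-negative} becomes unnecessary; your argument in fact re-proves it, since $\|\cdot\|_{\mathcal H}\le\|\cdot\|_{\mathcal D}$. You also get the sharper exponent $\delta^{1/200}$. The only cost is that the neutral-dominance bounds are used on the entire past $s\le\tau$, where they do hold. The paper's route, by contrast, only needs information on a unit time interval once the $\mathcal H$-bound is known. The key absorption computation is the same in both.

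Two small corrections. Your opening sentence says you will control the $\mathcal D$-norm via Lemma \ref{neutral-dominate-negative}, but the argument you actually give never uses it, so that sentence should be removed. The fallback in your last paragraph should also be dropped. Lemmas \ref{g-C-m} and \ref{g-C23} give only $O(1)$ and $O(\delta^{1/16})$ bounds on derivatives of $g$. Since one only knows $\delta\le CA^{1/4}$, these do not yield $\|(2-\mathcal L_1)\mathfrak p_-\hat g\|_{\mathcal H}\le CA$.
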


\begin{proof}
Using Proposition \ref{evolution of (f, g)}, Proposition \ref{Error-E2-estimate}, Lemma \ref{derivative-delta} and Lemma \ref{neutral-dominate-negative}, the proof is similar to \cite[Lemma 5.3]{ABDS}.

\end{proof}

\begin{lem}\label{bound-integrals}
We have 
\begin{align}
\int_{0}^\infty  |\hat{g}(\xi,\tau) -  \alpha(\tau) \, (\xi^2-2k)|^2 \, \xi^{k-1} e^{-\frac{\xi^2}{4}}  \, (1+0 \leq \xi)^4 \, d\xi \leq o(1) \, A(\tau)^2
\end{align}
and 
\begin{align}
\int_{0}^\infty  |\hat{g}_\xi(\xi,\tau) -  2 \,\alpha(\tau)\xi|^2 \, \xi^{k-1} e^{-\frac{\xi^2}{4}}  \, (1+0 \leq \xi)^4 \, d\xi \leq o(1) \, A(\tau)^2
\end{align} 
\end{lem}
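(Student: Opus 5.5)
The plan follows \cite[Lemma 5.4]{ABDS}. I read the weight $(1+0\leq\xi)^4$ as $(1+\xi)^4$. Set $u:=\hat g-\alpha(\tau)(\xi^2-2k)$, so that $u=\mathfrak p_+\hat g+\mathfrak p_-\hat g$. The summand $\mathfrak p_+\hat g$ is a constant multiple of $1$. By Lemma \ref{neutral-dominate-positive} it has $\mathcal H$-norm $o(1)A(\tau)$, and since it is a constant its $(1+\xi)^4$-weighted norm is comparable to that, while its derivative vanishes. So both integrals reduce to the negative part $v:=\mathfrak p_-\hat g$. From Lemmas \ref{neutral-dominate-negative} and \ref{neutral-dominate-negative-D} we already know $\|v\|_{\mathcal D}\le C\delta^{1/400}A$.

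The basic tool is the Gaussian weighted Poincaré inequality on $[0,\infty)$ with weight $w=\xi^{k-1}e^{-\xi^2/4}$. For compactly supported $\varphi$, integrating $\partial_\xi(\xi^{k}e^{-\xi^2/4})\varphi^2$ by parts and applying Cauchy--Schwarz gives
\[
\int \xi^2\varphi^2 w\,d\xi\le C\int(\varphi^2+\varphi_\xi^2)\,w\,d\xi .
\]
With $\varphi=v$ this yields $\int\xi^2 v^2 w\le C\delta^{1/400}A^2$. I then apply it again with $\varphi=\xi v$, which gives
\[
\int\xi^4v^2w\le C\int(\xi^2 v^2+v^2+\xi^2v_\xi^2)\,w .
\]
In the same way, the second statement needs $\int (1+\xi)^4 v_\xi^2 w$, which in turn reduces to controlling $\int(v_\xi^2+v_{\xi\xi}^2)w$ (the latter weighted by $\xi^2$ after one more application). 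So everything reduces to a $\mathcal D$-type bound for one more derivative of $\hat g-\alpha(\xi^2-2k)$.

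For this derivative bound I would differentiate the equation $g_\tau=\mathcal L_1g+E_1$ in $\xi$. The function $q:=g_\xi$ (with $q(0)=0$) solves
\[
q_\tau=q_{\xi\xi}+\Big(\tfrac{k-1}{\xi}-\tfrac{\xi}{2}\Big)q_\xi-\tfrac{k-1}{\xi^2}q+\tfrac12q+\partial_\xi E_1 .
\]
This operator is self-adjoint on $L^2(\xi^{k-1}e^{-\xi^2/4})$ restricted to functions vanishing at $0$, and its top eigenfunction is $\xi$, which is exactly the derivative of the neutral mode. I would cut off $q$ as for $\hat g$ and bound $\partial_\xi E_1$ in the same weighted norm, by the arguments of Proposition \ref{Error-E2-estimate} together with Lemmas \ref{g-C01}, \ref{g-C23} and \ref{gequation errorf_2}. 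That gives a bound $C\delta^{1/100}(\Gamma+\Lambda)+C\exp(-\tfrac18\delta^{-1/50})\le C\delta^{1/200}A^2$, using $\Lambda\le o(1)A^2$ from Proposition \ref{discrete-MZ}. Repeating the energy argument of Lemma \ref{neutral-dominate-negative-D} for $q-2\alpha\xi$ should then give $\|q-2\alpha\xi\|_{\mathcal D}\le o(1)A$. The drift $\alpha'(\tau)$ of the projection coefficient is controlled by Lemma \ref{gamma-lambda-diff}. Combining this with the two Poincaré steps gives both displayed estimates.

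The main obstacle is this derivative step: bounding $\partial_\xi E_1$, in particular the $f$-coupled terms $(k-1)g_\xi\big(\tfrac{f}{\xi(f+\xi)}+\int_0^\xi\cdots\big)$ after differentiation. These produce terms like $g_{\xi\xi}\cdot(\ldots)$ and $g_\xi f_\xi/\xi^2$, which must be controlled by $\Lambda$ via the weighted Hardy inequality \eqref{decay times k-3}, using $|g_\xi|,|g_{\xi\xi}|\le C\delta^{1/16}$. If this proves too delicate, the fallback is to bound $\int\xi^2 v_\xi^2w$ directly. One would interpolate between $\|v\|_{\mathcal D}$ and the higher-moment bound on $\hat g$ supplied by Lemma \ref{g-C-m} on the support $\xi\le\delta^{-1/100}$, and absorb the resulting losses using $\delta\le CA^{1/4}$.
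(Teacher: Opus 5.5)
Your fallback is the right argument and is essentially the paper's proof, which defers to \cite[Lemma 5.4]{ABDS} using only Lemmas \ref{neutral-dominate-positive} and \ref{neutral-dominate-negative-D}. Your main route, however, does not close as stated.

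Write $w=\xi^{k-1}e^{-\xi^2/4}$. For the second estimate you need $\int\xi^4 v_\xi^2 w$. Via the Poincar\'e inequality this requires $\int \xi^2 v_{\xi\xi}^2 w$, and hence $\int v_{\xi\xi\xi}^2 w$. A $\mathcal D$-bound for $q-2\alpha\xi=v_\xi$ only controls $v_\xi$ and $v_{\xi\xi}$. That suffices for the first estimate but not the second.

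So you would have to differentiate the equation twice. Then $\partial_\xi^2E_1$ contains terms such as $g_{\xi\xi}\,\partial_\xi\bigl(\frac{f}{\xi(f+\xi)}\bigr)$, of size $\delta^{1/16}|f_\xi|/\xi^2$. Its squared $L^2_w$-norm carries the weight $\xi^{k-5}$, which is not controlled by $\Lambda$ (weight $\xi^{k-3}$) without an additional Dirichlet-type energy bound for $h$. This is a genuine extra step your sketch does not account for, and it is unnecessary.

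Promote the fallback to the main argument, and apply it to $v=\mathfrak p_-\hat g$ rather than to $u$. The constant $\mathfrak p_+\hat g$ is handled exactly as you say. The point is that only $v$ carries the quantitative gain $\|v\|_{\mathcal D}^2\le C\delta^{1/200}A^2$ from Lemma \ref{neutral-dominate-negative-D}; a mere $o(1)$ there would not suffice.

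No higher derivatives are needed. Lemma \ref{g-C01}, together with $|\alpha|+|\mathfrak p_+\hat g|\le C$, gives the crude bound $|v|+|v_\xi|\le C(1+\xi)^2$. H\"older's inequality then gives
\[
\int_0^\infty (v^2+v_\xi^2)(1+\xi)^4\,w\,d\xi\le\Bigl(\int_0^\infty (v^2+v_\xi^2)\,w\,d\xi\Bigr)^{\theta}\Bigl(\int_0^\infty (v^2+v_\xi^2)(1+\xi)^{\frac{4}{1-\theta}}\,w\,d\xi\Bigr)^{1-\theta}\le C_\theta\bigl(\delta^{\frac{1}{200}}A^2\bigr)^{\theta}\le C_\theta\,A^{2\theta+\frac{\theta}{800}},
\]
where the last step uses $\delta\le CA^{1/4}$. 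For $\theta>1600/1601$ the exponent exceeds $2$, so the right side is $o(1)A^2$ because $A\to0$.

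Since $u=\mathfrak p_+\hat g+v$ and $u_\xi=v_\xi$, both weighted estimates follow at once. This uses no Poincar\'e inequality and no differentiated equation.
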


\begin{proof}
Using Lemma \ref{neutral-dominate-positive} and  Lemma \ref{neutral-dominate-negative-D}, the proof follows similarly as in \cite[Lemma 5.4]{ABDS}. 
\end{proof}



\begin{prop}
\label{projection-E2-neutral}
The function $g$ satisfies $\frac{\partial}{\partial \tau} g(\xi,\tau) = \mathcal{L}_1 g(\xi,\tau) + E_1(\xi,\tau)$, where  
\begin{align}
\intde &\quad E_1(\xi,\tau) \, \frac{\xi^2-2k}{\|\xi^2-2k\|^{2}_{\mathcal{H}}} \, \xi^{k-1}\, e^{-\frac{\xi^2}{4}} \, d\xi \\
&=-\beta_{n, k}^{-1}\alpha(\tau)^2 + o(A(\tau)^2)
\end{align}  
where
\begin{align}
    \beta_{n, k}&=\frac{\sqrt{(n-k-1)}}{4\sqrt{2}}.
\end{align}
\end{prop}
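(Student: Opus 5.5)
The plan is to compute the neutral-mode projection of $E_1$ term by term, using the explicit form \eqref{g-error-expansion}. In each term we replace $g$ by $\alpha(\tau)(\xi^2-2k)$ and $g_\xi$ by $2\alpha(\tau)\xi$, and show that everything not quadratic in $(g,g_\xi)$ is $o(A(\tau)^2)$. Write $m:=n-k-1$ and $w(\xi)=\xi^{k-1}e^{-\xi^2/4}$. We split
\[
E_1=\Big(-\tfrac{g^2}{2\sqrt{2m}}+O(g^3)\Big)-\tfrac{g_\xi^2}{\sqrt{2m}}+\mathcal{R}_1+\mathcal{R}_2,
\]
where the first bracket comes from \eqref{gequation term expansion}, and $\mathcal{R}_1$ collects $-g_\xi^2\big(\tfrac{1}{g+\sqrt{2m}}-\tfrac{1}{\sqrt{2m}}\big)$ and $-(n-k)g_\xi\int_0^\xi\frac{g_\eta^2}{(g+\sqrt{2m})^2}\,d\eta$. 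The term $\mathcal{R}_2$ is the $f$-coupling $-(k-1)g_\xi\big(\frac{f}{\xi(f+\xi)}+\int_0^\xi\frac{f_\eta(f_\eta+1)}{(f+\eta)^2}d\eta\big)$.

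\emph{Cubic remainders.} By \eqref{g error geta^2 integral estimate}, $|\mathcal{R}_1|+|O(g^3)|\le C(|g|+|g_\xi|)(g^2+g_\xi^2)$. Lemma \ref{g-C01} gives $|g|+|g_\xi|\le C\delta^{1/4}$ on $[0,\delta^{-1/100}]$. Hence after pairing with $(\xi^2-2k)w$ this is bounded by $C\delta^{1/4}\int(g^2+g_\xi^2)(1+\xi^2)w$. Since $\hat g=g$ on $[0,\tfrac12\delta^{-1/100}]$ and the part near the cutoff is $O(\exp(-c\delta^{-1/50}))=o(A^2)$, Lemma \ref{bound-integrals} bounds this integral by $CA^2$. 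So these terms contribute $o(A^2)$.

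\emph{Coupling term.} Here a plain $L^2$ bound is not enough: Proposition \ref{Error-E2-estimate} only yields $\delta^{1/200}\Lambda^{1/2}=o(A)$. Instead we use Cauchy--Schwarz so that one factor carries $g_\xi$ (size $A$) and the other carries $h=f_\xi$. By \eqref{f-slope} and Lemma \ref{apriori-f}, $\big|\frac{f}{\xi(f+\xi)}\big|\le 2\frac{|f_\xi|}{\xi}$ and $\big|\int_0^\xi\frac{f_\eta(f_\eta+1)}{(f+\eta)^2}\big|\le 4\int_0^\xi\frac{|f_\eta|}{\eta^2}$. Then
\[
\Big|\int\mathcal{R}_2(\xi^2-2k)w\Big|\le C\Big(\int g_\xi^2\xi^2(1+\xi^2)^2 w\Big)^{1/2}\Big(\int\Big[\tfrac{f_\xi^2}{\xi^4}+\tfrac{1}{\xi^2}\Big(\int_0^\xi\tfrac{|f_\eta|}{\eta^2}\Big)^2\Big]\xi^2 w\Big)^{1/2}.
\]
The first factor is $\le CA$ by Lemma \ref{bound-integrals}. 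The second is $\le C\Lambda^{1/2}+Ce^{-c\delta^{-1/50}}$, directly for the first summand and via the weighted Hardy estimate \eqref{decay times k-3} for the second. Since $\Lambda\le o(1)A^2$ by Proposition \ref{discrete-MZ}, this gives $o(A^2)$. This step is the main obstacle: one must check that the weights match exactly, so that $\xi^{k-3}$ appears for $h$, and that the Hardy inequality applies with the cutoff.

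\emph{Main quadratic term.} By Lemma \ref{bound-integrals}, replacing $g\to\alpha(\xi^2-2k)$ and $g_\xi\to2\alpha\xi$ in the quadratic terms costs $o(A^2)$, using $(a^2-b^2)=(a-b)(a+b)$ and Cauchy--Schwarz with the polynomial weights. It remains to evaluate Gaussian moments. Set $I_j=\int_0^\infty\xi^{k-1+2j}e^{-\xi^2/4}d\xi$, so that $I_{j+1}=2(k+2j)I_j$. One finds
\[
\|\xi^2-2k\|_{\mathcal H}^2=8kI_0,\qquad \int(\xi^2-2k)^3w=64kI_0,\qquad \int\xi^2(\xi^2-2k)w=8kI_0 .
\]
Hence the normalized projection equals $-\alpha^2\big(\tfrac{8}{2\sqrt{2m}}+\tfrac{4}{\sqrt{2m}}\big)=-\tfrac{4\sqrt2}{\sqrt m}\alpha^2=-\beta_{n,k}^{-1}\alpha^2$. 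Adding the $o(A^2)$ errors gives the claim.
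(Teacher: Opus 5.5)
Your argument is correct. For the quadratic terms and the cubic remainders $E_1^1,E_1^2,E_1^3$ it is the paper's argument: the same splitting of $E_1$, the same replacement of $g,g_\xi$ by $\alpha(\xi^2-2k),2\alpha\xi$ via Lemma \ref{bound-integrals}, and the same moment ratios. Your recursion $I_{j+1}=2(k+2j)I_j$ just verifies the values $8$ and $1$ that the paper asserts.

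The genuine difference is the coupling term $E_1^4=-(k-1)g_\xi\bigl(\frac{f}{\xi(f+\xi)}+\int_0^\xi\frac{f_\eta(f_\eta+1)}{(f+\eta)^2}d\eta\bigr)$.
\begin{itemize}
\item The paper estimates it through $\|\hat E_1^4\|_{\mathcal H}$, i.e.\ through Lemmas \ref{gequation errorf_1} and \ref{gequation errorf_2}, combined with the splitting $\xi^2-2k=((\xi^2-2k)-g)+g$.
\item Those lemmas control $g_\xi$ only pointwise, by $C\delta^{1/4}$. They therefore give only $\|\hat E_1^4\|_{\mathcal H}\le C\delta^{1/200}\Lambda^{1/2}+(\text{exp.\ small})=o(A)$.
\item Moreover, $(\xi^2-2k)-g$ has $\mathcal H$-norm of order one, not $o(1)A$. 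As written, that chain yields only $o(A)$, not the required $o(A^2)$.
\item Replacing $\xi^2-2k$ by $\alpha(\xi^2-2k)$ and dividing by $|\alpha|$ would not rescue it. It would also need $|\alpha|\gtrsim A$, which is only established later in Corollary \ref{CorA=a}.
\end{itemize}
Your bilinear Cauchy--Schwarz exploits the product structure of $E_1^4$ instead. It keeps $g_\xi$ in a polynomially weighted $L^2$ norm, where it is $O(A)$ by Lemma \ref{bound-integrals}. It puts all $f$-dependence into an $\mathscr H$-type factor, which is $O(\Lambda^{1/2})=o(A)$ by \eqref{f-slope}, Lemma \ref{apriori-f}, the Hardy bound \eqref{decay times k-3} and Proposition \ref{discrete-MZ}. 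This is what actually delivers $o(A^2)$. The paper's route only buys the convenience of reusing the already packaged $E_1$ estimates, which discard the smallness of $g_\xi$ in $A$.

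One bookkeeping fix is needed in your display. Your second factor equals $\int\bigl[\frac{f_\xi^2}{\xi^2}+\bigl(\int_0^\xi\frac{|f_\eta|}{\eta^2}d\eta\bigr)^2\bigr]w$, so the matching first factor is $\int g_\xi^2(1+\xi^2)^2w$, not $\int g_\xi^2\xi^2(1+\xi^2)^2w$. Your version would need $|\xi^2-2k|\le C\xi(1+\xi^2)$, which is false near $\xi=0$. It would also need a $(1+\xi)^6$ weight, which Lemma \ref{bound-integrals} does not supply. Since $(1+\xi^2)^2\le(1+\xi)^4$, the corrected factor is $\le CA$, and the estimate goes through as you state.
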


\begin{proof}
Recall that the source term $E_1(\xi,\tau)$ is given by  {\eqref{g-error-expansion}}.
Let us write it as
\begin{align}
\begin{aligned}
    E_1(\xi,\tau) &= -\frac{1}{2\sqrt{2(n-k-1)}} \, g(\xi,\tau)^2 - \frac{1}{\sqrt{2(n-k-1)}} \, g_\xi(\xi,\tau)^2\\
    &+ E_1^1(\xi,\tau) + E^2_1(\xi,\tau) + E_1^3(\xi,\tau) + E_1^4(\xi,\tau)
\end{aligned}
\end{align}
where
\begin{align}
\begin{aligned}
E_1^1(\xi,\tau) &= -g(\xi,\tau) + \frac{1}{2\sqrt{2(n-k-1)}} \, g(\xi,\tau)^2 -   \frac{(n-k-1)}{g(\xi,\tau) + \sqrt{2(n-k-1)})} \\
&+\frac{1}{2}(g(\xi,\tau) + \sqrt{2(n-k-1)}),
\end{aligned}
\end{align}
\begin{align}
E_1^2(\xi,\tau) = \frac{g_\xi^2(\xi,\tau)}{\sqrt{2(n-k-1)}} -\frac{g_\xi^2(\xi,\tau)}{g(\xi,\tau) + \sqrt{2(n-k-1)})},
\end{align}
\begin{align}
E_1^3(\xi,\tau) = -(n-k)g_{\xi}(\xi,\tau)\int_0^\xi \frac{g_{\eta}^2(\eta,\tau)}{(g(\eta,\tau) + \sqrt{2(n-k-1)})^{2}} 
\end{align}
\begin{align}
E_1^4(\xi,\tau) = -(k-1)g_\xi(\xi,\tau)\left( \frac{f(\xi,\tau)}{\xi(f(\xi,\tau)+\xi)} +\int_{0}^{\xi}
     \frac{f_{\eta}(\eta,\tau)(f_{\eta}(\eta,\tau) +1) }{(f(\eta,\tau)  + \eta)^2} d\eta\right).
\end{align}
Using Lemma \ref{bound-integrals} and the fact
\begin{equation}
    \frac{\langle(\xi^2-2k)^2, \xi^2-2k\rangle_{\mathcal{H}}}{\|\xi^2-2k\|^2_{\mathcal{H}}}=8
\end{equation}
 we obtain 
\begin{align} 
\begin{aligned}
&\quad -\|\xi^2-2k\|^{-2}_{\mathcal{H}}\frac{1}{2\sqrt{2(n-k-1)}} \int_{0}^\infty \hat{g}(\xi,\tau)^2 \, (\xi^2-2k)\, \xi^{k-1} \, e^{-\frac{\xi^2}{4}} \, d\xi \\ 
&= -\alpha(\tau)^2  \frac{1}{2\sqrt{2(n-k-1)}}\frac{\langle(\xi^2-2k)^2, \xi^2-2k\rangle_{\mathcal{H}}}{\|\xi^2-2k\|^2_{\mathcal{H}}}+ o(1) \, A(\tau)^2\\
&=-\frac{8}{2\sqrt{2(n-k-1)}} \alpha(\tau)^2 + o(1) \, A(\tau)^2 \\
&=-\frac{4}{\sqrt{2(n-k-1)}} \alpha(\tau)^2 + o(1) \, A(\tau)^2
\end{aligned}
\end{align}
and  by 
\begin{equation}
    \langle \xi^2-2k, 1\rangle_{\mathcal{H}}=0
\end{equation}
\begin{align} 
\begin{aligned}
&\quad-\|\xi^2-2k\|^{-2}_{\mathcal{H}}\frac{1}{\sqrt{2(n-k-1)}} \int_{0}^\infty \hat{g}_\xi(\xi,\tau)^2 \, (\xi^2-2k)\, \xi^{k-1} \, e^{-\frac{\xi^2}{4}} \, d\xi \\ 
&=-\|\xi^2-2k\|^{-2}_{\mathcal{H}}\frac{1}{\sqrt{2(n-k-1)}}  4\alpha(\tau)^2 \int_{0}^\infty  \xi^2\, (\xi^2-2k) \, \xi^{k-1} \, e^{-\frac{\xi^2}{4}} \, d\xi + o(1) \, A(\tau)^2\\
&=-\|\xi^2-2k\|^{-2}_{\mathcal{H}}\frac{1}{\sqrt{2(n-k-1)}} \times   4\|\xi^2-2k\|^{2}_{\mathcal{H}}\, \alpha(\tau)^2\\
&=- \frac{4}{\sqrt{2(n-k-1)}} \, \alpha(\tau)^2 + o(1) \, A(\tau)^2 .
\end{aligned}
\end{align}

Finally, we estimate the terms $E_1^1(\xi,\tau)$, $E_1^2(\xi,\tau)$, $E_1^3(\xi,\tau)$ and $E_1^4(\xi,\tau)$. The term $E_1^1(\xi,\tau)$ satisfies the pointwise estimate $|E_1^1(\xi,\tau)| \leq C \, |g(\xi,\tau)|^3 \leq o(1) \, g(\xi,\tau)^2$ for $0 \leq \xi \leq \delta(\tau)^{-\frac{1}{100}}$. Using Lemma \ref{bound-integrals}, by the similar argument as in the proof of \cite[Proposition 5.6]{ABDS}, we obtain the estimates of $E_1^i$, for $i\in \{1,2,3\}$ below
\begin{align} 
\begin{aligned}
&\bigg | \intde (E_1^1+E_1^2+E_1^3)(\xi,\tau) \, (\xi^2-2k) \, \xi^{k-1} \, e^{-\frac{\xi^2}{4}} \, d\xi \bigg | \leq o(1) \, A(\tau)^2.
\end{aligned}
\end{align}


At last, we analyze the term $E_1^4(\xi, \tau)$. By Lemma \ref{gequation errorf_1}, Lemma \ref{gequation errorf_2} and Lemma \ref{bound-integrals},  We conclude that 
\begin{align}
\begin{aligned}
&\bigg | \intde{E_1^4}(\xi,\tau) \, (\xi^2-2k) \, \xi^{k-1} \, e^{-\frac{\xi^2}{4}} \, d\xi \bigg | \\ 
& \leq \bigg | \intde{E_1^4}(\xi,\tau) \,  \left((\xi^2-2k)-g(\xi,\tau)\right) \, \xi^{k-1} \, e^{-\frac{\xi^2}{4}} \, d\xi \bigg | \\
& + \bigg | \intde{E_1^4}(\xi,\tau) \,   g(\xi,\tau) \, \xi^{k-1} \, e^{-\frac{\xi^2}{4}} \, d\xi \bigg | \\
& \leq o(1)A(\tau)\|\hat{E}^4_1(\xi,\tau)\|_{\mathcal{H}} + \|\hat{g}(\xi,\tau)\|_{\mathcal{H}}\|\hat{E}^4_1(\xi,\tau)\|_{\mathcal{H}}\\
&\leq o(1) \, A(\tau)^2,
\end{aligned}
\end{align} 
where
\begin{align}
\hat{E}_1^4(\xi,\tau) := {E}_1^4(\xi,\tau) \, \chi(\delta(\tau)^{\frac{1}{100}} \xi)
\end{align}
and $\chi$ is given by \eqref{cutoffchidef}.
Putting these facts together, the assertion follows.
\end{proof}
\begin{cor}\label{CorA=a}
    If $-\tau$ is sufficiently negative, then $A(\tau)=|\alpha(\tau)|$.
\end{cor}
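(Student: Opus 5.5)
The plan is to follow the ODE argument of \cite[Corollary 5.7]{ABDS}. The first step is to derive the differential equation for $\alpha(\tau)$ by projecting the equation for $\hat g$ onto the neutral eigenfunction $\xi^2-2k$. Since $\mathcal{L}_1$ is self-adjoint on $\mathcal{H}$ and $\mathcal{L}_1(\xi^2-2k)=0$, the linear part drops out. The cutoff errors are handled as follows: the commutator of $\partial_\tau-\mathcal{L}_1$ with $\chi(\delta(\tau)^{\frac1{100}}\xi)$ is supported in $\xi\geq \frac12\delta(\tau)^{-\frac1{100}}$, and the $\tau$-derivative of the cutoff is controlled by Lemma \ref{derivative-delta}. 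Using Lemma \ref{g-C01}, Lemma \ref{g-C-m} and the bound $\exp(-\frac1{64}\delta^{-\frac1{50}})\le C\delta^5\le o(1)A(\tau)^2$, these errors contribute only $o(A(\tau)^2)$. Combining this with Proposition \ref{projection-E2-neutral} gives
\begin{equation*}
\alpha'(\tau)=-\beta_{n,k}^{-1}\alpha(\tau)^2+o(A(\tau)^2).
\end{equation*}
In particular $|\alpha'(\tau)|\le C A(\tau)^2$.

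The second step shows that $A(\tau)=|\alpha(\tau)|$ by contradiction. Suppose there is $\tau_0$, with $-\tau_0$ large, such that $|\alpha(\tau_0)|<A(\tau_0)$. Since $A$ is the running supremum of $|\alpha|$, there are two possibilities. The first is that $A(\tau_0)$ is attained at some $\tau_1<\tau_0$ with $|\alpha(\tau_1)|=A(\tau_1)=A(\tau_0)$ and $|\alpha|<A(\tau_0)$ on $(\tau_1,\tau_0]$. The second is that the supremum is approached only as $\tau\to-\infty$; this is impossible because $\alpha(\tau)\to0$ as $\tau\to-\infty$ while $A(\tau_0)>0$.

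The third step rules out the first possibility. On $[\tau_1,\tau_0]$ we have $A\equiv A(\tau_0)$, so the ODE gives
\begin{equation*}
\frac{d}{d\tau}\alpha^2=2\alpha\alpha'=-2\beta_{n,k}^{-1}\alpha^3+o(A^3).
\end{equation*}
If $\alpha(\tau_1)<0$, then near $\tau_1$ we have $\alpha\approx -A$, hence $\frac{d}{d\tau}\alpha^2\geq \beta_{n,k}^{-1}A^3>0$. So $|\alpha|$ increases just after $\tau_1$, exceeding $A(\tau_1)$, which contradicts the choice of $\tau_1$. If $\alpha(\tau_1)>0$, then $|\alpha|$ decreases in $\tau$, and the monotonicity argument must instead be run backward. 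Going backward from $\tau_1$, $|\alpha|$ grows at rate at least $cA^2$ for as long as $\alpha\gtrsim A/2$. Near $\tau_1$ this gives some $\tau<\tau_1$ with $|\alpha(\tau)|>|\alpha(\tau_1)|=A(\tau_1)\geq A(\tau)$, which contradicts the definition of $A$ as the supremum over the past. The same reasoning excludes a maximum attained at a point where $\alpha^2$ is strictly monotone. Hence in every case the supremum is attained at the current time, so $A(\tau)=|\alpha(\tau)|$.

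I expect the main obstacle to be the first step: verifying rigorously that all cutoff and projection errors, including the $f$-coupling term $E_1^4$, are genuinely $o(A^2)$ uniformly. This requires carefully tracking $\delta\le CA^{1/4}$ and the bound $\Lambda\le o(1)A^2$ from Proposition \ref{discrete-MZ}. Once the ODE holds with an $o(A^2)$ error, the comparison argument in the last two steps is elementary.
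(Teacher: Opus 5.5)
Your proposal is correct and takes essentially the paper's route: the paper defers to \cite[Corollary 5.8]{ABDS}, which combines Proposition \ref{projection-E2-neutral} with exponentially small cutoff errors to get $\alpha'=-\beta_{n,k}^{-1}\alpha^2+o(A^2)$, and then uses the ODE to show $|\alpha|$ cannot attain its running supremum strictly before the current time. One bookkeeping slip: $C\delta^5\le o(1)A^2$ does not follow from the bound $\delta\le CA^{1/4}$ you quote (it only gives $CA^{5/4}$), so use instead that $\exp(-\tfrac{1}{64}\delta^{-\frac{1}{50}})\le C_N\delta^N$ for every $N$, and take $N\ge 9$.
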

\begin{proof}
    The proof is similar as in  \cite[Corollary 5.8]{ABDS}.
\end{proof}
{Then, Proposition \ref{projection-E2-neutral} and Corollary \ref{CorA=a} immediately imply the following corollary.}
\begin{cor}
The spectral function $\alpha(\tau)$ satisfies
\begin{equation}
    \frac{d\alpha(\tau)}{d\tau}=-(\beta_{n, k}^{-1}+o(1))\alpha(\tau)^2,
\end{equation}
where
\begin{align}
    \beta_{n, k}&=\frac{\sqrt{(n-k-1)}}{4\sqrt{2}}.
\end{align}
In particular
\begin{equation}\label{spectral asy}
    \alpha(\tau)=\frac{\sqrt{(n-k-1)}}{4\sqrt{2}\tau}+o(\tau^{-1})
\end{equation}
as $\tau\to -\infty$.
\end{cor}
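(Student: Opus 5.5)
The plan is to differentiate the definition of $\alpha(\tau)$ in $\tau$, read off the leading term from Proposition \ref{projection-E2-neutral}, and then integrate the resulting Riccati-type ODE. Write $\psi(\xi)=\xi^2-2k$, so that $\alpha(\tau)=\|\psi\|_{\mathcal H}^{-2}\langle \hat g(\cdot,\tau),\psi\rangle_{\mathcal H}$ with $\hat g=g\,\chi(\delta(\tau)^{\frac1{100}}\xi)$. Using $g_\tau=\mathcal L_1 g+E_1$ from Proposition \ref{evolution of (f, g)}, we get
\begin{align}
\hat g_\tau=\mathcal L_1\hat g+\chi E_1 -2\delta^{\frac1{100}}\chi' g_\xi-\Big(\delta^{\frac1{50}}\chi''+\tfrac{k-1}{\xi}\delta^{\frac1{100}}\chi'-\tfrac12\xi\delta^{\frac1{100}}\chi'\Big)g+\tfrac{1}{100}\delta^{-\frac{99}{100}}\delta'\,\xi\chi' g .
\end{align}
Since $\mathcal L_1$ is self-adjoint on $\mathcal H$ and $\mathcal L_1\psi=0$, we have $\langle\mathcal L_1\hat g,\psi\rangle_{\mathcal H}=\langle\hat g,\mathcal L_1\psi\rangle_{\mathcal H}=0$. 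The boundary term at $\xi=0$ vanishes because $g_\xi(0,\tau)=0$, and $\hat g$ is compactly supported.

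All the cutoff terms are supported in $\{\tfrac12\delta^{-\frac1{100}}\le\xi\le\delta^{-\frac1{100}}\}$. There we control them as follows:
\begin{itemize}
\item $|g|+|g_\xi|\le C\delta^{\frac14}$ by Lemma \ref{g-C01};
\item $0\le\delta'\le1$ by Lemma \ref{derivative-delta};
\item the Gaussian weight $\xi^{k-1}e^{-\xi^2/4}$ times the polynomial $\psi$ makes their pairing with $\psi$ at most $C\exp(-\tfrac18\delta^{-\frac1{50}})$.
\end{itemize}
Since $\delta(\tau)\le C A(\tau)^{\frac14}$, this quantity is $O(\delta^5)=o(A(\tau)^2)$. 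Similarly, replacing $\chi E_1$ by $E_1\,\mathbf 1_{[0,\delta^{-1/100}]}$ costs only an exponentially small amount. Proposition \ref{projection-E2-neutral} then gives
\begin{align}
\alpha'(\tau)=-\beta_{n,k}^{-1}\alpha(\tau)^2+o(A(\tau)^2).
\end{align}
By Corollary \ref{CorA=a}, $A(\tau)=|\alpha(\tau)|$ for $-\tau$ large, so $o(A^2)=o(\alpha^2)$, which is the stated ODE.

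To integrate, I first need $\alpha(\tau)\neq0$ for all very negative $\tau$. We are in the neutral-mode-dominant case, where $A(\tau)^2$ is comparable to $\Gamma(\tau)$. If $\Gamma$ vanished on some half-line, then $g\equiv0$ near the center for all earlier times, and the solution would be the exact cylinder, contradicting compactness and non-self-similarity. Hence $A(\tau)>0$, and therefore $|\alpha(\tau)|=A(\tau)>0$.

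Dividing by $\alpha^2$ gives $\frac{d}{d\tau}\big(\alpha^{-1}\big)=\beta_{n,k}^{-1}+o(1)$. Integrating over $[\tau,\tau_0]$ yields $\alpha(\tau)^{-1}=\beta_{n,k}^{-1}\tau+o(|\tau|)$. This is consistent with $\alpha(\tau)\to0$ as $\tau\to-\infty$, and it forces $\alpha<0$. Inverting gives \eqref{spectral asy}.

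I expect the main obstacle to be bookkeeping rather than new ideas. One must check that every cutoff commutator term, and the term coming from $\delta'(\tau)$, is genuinely $o(\alpha^2)$; this relies on the polynomial-versus-exponential comparison $\exp(-c\,\delta^{-1/50})\ll\delta^{8}\lesssim A^2$. One must also justify the non-vanishing of $\alpha$ before dividing by it.
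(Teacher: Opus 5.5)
Your proposal is correct and follows the paper's route. The paper simply cites Proposition~\ref{projection-E2-neutral} and Corollary~\ref{CorA=a} (following ABDS), and you carry out that argument in full: differentiate $\alpha(\tau)$, eliminate $\langle \mathcal{L}_1\hat g,\xi^2-2k\rangle_{\mathcal H}$ by self-adjointness, read off $-\beta_{n,k}^{-1}\alpha^2+o(A^2)$, replace $A$ by $|\alpha|$, and integrate $\frac{d}{d\tau}(\alpha^{-1})=\beta_{n,k}^{-1}+o(1)$, adding the cutoff and non-vanishing bookkeeping the paper leaves implicit. One small slip: with $\delta\le CA^{1/4}$, a bound $O(\delta^5)$ is not $o(A^2)$, but since $\exp(-c\,\delta^{-1/50})=O(\delta^N)$ for every $N$ (as your $\delta^8$ remark already uses), the cutoff terms are indeed $o(A^2)$.
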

Then we can obtain unique sharp asymptotics in parabolic region below.
\begin{prop}[Unique sharp asymptotics in parabolic region]\label{convergence-C_loc^infty}
    We have 
    \begin{equation}
        \tau \, g(\xi, \tau)\to \frac{\sqrt{(n-k-1)}}{4\sqrt{2}}(\xi^{2}-2k)\quad \textrm{in}\quad C^{\infty}_{\textrm{loc}}([0, +\infty)).
    \end{equation}
    In particular,  for the profile function $G(z, t)$, as $t\to -\infty$ we have
    \begin{equation}
         G(z,t)^2 = 2(n-k-1)(-t)   - \frac{(n-k-1)(z^2+2kt)}{2\log(-t)}  + o\left(\frac{-t}{\log(-t)}\right) 
    \end{equation}
in $C^{\infty}_{loc}$-sense    for all $0\leq z \leq L\sqrt{-t}$.
\end{prop}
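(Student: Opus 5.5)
The plan follows the argument for \cite[Proposition 5.9 / Theorem 1.4]{ABDS}: first convert the ODE asymptotics for $\alpha(\tau)$ into $L^2$ convergence of $\tau g$, and then upgrade this to $C^\infty_{loc}$ convergence using the a priori derivative bounds.

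\textbf{Step 1: $L^2$ convergence.} By the last corollary, $\alpha(\tau)=\beta_{n,k}\tau^{-1}+o(\tau^{-1})$. By Corollary \ref{CorA=a}, $A(\tau)=|\alpha(\tau)|\sim \beta_{n,k}|\tau|^{-1}$. Lemma \ref{bound-integrals} gives
\[
\|\hat g(\cdot,\tau)-\alpha(\tau)(\xi^2-2k)\|_{\mathcal H}+\|\hat g_\xi(\cdot,\tau)-2\alpha(\tau)\xi\|_{\mathcal H}\le o(1)A(\tau)=o(|\tau|^{-1}).
\]
On any fixed interval $[0,L]$ the cutoff equals $1$ for $-\tau$ large, since $\delta(\tau)\to0$, so $\hat g=g$ there. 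Multiplying by $\tau$ yields
\[
\tau g\to\beta_{n,k}(\xi^2-2k)
\]
in $L^2([0,L],\xi^{k-1}d\xi)$, and similarly for the first derivative. As in Claim \ref{lem error gxixi gamma}, I would pass to the radial function ${\bf g}({\bf y},\tau)=g(|{\bf y}|,\tau)$ on a ball in $\mathbb R^k$, so that the weight $\xi^{k-1}$ becomes Lebesgue measure.

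\textbf{Step 2: higher derivatives (main obstacle).} The difficulty is that the uniform bounds $|\partial_\xi^m g|\le C(m)$ from Lemma \ref{g-C-m} do not carry the factor $|\tau|^{-1}$, so interpolating them directly against the $L^2$ estimate loses decay. Two remedies are available.

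The first is to use the equation $\partial_\tau{\bf g}=\Delta{\bf g}-\frac12 {\bf y}\cdot\nabla{\bf g}+{\bf g}+E_1$ on $B_{2L}\times[\tau-1,\tau]$. Here the error satisfies $|E_1|\le C(g^2+g_\xi^2)+C|g_\xi|\cdot|\text{$f$-terms}|$. By Lemmas \ref{g-C01}, \ref{g-C23} and the $f$-estimates of Lemma \ref{apriori-f}, together with the $L^2$ bound $\Lambda\le o(1)A^2$, this should give $\|E_1\|_{L^2(B_{2L}\times[\tau-1,\tau])}\le o(|\tau|^{-1})$. Interior parabolic $L^2\to C^m$ estimates, bootstrapped on the smooth nonlinearity, then show that $\tau(g-\alpha(\xi^2-2k))\to0$ in $C^m$ on $B_L$.

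The second remedy is a weaker interpolation: $\|D^m w\|_\infty\le C\|w\|_{L^2}^{1-\theta}\|w\|_{C^{M}}^{\theta}$ with $\theta$ small. Applied to $w=\tau g-\beta_{n,k}(\xi^2-2k)$, whose $C^M$ norm is at most $C|\tau|$, this gives only $o(1)^{1-\theta}|\tau|^{\theta}$, which is insufficient. Hence the parabolic-regularity route is the one I would actually pursue. The key point to check is that the $f$-coupled terms in $E_1$ are $o(|\tau|^{-1})$ in $L^2_{loc}$; this follows from Lemmas \ref{gequation errorf_1}–\ref{gequation errorf_2} combined with $\Lambda(\tau)\le o(1)A(\tau)^2$.

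\textbf{Step 3: translation back to $G$.} Since $g+\sqrt{2(n-k-1)}=e^{\tau/2}G$ with $t=-e^{-\tau}$, $\tau=-\log(-t)$ and $\xi=z/\sqrt{-t}$, we have
\[
G^2=(-t)\bigl(\sqrt{2(n-k-1)}+g\bigr)^2=(-t)\bigl(2(n-k-1)+2\sqrt{2(n-k-1)}\,g+O(g^2)\bigr).
\]
Substituting $g=\beta_{n,k}(\xi^2-2k)/\tau+o(1/|\tau|)$ and using $2\sqrt{2(n-k-1)}\,\beta_{n,k}=(n-k-1)/2$ gives
\[
G^2=2(n-k-1)(-t)-\frac{(n-k-1)(z^2+2kt)}{2\log(-t)}+o\Bigl(\frac{-t}{\log(-t)}\Bigr),
\]
uniformly for $z\le L\sqrt{-t}$. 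The $C^\infty_{loc}$ statement follows from the $C^m$ convergence obtained in Step 2.
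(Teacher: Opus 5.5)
Your proposal is correct and follows essentially the paper's route; the paper simply defers to the proof of \cite[Proposition 5.10]{ABDS}. That route is: $L^2_{loc}$ convergence of $\tau g$ from the asymptotics of $\alpha(\tau)$ together with Lemma \ref{bound-integrals}, then an upgrade to $C^\infty_{loc}$ by interior parabolic estimates, then a rewrite in the original variables. You are right that Lemma \ref{g-C-m} alone cannot give the upgrade. It can be streamlined by noting that $\tau g$ itself solves a linear parabolic equation whose coefficients are bounded in $C^\infty_{loc}$, so $\tau g$ is bounded in $C^\infty_{loc}$, and interpolating with the $L^2$ convergence finishes.
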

\begin{proof}
    The proof of asymptotics directly follows from \eqref{spectral asy}, the argument in the proof of  \cite[Proposition 5.10]{ABDS}, and transforming everything back to original variables.
\end{proof}

\begin{cor}\label{domain.of.definition.of.G}
The domain of definition of the function $\xi\mapsto g(\xi, \tau) $ is an interval of length at most $o(1)(-\tau)$
\end{cor}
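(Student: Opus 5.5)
The plan is to combine the concavity of $G$ in $z$ with the $C^1$ parabolic asymptotics at a fixed large $\xi=L$, and then let $L\to\infty$. The domain of $g(\cdot,\tau)$ is $[0,\xi_{\max}(\tau)]$ with $\xi_{\max}(\tau)=e^{\frac{\tau}{2}}z_{\max}(-e^{-\tau})$. At the endpoint $G=0$, so $g(\xi_{\max}(\tau),\tau)=-\sqrt{2(n-k-1)}$. It therefore suffices to show that for every $\varepsilon>0$ we have $\xi_{\max}(\tau)\le \varepsilon(-\tau)$ once $-\tau$ is large enough.

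\textbf{Step 1 (concavity).} By \eqref{derivative sign} we have $G_{zz}\le 0$. Hence $\xi\mapsto g(\xi,\tau)=e^{\tau/2}G(e^{-\tau/2}\xi,-e^{-\tau})-\sqrt{2(n-k-1)}$ is concave on $[0,\xi_{\max}(\tau)]$. For any fixed $L>0$ with $L<\xi_{\max}(\tau)$ this gives
\[
-\sqrt{2(n-k-1)} = g(\xi_{\max},\tau)\le g(L,\tau)+g_\xi(L,\tau)\,(\xi_{\max}-L).
\]

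\textbf{Step 2 (parabolic asymptotics at $\xi=L$).} Proposition \ref{convergence-C_loc^infty} gives $\tau g\to \beta_{n,k}(\xi^2-2k)$ in $C^\infty_{loc}$, with $\beta_{n,k}=\frac{\sqrt{n-k-1}}{4\sqrt 2}$. In particular, at $\xi=L$:
\begin{align*}
g(L,\tau)&=\frac{\beta_{n,k}(L^2-2k)+o(1)}{\tau},\\
g_\xi(L,\tau)&=\frac{2\beta_{n,k}L+o(1)}{\tau}.
\end{align*}
Since $\tau<0$, the slope $g_\xi(L,\tau)$ is strictly negative for $-\tau$ large, and $g(L,\tau)\to0$. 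Rearranging the inequality from Step 1 then yields
\[
\xi_{\max}(\tau)\le L+\frac{\sqrt{2(n-k-1)}+g(L,\tau)}{|g_\xi(L,\tau)|}\le L+\frac{\sqrt{2(n-k-1)}+o(1)}{2\beta_{n,k}L+o(1)}\,(-\tau).
\]
(If instead $\xi_{\max}(\tau)\le L$, the bound is trivial.)

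\textbf{Step 3 (conclusion).} Given $\varepsilon>0$, I choose $L$ so large that $\sqrt{2(n-k-1)}/(2\beta_{n,k}L)<\varepsilon/2$. For $-\tau$ sufficiently large, depending on $L$, Step 2 then gives $\xi_{\max}(\tau)\le L+\varepsilon(-\tau)/2\le\varepsilon(-\tau)$. Hence $\xi_{\max}(\tau)=o(1)(-\tau)$.

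The only delicate point is that Step 2 needs $C^1$ (not just $C^0$) convergence at the fixed point $\xi=L$, in order to control the slope $g_\xi(L,\tau)$ from below by a multiple of $L/|\tau|$. This is supplied by the $C^\infty_{loc}$ statement of Proposition \ref{convergence-C_loc^infty}, so I expect no genuine obstacle beyond citing it correctly.
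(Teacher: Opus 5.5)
Your proposal is correct and follows the paper's proof: the paper only notes that $g(\cdot,\tau)\ge -\sqrt{2(n-k-1)}$ is concave in $\xi$ and appeals to the $C^\infty_{loc}$ parabolic asymptotics of Proposition \ref{convergence-C_loc^infty}. Your tangent-line bound at a fixed $\xi=L$, using $g_\xi(L,\tau)\approx 2\beta_{n,k}L/\tau$, and then letting $L\to\infty$, supplies exactly the details that one-line argument omits.
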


\begin{proof}
 Recall that the function of $g(\xi, \tau)\geq -\sqrt{2(n-k-1)}$ is concave in $\xi$, the assertion follows from Proposition \ref{convergence-C_loc^infty}.
\end{proof}

\subsection{Unique sharp asymptotics in the intermediate region}
We next study the asymptotics in the intermediate region where $z\geq M\sqrt{-t}$ for some large constant $M$, and $G\geq \theta\sqrt{-2(n-k-1)t}$ for some small constant $\theta$. First, using barrier arguments to derive the derivative estimates for $G(z,t)$ at intermediate region.

\begin{lem}\label{GzGtupper bound}
Fix a small number $\theta \in (0,\frac{1}{2})$ and a large number $M \geq 10k$. If $-t$ is sufficiently large (depending on $\theta$ and $M$), then  we have
\begin{align}\label{Gz-intermediate}
G_z(z,t)^2 \leq \frac{M^2+C(\theta)k}{M^2-2k} \cdot  \frac{(n-k-1)}{2 \log(-t)} \cdot  \Big ( \frac{-2(n-k-1)t}{G(z,t)^2} - 1 \Big )
\end{align}
and
\begin{align}\label{Gt-intermediate}
G_t(z,t) \leq -(n-k-1)G(z,t)^{-1} \, \Big ( 1-\frac{C(\theta)k}{\log(-t)} \Big )
\end{align}
whenever $0\leq z  \leq M\sqrt{-t}$ and $G(z,t) \geq \theta \sqrt{-2(n-k-1)t}$. 
\end{lem}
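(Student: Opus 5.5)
This lemma is the analogue of \cite[Lemma 5.11]{ABDS}, and I would prove it the same way: a maximum principle argument for the inverse profile $U(r,t)=G_z^2$. In the region $r\ge \theta\sqrt{-2(n-k-1)t}$, the parabolic asymptotics of Proposition \ref{convergence-C_loc^infty} supply boundary data at $z=M\sqrt{-t}$. The key structural point is that \eqref{u-equation} gives the differential inequality
\begin{align*}
U_t\le UU_{rr}-\tfrac12U_r^2+\tfrac{n-k-1-U}{r}U_r+\tfrac{2(n-k-1)}{r^2}U(1-U),
\end{align*}
because the coupling term $-2(k-1)U^2F_r^2/F^2$ is nonpositive. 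So $U$ is a subsolution, and any supersolution of the unperturbed equation that dominates $U$ on the parabolic boundary dominates it inside.

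\textbf{Step 1: the barrier.} For a constant $A>1$ (eventually $A=\frac{M^2+C(\theta)k}{M^2-2k}$), take
\begin{align*}
\bar U(r,t)=\frac{A(n-k-1)}{2\log(-t)}\Big(\frac{-2(n-k-1)t}{r^2}-1\Big).
\end{align*}
Plugging $\bar U$ into the operator, the dominant terms are $\frac{2(n-k-1)}{r^2}\bar U$ against $\bar U_t$ and $\frac{n-k-1}{r}\bar U_r$. The leading orders cancel exactly as in \cite{ABDS}, with $n-k-1$ replacing the 3D constant. The remaining terms are $O(\bar U/\log(-t))$, and since $A>1$ they have the right sign to make $\bar U$ a strict supersolution, provided $r\ge\theta\sqrt{-t}$ and $-t$ is large. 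This is where $C(\theta)$ enters.

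\textbf{Step 2: boundary comparison.} On the inner boundary $z=M\sqrt{-t}$, Proposition \ref{convergence-C_loc^infty} gives
\begin{align*}
G^2=2(n-k-1)(-t)-\frac{(n-k-1)(M^2-2k)(-t)}{2\log(-t)}(1+o(1)).
\end{align*}
Its $C^1$ convergence gives $G_z^2\approx \frac{(n-k-1)M^2}{4\log(-t)}\cdot\frac{n-k-1}{2(n-k-1)}$. Comparing with $\bar U$, whose factor $\big(\frac{-2(n-k-1)t}{G^2}-1\big)\approx\frac{M^2-2k}{4\log(-t)}$, the ratio $U/\bar U$ is about $\frac{M^2}{A(M^2-2k)}<1$ for our choice of $A$. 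On the outer boundary $G=\theta\sqrt{-2(n-k-1)t}$, I would use a crude bound from Proposition \ref{G-gradient}/Proposition \ref{star-Gz} (Brendle's barrier $\psi_a$), which yields $G_z^2\le C/\log(-t)$ there. Alternatively, I would shrink $\theta$ slightly and run the comparison on the region between the two boundaries. Comparison at $t\to-\infty$ follows from the same asymptotics. The maximum principle then gives \eqref{Gz-intermediate}.

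\textbf{Step 3: the $G_t$ bound.} I would obtain \eqref{Gt-intermediate} from \eqref{G-equation}, where
\begin{align*}
G_t=G_{zz}-(n-k-1)\tfrac{1-G_z^2}{G}+(k-1)\tfrac{F_zG_z}{F}-G_z\int_0^z\Big(\cdots\Big).
\end{align*}
Here $G_{zz}\le0$, and $F_zG_z\le0$. The integral term is $-G_z$ times a nonpositive quantity, by concavity, so it is $\le 0$. Together with $G_z^2\le C(\theta)k/\log(-t)$ from Step 1, this gives
\begin{align*}
G_t\le -(n-k-1)G^{-1}\big(1-C(\theta)k/\log(-t)\big).
\end{align*}

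\textbf{Main obstacle.} The delicate step is Step 1: checking that $\bar U$ is a genuine supersolution uniformly on the region, especially near the outer boundary where $\bar U$ is not small relative to $1/\log$ corrections. It also requires making the outer-boundary data compatible, which may force using the ABDS trick of working with the auxiliary region $\{G\ge\theta\sqrt{-t}\}$ and a slightly larger $A$.
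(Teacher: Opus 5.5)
\paragraph{Verdict.} Your proposal has two genuine gaps, both in the gradient bound \eqref{Gz-intermediate}. Your Step 3 is the paper's argument for \eqref{Gt-intermediate}, and using the sign of $-2(k-1)U^2F_r^2/F^2$ in \eqref{u-equation} is also what the paper does.

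\paragraph{Gap 1: $\bar U$ is not a supersolution.} This is not a matter of constants. Put $m=n-k-1$, $u=\frac{2m(-t)}{r^2}$ and $\epsilon(t)=\frac{Am}{2\log(-t)}$, so that $\bar U=\epsilon\,(u-1)$. Let $\mathcal Q(U)$ be the right-hand side of the inequality for $U_t$. The linear terms cancel exactly, as you say. What survives are two terms of the same order: $\epsilon'(t)(u-1)=\frac{2\epsilon^2}{Am(-t)}(u-1)$, coming from the time dependence of $\epsilon$, and the quadratic part of $\mathcal Q$. A direct computation gives
\[
\bar U_t-\mathcal Q(\bar U)=\frac{\epsilon^2}{m(-t)}\Big[\frac{2}{A}(u-1)-3u^3+4u^2+m\,u\,(u-1)^2\Big].
\]
Take $u=2$, i.e.\ $G^2=(n-k-1)(-t)$, which lies well inside your region. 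There the bracket equals $\frac{2}{A}-8+2m$, which is negative for every $A>1$ as soon as $m\le 3$. So for $n=4$, $k=2$, for instance, $\bar U$ fails to be a supersolution in the middle of the intermediate region. Enlarging $A$ only shrinks the helpful term. (For $m\ge 4$ the bracket is at least $\frac{2}{A}(u-1)+u(u-2)^2>0$, so your barrier works there, but the lemma covers all $m\ge 1$.)

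What is missing is a $\theta$-dependent correction of size $O(\log(-t)^{-2})$. For $z\ge M\sqrt{-t}$ one has $\frac{-2(n-k-1)t}{G^2}-1\ge(1-o(1))\frac{M^2-2k}{4\log(-t)}$. Such a correction can therefore be absorbed into the main term at the price of replacing $M^2$ by $M^2+C(\theta)$ in the numerator. That is where $C(\theta)$ comes from, not from the choice $A>1$.

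\paragraph{Gap 2: starting the comparison.} Your $\bar U$ tends to $0$ as $t\to-\infty$ and is only of size $\log(-t)^{-2}$ near $z=M\sqrt{-t}$. A first-touching argument therefore needs $U<\bar U$ on the whole region at some very negative time, and that is essentially the sharp statement itself. The parabolic asymptotics say nothing for $z\gg\sqrt{-t}$. The only a priori bound there, from Proposition \ref{G-gradient}, is $C/\log(-t)$ with a constant you do not control. This is too weak where $\bar U=o(1/\log(-t))$.

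\paragraph{How the paper avoids both problems.} Following \cite[Proposition 6.1]{ABDS}, it compares $U$ with Brendle's barrier $\psi_a\big(G/\sqrt{-2(n-k-1)t}\big)$ from Proposition \ref{g-barrier}.
\begin{itemize}
\item The parameter $a$ is frozen in terms of the target time $\bar t$, with $a^{-2}$ a suitable multiple of $1/\log(-\bar t)$.
\item The comparison runs on $\{z\ge M\sqrt{-t},\ G\ge r_*a^{-1}\sqrt{-2(n-k-1)t}\}$ for $t\le \bar t$.
\item The barrier is a genuine supersolution on its whole interval; its construction contains exactly the kind of higher-order correction your $\bar U$ lacks.
\item At the small-$G$ end, $\psi_a(r_*a^{-1})\ge\frac32>1\ge U$.
\item The floor $\psi_a\ge\frac1{32}a^{-4}$, together with $U\to 0$ on cylindrical regions as $t\to-\infty$, starts the comparison.
\item At $z=M\sqrt{-t}$ one uses $\psi_a(s)\ge a^{-2}(s^{-2}-1)+\frac1{16}a^{-4}$ together with $\log(-t)\ge\log(-\bar t)$.
\end{itemize}

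\paragraph{A slip in Step 2.} At $z=M\sqrt{-t}$ one has $G_z^2\approx\frac{(n-k-1)M^2}{8\log(-t)^2}$, not a multiple of $\log(-t)^{-1}$. With your formula the ratio $U/\bar U$ would diverge.
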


\begin{proof}
The proof of \eqref{Gz-intermediate} is based on applying barrier in Proposition \ref{g-barrier} as in the proof of \cite[Proposition 6.1]{ABDS} up to the change of some constants depending on $n$ and $k$.  The proof of \eqref{Gt-intermediate} is similar  to the proof of \cite[Corollary 6.2]{ABDS} by an argument of differential inequality of $G$. Here, we note that the new issue different from \cite[Corollary 6.2]{ABDS} is that the evolution equation of $G$ in \eqref{G-equation} is coupled with $F$-related terms. To deal with it, we recall that we have nonnegative curvature condition, it follows that 
\begin{align}
    F_{zz}\leq 0,\quad G_{zz}\leq 0, \quad F_zG_z\leq 0 \quad \text{and} \quad -1\leq G_z\leq 0.
\end{align}
at each point in space-time. Using the evolution equation for $G$, we obtain 
\begin{align}
\begin{aligned}\label{Godeinequality}
    G_t &= G_{zz} - (n-k-1)\frac{1-G_z^2}{G} + (k-1)\frac{F_zG_z}{F}\\
    &- G_z \int_0^{z(t)} (k-1)\frac{F_{zz}}{F}(x,t)+(n-k)\frac{G_{zz}}{G}(x,t) dx\\
    &\leq - (n-k-1)\frac{1-G_z^2}{G}
\end{aligned}
\end{align}
at each point in space-time. Then as in \cite[Corollary 6.2]{ABDS}, \eqref{Gt-intermediate} follows from  \eqref{Gz-intermediate} with $M=10k$ and \eqref{Godeinequality}. This completes the proof of Lemma \ref{GzGtupper bound}.

\end{proof}

\begin{prop}[intermediate region asymptotics]\label{intermediate region asymptotics}
Fix a small number $\theta \in(0, \frac{1}{2})$. If $-t$ is sufficiently large (depending on $\theta$), then  $\{z: G(z, t) \geq \theta \sqrt{-2(n-k-1) t}\}$ is an interval $\left[0, \bar{z}(\theta, t)\right]$, and
\begin{align}\label{domain-size-estimate}
\bar{z}(\theta, t)=(2+o(1)) \sqrt{1-\theta^2} \sqrt{(-t) \log (-t)}
\end{align}
Moreover, we have 
\begin{align}\label{G-intermediate-estimate}
G(z, t)^2=-2(n-k-1) t-(n-k-1)\frac{z^2}{2 \log (-t)}+o(-t)
\end{align}
in $C^{\infty}_{loc}$-sense  for $0\leq z \leq 2\sqrt{1-\theta^2} \sqrt{(-t) \log (-t)}$.
\end{prop}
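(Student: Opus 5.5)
The plan follows the ABDS intermediate-region argument (\cite[Proposition 6.3]{ABDS}). The coupled $F$-terms enter only through the one-sided inequalities already recorded in Lemma \ref{GzGtupper bound}. Work with $u:=G^2/(-2(n-k-1)t)$ as a function of $z$ and $t$; by monotonicity $G_z\le 0$, the superlevel set $\{u\ge\theta^2\}$ is an interval $[0,\bar z(\theta,t)]$. The goal is to show that
\[
u(z,t)=1-\frac{z^2}{4(-t)\log(-t)}+o(1)
\]
uniformly for $0\le z\le 2\sqrt{1-\theta^2}\sqrt{(-t)\log(-t)}$. This gives \eqref{G-intermediate-estimate} in $C^0$ and, by solving $u=\theta^2$, the estimate \eqref{domain-size-estimate}.

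\textbf{Upper bound for $u$ (equivalently, a lower bound on the width).} Integrate the gradient bound \eqref{Gz-intermediate} in $z$. Writing $w=\sqrt{-2(n-k-1)t}$, it reads
\[
\frac{-G_z}{\sqrt{w^2/G^2-1}}\le (1+o(1))\sqrt{\frac{n-k-1}{2\log(-t)}}
\]
for $z\le M\sqrt{-t}$. Hence $\frac{d}{dz}\sqrt{w^2-G^2}$ is bounded by $(1+o(1))\sqrt{(n-k-1)/(2\log(-t))}\,$, and we get
\[
w^2-G^2\le (1+o(1))\frac{(n-k-1)z^2}{2\log(-t)}+o(-t).
\]
The initial value at $z\sim M\sqrt{-t}$ is supplied by Proposition \ref{convergence-C_loc^infty}. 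This bound only covers $z\le M\sqrt{-t}$, however. To reach the scale $\sqrt{(-t)\log(-t)}$ I would follow ABDS and use the method of characteristics in time: the inequality $G_t\le -(n-k-1)G^{-1}(1-C/\log(-t))$ from \eqref{Gt-intermediate} means that along a fixed $z$ the quantity $G^2+2(n-k-1)(1-o(1))t$ is nonincreasing in $t$. Going backwards in time, any point $(z,t)$ with $z\le 2\sqrt{1-\theta^2}\sqrt{(-t)\log(-t)}$ corresponds to an earlier time $t'$ with $z\le M\sqrt{-t'}$ still lying in the parabolic regime. Comparing $G(z,t)^2$ with $G(z,t')^2$, which the parabolic asymptotics give, yields
\[
G(z,t)^2\ge -2(n-k-1)t-\frac{(n-k-1)z^2}{2\log(-t)}-o(-t).
\]
Choosing $t'$ so that $z^2\approx 4(-t')\log(-t')\cdot\kappa$ for a suitable $\kappa$ and optimizing is the step where the constants must match exactly.

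\textbf{Matching bound.} For the reverse inequality I would use concavity of $G^2$ in $z$ in the original variables ($G_{zz}\le 0$ gives $(G^2)_{zz}\le 2G_z^2$, which is $o(1)$ there) together with the evolution inequality $G_t\le -(n-k-1)(1-G_z^2)/G$. As in ABDS, the argument is: if $G^2$ were larger than the claimed profile at some $z_0$, then the gradient bound integrated from $z_0$ outward would contradict Corollary \ref{domain.of.definition.of.G} / the diameter control, or conversely would contradict the parabolic asymptotics after flowing backwards with $G_t\ge -(n-k-1)/G\cdot(1+o(1))$. That last lower bound on $G_t$ needs the neglected $F$-terms to be small. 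Those terms are $(k-1)F_zG_z/F$ and $G_z\int (k-1)F_{zz}/F$; where $G\ge\theta w$, Proposition \ref{epsilon-bubble-sheet} makes them $o(1)\,G^{-1}$, because there the $F$-curvature components vanish at cylindrical scale. I expect this two-sided time-characteristic comparison, with the $F$-terms controlled only via the $\varepsilon$-cylinder structure rather than explicit rates, to be the main obstacle.

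\textbf{Smooth convergence.} Finally, upgrade the $C^0$ asymptotics to $C^\infty_{loc}$ using Proposition \ref{epsilon-bubble-sheet}: every point in the region is the center of an evolving $\varepsilon$-cylinder. Combined with the derivative bounds of Proposition \ref{Gmorder estimates}, this allows interpolation between the $C^0$ closeness and the uniform higher-derivative bounds.
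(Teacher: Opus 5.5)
There is a genuine gap. In both of your main steps the inequality points the wrong way, so neither half of the sharp two-sided estimate is actually obtained.

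\emph{The characteristic step.} Monotonicity of $G^2+2(n-k-1)(1-o(1))t$ at fixed $z$ gives, for $t'<t$, $G(z,t)^2\le G(z,t')^2-2(n-k-1)(1-o(1))(t-t')$. This is an \emph{upper} bound on $G(z,t)^2$, not the lower bound you write.

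\emph{The ``matching bound'' step.} Corollary \ref{domain.of.definition.of.G} (length $o(\sqrt{-t}\,\log(-t))$ in $z$) and the diameter bound $O(-t)$ are far too coarse to detect the constant $2\sqrt{1-\theta^2}$. Flowing backwards with a \emph{lower} bound $G_t\ge-(1+o(1))(n-k-1)/G$ only bounds $G(z_0,t')^2$ from above, so it cannot contradict $G(z_0,t)^2$ being too large.

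\emph{Source of the confusion.} You took literally the range ``$0\le z\le M\sqrt{-t}$'' in Lemma \ref{GzGtupper bound}. That range must be a misprint: for $0<z\le\sqrt{-t}$ the parabolic asymptotics give $G^2>-2(n-k-1)t$, so the right-hand side of \eqref{Gz-intermediate} is negative there while $G_z^2\ge0$. As in \cite[Proposition 6.1]{ABDS}, the barrier estimate holds for $z\ge M\sqrt{-t}$ with $G\ge\theta\sqrt{-2(n-k-1)t}$, which is precisely the intermediate region.

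With the correct range, the two mechanisms swap roles, and nothing about the $F$-terms beyond their favorable sign is needed.

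\emph{Lower bound on $G$.} Use your computation of $\frac{d}{dz}\sqrt{w^2-G^2}$, with your $w=\sqrt{-2(n-k-1)t}$; note it bounds $u$ from below, not above. Integrate it from $z=M\sqrt{-t}$, where Proposition \ref{convergence-C_loc^infty} gives $w^2-G^2=O(M^2(-t)/\log(-t))$. This yields $w^2-G^2\le(1+O(M^{-2}))\frac{(n-k-1)z^2}{2\log(-t)}+o(-t)$ as long as $G\ge\theta w$. Letting $M\to\infty$ gives the lower half of \eqref{G-intermediate-estimate} and $\bar z(\theta,t)\ge(2-o(1))\sqrt{1-\theta^2}\sqrt{(-t)\log(-t)}$.

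\emph{Upper bound on $G$.} For $L\sqrt{-t}\le z\le 2\sqrt{1-\theta^2}\sqrt{(-t)\log(-t)}$, pick $t'\le t$ with $z=L\sqrt{-t'}$. Then $t-t'\le -t'\le 4(-t)\log(-t)/L^2$ and $\log(-t')=(1+o(1))\log(-t)$. From \eqref{Gt-intermediate} you get $G(z,t)^2\le G(z,t')^2-2(n-k-1)(1-C(\theta)/\log(-t))(t-t')$. The condition $G\ge\theta\sqrt{-2(n-k-1)t''}$ persists backwards, because $G^2$ increases backwards at a rate close to $2(n-k-1)>2(n-k-1)\theta^2$. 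Combining this with the parabolic asymptotics at $(z,t')$ gives $G(z,t)^2\le -2(n-k-1)t-\frac{(n-k-1)z^2}{2\log(-t)}+O((-t)/L^2)+o(-t)$. Then let $L\to\infty$.

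Only \eqref{Gt-intermediate} is used here, and there the $F$-terms have a favorable sign. So the lower bound on $G_t$ that you flag as the main obstacle is never needed. Your final smoothing step is fine once these $C^0$ asymptotics are established.
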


\begin{proof}
Using Proposition \ref{convergence-C_loc^infty}-Lemma \ref{GzGtupper bound},  the proof follows similarly as in the proof of  \cite[Proposition 6.3, Proposition 6.4, Corollary 6.5 and Corollary 6.6]{ABDS}.  
\end{proof}

\subsection{Unique sharp asymptotics in the tip region}
In this subsection, we analyze the asymptotics of the solution near the tip region based on the discussion of intermediate region asymptotics in Proposition \ref{intermediate region asymptotics} and the blowdown analysis in Section \ref{blowdown analysis section}.

For each $t$, the function $z \mapsto G(z,t)$ is defined on the interval $[0,d_{\text{\rm tip}}(t)]$, where $d_{\text{\rm tip}}(t)$ denotes the distance of the fixed point of the $SO(k)$-action $\mathcal{O}$ to tip fiber. We  derive the asymptotics in the tip region below.
\begin{prop}[Tip region asymptotics]\label{Tip region asymptotics}
We have 
\begin{align}\label{dtip}
\lim_{t \to -\infty} \frac{d_{\text{\rm tip}}(t)}{\sqrt{(-t) \, \log(-t)}} = 2.
\end{align}
and
the scalar curvature at each  tip point on $\{G(z, t)=0\}$  satisfies
\begin{align}\label{Rtip}
R_{\text{\rm tip}}(t) = (1+o(1)) \, \frac{\log (-t)}{(-t)}.
\end{align}
Finally, if
we consider any  $g_{i}(t)=R(p_i, t_i)g(t_i +R(p_i, t_i)^{-1}t)$ on $M$ obtained by rescaling  ancient solution $g(t)$ at the tip points on $(p_i, t_i)\in \{G(z, t_i)=0\}$ with $t_i\to -\infty$, then the rescaled solutions $(M, g_i(t))$
converge to the unique Bryant soliton times $\mathbb{R}^{k-1}$.
\end{prop}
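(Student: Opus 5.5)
The plan is to follow the strategy of \cite[Section 7]{ABDS}, adapted to the coupled system. The new inputs are the $k-1$ line splitting from Section \ref{blowdown analysis section} and the favorable sign of the $F$-terms in the $G$-equation. The argument has three parts, in the order I would carry them out.

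\textbf{Step 1 (the upper bound in \eqref{dtip}).} The upper bound $d_{\text{tip}}(t)\le (2+o(1))\sqrt{(-t)\log(-t)}$ comes from Proposition \ref{intermediate region asymptotics} together with the convexity of $G$, so it does not use the tip geometry at all. At $z=\bar z(\theta,t)$, \eqref{G-intermediate-estimate} and its $C^1$ version give $G\approx\theta\sqrt{-2(n-k-1)t}$ and $|G_z|\gtrsim \theta^{-1}\sqrt{1-\theta^2}\,(\log(-t))^{-1/2}$ up to constants. Since $G_{zz}\le 0$, the slope $|G_z|$ only increases beyond $\bar z$. Hence the remaining length satisfies $d_{\text{tip}}-\bar z\le G(\bar z)/|G_z(\bar z)|\le C\theta^2\sqrt{(-t)\log(-t)}$. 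Letting $\theta\to0$ after $t\to-\infty$ gives the upper bound. The lower bound $d_{\text{tip}}\ge\bar z(\theta,t)$ is immediate from \eqref{domain-size-estimate}, again letting $\theta\to0$. Together these prove \eqref{dtip}.

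\textbf{Step 2 (the bound \eqref{Rtip} on the tip curvature).} Proposition \ref{tip-asymptotics} says the tip points (where $G=0$, so $R^{1/2}G=0$) rescale to $\mathbb R^{k-1}\times$ Bryant. The $\mathbb R^{k-1}$ factor contributes nothing to the curvature, so near the tip $U=G_z^2$ as a function of $r=G$ is governed by the $(n-k+1)$-dimensional Bryant profile. In particular, on the scale $r\gg R_{\rm tip}^{-1/2}$ we have $U(r)\approx c_{n,k}/(R_{\rm tip}r^2)$, with $c_{n,k}$ fixed by the Bryant normalization. Matching this with the intermediate region, where \eqref{Gz-intermediate} and \eqref{G-intermediate-estimate} give $U\approx\frac{(n-k-1)}{2\log(-t)}\bigl(\frac{-2(n-k-1)t}{r^2}-1\bigr)$ for small $r/\sqrt{-t}$, yields $R_{\rm tip}\approx\log(-t)/(-t)$ once the constants are checked.

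To make this matching rigorous I would follow \cite[Prop.~7.x]{ABDS}. One direction uses Brendle's barrier from Proposition \ref{g-barrier}, applied in the collar $r\in[L R_{\rm tip}^{-1/2},\theta\sqrt{-t}]$ to the inequality \eqref{u-equation}. Here the term $-2(k-1)U^2F_r^2/F^2\le0$ makes $U$ a subsolution, so the barrier still applies. The other direction integrates the $G$-equation in time using \eqref{Gt-intermediate}, Corollary \ref{scalar.curvature.at.tips.1}, and Corollary \ref{scalar.curvature.at.tips.2}.

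\textbf{Main obstacle.} The lower bound $R_{\rm tip}\ge(1-o(1))\log(-t)/(-t)$ is where I expect the difficulty. It needs a supersolution bound for $U$, and the $F$-term in \eqref{u-equation} has the wrong sign for that. I would handle it by showing this term is negligible in the collar. By Lemma \ref{k-1lines split} and the argument in Proposition \ref{tip-asymptotics}, we have $R^{1/2}F\to\infty$ there, while $F_r=F_z/G_z$. Near the tip the blowdown splits $\mathbb R^{k-1}$, so $F_z/F\to0$ on the curvature scale. This gives $U^2F_r^2/F^2=U\,F_z^2/F^2=o(U R)$, which is lower order compared with $U_r^2$ and $U(1-U)/r^2$.

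\textbf{Step 3 (convergence at the tips).} Once Steps 1 and 2 are done, the final assertion is Proposition \ref{tip-asymptotics} applied to the tip points $(p_i,t_i)$. Uniqueness of the limit follows from the uniqueness of the Bryant soliton up to scaling, together with the normalization $R=1$ at the tip.
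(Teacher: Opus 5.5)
Your Steps 1 and 3 are fine and match the paper. For Step 1, the paper bounds $d_{\rm tip}$ from above via the midpoint inequality $\tfrac12(\bar z(2\theta,t)+d_{\rm tip}(t))\le \bar z(\theta,t)$. That is the chord form of your slope argument, and it needs only $C^0$ information.

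\textbf{The gap is in Step 2.} Your profile matching is a correct heuristic, but nothing available makes it rigorous.
\begin{itemize}
\item The estimates \eqref{Gz-intermediate}, \eqref{Gt-intermediate} and \eqref{G-intermediate-estimate} hold only where $G\ge\theta\sqrt{-2(n-k-1)t}$ with $\theta$ fixed.
\item The Bryant picture holds only where $G\le L R_{\rm tip}^{-1/2}$ with $L$ fixed.
\item By Corollary \ref{scalar.curvature.at.tips.2}, the collar between these two regions spans an unbounded range of scales. Using the intermediate formula for $U$ ``for small $r/\sqrt{-t}$'' is exactly the unproved step.
\end{itemize}

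Moreover, you have the two directions reversed. On the Bryant side $U\approx c_{n,k}/(R_{\rm tip}r^2)$. So an \emph{upper} bound on $U$ in the collar yields the \emph{lower} bound $R_{\rm tip}\ge(1-o(1))\log(-t)/(-t)$. That upper bound is what Brendle's barrier gives, and there the sign of the $F$-term is favorable. Even this direction needs sharp information on $\psi_a$ near $s\sim a^{-1}$, which Proposition \ref{g-barrier} does not record.

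The genuinely hard direction is the upper bound on $R_{\rm tip}$. It would require a sharp \emph{lower} bound on $U$ across the whole collar, and none of your tools supplies it:
\begin{itemize}
\item there is no subsolution barrier for this, and showing that the $F$-term is negligible does not produce one;
\item concavity of $G$ only gives chord bounds, which lose a factor $\log(-t)$;
\item ``integrating the $G$-equation in time'' with \eqref{Gt-intermediate} again only sees the region $G\ge\theta\sqrt{-t}$.
\end{itemize}

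\textbf{The missing idea.} What the paper imports from \cite{ABDS} (Lemmas 7.2, 7.3 and Proposition 7.4) converts the $d_{\rm tip}$ asymptotics of your Step 1 into curvature asymptotics through the evolution of distance. The collar then never has to be resolved.
\begin{itemize}
\item Since $d_{\rm tip}(t)$ is the distance between two time-independent singular orbits, $\frac{d}{dt}d_{\rm tip}=-\int_0^{d_{\rm tip}}\mathrm{Ric}(\partial_z,\partial_z)\,dz$.
\item On the limit $\mathbb{R}^{k-1}\times$Bryant, normalized so that $R=1$ at the tip, $\int_0^\infty\mathrm{Ric}(\gamma',\gamma')\,ds=\langle\nabla f,\gamma'\rangle\big|_0^\infty=1$. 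So the final stretch of length $LR_{\rm tip}^{-1/2}$ contributes approximately $R_{\rm tip}^{1/2}$.
\item The rest of the integral is small. Integrating by parts, $\int_0^{z_1}-G_{zz}/G\,dz\le|G_z|/G$ evaluated at $z_1=d_{\rm tip}-LR_{\rm tip}^{-1/2}$, which is $O(R_{\rm tip}^{1/2}/L)$. The $F$-part is $O((-t)^{-1/2})$ by concavity and monotonicity of $F$, and this is $o(R_{\rm tip}^{1/2})$ by Corollary \ref{scalar.curvature.at.tips.2}.
\item Hence $-\frac{d}{dt}d_{\rm tip}=(1+o(1))R_{\rm tip}^{1/2}$.
\item Integrate this over $[-(1+\varepsilon)s,-s]$ against \eqref{dtip}. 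Use that $R_{\rm tip}$ is nondecreasing in $t$, by Hamilton's Harnack inequality at the time-independent tip set. Letting $\varepsilon\to0$ gives both inequalities in \eqref{Rtip}.
\end{itemize}
The sharp constant comes from $d_{\rm tip}$, not from any barrier in the collar.
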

\begin{proof}
{
We notice that for any $\theta>0$, by \eqref{domain-size-estimate} and convexity of $G$, we have for $\bar{z}$ defined in Proposition \ref{intermediate region asymptotics},
\begin{align}
\frac{1}{2}\left(\bar z(2\theta, t) + d_{\text{\rm tip}}(t)\right)\leq \bar z(\theta, t) \leq d_{\text{\rm tip}}(t).
\end{align}
Using intermediate region asymptotics from Proposition \ref{intermediate region asymptotics} and taking $\theta\to 0$, we obtain \eqref{dtip} as in \cite[Proposition 7.1]{ABDS}. By Proposition \ref{tip-asymptotics}, we know the sequence of rescaled flows $(M, g_i(t), p_i)$ subsequentially converges to  $\mathbb{R}^{k-1}$ times Bryant soliton with tip scalar curvature $1$. Since all subsequential limits are the same, we obtain full convergence to $\mathbb{R}^{k-1}$ times Bryant soliton. Lastly, since the limit is the Bryant soliton, we have $\int_0^\infty \text{\rm Ric}(\gamma'(s),\gamma'(s)) \, ds = 1$ as in \cite[Lemma 7.2]{ABDS}, when the scalar curvature is normalized to $1$. Using the above facts, Corollary \ref{scalar.curvature.at.tips.2} and \eqref{dtip}, we can obtain  $-\frac{d}{dt} d_{\text{\rm tip}}(t) = (1+o(1)) \, R_{\text{\rm tip}}(t)^{\frac{1}{2}}$  as in \cite[Lemma 7.3]{ABDS}. The proof of  \eqref{Rtip} is now the same as the proof of in \cite[Proposition 7.4 ]{ABDS}. 
}
\end{proof}


{We are now ready to prove Theorem \ref{unique asymptotics theorem}.
\begin{proof}[Proof of Theorem \ref{unique asymptotics theorem}]
    Theorem \ref{unique asymptotics theorem} follows directly from Proposition \ref{convergence-C_loc^infty}, Proposition \ref{intermediate region asymptotics}, Proposition \ref{Tip region asymptotics}.
  \end{proof}}

\subsection{Uniqueness of profile $G(z, t)$ implies uniqueness of profile $F(z, t)$}\label{secGimplyF}
In this subsection, we show that uniqueness of $G(z, t)$ implies uniqueness of $F(z, t)$.
\begin{proof}[Proof of Theorem \ref{GuniqueimpliesFunique}]
    By the equation of $G$ in \eqref{G-equation},  for 
    \begin{equation}
\Phi^{G}(z,t)
:=
\frac{G_t-G_{zz}+(n-k-1)\frac{1-G_z^2}{G}}{G_z} \quad z>0 
    \end{equation}
we have
\begin{equation}\label{Phi-identity}
\Phi^{G}=(k-1)\frac{F_z}{F}-\mathcal J.
\end{equation}
where 
\begin{equation}
    \mathcal J(z,t)=
\int_0^z \left((k-1)\frac{F_{zz}}{F}(\tilde{z},t)+(n-k)\frac{G_{zz}}{G}(\tilde{z},t)\right)\,d\tilde{z}.
\end{equation}
Differentiate \eqref{Phi-identity} to obtain
\begin{align}
    \Phi^{G}_z
&=
(k-1)\frac{F_{zz}}{F}
-(k-1)\frac{F_z^2}{F^2}
-
(k-1)\frac{F_{zz}}{F}
-(n-k)\frac{G_{zz}}{G}\\
&=
-(k-1)\frac{F_z^2}{F^2}
-(n-k)\frac{G_{zz}}{G}.
\end{align}

Consequently,
\begin{align}\label{F-log-derivative-square}
((\log F)_z)^2&=\left(\frac{F_z}{F}\right)^2=Q^G\geq 0
\end{align}
where 
\begin{align}
Q^G&=
-\frac{1}{k-1}
\left(
\Phi^{G}_z+(n-k)\frac{G_{zz}}{G}
\right)\\
&=-\frac{1}{k-1}
\left[
\partial_z\!\left(
\frac{G_t-G_{zz}+(n-k-1)\frac{1-G_z^2}{G}}{G_z}
\right)
+
(n-k)\frac{G_{zz}}{G}\right]
\end{align}
Hence by $F_z/F>0$ from \eqref{gradient-sign}, we have
\begin{equation}\label{F-log-derivative}
\frac{F_z}{F}
=\sqrt{Q^G}=\sqrt{
-\frac{1}{k-1}
\left[
\partial_z\!\left(
\frac{G_t-G_{zz}+(n-k-1)\frac{1-G_z^2}{G}}{G_z}
\right)
+
(n-k)\frac{G_{zz}}{G}
\right]
}.
\end{equation}
By integration from $z_*$ to $z >0$
\begin{align}\label{Fz*t0}
F(z, t)&= F(z_*, t)\exp \left(\int_{z_*}^{z}\sqrt{Q^G(\tilde{z}, t)}d\tilde{z}\right) \\
&=z\frac{F(z_*, t)}{z_*}\exp \left(\int_{z_*}^{z}\left[\sqrt{Q^G(\tilde{z}, t)}-\frac{1}{\tilde{z}}\right]d\tilde{z}\right)\notag
\end{align}
Sending $z_*\to 0^{+}$ in \eqref{Fz*t0} and noticing $\lim_{z_*\to 0^+}F(z_*, t)/z_*\to 1$, we have
\begin{align}\label{Fz*t}
F(z, t)
&=z\exp \left(\int^{z}_{0}\left[\sqrt{Q^G(\tilde{z}, t)}-\frac{1}{\tilde{z}}\right]d\tilde{z}\right) 
\end{align}
where 
\begin{align}
    \lim_{z\to 0^+}\left|\sqrt{Q^G}(z, t)-\frac{1}{z}\right|=\lim_{z\to 0^+}\left|\frac{F_z}{F}-\frac{1}{z}\right|= \lim_{z\to 0^+}\left|\frac{zF_z-F}{Fz}\right|= \lim_{z\to 0^+}\left|\frac{zF_{zz}}{zF_z+F}\right|=0.
\end{align}
The claim now immediately follows.
\end{proof}

\bigskip


\begin{appendix}
\section{\quad\quad\quad A Gaussian weighted Hardy inequality}\label{appendix weights varied Gaussian Hardy}
For $a> 1$,  $A >0$ large, and $\xi \in (0, A]$ 
we define 
    \begin{equation}\label{rho weight}
    \rho(\xi)= \frac{
  \xi^{-a} e^{\frac {\xi^2}4} }{A^{-a}  e^{\frac {A^2}4}  + 2\int^{A}_{\xi} \sigma^{-a}  e^{\frac {\sigma^2}4} \,   \mathrm{d}\sigma}, 
 \qquad \xi\in (0, A]. 
    \end{equation}
One easily checks that $\rho$ is well defined on the interval $(0, A]$,  it  solves
\begin{equation}\label{eqn-rho}
    \rho'+(\frac{a}{\xi}-\frac{\xi}{2})\rho-2\rho^2=0
\end{equation}
and satisfies  
\begin{equation}\label{eqn-rho2}
 \rho>0,\quad \rho(A)=1,\quad  \lim_{\xi\to 0+}\xi \, \rho(\xi)=\tfrac{a-1}{2}>0.
\end{equation}

We will use the following elementary Hardy inequality, which we prove for the reader's convenience. 
 
\begin{lem}[{\cite[Lemma 3.4]{Hardy_Gaussian}}]\label{weights varied Gaussian Hardy}
For $\tilde{f}\in C^{1}([0, A])$, $\tilde{f}(0)=0$ and $a>1$,  we have
    \begin{equation}\label{eqn-Hardy} 
        \int_{0}^{A} \tilde{f}(\xi)^2 \rho^2(\xi) \xi^{a} e^{-\frac{\xi^2}{4}}  d\xi\leq   \int_{0}^{A} \tilde{f}' (\xi)^2 \xi^{a} e^{-\frac{\xi^2}{4}} d\xi+ \tilde{f} (A)^2 A^{a} e^{-\frac{A^2}{4}} 
    \end{equation}
\end{lem}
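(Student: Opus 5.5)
The natural approach is the classical ground-state (Riccati) substitution: the ODE \eqref{eqn-rho} for $\rho$ is exactly what makes a completed square work against the weight $w(\xi):=\xi^{a}e^{-\frac{\xi^2}{4}}$. Note first that $w'=(\frac{a}{\xi}-\frac{\xi}{2})w$, so \eqref{eqn-rho} can be rewritten as
\[
(\rho w)'=\rho' w+\rho w'=2\rho^2 w .
\]
Then for $\tilde f\in C^1([0,A])$ with $\tilde f(0)=0$, I integrate $(\tilde f^2\rho w)'=2\tilde f\tilde f'\rho w+2\tilde f^2\rho^2 w$ over $[\epsilon,A]$:
\[
2\int_\epsilon^A \tilde f^2\rho^2 w\,d\xi=\tilde f(A)^2\rho(A)w(A)-\tilde f(\epsilon)^2\rho(\epsilon)w(\epsilon)-2\int_\epsilon^A \tilde f\tilde f'\rho w\,d\xi .
\]
The cross term is bounded by Cauchy--Schwarz in the form $-2\tilde f\tilde f'\rho\le \tilde f^2\rho^2+\tilde f'^2$, which gives
\[
\int_\epsilon^A \tilde f^2\rho^2 w\le \int_\epsilon^A \tilde f'^2 w+\tilde f(A)^2 A^{a}e^{-\frac{A^2}{4}}-\tilde f(\epsilon)^2\rho(\epsilon)w(\epsilon),
\]
using $\rho(A)=1$ from \eqref{eqn-rho2}.

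The only delicate step is the boundary term at $0$ as $\epsilon\to0^+$. By \eqref{eqn-rho2}, $\rho(\epsilon)\sim\frac{a-1}{2\epsilon}$, so $\rho(\epsilon)w(\epsilon)\sim\frac{a-1}{2}\epsilon^{a-1}\to0$ since $a>1$. Moreover $\tilde f(\epsilon)=O(\epsilon)$ since $\tilde f\in C^1$ and $\tilde f(0)=0$. Hence the boundary term tends to $0$, and in any case it is nonnegative (since $\rho>0$), so it may simply be dropped. Letting $\epsilon\to0$ by monotone convergence yields \eqref{eqn-Hardy}.

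For completeness I would also verify the stated properties of $\rho$. Write $D(\xi):=A^{-a}e^{\frac{A^2}{4}}+2\int_\xi^A\sigma^{-a}e^{\frac{\sigma^2}{4}}d\sigma$, so that $\rho=\xi^{-a}e^{\frac{\xi^2}{4}}/D$. Then $D>0$, and $D'=-2\xi^{-a}e^{\frac{\xi^2}{4}}$, which gives $\rho'=(-\frac{a}{\xi}+\frac{\xi}{2})\rho+2\rho^2$; this is \eqref{eqn-rho}. The normalization $\rho(A)=1$ is immediate. For the limit, since $a>1$ we have $D(\xi)\sim\frac{2}{a-1}\xi^{1-a}$ as $\xi\to0$, and therefore $\xi\rho\to\frac{a-1}{2}$. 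The main (mild) obstacle is just this asymptotic bookkeeping at $\xi=0$; everything else is algebra.
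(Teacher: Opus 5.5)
Your proof is correct and is essentially the paper's argument. Your Cauchy--Schwarz step $-2\tilde f\tilde f'\rho\le \tilde f^2\rho^2+\tilde f'^2$ is exactly the paper's $(\tilde f'+\rho\tilde f)^2\ge 0$, and integrating by parts against $\xi^{a}e^{-\frac{\xi^2}{4}}$ using \eqref{eqn-rho} is the same computation. Your truncation to $[\epsilon,A]$ simply makes explicit that the boundary term at $0$ vanishes, because $\rho(\xi)\,\xi^{a}e^{-\frac{\xi^2}{4}}\to 0$ for $a>1$; the paper leaves this implicit in its appeal to \eqref{eqn-rho2}. As you observed, that term enters with a favorable sign, so the hypothesis $\tilde f(0)=0$ is not actually needed.
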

\begin{proof} First we observe that  $(\tilde{f}'+ \rho\tilde{f})^2 \geq 0$, implies  $ (\tilde{f}')^2 \geq -  \rho \, ( \tilde f^2)'- \rho^2 \tilde f^2$. 
Integrating the last inequality on  $[0, A]$ against the  weight $\xi^{a} e^{-\frac{\xi^2}{4}} d\xi$, while using integration by parts, equation \eqref{eqn-rho} and  conditions \eqref{eqn-rho2},  we obtain
       \begin{equation}
  \int_{0}^{A} \tilde{f}' (\xi)^2 \xi^{a} e^{-\frac{\xi^2}{4}} d\xi 
 \geq      - \tilde{f} (A)^2 A^{a} e^{-\frac{A^2}{4}} +\int_{0}^{A} |\tilde{f}(\xi)|^2 \rho^2(\xi) \xi^{a} e^{-\frac{\xi^2}{4}}  d\xi\    \end{equation}
which readily gives the desired inequality. 
\end{proof}



\section{\quad\quad\quad  Heat kernel estimates}
Recall that one can use the infinite  reflection method to show that the Dirichlet heat kernel on the domain  $\Omega := [-1,1]^k$ (Green's function of  the heat equation with Dirichlet boundary condition) is given by
\begin{align}
    K_t({\bf x},{\bf y}) =\!\!\!\!\!\!\sum_{\delta\in \{1,-1\}^{k},l\in\mathbb{Z}^{k}}\!\! \frac{(-1)^{\frac{\delta_{1}+\dots+\delta_{k}-k}{2}}}{(4\pi t)^{\frac{k}{2}}}\prod_{i=1}^{k}
    \exp\left(-\frac{\left(x_i-\delta_iy_i-(4l_{i}+1-\delta_{i})\frac{1}{4\sqrt{k}}\right)^2}{4t}\right),
\end{align}
for all ${\bf x}, {\bf y} \in \Omega = [-1,1]^k$, and all $t>0$. Note that we have boundary condition $\left.K_t({\bf x},{\bf y})\right|_{{\bf y}\in \partial \Omega} = 0$. We have the   following elementary estimates for Dirichlet heat kernel $K_t({\bf x},{\bf y})$.

\begin{lem}\label{HK-estimate}
There exists  a large constant $C$ such that the following hold on $\Omega:=(-1,1)^k$\\
(i) $K_1({\bf 0}, \bf   y) \geq \frac{1}{C} \Pi_{  i=1}^k\cos \frac{\pi y_i}{2}$ for all ${\bf y} \in\Omega$.\\
(ii) $\left|\left.\Delta_{{{\bf x}}} K_1({{\bf x}}, {\bf y})\right|_{{{\bf x}}={\bf 0}} \right\rvert\, \leq C \Pi_{  i=1}^k\cos \frac{\pi y_i}{2}$ for all ${\bf y} \in\Omega$.\\
(iii) $-\left.\partial_{\nu_{{\bf y}}} K_t({\bf 0}, {\bf y})\right|_{\partial \Omega} \geq \frac{1}{C} t^{-\frac{k}{2}-1} e^{-\frac{1}{4 t}}$ for all $t \in(0,1]$.\\
(iv) $\left|\left.\Delta_{{{\bf x}}} \partial_{\nu_{{\bf y}}} K_t({{\bf x}}, {\bf y})\right|_{{{\bf x}}={\bf 0}, \partial \Omega} \right\rvert\, \leq C t^{-\frac{k}{2}-3} e^{-\frac{1}{4 t}}$  for all $t \in(0,1]$.
\end{lem}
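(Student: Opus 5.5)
The plan is to exploit the product structure of the cube and reduce everything to the one-dimensional Dirichlet heat kernel on $[-1,1]$. Write $k_t(x,y)$ for the Dirichlet heat kernel of $\partial_t-\partial_x^2$ on $[-1,1]$. By separation of variables (equivalently, by uniqueness of the Dirichlet Green's function), $K_t({\bf x},{\bf y})=\prod_{i=1}^k k_t(x_i,y_i)$. I will work from the standard one-dimensional image formula
\begin{align*}
k_t(x,y)=\sum_{l\in\mathbb Z}\frac{1}{\sqrt{4\pi t}}\Big(e^{-\frac{(x-y-4l)^2}{4t}}-e^{-\frac{(x+y-2-4l)^2}{4t}}\Big),
\end{align*}
rather than from the displayed multi-dimensional formula, whose shifts I would first double-check against this one. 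I will also use the eigenfunction expansion
\begin{align*}
k_t(x,y)=\sum_{m\ge1}e^{-(m\pi/2)^2t}\,\phi_m(x)\phi_m(y),\qquad \phi_m(y)=\sin\tfrac{m\pi(y+1)}{2}.
\end{align*}
Note that $\phi_1(y)=\cos\frac{\pi y}{2}$.

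For (i) and (ii), set $t=1$. The function $y\mapsto k_1(0,y)$ is smooth on $[-1,1]$ and vanishes at $y=\pm1$. It is positive in the interior by the strong maximum principle, and $\partial_y k_1(0,\pm1)\neq0$ by the Hopf lemma (alternatively, because the first eigenmode dominates the exponentially convergent series). Hence the ratio $k_1(0,y)/\cos\frac{\pi y}{2}$ extends to a continuous positive function on $[-1,1]$. It is therefore bounded below by $1/C$, and taking products over $i$ gives (i). For (ii), $\partial_x^2k_1(x,y)|_{x=0}$ is smooth in $y$ and vanishes at $y=\pm1$, since every term of the eigen-series does. So it is bounded by $C\cos\frac{\pi y}{2}$, and $|k_1(0,y)|\le C\cos\frac{\pi y}{2}$ as well. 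Then
\begin{align*}
\Delta_{\bf x}K_1=\sum_i \partial_{x_i}^2k_1(0,y_i)\prod_{j\ne i}k_1(0,y_j),
\end{align*}
and each summand is bounded by $C\prod_j\cos\frac{\pi y_j}{2}$.

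For (iii) and (iv), I use the image series for $t\in(0,1]$. On the face $\{y_1=1\}$ we have
\begin{align*}
-\partial_\nu K_t=-\partial_{y_1}k_t(0,1)\prod_{i\ge2}k_t(0,y_i).
\end{align*}
In the sum for $-\partial_{y}k_t(0,y)|_{y=1}$, the two nearest images contribute $2\cdot\frac{1}{\sqrt{4\pi t}}\frac{1}{2t}e^{-\frac1{4t}}$, and all others are $O(t^{-3/2}e^{-9/(4t)})$. This gives $\ge c\,t^{-3/2}e^{-1/(4t)}$. Moreover $k_t(0,0)\ge c\,t^{-1/2}$, since the identity term dominates.

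I expect the main obstacle to be the lower bound (iii), because of the edges. Near the edges of the cube the factors $k_t(0,y_i)$ degenerate, so the claimed bound cannot hold literally at every boundary point. I would prove it at the points of $\partial\Omega$ nearest the origin (the face centers, $y_i=0$ for $i\ge2$), or in an averaged form over a fixed neighborhood of them, which is what the application via Proposition \ref{refine-Laplace-esitimate} needs. There the product of the bounds above gives $t^{-k/2-1}e^{-1/(4t)}$.

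For (iv), I would differentiate termwise. In the factor carrying $\partial_\nu$, every image term sits at distance $\ge1$ from $x=0$. Its derivatives, including the $\partial_{x_1}^2$ from the Laplacian, produce at most $t^{-1/2}\cdot t^{-3}$ times a polynomial in $|x-y|$ times $e^{-1/(4t)}$. The remaining factors are bounded by $Ct^{-1/2}$, and their second $x$-derivatives at $0$ by $Ct^{-3/2}$. Collecting powers of $t$ gives the bound $Ct^{-k/2-3}e^{-1/(4t)}$.
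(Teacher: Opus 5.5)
Your treatment of (i), (ii) and (iv) is correct, and it takes a different route from the paper.

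\emph{Comparison with the paper.} The paper transplants the one-dimensional argument. It asserts the small-time boundary bounds (iii)--(iv) on $\partial\Omega$ for $t\in(0,\tau]$. It then obtains (i) by comparing $K_t({\bf 0},\cdot)$ with the first Dirichlet eigenfunction via the maximum principle. That comparison tacitly needs the right vanishing of $K_\tau({\bf 0},\cdot)$ at faces, edges and corners. Your factorization $K_t({\bf x},{\bf y})=\prod_i k_t(x_i,y_i)$ reduces everything to explicit one-dimensional facts and supplies exactly that boundary behaviour. You are also right to distrust the displayed image formula: its shifts should be $4l_i+1-\delta_i$, without the factor $\frac{1}{4\sqrt k}$.

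\emph{Your diagnosis of (iii) is correct.} For $k\ge2$, (iii) is false as stated. At ${\bf y}=(1,\frac12,0,\dots,0)$, bounding $k_t(0,\frac12)$ by the free kernel gives
\[
-\partial_\nu K_t({\bf 0},{\bf y})\le Ct^{-\frac k2-1}e^{-\frac{1}{4t}-\frac{1}{16t}}.
\]
Moreover, for each fixed $t$, $-\partial_\nu K_t({\bf 0},\cdot)\to0$ towards every edge. The paper's proof does not notice this.

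\emph{The gap is in your proposed repair.} A version of (iii) at the face centers, or averaged near them, is \emph{not} what Proposition~\ref{refine-Laplace-esitimate} needs. There, (iii) bounds $h({\bf 0},0)$ from below by $\int_0^1\!\int_{\partial\Omega}h\,(-\partial_\nu K_t)\,d\sigma\,dt$. This lower bound must absorb $\int_\mu^1\!\int_{\partial\Omega}h\,|\Delta_{\bf x}\partial_\nu K_t|\,d\sigma\,dt$, where $h\ge0$ is otherwise arbitrary on $\partial\Omega$. If $h(\cdot,-t)$ concentrates near an edge, a lower bound near the face centers gives nothing, while (iv) still charges that region.

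What is needed is a pointwise comparison with matching weights on all of $\partial\Omega$, just as (i) and (ii) match through $\prod_i\cos\frac{\pi y_i}{2}$:
\[
|\Delta_{\bf x}\partial_{\nu_{\bf y}}K_t({\bf 0},{\bf y})|\le Ct^{-2}\bigl(-\partial_{\nu_{\bf y}}K_t({\bf 0},{\bf y})\bigr),\qquad {\bf y}\in\partial\Omega,\ t\in(0,1].
\]
This is used for $t\in[\mu,1]$; your uniform bound (iv) handles $t\in(0,\mu)$. Via your product formula it reduces to two one-dimensional relative bounds:
\[
|\partial_x^2\partial_yk_t(0,1)|\le Ct^{-2}|\partial_yk_t(0,1)|,\qquad |\partial_x^2k_t(0,y)|\le Ct^{-2}k_t(0,y)\ \ \text{for } y\in(-1,1).
\]
Your image-series estimates already give the first. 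The second is the genuinely missing estimate.

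It needs care in the layer $1-|y|\lesssim t$, where the image cancellation matters. Writing $y=1-s$, the leading image pair gives the ratio
\[
\frac{1+s^2-2s\coth\frac{s}{2t}}{4t^2}-\frac{1}{2t}=O(t^{-2}).
\]
The remaining images are exponentially smaller. For $t$ bounded below, Hopf's lemma and continuity suffice.
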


\begin{proof}
We can find a small constant $\tau \in(0,1)$ such that the following holds:
\begin{equation}
    -\left.\partial_{\nu_{{\bf y}}} K_t({\bf 0}, {\bf y})\right|_{\partial \Omega} \geq C^{-1} \,  t^{-\frac{k}{2}-1} e^{-\frac{1}{4 t}}
\end{equation}
and 
 \begin{equation}
     \left|\left.\Delta_{{{\bf x}}} \partial_{\nu_{{\bf y}}} K_t({{\bf x}}, {\bf y})\right|_{{{\bf x}}={\bf 0}, \partial \Omega} \right\rvert\, \leq C\,  t^{-\frac{k}{2}-3} e^{-\frac{1}{4 t}}
 \end{equation}
 for all $t \in(0, \tau]$.
In particular, $-\left.\partial_{\nu_{{\bf y}}} K_t({\bf 0}, {\bf y})\right|_{\partial \Omega}$ is a positive number. Since $K_\tau({\bf 0}, {\bf y})>0$ for all ${\bf y} \in\Omega$, we can find a small number $\varepsilon>0$ such that $K_\tau({\bf 0}, {\bf y}) \geq \varepsilon \Pi_{  i=1}^k\cos \frac{\pi y_i}{2}$ for all ${\bf y} \in\Omega$. The maximum principle now implies $K_t({\bf 0}, {\bf y}) \geq \varepsilon e^{-\frac{\pi^2 t}{4}} \Pi_{  i=1}^k\cos \frac{\pi y_i}{2}$ for all ${\bf y} \in\Omega$ and all $t \in[\tau, 1]$. In particular, $K_1({\bf 0}, {\bf y}) \geq \varepsilon e^{-\frac{\pi^2}{4}} \Pi_{  i=1}^k\cos \frac{\pi y_i}{2}$. From this, statements (i), (iii), and (iv) follow.

To prove statement (ii), we observe that the function $y \mapsto \left.\Delta_{{{\bf x}}} K_1({{\bf x}}, {\bf y})\right|_{{{\bf x}}={\bf 0}}$ is smooth, and vanishes at $\partial \Omega$. Consequently, $\left|\left.\Delta_{{{\bf x}}} K_1({{\bf x}}, {\bf y})\right|_{{{\bf x}}={\bf 0}} \right\rvert\, \leq C \Pi_{  i=1}^k\cos \frac{\pi y_i}{2}$ for all ${\bf y} \in\Omega$. This proves (ii).
\end{proof}

\begin{prop}\label{refine-Laplace-esitimate}
Let $h({{\bf x}}, t)$ be a nonnegative solution of the heat equation $h_t=\Delta h$ on the rectangle $\Omega\times [-1,0] = [-1,1]^k \times[-1,0]$. Then, for each $\mu \in(0,1)$,
$$
\left|\Delta h({\bf 0},0)\right| \leq C \mu^{-2} h({\bf 0},0)+C e^{-\frac{1}{8 \mu}} \sup _{\partial \Omega \times[-1,0]} h,
$$
where $C$ is a constant.
\end{prop}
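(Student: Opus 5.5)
We follow the proof of the one-dimensional analogue in \cite[Proposition 4.4]{ABDS} (cf. also \cite{Bre-3d-noncpt}), with the one-dimensional heat kernel on $[-1,1]$ replaced by the Dirichlet heat kernel $K_t$ on $\Omega=(-1,1)^k$ and its estimates from Lemma \ref{HK-estimate}. By parabolic scaling (replace $h$ by its restriction to a box of size comparable to $\sqrt{\mu}$ around the origin, or equivalently apply the representation on the time interval $[-\mu,0]$) it suffices to produce a representation formula for $h(\mathbf 0,0)$ and $\Delta h(\mathbf 0,0)$ in terms of the interior values of $h$ at an earlier time and the boundary values on $\partial\Omega$. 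Concretely, we use the Duhamel/Green representation on $\Omega\times[-\mu,0]$:
\[
h(\mathbf x,0)=\int_\Omega K_{\mu}(\mathbf x,\mathbf y)\,h(\mathbf y,-\mu)\,d\mathbf y-\int_{0}^{\mu}\int_{\partial\Omega}\partial_{\nu_{\mathbf y}}K_s(\mathbf x,\mathbf y)\,h(\mathbf y,-s)\,d\sigma(\mathbf y)\,ds ,
\]
and apply $\Delta_{\mathbf x}$ at $\mathbf x=\mathbf 0$. Since $K_\mu$ is obtained from $K_1$ on a rescaled box, the scaled versions of Lemma \ref{HK-estimate} (i)--(iv) give: on the interior piece, $|\Delta_{\mathbf x}K_\mu(\mathbf 0,\mathbf y)|\le C\mu^{-1}\cdot(\text{const})\,K_\mu(\mathbf 0,\mathbf y)$-type comparison (via (i),(ii) after rescaling the domain to $(-\sqrt\mu^{-1},\sqrt\mu^{-1})^k$ is not needed; instead we rescale time only up to $\mu$ and use the comparison with $\prod\cos\frac{\pi y_i}{2}$); on the boundary piece, (iii) and (iv) give $|\Delta_{\mathbf x}\partial_\nu K_s|\le C s^{-2}\,(-\partial_\nu K_s)$ for $s\le 1$.

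The key steps, in order: first, prove the interior bound
\[
\Bigl|\int_\Omega \Delta_{\mathbf x}K_\mu(\mathbf 0,\mathbf y)h(\mathbf y,-\mu)\,d\mathbf y\Bigr|\le C\mu^{-2}\int_\Omega K_\mu(\mathbf 0,\mathbf y)h(\mathbf y,-\mu)\,d\mathbf y ,
\]
using nonnegativity of $h$ and the pointwise comparison $|\Delta_{\mathbf x}K_\mu(\mathbf 0,\cdot)|\le C\mu^{-2}K_\mu(\mathbf 0,\cdot)$, which for $\mathbf y$ near $\mathbf 0$ follows from the explicit Gaussian and near $\partial\Omega$ from (i),(ii) (both vanish linearly, ratio bounded). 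Second, split the boundary integral into $s\in(0,\mu]$, where (iii),(iv) give $|\Delta\partial_\nu K_s|\le Cs^{-2}(-\partial_\nu K_s)\le C\mu^{-2}\dots$ fails for small $s$; so instead we bound the small-$s$ boundary contribution directly by $\sup h\cdot\int_0^{\mu}Cs^{-k/2-3}e^{-1/(4s)}ds\le Ce^{-1/(8\mu)}\sup_{\partial\Omega\times[-1,0]}h$. Third, for the remaining part (if we take the interior time to be $-1$ rather than $-\mu$, the range $s\in[\mu,1]$), use (iii),(iv): $|\Delta\partial_\nu K_s|\le C\mu^{-2}(-\partial_\nu K_s)$ there, so this piece is bounded by $C\mu^{-2}$ times the corresponding positive boundary term in the representation of $h(\mathbf 0,0)$. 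Since all terms in the representation of $h(\mathbf 0,0)$ are nonnegative ($h\ge0$, $K\ge0$, $-\partial_\nu K\ge0$), summing gives $|\Delta h(\mathbf 0,0)|\le C\mu^{-2}h(\mathbf 0,0)+Ce^{-1/(8\mu)}\sup h$.

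So the concrete choice is: take the representation on $[-1,0]$, interior kernel $K_1$, and split the boundary time integral at $s=\mu$. The main obstacle is the interior comparison $|\Delta_{\mathbf x}K_1(\mathbf 0,\mathbf y)|\le CK_1(\mathbf 0,\mathbf y)$, which is exactly Lemma \ref{HK-estimate}(i),(ii) (so the product-of-cosines lower bound must be checked carefully in $k$ dimensions, including corners of the cube); the boundary estimates are then routine consequences of (iii),(iv), with the exponent $-k/2-3$ absorbed since $s^{-k/2-3}e^{-1/(4s)}\le C e^{-1/(8s)}$.
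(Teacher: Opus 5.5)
Your final concrete route is exactly the paper's proof. That route is: the Green representation of $h(\mathbf 0,0)$ and $\Delta h(\mathbf 0,0)$ on $\Omega\times[-1,0]$ with interior kernel $K_1$; the pointwise comparison $|\Delta_{\mathbf x}K_1(\mathbf 0,\cdot)|\le C\,K_1(\mathbf 0,\cdot)$ from Lemma \ref{HK-estimate}(i),(ii); splitting the boundary time integral at $s=\mu$ and using (iii),(iv); and the nonnegativity of every term in the representation of $h(\mathbf 0,0)$. The preliminary discussion of rescaling to $[-\mu,0]$ and comparing $\Delta_{\mathbf x}K_\mu$ with $K_\mu$ is unnecessary and can be dropped.
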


\begin{proof}
For any point ${{\bf x}}\in \Omega$, and each $t \in(0,1]$, we define
\begin{align}
I(t):=\int_{\Omega} K_t({{\bf x}}, {\bf y}) h({\bf y},-t) d {\bf y}.
\end{align}
Then
\begin{align}
\begin{aligned}
I^{\prime}(t) & =\int_{\Omega}\left[h({\bf y},-t) \frac{\partial}{\partial t} K_t({{\bf x}}, {\bf y})+K_t({{\bf x}}, {\bf y}) \frac{\partial}{\partial t} h({\bf y},-t)\right] d {\bf y} \\
& =\int_{\Omega}\left[h({\bf y},-t) \Delta_{{\bf y}} K_t({{\bf x}}, {\bf y})-K_t(x, y) \Delta_{{\bf y}} h({\bf y},-t)\right] d {\bf y} \\
& =\int_{\partial\Omega}h({\bf y},-t) \partial_{\nu_{{\bf y}}} K_t(x, y)d\sigma.
\end{aligned}
\end{align}
We now integrate this identity over $t \in(0,1]$. Since $\lim _{t \rightarrow 0} I(t)=h({{\bf x}}, 0)$, it follows that
\begin{align}
\begin{aligned}
h({{\bf x}}, 0) & =\int_{\Omega} K_1({{\bf x}}, {\bf y}) h({\bf y},-1) d {\bf y} \\
& -\int_0^1 \int_{\partial\Omega}h({\bf y},-t) \partial_{\nu_{{\bf y}}} K_t({{\bf x}}, {\bf y})d\sigma d t
\end{aligned}
\end{align}
for ${{\bf x}} \in \Omega$. We now put ${{\bf x}}={\bf 0}$. Using part (i) and (iii) of Lemma \ref{HK-estimate}, we obtain
\begin{align}
\begin{aligned}
h({\bf 0},0) & \geq \frac{1}{C} \int_{\Omega} \Pi_{  i=1}^k\cos \frac{\pi y_i}{2} h({\bf y},-1) d {\bf y} \\
& +\frac{1}{C} \int_0^1 t^{-\frac{k}{2}-1} e^{-\frac{1}{4 t}}\int_{\partial\Omega}h({\bf y},-t) d\sigma d t
\end{aligned}
\end{align}
Similarly, using part (ii) and (iv) of Lemma \ref{HK-estimate}, we obtain
\begin{align}
\begin{aligned}
\left|\Delta h({\bf 0},0)\right| & \leq C \int_{\Omega} \Pi_{i=1}^k\cos \frac{\pi y_i}{2} h({\bf y},-1) d {\bf y} \\
& +C \int_0^1 t^{-\frac{k}{2}-3} e^{-\frac{1}{4 t}}\int_{\partial\Omega}h({\bf y},-t) d\sigma d t.
\end{aligned}
\end{align}
Putting these facts together, we conclude that
\begin{align}
\begin{aligned}
\left|\Delta h({\bf 0},0)\right| & \leq C \int_{\Omega}  \Pi_{i=1}^k\cos \frac{\pi y_i}{2} h({\bf y},-1) d {\bf y} \\
& +C \int_\mu^1 t^{-\frac{k}{2}-3} e^{-\frac{1}{4 t}}\int_{\partial\Omega}h({\bf y},-t) d\sigma d t \\
& +C \int_0^\mu t^{-\frac{k}{2}-3} e^{-\frac{1}{4 t}}\int_{\partial\Omega}h({\bf y},-t) d\sigma d t \\
& \leq C \int_{\Omega}  \Pi_{i=1}^k\cos \frac{\pi y_i}{2} h({\bf y},-1) d {\bf y} \\
& +C \mu^{-2} \int_\mu^1 t^{-\frac{k}{2}-1} e^{-\frac{1}{4 t}}\int_{\partial\Omega}h({\bf y},-t) d\sigma d t \\
& +C e^{-\frac{1}{8 \mu}} \int_0^\mu \int_{\partial\Omega}h({\bf y},-t) d\sigma d t \\
& \leq C \mu^{-2} h({\bf 0},0)+C e^{-\frac{1}{8 \mu}} \sup _{\partial \Omega \times[-1,0]} h
\end{aligned}
\end{align}
for each $\mu \in(0,1)$. This completes the proof.
\end{proof}

\section{\quad\quad\quad   Regularity estimate of  $h=f_{\xi}$}\label{regularity estimates of tildeh}
In this section, we prove  certain  regularity estimates for $h=f_{\xi}$ in the parabolic region $z\leq C\sqrt{-t}$,  for some constant $C>0$. {
These estimates are  applied to evaluate the error terms of the evolution equation for $\frac{1}{2}G^2 + (n-k-1)t$ in \eqref{Hztdef} of Section \ref{rule out+}, which is crucial for ruling out the case of dominant positive mode.}

First, for any
positive weight function $\rho$ and $q>0$, we define the norm $\|\cdot\|_{L^{2}_{{\rho}}(Q(R))}$
\begin{equation}\label{weight l2 norm definition}
   \|h\|_{L^{q}_{{\rho}}(Q(R))}=\left(\int_{Q(R)} h(\xi, \tau)^q\rho(\xi) d\xi d\tau\right)^{\frac{1}{q}}
\end{equation}
where  $Q(R)=[0, R]\times [-R^2, 0]$ for  $R>0$. Then, we have the   following $C^\alpha$-estimates for $h$.

\begin{lem}[$C^{\alpha}$-estimates]\label{Linfinity estimates}
    For $h\leq 0$ with $h(0, \tau)=0$, $h(0, \tau)=0$ satisfying 
    \begin{equation}
\partial_{\tau}h=\mathcal{L}_2h+E_2,
\end{equation}
where $E_2$ is defined in \eqref{h-error-expansion}, we have 
   \begin{equation}
       \|\xi^{-2}h\|_{L^{\infty}(Q(R))}+\|\xi^{-2}h\|_{C^{\alpha}(Q(R))}\leq C(k, R)\left(\|h\|_{L^{2}_{{\rho}}(Q(2R))}+\|\xi^{-2}E_2\|_{L^q_{\tilde{\rho}}(Q(2R))}\right)
    \end{equation}
where $q>\frac{k}{2}+1$,   and
\begin{equation}
    {\rho}(\xi)=\xi^{k-3} e^{-\xi^2/4} \quad \mbox{and} \quad 
    \tilde{\rho}(\xi)=\xi^{k+1} e^{-\xi^2/4},  \quad \text{for}\,\, k\geq 2.
\end{equation}
\end{lem}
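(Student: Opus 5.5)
The idea is to turn the equation for $h$ into a genuine heat equation in higher dimension and then apply standard local parabolic theory. Set $w:=\xi^{-2}h$. A direct computation with $\mathcal{L}_2 h = h_{\xi\xi}-\frac12\xi h_\xi+\frac{k-3}{\xi}h_\xi-\frac{2k-4}{\xi^2}h$ and $h=\xi^2 w$ gives
\begin{equation*}
\partial_\tau w = w_{\xi\xi}+\frac{k+1}{\xi}w_\xi-\frac12\xi w_\xi - w + \xi^{-2}E_2 .
\end{equation*}
The zeroth order terms $\xi^{-2}\bigl(2+2(k-3)-(2k-4)\bigr)w$ cancel exactly, and $-\frac12\xi\cdot 2\xi^{-1}w=-w$. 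This is the radial Ornstein--Uhlenbeck operator in dimension $k+2$, with a bounded zeroth order term. So if we define $\mathbf{w}(\mathbf{y},\tau)=w(|\mathbf{y}|,\tau)$ for $\mathbf{y}\in\mathbb{R}^{k+2}$, then $\mathbf{w}$ solves
\begin{equation*}
\partial_\tau\mathbf{w}=\Delta\mathbf{w}-\tfrac12\mathbf{y}\cdot\nabla\mathbf{w}-\mathbf{w}+\mathbf{E}, \qquad \mathbf{E}(\mathbf{y},\tau)=|\mathbf{y}|^{-2}E_2(|\mathbf{y}|,\tau).
\end{equation*}
This equation is uniformly parabolic with smooth bounded coefficients on the cylinder $B_{2R}\times[-4R^2,0]\subset\mathbb{R}^{k+2}\times\mathbb{R}$. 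The boundary conditions $h(0)=h_\xi(0)=0$ and $h=f_\xi$ with $f$ odd-smooth make $w$ even-smooth at the origin, which is needed for $\mathbf{w}$ to be a genuine solution across $\mathbf{y}=0$. If one prefers to avoid this, the same conclusion follows from a removable-singularity argument, since $w$ is locally bounded by the earlier smoothness properties.

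Next I translate the norms. In polar coordinates, $d\mathbf{y}=c_k\,\xi^{k+1}d\xi$. For the solution term,
\begin{equation*}
\int|\mathbf{w}|^2d\mathbf{y}=c\int \xi^{-4}h^2\xi^{k+1}d\xi=c\int h^2\xi^{k-3}d\xi,
\end{equation*}
so on a bounded region, where $e^{-\xi^2/4}$ is comparable to $1$, we get $\|\mathbf{w}\|_{L^2(B_{2R}\times[-4R^2,0])}\le C(R)\|h\|_{L^2_\rho(Q(2R))}$. For the source term, $\|\mathbf{E}\|_{L^q}^q=c\int|\xi^{-2}E_2|^q\xi^{k+1}d\xi$, which is bounded by $C(R)\|\xi^{-2}E_2\|^q_{L^q_{\tilde\rho}(Q(2R))}$.

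The estimate then comes from two standard results applied on $\mathbb{R}^{k+2}$, whose parabolic dimension is $k+4$. The condition $q>\frac{k}{2}+1$ is exactly $q>\frac{(k+2)+2}{2}$, the threshold in the local maximum principle (Moser, or Krylov--Safonov / Lieberman Thm 6.17 type). That principle gives
\begin{equation*}
\sup_{Q'}|\mathbf{w}|\le C\bigl(\|\mathbf{w}\|_{L^2}+\|\mathbf{E}\|_{L^q}\bigr).
\end{equation*}
The parabolic De Giorgi--Nash--Moser Hölder estimate under the same condition on $q$ gives $[\mathbf{w}]_{C^\alpha(Q(R))}$ bounded by the same right-hand side. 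To pass from $L^2$ to $L^\infty$ I would use a chain of shrinking cylinders between $B_R$ and $B_{2R}$. Restricting to radial functions gives the bounds for $\xi^{-2}h$ on $Q(R)$.

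The main obstacle is justifying that the lifted $\mathbf{w}$ is a weak solution across $\mathbf{y}=0$ when only the stated integrability of $\xi^{-2}E_2$ is used. I would handle it by testing against cutoffs that vanish near the origin and letting the cutoff radius go to zero. This works because $k+2\ge4$, so the origin has zero capacity, combined with the local boundedness of $w$. The sign condition $h\le0$ is not needed for these estimates; it only serves to make $|h|=-h$ if one wants to use the one-sided subsolution version.
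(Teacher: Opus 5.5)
Your route is the paper's route: the substitution $h=\xi^2 w$, the observation that the resulting operator is the radial Ornstein--Uhlenbeck operator in dimension $k+2$, local Moser/De Giorgi estimates, and the identity $\|\xi^{-2}h\|_{L^2_{\tilde\rho}}=\|h\|_{L^2_\rho}$ to translate norms. Your side remarks are correct, and in places cleaner than the paper's argument:
\begin{itemize}
\item The sign $h\le 0$ is not needed. The paper uses it to pass to the nonnegative subsolution $-\tilde h$, but $-w$ is just a bounded zeroth-order term.
\item Since $f$ is odd-smooth, $h=f_\xi$ is even-smooth with $h(0)=0$. Hence $w$ is even-smooth and $\mathbf w$ is a classical solution on $\mathbb R^{k+2}$, so no capacity argument is required.
\end{itemize}

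\textbf{The gap: the exponent threshold.} You claim that $q>\frac k2+1$ is exactly $q>\frac{(k+2)+2}{2}$. It is not, since $\frac{(k+2)+2}{2}=\frac k2+2$.
\begin{itemize}
\item The local maximum principle and the De Giorgi--Nash--Moser H\"older estimate in $\mathbb R^{k+2}$ require the source in $L^q$ with $q>\frac{k+4}{2}$.
\item Neither radial symmetry nor the sign of $h$ improves this. Take radial sources $\pm\lambda^{-2}\phi(\mathbf y/\lambda,\tau/\lambda^2)$ concentrating at the origin. They give solutions with $|\mathbf w(0,0)|$ of order one, while $\|\mathbf w\|_{L^2}\to 0$ and $\|\mathbf E\|_{L^q}\sim\lambda^{-2+(k+4)/q}\to 0$ whenever $q<\frac k2+2$.
\item So your argument proves the lemma only for $q>\frac k2+2$. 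For $\frac k2+1<q\le\frac k2+2$, the standard theorems you cite do not apply.
\end{itemize}

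The paper's own proof runs the same Moser iteration against the $(k+2)$-dimensional measure $\tilde\rho\,d\xi$. So the stated threshold $\frac k2+1$, which is the threshold for spatial dimension $k$, appears to be a slip in the statement itself. The repair is to assume $q>\frac k2+2$. This is compatible with the later use of the lemma, since $q$ enters only through the $L^2$--$C^0$ interpolation of Lemma \ref{Lqx-2E3}, which holds for every $q\ge 2$.
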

\begin{proof}
Noticing that $h(0, \tau)=0$,  $h(0, \tau)=0$, we consider $\tilde{h}$ satisfying
\begin{equation}
    h(\xi, \tau)=\xi^2\, \tilde{h}(\xi, \tau),
\end{equation}
and observe that  satisfies the evolution  equation,
\begin{align}\label{tildehequation}
\partial_{\tau}\tilde{h}&=\tilde{\mathcal{L}_2}\tilde{h}+\tilde{E}_2=\tilde{h}_{\xi\xi}+\left(\frac{k+1}{\xi}-\frac{\xi}{2}\right)\tilde{h}_{\xi}-\tilde{h}+\tilde{E}_2
\end{align}
where
\begin{equation}
    \tilde{\mathcal{L}_2} \tilde{h}=\tilde{h}_{\xi\xi}+\left(\frac{k+1}{\xi}-\frac{\xi}{2}\right)\tilde{h}_{\xi}-\tilde{h}
\end{equation}
\begin{equation}
    \tilde{E}_2=\frac{E_2}{\xi^2}.
\end{equation}

We note that both $T\tilde{h}=\xi^2\tilde{h}$ and its inverse $T^{-1}h=\xi^{-2}h$ are well defined in the function space $\mathscr{H}$ defined in \eqref{mathscrHdef},  and by  $h(0, \tau)=0$,  $h(0, \tau)=0$, we have the following self-similar conjugate relation
\begin{equation}
    \tilde{\mathcal{L}}_2 h=T^{-1}\circ \mathcal{L}_2\circ T(h). 
\end{equation}
Hence $\mathcal{L}_2$ has the same spectrum as $\tilde{\mathcal{L}}$, that is 
\begin{equation}\label{negative spectrum of L3}
    \sigma(\mathcal{L}_2)= \sigma(\tilde{\mathcal{L}_2})=\{-(m+1)\}_{m=0}^{\infty}
\end{equation}

We can  write the equation of $\tilde{h}$ in divergence form
\begin{equation}\label{eqn-C10}
\partial_{\tau}\tilde{h}=\tilde{\rho}^{-1}\left(
\tilde{\rho}\tilde{h}_\xi\right)_\xi
-\tilde{h}+ \tilde{E}_2.
\end{equation}
{Since $h \le 0$, we also have that $\tilde{h}\leq 0$.} Hence the  nonnegative solution 
\begin{equation}
    \hat{h}:=-\tilde{h}\geq 0,
\end{equation}
of equation \eqref{eqn-C10},  is a  subsolution of following equation
\begin{align}\label{hath-equation}
\partial_{\tau}\hat{h} \leq \tilde{\rho}^{-1}\left(
\tilde{\rho}\hat{h}_\xi\right)_\xi
- \tilde{E}_2
\end{align}
Note that the weight $\tilde{\rho}$ is non-singular for {$k\geq 2$}, and the operator $\tilde{\rho}^{-1}\left(
\tilde{\rho}\tilde{h}_\xi\right)_\xi$ is the radial symmetric $k+2$ dimensional self-adjoint Ornstein-Uhlenbeck operator acting on the $k+2$ dimensional Gaussian weighted space $L(\mathbb{R}^{k+2}; e^{-\frac{|y|^2}{4}})$. 
Hence,  the function 
\begin{equation}
\bar{h} :=\tilde{\rho}\, \hat{h}
\end{equation}
satisfies 
\begin{align}
\partial_{\tau}\bar{h}&=\left(\tilde{\rho}\left(
\tilde{\rho}^{-1}\bar {h}
\right)_\xi\right)_\xi
-\bar{h}- \tilde{\rho}\tilde{E}_2
\leq \left(\tilde{\rho}\left(
\tilde{\rho}^{-1}\bar {h}
\right)_\xi\right)_\xi
- \tilde{\rho}\tilde{E}_2
\end{align}
Therefore, by the similar parabolic Nash-Moser iteration and oscillation  argument as in  \cite[Lem A.1, Lem A.2]{nowingmcf} or \cite[Thm 6.17, Thm 6.9]{Lib}, we obtain
  \begin{equation}
       \|\hat{h}\|_{L^{\infty}(Q(R))}+\|\hat{h}\|_{C^{\alpha}(Q(R))}\leq C(k, q, R)\left(\|\hat{h}\|_{L^{2}_{\tilde{\rho}}(Q(2R))}+\|\tilde{E}_2\|_{L^q_{\tilde{\rho}}(Q(2R))}\right)\footnote{The parabolic Sobolev inequality in \cite[Thm 6.9]{Lib} can be easily extended to the $\tilde{\rho}$-Gaussian weighted parabolic Sobolev inequality on a bounded domain which we use in the Nash-Moser iteration process as in \cite[Appendix A]{nowingmcf}.},
    \end{equation}
where $q>\frac{k}{2}+1$. This implies
    \begin{align}
       &\quad \|\xi^{-2}h\|_{L^{\infty}(Q(R))}+\|\xi^{-2}h\|_{C^{\alpha}(Q(R))}\\\nonumber
       &\leq C(k, R)\left(\|\xi^{-2}h\|_{L^{2}_{\tilde{\rho}}(Q(2R))}+\|\xi^{-2}E_2\|_{L^q_{\tilde{\rho}}(Q(2R))}\right)\\\nonumber
       &= C(k, R) \left(\|h\|_{L^{2}_{{\rho}}(Q(2R))}+\|\xi^{-2}E_2\|_{L^q_{\tilde{\rho}}(Q(2R))}\right).\\\nonumber
    \end{align}

\end{proof}

We note that $\|h\|_{L^{2}_{{\rho}}(Q(2R))}$ 
has good decay estimates {as in \eqref{h-L2}}  by Merle-Zaag type estimates in Proposition \ref{discrete-MZ} and 
\begin{equation}
\|\xi^{-2}E_2\|_{L^q_{\tilde{\rho}}(Q(2R))}=\|\xi^{{\frac 4q-2}}E_2\|_{L^q_{{\rho}}(Q(2R))}
\end{equation}
and in particular
\begin{equation}
\|\xi^{-2}E_2\|_{L^2_{\tilde{\rho}}(Q(2R))}=\|E_2\|_{L^2_{{\rho}}(Q(2R))}
\end{equation}
which has good decay estimates by Proposition \ref{Error-E3-estimate}. Next we prove the estimate that we need in order to obtain later   Schauder type estimates.
\begin{lem}\label{x-2E3Calpha} We have 
    \begin{equation}
        \xi^{-2}E_{2}\in C^{\alpha}(Q(R)),
    \end{equation}
and
\begin{equation}\label{Calphax-2E3}
     \|\xi^{-2} E_{2}\|_{C^{\alpha}}\leq C
\end{equation}
where $Q(R)=[0, R]\times [-R^2, 0]$ for  $R>0$. 
\end{lem}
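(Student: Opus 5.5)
The plan is to estimate $\xi^{-2}E_2$ term by term, using the explicit formula \eqref{h-error-expansion}. Each term will be written as a product of smooth quantities with factors that vanish to high enough order at $\xi=0$, so that dividing by $\xi^2$ still leaves a function that is bounded and $C^\alpha$ on $Q(R)$.

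The basic input is the regularity of $f$ and $g$ on compact sets $\xi\le 2R$. By the smoothness of the warped product metric at $z=0$, after rescaling, $F$ is odd in $z$ and $G$ is even. Hence $f$ is smooth and odd in $\xi$, so $h=f_\xi$ is even with $h(0)=h_{\xi}(0)=0$. Likewise $g$ is even, $g_\xi$ is odd, and $g_\xi(0)=0$. Proposition \ref{Gmorder estimates} and Lemma \ref{g-C-m} bound all $\xi$-derivatives of $g$ on $Q(2R)$. For $f$, the boundedness of the curvature operator and the convergence to the cylinder from Proposition \ref{epsilon-bubble-sheet} should give uniform $C^m$ bounds on $Q(2R)$, taken after the parabolic rescaling by $(-t)^{-1/2}$; this is exactly the scale on which Perelman's derivative estimates apply. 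The equation supplies time regularity, so I would treat $C^\alpha$ in space–time as following from bounded first derivatives in $\xi$ and in $\tau$.

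With these bounds in hand I would go through the terms of \eqref{h-error-expansion}.
\begin{enumerate}
\item Terms such as $f_{\xi\xi}\bigl[\frac{1}{f+\xi}-\frac1\xi\bigr]=-\frac{f_{\xi\xi}f}{\xi(f+\xi)}$. Here $f_{\xi\xi}=O(\xi)$ and $f=O(\xi^3)$, while $f+\xi\ge \xi/2$ by Lemma \ref{apriori-f}. So the term is $O(\xi^2)$, and it is a smooth function of $\xi^2$ by parity.
\item The term $h\bigl[\frac{h+1}{(f+\xi)^2}-\frac1{\xi^2}\bigr]$. Writing $f=\xi\phi(\xi^2)$ with $\phi$ smooth and $\phi(0)=0$, the bracket equals $\xi^{-2}$ times a smooth function of $\xi^2$ vanishing at $0$. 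Since $h=O(\xi^2)$, the whole term is $O(\xi^2)$.
\item The products $\frac{hh_\xi}{f+\xi}$ and $\frac{h^2(h+1)}{(f+\xi)^2}$ are $O(\xi^2)$ and $O(\xi^2)$ respectively, again as smooth even functions.
\item The nonlocal terms. The integrand $\frac{h(h+1)}{(f+\eta)^2}$ is $O(1)$ and smooth, so its integral is $O(\xi)$. It is multiplied by $h_\xi=O(\xi)$, giving $O(\xi^2)$. Similarly $\int_0^\xi g_\eta^2/(\cdot)^2=O(\xi^3)$ because $g_\eta=O(\eta)$.
\item The last term, $(h+1)\frac{g_\xi^2}{(g+\sqrt{2(n-k-1)})^2}$. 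Since $g_\xi^2=O(\xi^2)$, it is also $O(\xi^2)$.
\end{enumerate}
In each case, $\xi^{-2}$ times the term is an even function expressible as a smooth function of $\xi^2$. Its derivatives are therefore bounded by Taylor's formula with integral remainder, for example $\xi^{-2}h=\int_0^1(1-s)h_{\xi\xi}(s\xi)\,ds$. This yields uniform $C^1$ bounds in $\xi$ on $[0,R]$.

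For the time direction, I would differentiate in $\tau$. The evolution equations for $f$ and $g$ in Proposition \ref{evolution of (f, g)}, combined with the spatial derivative bounds, control $f_\tau$ and $g_\tau$ together with their $\xi$-derivatives. The same Taylor representation then bounds $\partial_\tau(\xi^{-2}E_2)$. Bounded first derivatives in both variables give \eqref{Calphax-2E3} with a constant $C$ depending on $R$ and $k$.

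The main obstacle is the first step: obtaining uniform higher-order bounds on $f$ near $\xi=0$. Proposition \ref{Gmorder estimates} only controls $G$, so I would derive the bounds for $F$ from Perelman's pointwise derivative estimates applied to the curvature components $-F_{zz}/F$ and $(1-F_z^2)/F^2$, using $R\le C/(-t)$ in the parabolic region. This would be done inductively, as in the proof of Proposition \ref{Gmorder estimates}. The division by $F\sim z$ near the axis is harmless because of the odd extension.
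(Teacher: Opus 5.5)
Your proposal is correct, but it follows a different route from the paper's.

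\textbf{The paper's route.} It first bounds $\|\xi^{-2}E_2\|_{C^0(Q(R))}$ term by term, using the mean value theorem and a bound on $f_{\xi\xi\xi}$. It then feeds this bound into the De Giorgi--Nash--Moser type estimate of Lemma \ref{Linfinity estimates}, where $\tilde h=\xi^{-2}h$ is treated as a radial solution of a $(k+2)$-dimensional Ornstein--Uhlenbeck type equation, and obtains $\xi^{-2}h\in C^\alpha$. Finally, it propagates H\"older regularity to each term of $\xi^{-2}E_2$. For this it uses the product estimate \eqref{eq:Ckth-product} and the averaging Claim \ref{average-horder}, after rewriting every term through $\xi^{-2}h$, $\int_0^1 h(s\xi,\tau)\,ds$, $f_{\xi\xi}/(f+\xi)$ and $g_\xi/\xi$.

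\textbf{Your route.} You take uniform space--time $C^m$ bounds on $f$ and $g$ over $Q(2R)$ as input. Parity and Taylor's formula with integral remainder then turn $\xi^{-2}$ times each term into an explicitly smooth function. This bypasses the parabolic regularity step entirely. It also gives uniform $C^m$ bounds for every $m$ at once, which is exactly what the bootstrap in Lemma \ref{Schauder-estimatesh} needs anyway.

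\textbf{The trade-off.} Your argument rests entirely on uniform higher-order control of $f$ up to the axis. The paper uses only low-order information on $f$: a bound on $f_{\xi\xi\xi}$ and H\"older bounds on $f_{\xi\xi}/(f+\xi)$. However, the paper uses these tacitly, so both routes rest on the same kind of input.

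\textbf{Securing the input.} You can simply invoke the $C^\infty_{loc}$ convergence $f,g\to 0$ stated after \eqref{f-g}. If you prefer to derive it from curvature estimates, do not copy the induction of Proposition \ref{Gmorder estimates} verbatim. Weighted bounds of the form $F^m|\partial_z^{m+1}F|\le C$ degenerate at $\xi=0$. Instead, proceed as follows.
\begin{enumerate}
\item Set $w:=f_{\xi\xi}/(f+\xi)=(-t)\,F_{zz}/F$.
\item The frame $F^{-1}\partial_\theta$ is parallel along $\partial_z$. Hence $\partial_\xi^j w=(-t)^{1+j/2}\partial_z^j(F_{zz}/F)$ is a component of the rescaled $\nabla^j\mathrm{Rm}$.
\item By Perelman's derivative estimates together with $R\le C/(-t)$ in the parabolic region, $\partial_\xi^j w$ is bounded for every $j$.
\item Differentiate $f_{\xi\xi}=w\,(f+\xi)$ with the Leibniz rule, using $0\le f+\xi\le \xi$ and $|f_\xi|\le 1$. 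Induction then bounds all $\partial_\xi^m f$ on $[0,2R]$ with no division by $F$.
\item Time derivatives follow from the evolution equations exactly as you describe.
\end{enumerate}
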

\begin{proof}
To simplify the notation, we will use the abbreviation $C^{0}=C^0(Q(R))$ and $C^{\alpha}=C^\alpha (Q(R))$.  We   first estimate  the $C^{0}$ norm of $\xi^{-2}\, E_{2}$ term by term,
where  $E_2$ is defined  in \eqref{h-error-expansion}. 

For $\xi\in [0, R]$, 
by \eqref{3.3E31}, $f(0)=0, f_{\xi}(0)=0, f_{\xi\xi}(0)=0$ in \eqref{fgx=0} and \eqref{fxx=0}, Lemma \ref{apriori-f} we have
\[
   \quad \left |  \xi^{-2}\, f_{\xi \xi } \, \Big[ \frac{1}{f+\xi }-\frac{1}{\xi}\Big] \right| = \left |\frac{f_{\xi\xi}}{f+\xi} \cdot \frac{f}{\xi^3}\right|
   =\left |\frac{f_{\xi\xi}}{f+\xi} \cdot f_{\xi\xi\xi}((\vartheta) \xi)\right |\leq C
\]
where $\vartheta=\vartheta(\xi)\in [0, 1]$  
may  vary  in each time we  apply the  intermediate value theorem. 
 { By \eqref{fxi/f+xi2}, \eqref{f-slope}, and the intermediate value theorem we have
  \begin{align}
&\quad\left|\xi^{-2}\left[-\frac{2f_{\xi}f_{\xi\xi}}{f+\xi}+\frac{f^2_{\xi}(f_{\xi}+1)}{(f+\xi)^2}\right]\right|\\\nonumber
&\le\left|\frac{2f_{\xi} f_{\xi\xi}}{\xi^2(f+\xi)}\right| + \left|\frac{f_{\xi}^2 (f_{\xi} + 1)}{\xi^2\,(f+\xi)^2}\right|\\\nonumber
&\le 2\frac{|f_{\xi}|}{(f+\xi)^2}\left(\frac{f}{\xi}+1\right)\, \frac{|f_{\xi\xi}|}{\xi} + \frac{f_{\xi}^2}{(f+\xi)^4}\left(1+\frac{f}{\xi}\right)^2\, (1+f_{\xi}) \\\nonumber
&\le C\, (|f_{\xi\xi\xi}(\vartheta\xi)| + 1) \le C.
  \end{align}}
{By Lemma \ref{apriori-f} and \eqref{fxi/f+xi2} we have}
  \begin{align}
\begin{aligned}
\xi^{-2}\left|\frac{f^2_{\xi}(f_{\xi}+1)}{(f+\xi)^2}-\frac{2f_{\xi}f_{\xi\xi}}{f+\xi}\right|\leq \epsilon|\xi^{-2}f_\xi|\leq  |f_{\xi\xi\xi}(\vartheta\xi)| \leq C.
\end{aligned}
\end{align}

{By the intermediate value theorem and \eqref{fxi/f+xi2} we have
\begin{align}
\begin{aligned}
    &\quad \left|\xi^{-2}f_{\xi\xi}(\xi,\tau)\int_0^\xi \frac{f_{\eta}(\eta,\tau)(f_{\eta}(\eta,\tau)+1)}{(f(\eta,\tau)+\eta)^2} d\eta\right|_{C^0(Q(R))}\\
    &\leq C\, \epsilon\, |f_{\xi\xi\xi}|_{C^0(Q(R))}\\
    &\leq C,
\end{aligned}
\end{align}
and also
\begin{align}
    &\quad\xi^{-2}\left|-(k-1)f_\xi(\xi,\tau)\frac{f_{\xi}(\xi,\tau)(f_{\xi}(\xi,\tau)+1)}{(f(\xi,\tau)+\xi)^2}\right|\\\nonumber
    &\leq C\, \epsilon\, |f_{\xi\xi\xi}|^2_{C^0(Q(R))}\\ \nonumber
    &\leq C,
\end{align}}
and 
\begin{align}
    &\quad\xi^{-2}\left|-(n-k)f_{\xi\xi}(\xi,\tau)\int_0^\xi \frac{g_{\eta}^2(\eta,\tau)}{(g(\eta,\tau) + \sqrt{2(n-k-1)})^2} d\eta \right|\\\nonumber
    &\leq \frac{2(n-k)}{(n-k-1)}|f_{\xi\xi\xi}|_{C^0(Q(R))} \, |g_{\xi}|^2_{C^0(Q(R))} \leq C
\end{align}
and
\begin{align}
    &\quad \xi^{-2}\left|-(n-k)(f_\xi(\xi,\tau) + 1) \frac{g_{\xi}^2(\xi,\tau)}{(g(\xi,\tau) + \sqrt{2(n-k-1)})^2}\right|\\\nonumber
    &\leq \frac{2(n-k)}{2(n-k-1)}\, |g_{\xi\xi}|^2_{C^0(Q(R))}\leq C. 
\end{align}
Combining all the above estimates, we have
\begin{equation}
{\|    \xi^{-2}E_{2} \|_{C^0(Q(R))} \leq C. } 
\end{equation}
Hence, by Lemma \ref{Linfinity estimates} we conclude that 
\begin{equation}\label{Calpha x-2h}
    \xi^{-2}f_{\xi}= \xi^{-2}h\in C^{\alpha}. 
\end{equation}

Next  we want to use \eqref{Calpha x-2h} and 
the following  estimates
\begin{equation}\label{eq:Ckth-product}
\|fg\|_{C^{\ell,\alpha}}
\le
C(n,\ell)
\Bigg(
\|f\|_{C^0}\|g\|_{C^{\ell,\alpha}}
+
\|g\|_{C^0}\|f\|_{C^{\ell,\alpha}}
+
\sum_{j=1}^{\ell}
\|f\|_{C^{j}}\|g\|_{C^{\ell-j}}
\Bigg),
\end{equation}
when $\ell=0$ to show that
\begin{equation}
     \|\xi^{-2} E_{2}\|_{C^{\alpha}}\leq C
\end{equation}
and  we will  estimate the $C^{\alpha}$ norm of $\xi^{-2}E_{2}$ term by term. We rewrite $E_2$ in \eqref{h-error-expansion} in the following form
\begin{align}
    E_{2}&=(k-3) {f_{\xi \xi }(\xi ,\tau)}\Big[\frac{1}{f(\xi ,\tau)+\xi }-\frac{1}{\xi}\Big] \\
    &-(2k-4){h(\xi,\tau)}\Big[\frac{h+1}{(f(\xi,\tau)+\xi)^2}-\frac{1}{\xi^2}\Big] \notag\\
    &-\frac{2hf_{\xi\xi}}{f+\xi}+\frac{h^2(h+1)}{(f+\xi)^2}-(k-1)f_{\xi\xi}(\xi,\tau)\int_0^\xi \frac{h(\eta,\tau)(h(\eta,\tau)+1)}{(f(\eta,\tau)+\eta)^2} d\eta\notag\\
    &-hf_\xi(\xi,\tau)\frac{h(\xi,\tau)(h(\xi,\tau)+1)}{(f(\xi,\tau)+\xi)^2} -(n-k)f_{\xi\xi}(\xi,\tau)\int_0^\xi \frac{g_{\eta}^2(\eta,\tau)}{(g(\eta,\tau) + \sqrt{2(n-k-1)})^2} d\eta  \notag\\
    & -(n-k)(h(\xi,\tau) + 1) \frac{g_{\xi}^2(\xi,\tau)}{(g(\xi,\tau) + \sqrt{2(n-k-1)})^2} \notag.
\end{align}
First observe that \eqref{Calpha x-2h} and \eqref{eq:Ckth-product} 
imply 
\begin{equation}
    h\in C^{\alpha}.
\end{equation}
Note also that
\begin{equation}
    \frac{f(\xi, \tau)}{\xi}=\int_{0}^1h(s\xi, \tau)ds,
\end{equation}
Let us now estimate the $C^{\alpha}$ norm of $\xi^{-2}\, E_2$. Using \eqref{eq:Ckth-product}, the first term gives
\begin{align}\label{xi-2E-1}
\begin{aligned}
    &\quad\left|(k-3) \xi^{-2}{f_{\xi \xi }(\xi ,\tau)}\Big[\frac{1}{f(\xi ,\tau)+\xi }-\frac{1}{\xi}\Big]\right|_{C^\alpha} \\
    &= \left|-(k-3)\frac{f_{\xi\xi}}{f+\xi} \cdot \frac{f}{\xi^3}\right|_{C^\alpha}= 
     \left|-(k-3)\frac{f_{\xi\xi}}{f+\xi}\xi^{-2}\int_{0}^1h(s\xi, \tau)ds\right|_{C^\alpha}\\
    & \leq \left|\frac{f_{\xi\xi}}{f+\xi}\right|_{C^0}\left|\xi^{-2}\int_{0}^1h(s\xi, \tau)ds\right|_{C^\alpha} + \left|\frac{f_{\xi\xi}}{f+\xi}\right|_{C^\alpha}\left|\xi^{-2}\int_{0}^1h(s\xi, \tau)ds\right|_{C^0}.
\end{aligned}
\end{align}
The second term gives
\begin{align}\label{xi-2E-2}
\begin{aligned}
    &\quad\left|-(2k-4)\xi^{-2}{h(\xi,\tau)}\Big[\frac{h+1}{(f(\xi,\tau)+\xi)^2}-\frac{1}{\xi^2}\Big]\right|_{C^\alpha}\\
    & = \left|-(2k-4)\xi^{-2}{h(\xi,\tau)}\Big[\frac{\xi^2h-f^2-2\xi f}{\xi^2(f(\xi,\tau)+\xi)^2}\Big]\right|_{C^\alpha}\\
    &=\left|\xi^{-2}h\left[\frac{\xi^2h-f^2-2\xi f}{\xi^4(\frac{f(\xi,\tau)}{\xi}+1)^2}\right]\right|_{C^\alpha}\\
    &=\left|\xi^{-2}h\left[\xi^{-2}h-\xi^{-2}h^2-2\xi^{-2}\int_{0}^1h(s\xi, \tau)ds)\right]\frac{1}{(1+\int_{0}^1h(s\xi, \tau)ds)^2}\right|_{C^\alpha}
\end{aligned}
\end{align}
The third and fourth terms give
\begin{align}\label{xi-2E-34}
\begin{aligned}
    &\quad \left|-\frac{2hf_{\xi\xi}}{\xi^2(f+\xi)}+\frac{h^2(h+1)}{\xi^2(f+\xi)^2}\right|_{C^\alpha} \leq \left|\xi^{-2}h\frac{f_{\xi\xi}}{f+\xi}\right|_{C^\alpha} + \left|(\xi^{-2}h)^2\frac{h(\xi, \tau)+1}{(\int_{0}^1h(s\xi, \tau)ds+1)^2}\right|_{C^\alpha}
\end{aligned}
\end{align}
For estimating the fifth term, we will use the following claim.
\begin{claim}\label{average-horder}
Suppose that $\phi(\xi)\in C^{k,\alpha}([0, R])$, then we have  
\begin{align}
H\phi(\xi) = \frac{1}{\xi}\int_0^\xi \phi(\eta) d\eta \in C^{k,\alpha}([0, R]). 
\end{align}
\end{claim}

\begin{proof}
For $x>0$, change variables $t = s\xi$, we have
\begin{align}
(H\phi)(\xi)
= \frac{1}{\xi} \int_0^\xi \phi(t)\,dt
= \int_0^1 \phi(s\xi)\,ds.
\end{align}
Since $\phi(\xi)\in C^{\ell,\alpha}([0, R])$, we know $(H\phi)(0) = \phi(0)$, this implies $H\phi(\xi) = \frac{1}{\xi}\int_0^\xi \phi(\eta) d\eta \in C^{0}([0, R])$. Moreover, we can show that 
\begin{align}
\begin{aligned}
(H\phi)^{(m)}(\xi)
&= \partial_\xi^m \int_0^1 \phi(s\xi)\,ds = \int_0^1 \partial_\xi^m \big(\phi(s\xi)\big)\,ds \\
&= \int_0^1 s^m \phi^{(m)}(s\xi)\,ds
\end{aligned}
\end{align}
for all $\xi>0$ and $0\leq m\leq \ell$. Using $\phi(\xi)\in C^{\ell,\alpha}([0, R])$ again, we know that $H\phi(\xi) = \frac{1}{\xi}\int_0^\xi \phi(\eta) d\eta \in C^{\ell}([0, R])$.
For any $x,y\in[0,R]$, we compute
\begin{align}
\begin{aligned}
(H\phi)^{(\ell)}(x)-(H\phi)^{(\ell)}(y)
&= \int_0^1 s^\ell\big(\phi^{(\ell)}(sx)-\phi^{(\ell)}(sy)\big)\,ds.
\end{aligned}
\end{align}
Taking absolute values and using the $\alpha$–Hölder continuity of $\phi^{(\ell)}$, we obtain
\begin{align}
\begin{aligned}
\big|(H\phi)^{(\ell)}(x)-(H\phi)^{(\ell)}(y)\big|
&\leq \int_0^1 s^\ell
\big|\phi^{(\ell)}(sx)-\phi^{(\ell)}(sy)\big|\,ds \\
&\leq \int_0^1 s^\ell
[\phi^{(\ell)}]_{C^{0,\alpha}([0,R])}
|sx-sy|^\alpha\,ds \\
&= [\phi^{(\ell)}]_{C^{0,\alpha}([0,R])}
|x-y|^\alpha
\int_0^1 s^{\ell+\alpha}\,ds.
\end{aligned}
\end{align}
Thus, we conclude
\begin{align}
[(H\phi)^{(\ell)}]_{C^{0,\alpha}([0,R])}
\leq
\frac{1}{\ell+\alpha+1}
\,[\phi^{(\ell)}]_{C^{0,\alpha}([0,R])}.
\end{align}
Here we finish the proof of the claim.
\end{proof}

By Claim \ref{average-horder}, the fifth term gives
\begin{align}
\begin{aligned}
    &\quad \left|\xi^{-2}f_{\xi\xi}(\xi,\tau)\int_0^\xi \frac{h(\eta,\tau)(h(\eta,\tau)+1)}{(f(\eta,\tau)+\eta)^2} d\eta\right|_{C^\alpha}\\
    &= \left|\frac{f_{\xi\xi}}{f+\xi}\frac{f+\xi}{\xi}\frac{1}{\xi}\int_0^\xi \frac{h(\eta,\tau)(h(\eta,\tau)+1)}{(f(\eta,\tau)+\eta)^2} d\eta\right|_{C^\alpha}\\
    & = \left|\frac{f_{\xi\xi}}{f+\xi}\left(\int_{0}^1h(s\xi, \tau)ds+1\right)\frac{1}{\xi}\int_0^\xi \frac{h(\eta,\tau)(h(\eta,\tau)+1)}{(f(\eta,\tau)+\eta)^2}\right|_{C^\alpha}\\
    & \leq C\left|\frac{f_{\xi\xi}}{f+\xi}\left(\int_{0}^1h(s\xi, \tau)ds+1\right)\right|_{C^0}\left|\frac{h(\xi, \tau)(h(\xi, \tau)+1)}{\xi^2(\int_{0}^1h(s\xi, \tau)ds+1)^2}\right|_{C^\alpha} \\
    &+ C\left|\frac{f_{\xi\xi}}{f+\xi}\left(\int_{0}^1h(s\xi, \tau)ds+1\right)\right|_{C^\alpha}\left|\frac{h(\xi, \tau)(h(\xi, \tau)+1)}{\xi^2(\int_{0}^1h(s\xi, \tau)ds+1)^2}\right|_{C^0}.
\end{aligned}
\end{align}
The sixth term gives 
\begin{align}
\begin{aligned}
    &\quad \left|-(k-1)\xi^{-2}h(\xi,\tau)\frac{h(\xi,\tau)(h(\xi,\tau)+1)}{(f(\xi,\tau)+\xi)^2}\right|_{C^\alpha}\\
    & = \left|-(k-1)(\xi^{-2}h(\xi, \tau))^2\frac{h(\xi, \tau)+1}{(\int_{0}^1h(s\xi, \tau)ds+1)^2}\right|_{C^\alpha}.
\end{aligned}
\end{align}
By Claim \ref{average-horder}, the seventh term gives
\begin{align}
\begin{aligned}
    &\quad \left|-(n-k)\xi^{-2}f_{\xi\xi}(\xi,\tau)\int_0^\xi \frac{g_{\eta}^2(\eta,\tau)}{(g(\eta,\tau) + \sqrt{2(n-k-1)})^2} d\eta\right|_{C^\alpha}\\
    & = \left|-(n-k)\frac{f_{\xi\xi}}{f+\xi}\frac{f+\xi}{\xi}\frac{1}{\xi}\int_0^\xi \frac{g_{\eta}^2(\eta,\tau)}{(g(\eta,\tau) + \sqrt{2(n-k-1)})^2} d\eta\right|_{C^\alpha}\\
    & \leq C\left|\frac{f_{\xi\xi}}{f+\xi}(\int_{0}^1h(s\xi, \tau)ds+1)\right|_{C^0}\left|\frac{g_{\xi}^2(\xi)}{(g(\xi) + \sqrt{2(n-k-1)})^2}\right|_{C^\alpha}\\
    &+ C\left|\frac{f_{\xi\xi}}{f+\xi}(\int_{0}^1h(s\xi, \tau)ds+1)\right|_{C^\alpha}\left|\frac{g_{\xi}^2(\xi)}{(g(\xi) + \sqrt{2(n-k-1)})^2}\right|_{C^0}.
\end{aligned}
\end{align}
For the last term, we note that $g_{\xi}(0, \tau)=0$
\begin{equation}
    g_{\xi}(\xi, \tau)= \int_{0}^{\xi}g_{\eta\eta}(\eta, \tau)d\eta
\end{equation}
Hence, by this representation, bounded $C^{\alpha}$ norm of $g_{\xi\xi}$ in bounded region and Claim \ref{average-horder}
\begin{equation}
   \left|\frac{g_{\xi}}{\xi}\right|_{C^{\alpha}}\leq C
\end{equation}

last term gives
\begin{align}
    &\quad \left|\xi^{-2}(f_\xi(\xi,\tau) + 1) \frac{g_{\xi}^2(\xi,\tau)}{(g(\xi,\tau) + \sqrt{2(n-k-1)})^2} \right|_{C^{\alpha}}\\\nonumber
    &\leq   \left| \frac{f_\xi(\xi,\tau) + 1}{(g(\xi,\tau) + \sqrt{2(n-k-1)})^2} \right|_{C^{\alpha}}\left|\frac{g^2_{\xi}}{\xi^2}\right|_{C^{0}}+\left| \frac{f_\xi(\xi,\tau) + 1}{(g(\xi,\tau) + \sqrt{2(n-k-1)})^2} \right|_{C^{0}}\left|\frac{g_{\xi}}{\xi}\right|^2_{C^{\alpha}}
\end{align}
Combining above estimates, the $C^{\alpha}$ estimates of $\xi^{-2}E_2$ follows directly.
\end{proof}
\begin{lem}[$L^q_{\tilde{\rho}}$ estimates of $\xi^{-2}E_2$]\label{Lqx-2E3}
 \begin{align}
 \begin{aligned}
\|\xi^{-2}E_2\|_{L^q_{\tilde{\rho}} (Q(2R))}&\leq C\|\xi^{-2}E_2\|^{1-\theta}_{L^2_{\tilde{\rho}}(Q(2R))}\|\xi^{-2}E_2\|^{\theta}_{C^0 (Q(2R))}\\
&=C\|E_2\|^{1-\theta}_{L^2_{{\rho}}(Q(2R))}\|\xi^{-2}E_2\|^{\theta}_{C^0 (Q(2R))}. 
\end{aligned}
 \end{align}
\end{lem}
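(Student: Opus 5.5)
This is an interpolation inequality on the bounded parabolic domain $Q(2R)$, and I would prove it by splitting the power inside the $L^q_{\tilde\rho}$ integral. Set $u:=\xi^{-2}E_2$. Since $\tilde\rho=\xi^{k+1}e^{-\xi^2/4}$ is bounded on $Q(2R)$ (for $k\ge 2$), and $q>\frac{k}{2}+1\ge 2$, we can write $|u|^q=|u|^2\,|u|^{q-2}$. This gives
\begin{align*}
\int_{Q(2R)}|u|^q\tilde\rho\,d\xi\,d\tau\le \|u\|_{C^0(Q(2R))}^{q-2}\int_{Q(2R)}|u|^2\tilde\rho\,d\xi\,d\tau .
\end{align*}
Taking $q$-th roots yields
\begin{align*}
\|u\|_{L^q_{\tilde\rho}(Q(2R))}\le \|u\|_{L^2_{\tilde\rho}(Q(2R))}^{2/q}\,\|u\|_{C^0(Q(2R))}^{1-2/q}.
\end{align*}
This is the claimed inequality with $1-\theta=2/q$ and $C=1$. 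The finiteness of $\|u\|_{C^0}$ is exactly the $C^0$ bound on $\xi^{-2}E_2$ established in the proof of Lemma \ref{x-2E3Calpha}, so the right-hand side is meaningful.

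The equality in the second line follows from the weights alone. We have $\xi^{-4}\tilde\rho=\xi^{k-3}e^{-\xi^2/4}=\rho$, so
\begin{align*}
\|\xi^{-2}E_2\|_{L^2_{\tilde\rho}(Q(2R))}^2=\int_{Q(2R)}E_2^2\,\xi^{-4}\tilde\rho=\|E_2\|_{L^2_\rho(Q(2R))}^2 .
\end{align*}
This identity was already noted just before Lemma \ref{x-2E3Calpha}.

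The only point needing care is that the exponent is pinned by $q$: $\theta=1-2/q\in(0,1)$, which requires $q>2$, a condition that holds here. If the later application in Lemma \ref{decayDlh} wants $\theta$ arbitrarily small, I would take $q$ only slightly above $\max\{2,\frac{k}{2}+1\}$ so that $\theta$ is small. Combined with the decay of $\|E_2\|_{L^2_\rho}$ from \eqref{E-L2Q} and the uniform $C^0$ bound, this gives decay of $\|\xi^{-2}E_2\|_{L^q_{\tilde\rho}}$. That decay is what feeds into Lemma \ref{Linfinity estimates}.

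I do not expect any real obstacle. The step to check is that the $C^0$ bound from Lemma \ref{x-2E3Calpha} holds on $Q(2R)$ rather than $Q(R)$; this needs only $R$ replaced by $2R$ in that lemma, whose constants depend only on $R$.
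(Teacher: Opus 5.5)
Your argument is correct and is exactly what the paper means by ``standard interpolation inequality'' together with the weight identity $\xi^{-4}\tilde\rho=\rho$: splitting $|u|^q=|u|^2|u|^{q-2}$ gives the estimate with $C=1$ and $\theta=1-2/q$. One caveat on your aside, which the lemma itself does not need: since $q>\frac{k}{2}+1$ forces $\theta>\frac{k-2}{k+2}$, you can make $\theta$ arbitrarily small only when $k=2$.
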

\begin{proof}
This follows from the standard interpolation inequality and the following identity
\begin{equation}
\|\xi^{-2}E_2\|_{L^2_{\tilde{\rho}}(Q(2R))}=\|E_2\|_{L^2_{{\rho}}(Q(2R))}.
\end{equation}

\end{proof}

\begin{lem}[$C^{\alpha}$-estimates of $\xi^{-2}h$]\label{Calphax-2h}We  have 
\begin{equation}
\|\xi^{-2}h\|_{C^{\alpha}(Q(R))}\leq C(k, R)\left(\|h\|_{L^{2}_{{\rho}}(Q(2R))}+\|E_2\|^{1-\theta}_{L^2_{{\rho}}(Q(2R))}\|\xi^{-2}E_2\|^{\theta}_{C^0 (Q(2R))}\right).
\end{equation}
\end{lem}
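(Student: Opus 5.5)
The plan is to chain together the three estimates already established in this appendix. Lemma \ref{Linfinity estimates} gives the core parabolic regularity bound
\[
\|\xi^{-2}h\|_{C^{\alpha}(Q(R))}\leq C(k,R)\Big(\|h\|_{L^2_\rho(Q(2R))}+\|\xi^{-2}E_2\|_{L^q_{\tilde\rho}(Q(2R))}\Big),
\]
valid for any fixed $q>\frac k2+1$. It was obtained by writing $h=\xi^2\tilde h$, viewing $-\tilde h\geq 0$ as a subsolution of the $(k+2)$-dimensional radial Ornstein--Uhlenbeck equation with source $-\tilde E_2=-\xi^{-2}E_2$, and then applying Nash--Moser iteration together with the oscillation decay estimate. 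With $q$ fixed, the only remaining task is to control the source term $\|\xi^{-2}E_2\|_{L^q_{\tilde\rho}(Q(2R))}$ by the quantities appearing on the right-hand side of the statement.

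For that control I would invoke Lemma \ref{Lqx-2E3}, which rests on the elementary interpolation
\[
\|u\|_{L^q_{\tilde\rho}}\leq \|u\|_{L^2_{\tilde\rho}}^{2/q}\,\|u\|_{C^0}^{1-2/q}.
\]
This inequality holds because $\int |u|^q\tilde\rho\leq \sup|u|^{q-2}\int|u|^2\tilde\rho$ for $q\geq 2$, so $\theta=1-\frac2q$. The weight identity $\xi^{-4}\tilde\rho=\rho$ then gives
\[
\|\xi^{-2}E_2\|_{L^2_{\tilde\rho}(Q(2R))}=\|E_2\|_{L^2_{\rho}(Q(2R))}.
\]
Substituting these two facts into the bound from Lemma \ref{Linfinity estimates} yields the claimed inequality, with $\theta=1-2/q\in(0,1)$, and $\theta$ can be made small by taking $q$ only slightly larger than $\max\{2,\frac k2+1\}$.

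The final check is that every quantity on the right-hand side is finite, so that the statement is meaningful rather than vacuous. This comes from Lemma \ref{x-2E3Calpha}, which shows that $\|\xi^{-2}E_2\|_{C^0(Q(2R))}\leq C$. That bound was proved term by term, using the vanishing of $f$, $f_\xi$, $f_{\xi\xi}$ at the origin, the estimates in Lemma \ref{apriori-f}, and the mean value theorem. The main obstacle in this argument is the one already resolved inside Lemma \ref{Linfinity estimates}: justifying the weighted parabolic Sobolev inequality for $\tilde\rho=\xi^{k+1}e^{-\xi^2/4}$ near $\xi=0$. Once $\tilde\rho$ is read as the radial measure on $\mathbb R^{k+2}$, that inequality reduces to the standard one, so the present lemma itself amounts to bookkeeping of exponents.
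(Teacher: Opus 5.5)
Your proposal is correct and matches the paper's proof. The paper obtains the lemma the same way: it plugs the interpolation of Lemma \ref{Lqx-2E3} (with $\theta=1-\frac2q$ and the weight identity $\xi^{-4}\tilde\rho=\rho$) into Lemma \ref{Linfinity estimates}.

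One side remark is inaccurate. Since $q>\frac k2+1$ forces $\theta=1-\frac2q>\frac{k-2}{k+2}$, $\theta$ can be taken small only when $k=2$. The lemma as stated does not require $\theta$ to be small, so this does not affect your argument.
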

\begin{proof}
    This directly follows from Lemma \ref{Linfinity estimates} and Lemma \ref{Lqx-2E3}.
\end{proof}
\begin{lem}[Schauder-estimates]\label{Schauder-estimatesh}
     For $h\leq 0$ with $h(0, \tau)=0$, $h(0, \tau)=0$ satisfying 
    \begin{equation}
\partial_{\tau}h=\mathcal{L}_2h+E_2,
\end{equation}
we have 
\begin{equation}
 \|\xi^{-2} E_{2}\|_{C^{m, \alpha}(Q(R))}\leq C(m, k, R)
\end{equation}
and
   \begin{equation}
       \|\xi^{-2}h\|_{C^{2, \alpha}(Q(\frac{R}{2}))}\leq C(k, R)\left(\|\xi^{-2}h\|_{C^{\alpha}(Q(R))}+\|\xi^{-2}E_2\|_{C^{\alpha}(Q(R))}\right)
    \end{equation}
and
\begin{equation}
       \|\xi^{-2}h\|_{C^{m+2, \alpha}(Q(\frac{R}{2}))}\leq C(m, k, R)\left(\|\xi^{-2}h\|_{C^{m, \alpha}(Q(R))}+\|\xi^{-2}E_2\|_{C^{m, \alpha}(Q(R))}\right)
    \end{equation}
where $Q(R)=[0, R]\times [-R^2, 0]$ and $R>0$ .
\end{lem}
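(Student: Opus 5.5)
The plan is to work throughout with the conjugated function $\tilde h=\xi^{-2}h$ rather than $h$. As in the proof of Lemma \ref{Linfinity estimates}, $\tilde h$ satisfies
\[
\partial_\tau\tilde h=\tilde h_{\xi\xi}+\Big(\frac{k+1}{\xi}-\frac{\xi}{2}\Big)\tilde h_\xi-\tilde h+\tilde E_2,\qquad \tilde E_2=\xi^{-2}E_2 .
\]
The spatial part is the radial Ornstein--Uhlenbeck operator in $\mathbb R^{k+2}$. We therefore set $\mathbf h(\mathbf y,\tau)=\tilde h(|\mathbf y|,\tau)$ and $\mathbf E(\mathbf y,\tau)=\tilde E_2(|\mathbf y|,\tau)$ on $\mathbf y\in\mathbb R^{k+2}$. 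Then $\partial_\tau\mathbf h=\Delta\mathbf h-\tfrac12\mathbf y\cdot\nabla\mathbf h-\mathbf h+\mathbf E$, which is a uniformly parabolic equation with smooth, bounded coefficients on the ball $B_{2R}\times[-4R^2,0]$. This removes the apparent singularity at $\xi=0$.

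The second and third estimates are then interior parabolic Schauder estimates for this equation. They pass from the cylinder $B_R\times[-R^2,0]$ to $B_{R/2}\times[-R^2/4,0]$. One point needs a short remark: H\"older norms of radial functions in $\mathbf y$ are comparable to the one-variable norms on $[0,R]$, provided the odd derivatives vanish at the origin. Whenever a radial function on $\mathbb R^{k+2}$ is $C^{m,\alpha}$, its even extension in $\xi$ has exactly this regularity, so the comparison holds. The higher estimate follows by differentiating the equation in $\mathbf y$ (or applying difference quotients) and iterating. The commutator with the drift $-\tfrac12\mathbf y\cdot\nabla$ only produces lower-order terms $c\,\partial^j\mathbf h$, and these are already controlled by the previous step.

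The first estimate, $\|\xi^{-2}E_2\|_{C^{m,\alpha}}\le C$, I will prove by induction on $m$ together with the bound $\|\xi^{-2}h\|_{C^{m+2,\alpha}(Q(R/2))}\le C$.
\begin{itemize}
\item The base case is Lemma \ref{x-2E3Calpha} combined with \eqref{Calpha x-2h}.
\item For the inductive step, we reuse the term-by-term decomposition of $\xi^{-2}E_2$ from the proof of Lemma \ref{x-2E3Calpha}. Each term is a product, or a composition with smooth functions (such as $(1+\cdot)^{-2}$ and $(\sqrt{2(n-k-1)}+\cdot)^{-2}$), of the following building blocks: $\xi^{-2}h$, $h=\xi^2\cdot\xi^{-2}h$, $\int_0^1 h(s\xi)\,ds$, $\frac{f_{\xi\xi}}{f+\xi}$, the averages $\frac1\xi\int_0^\xi(\cdot)$, $g_\xi/\xi$, and $g$.
\item The $C^{m,\alpha}$ norms of these blocks are handled as follows. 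Claim \ref{average-horder} preserves $C^{m,\alpha}$ for the averaging operators. The product estimate \eqref{eq:Ckth-product} controls products. Lemma \ref{g-C-m} and the $C^{m,\alpha}$ bound for $g_{\xi\xi}$, inserted into $g_\xi/\xi=\int_0^1 g_{\xi\xi}(s\xi)\,ds$, control the $g$ terms.
\item The block $\frac{f_{\xi\xi}}{f+\xi}$ needs more care. We write $f_{\xi\xi}=h_\xi=(\xi^2\tilde h)_\xi=2\xi\tilde h+\xi^2\tilde h_\xi$ and $f+\xi=\xi\,(1+\int_0^1h(s\xi)\,ds)$. This gives
\[
\frac{f_{\xi\xi}}{f+\xi}=\frac{2\tilde h+\xi\tilde h_\xi}{1+\int_0^1 h(s\xi)\,ds},
\]
whose $C^{m,\alpha}$ norm is bounded by $\|\tilde h\|_{C^{m+1,\alpha}}$. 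The denominator is at least $1/2$ by Lemma \ref{apriori-f}.
\end{itemize}
Thus $\|\xi^{-2}E_2\|_{C^{m,\alpha}}$ is controlled by $\|\tilde h\|_{C^{m+1,\alpha}}$ together with $g$-bounds. The Schauder step gives $\|\tilde h\|_{C^{m+2,\alpha}}$ from $\|\tilde E_2\|_{C^{m,\alpha}}$, so the induction closes. At each stage the cylinder shrinks, which is harmless because $R$ is arbitrary and we may start from $Q(2^{m+2}R)$.

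The main obstacle is this loss of one derivative: $\tilde E_2$ involves $\tilde h_\xi$ through $f_{\xi\xi}$, so the bootstrap is only consistent because Schauder theory gains two derivatives. A second issue is making sure that no term of $\xi^{-2}E_2$ hides a genuine $\xi^{-1}$ singularity. I will check this term by term, using the vanishing conditions $f_\xi(0)=f_{\xi\xi}(0)=g_\xi(0)=0$ exactly as in Lemma \ref{x-2E3Calpha}.
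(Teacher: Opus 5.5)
Your proposal is correct and follows essentially the same route as the paper. The paper also applies Schauder theory to $\tilde h=\xi^{-2}h$, whose operator is the radial Ornstein--Uhlenbeck operator in $\mathbb{R}^{k+2}$, and bootstraps $C^{m,\alpha}$ bounds on $\xi^{-2}E_2$ using the product estimate and the term-by-term structure from Lemma \ref{x-2E3Calpha}; your lift to $\mathbb{R}^{k+2}$ and your tracking of the one-derivative loss through $f_{\xi\xi}/(f+\xi)$ simply make explicit what the paper calls ``standard Schauder theory'' and ``a bootstrap argument.''
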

\begin{proof}
For 
\begin{equation}
    h(\xi, \tau)=\xi^2\tilde{h}(\xi, \tau)
\end{equation}
we have
\begin{equation}
\partial_{\tau}\tilde{h}=\tilde{\rho}^{-1}\left(
\tilde{\rho}\tilde{h}_\xi\right)_\xi
-\tilde{h}+ \tilde{E}_2,
\end{equation}
where
\begin{equation}
    \tilde{\rho}(\xi)=\xi^{k+1} e^{-\xi^2/4} \quad \text{for}\quad k\geq 1
\end{equation}
and
\begin{equation}
    \tilde{E}_2=\frac{E_2}{\xi^2}
\end{equation}
By standard Schauder theory, we obtain
  \begin{align}
       \|\tilde{h}\|_{C^{2, \alpha}(Q(\frac{R}{2}))}\leq C(k, R)\left(\|\tilde{h}\|_{C^{\alpha}(Q(R))}+\|\tilde{E}_2\|_{C^{\alpha}(Q(R))}\right),
    \end{align}
Translating back to original variable, we obtain 
  \begin{equation}
       \|\xi^{-2}h\|_{C^{2, \alpha}(Q(\frac{R}{2}))}\leq C(k, R)\left(\|\xi^{-2}h\|_{C^{\alpha}(Q(R))}+\|\xi^{-2}E_2\|_{C^{\alpha}(Q(R))}\right)
    \end{equation}
The $m$-th higher order Schauder estimates follows from a bootstrap argument and the inductive estimates  
   \begin{equation}
    \xi^{-2}h\in  C^{m, \alpha}\quad  \xi^{-2} E_{2}\in C^{m, \alpha}
\end{equation}
and
\begin{equation}
    \|\xi^{-2}h\|_{C^{m, \alpha}}+\|\xi^{-2} E_{2}\|_{C^{m, \alpha}}\leq C
\end{equation}
via \eqref{eq:Ckth-product} and a similar argument in the proof of Lemma \ref{x-2E3Calpha}.
\end{proof}
\begin{lem}[Decay estimates of derivatives of $h$]\label{decayDlh}We have 
    \begin{align}
&\|\xi^{-2}h\|_{C^{\ell}(Q(R))}\leq C(\ell, k, R)\left(\|h\|_{L^{2}_{{\rho}}(Q(2R))}+\|E_2\|^{1-\theta}_{L^2_{{\rho}}(Q(2R))}\right)^{1-\theta}.
\end{align}
\end{lem}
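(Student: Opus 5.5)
We chain three estimates already in hand: the Hölder bound of Lemma \ref{Calphax-2h}, the Schauder bounds of Lemma \ref{Schauder-estimatesh}, and an interpolation in $C^\ell$ between a decaying low norm and a uniformly bounded high norm. Throughout we write $N:=\|h\|_{L^{2}_{\rho}(Q(2R))}+\|E_2\|^{1-\theta}_{L^2_{\rho}(Q(2R))}$. This is exactly the quantity that decays like a power of $(-t)$ by \eqref{h-L2} and \eqref{E-L2Q}.

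\textbf{Step 1 (low-norm decay).} By Lemma \ref{Calphax-2h},
\[
\|\xi^{-2}h\|_{C^{\alpha}(Q(R))}\le C\bigl(\|h\|_{L^2_\rho(Q(2R))}+\|E_2\|^{1-\theta}_{L^2_\rho(Q(2R))}\|\xi^{-2}E_2\|^{\theta}_{C^0(Q(2R))}\bigr).
\]
The $C^0$ bound on $\xi^{-2}E_2$ from the proof of Lemma \ref{x-2E3Calpha} (applied on $Q(2R)$) absorbs the factor $\|\xi^{-2}E_2\|^{\theta}_{C^0}$ into a constant. This gives $\|\xi^{-2}h\|_{C^0(Q(R))}\le C\,N$.

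\textbf{Step 2 (high-norm bound).} We iterate Lemma \ref{Schauder-estimatesh} on a nested chain of cylinders $Q(R)\supset Q(R/2)\supset\cdots$. Starting from the uniform bounds $\|\xi^{-2}h\|_{C^\alpha}\le C$ and $\|\xi^{-2}E_2\|_{C^{m,\alpha}}\le C(m,k,R)$, each application raises the regularity by two. This yields $\|\xi^{-2}h\|_{C^{M}(Q(R'))}\le C(M,k,R)$ for any fixed large $M$. Because the parabolic cylinders $Q(\cdot)$ are invariant under $\tau$-translation and the constants are independent of $\tau\ll-1$, we may rename $R'$ as $R$ by starting from a larger initial cylinder.

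\textbf{Step 3 (interpolation).} On the bounded cylinder $Q(R)$ we apply the Gagliardo--Nirenberg/Landau interpolation inequality
\[
\|u\|_{C^\ell}\le C\|u\|_{C^0}^{1-\ell/M}\|u\|_{C^M}^{\ell/M}+C\|u\|_{C^0},
\]
with $u=\xi^{-2}h$. Choosing $M$ large enough that $\ell/M\le\theta$, and using $N\lesssim 1$ so that the lower-order term $C\|u\|_{C^0}$ is also bounded by $C N^{1-\theta}$, this gives $\|\xi^{-2}h\|_{C^\ell(Q(R))}\le C(\ell,k,R)\,N^{1-\theta}$, which is the claim.

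The main obstacle is Step 2, namely justifying that $\|\xi^{-2}E_2\|_{C^{m,\alpha}}$ is uniformly bounded at every order so that the bootstrap closes. This requires repeating the term-by-term argument of Lemma \ref{x-2E3Calpha} at higher orders. That repetition uses the product estimate \eqref{eq:Ckth-product}, Claim \ref{average-horder} for the averaged quantities $\int_0^1 h(s\xi,\tau)\,ds$ and $\xi^{-1}\int_0^\xi(\cdot)$, and the fact that $f_{\xi\xi}/(f+\xi)$ and $g_\xi/\xi$ inherit $C^{m,\alpha}$ bounds from those of $\xi^{-2}h$ and $g$ (Lemma \ref{g-C-m}). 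We would organize this as an induction on $m$, where the $C^{m,\alpha}$ bound on $\xi^{-2}h$ feeds the $C^{m,\alpha}$ bound on $\xi^{-2}E_2$ and vice versa. One must also check that the boundary $\xi=0$ causes no loss. This holds because $\tilde h=\xi^{-2}h$ satisfies a regular radial $(k+2)$-dimensional equation there.
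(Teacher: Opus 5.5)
Your proposal is correct and follows the paper's proof step for step. You take the decaying $C^\alpha$ bound of Lemma \ref{Calphax-2h} and absorb the factor $\|\xi^{-2}E_2\|^{\theta}_{C^0}$ via Lemma \ref{x-2E3Calpha}, obtain a uniform high-order bound on $\xi^{-2}h$ from the Schauder bootstrap of Lemma \ref{Schauder-estimatesh}, and interpolate between the two with the top order chosen large relative to $\ell/\theta$. The step you flag as the main obstacle, the uniform $C^{m,\alpha}$ bound on $\xi^{-2}E_2$, is exactly the first assertion of Lemma \ref{Schauder-estimatesh}. The paper justifies it by the same inductive use of \eqref{eq:Ckth-product} and Claim \ref{average-horder} that you describe.
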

\begin{proof}
By Lemma \ref{Linfinity estimates} and interpolation inequality, 
we have 
   \begin{align}
      \|\xi^{-2}h\|_{C^{\alpha}(Q(R))}&\leq C(k, R)\left(\|h\|_{L^{2}_{{\rho}}(Q(2R))}+\|\xi^{-2}E_2\|_{L^q_{\tilde{\rho}}(Q(2R))}\right)\\\nonumber
      &\leq C(k, R)\left(\|h\|_{L^{2}_{{\rho}}(Q(2R))}+C(\theta)\|E_2\|^{1-\theta}_{L^2_{{\rho}}(Q(2R))}\|\xi^{-2}E_2\|^{\theta}_{C^0 (Q(2R))}\right).
    \end{align}
    By Schauder estimates in Lemma \ref{Schauder-estimatesh}, \eqref{Calphax-2E3} and the decay estimates in \eqref{hL2tdecay} and \eqref{E-L2Q},
  we have
  \begin{equation}\label{Cm+2alphax-2E3}
        \|\xi^{-2}h\|_{C^{m+2, \alpha}(Q(\frac{R}{2}))}\leq C(m, k, R)
  \end{equation}
  By interpolation estimates,  the above estimates \eqref{Cm+2alphax-2E3},  and the estimates in Lemma \ref{Calphax-2h}, and taking $\theta>0$ small enough and $m$ large enough, we have 
 \begin{align}
&\|\xi^{-2}h\|_{C^{\ell}(Q(R))}\\\nonumber
&\leq C(k,\ell, R)\left(\|h\|_{L^{2}_{{\rho}}(Q(2R))}+\|E_2\|^{1-\theta}_{L^2_{{\rho}}(Q(2R))}\|\xi^{-2}E_2\|^{\theta}_{C^0 (Q(2R))}\right)^{1-\theta}\|\xi^{-2}h\|^{\theta}_{C^{m+2}(Q(2R))}\\\nonumber
&\leq C(\ell, k, R)\left(\|h\|_{L^{2}_{{\rho}}(Q(2R))}+\|E_2\|^{1-\theta}_{L^2_{{\rho}}(Q(2R))}\|\xi^{-2}E_2\|^{\theta}_{C^0 (Q(2R))}\right)^{1-\theta}. 
\end{align}
Hence together with \eqref{Calphax-2E3} in Lemma \ref{x-2E3Calpha}, we have
  \begin{align}
        \|\xi^{-2}h\|_{C^{\ell, \alpha}(Q(\frac{R}{2}))}&\leq C(\ell, k, R)\left(\|h\|_{L^{2}_{{\rho}}(Q(2R))}+\|E_2\|^{1-\theta}_{L^2_{{\rho}}(Q(2R))}\|\xi^{-2}E_2\|^{\theta}_{C^0 (Q(2R))}\right)^{1-\theta}\\\nonumber
        &\leq C(\ell, k, R)\left(\|h\|_{L^{2}_{{\rho}}(Q(2R))}+\|E_2\|^{1-\theta}_{L^2_{{\rho}}(Q(2R))}\right)^{1-\theta}. 
  \end{align}
\end{proof}
\end{appendix}

\bibliography{reference}
\bibliographystyle{plain}

\vspace{5mm}

{\sc Panagiota Daskalopoulos, Department of Mathematics, Columbia University, New York,   10027, New York State,  United States}

{\sc Wenkui Du,  Department of Mathematics, Massachusetts Institute of Technology, Massachusetts, 02139, USA}

{\sc Natasa Sesum, Department of Mathematics, Rutgers University, Piscataway, 08854, New Jersey State,  United States}

{\sc Ziyi Zhao, School of Mathematical Sciences, Zhejiang Normal University, Jinhua, 321004, Zhejiang Province, China}

\vspace{5mm}

\emph{E-mail:} pdaskalo@math.columbia.edu, 
duwenkui15@gmail.com, natasas@math.rutgers.edu,  zyzhao@zjnu.edu.cn.

\end{document}